\documentclass[11pt,reqno,a4paper]{amsart}
\usepackage{fbb}
\usepackage[utf8]{inputenc}
\usepackage[T1]{fontenc}
\usepackage{esint}

\usepackage[top=2.5cm,bottom=2.4cm,left=2.6cm,right=2.6cm,headsep=0.2in]{geometry}
\usepackage{microtype}

\usepackage{amsmath,amsthm,amssymb,amsfonts,mathtools,mathrsfs}
\usepackage{centernot}

\usepackage{graphicx}
\usepackage{tikz}
\usepackage{booktabs}
\usepackage{tabularx}

\usepackage{enumitem}
\setlist[itemize]{leftmargin=1.6em}
\setlist[enumerate]{leftmargin=1.6em}

\usepackage[noadjust]{cite}

\usepackage{xcolor}
\usepackage{hyperref}
\hypersetup{
  colorlinks=true,
  linkcolor=blue!50!green,
  citecolor=blue!50!green,
  urlcolor=blue!50!green,
  pdfauthor={Mayukh Mukherjee},
  pdftitle={Spectral Dehn functions and a characterisation of word-hyperbolicity}
}

\usepackage[capitalise,nameinlink]{cleveref}
\usepackage{needspace}
\let\phi\varphi
\let\epsilon\varepsilon

\usepackage{marginnote}
\makeatletter

\def\XXint#1#2#3{{\setbox0=\hbox{$#1#2\int$}%
  \vcenter{\hbox{$#2#3$}}\kern-0.5\wd0}}

\makeatother

\usepackage[most]{tcolorbox}

\usepackage{aliascnt}

\numberwithin{equation}{section}
\theoremstyle{plain}
\newtheorem{theorem}{Theorem}[section]

\newaliascnt{lemma}{theorem}
\newtheorem{lemma}[lemma]{Lemma}
\aliascntresetthe{lemma}

\newaliascnt{proposition}{theorem}
\newtheorem{proposition}[proposition]{Proposition}
\aliascntresetthe{proposition}

\newaliascnt{corollary}{theorem}
\newtheorem{corollary}[corollary]{Corollary}
\aliascntresetthe{corollary}

\newaliascnt{hypothesis}{theorem}

\aliascntresetthe{hypothesis}

\newaliascnt{assumption}{theorem}

\aliascntresetthe{assumption}

\theoremstyle{definition}
\newaliascnt{definition}{theorem}
\newtheorem{definition}[definition]{Definition}
\aliascntresetthe{definition}

\theoremstyle{remark}
\newaliascnt{remark}{theorem}
\newtheorem{remark}[remark]{Remark}
\aliascntresetthe{remark}

\newaliascnt{example}{theorem}
\newtheorem{example}[example]{Example}
\aliascntresetthe{example}

\newaliascnt{problem}{theorem}

\aliascntresetthe{problem}

\theoremstyle{plain}
\newaliascnt{conjecture}{theorem}
\newtheorem{conjecture}[conjecture]{Conjecture}
\aliascntresetthe{conjecture}

\theoremstyle{plain}

\newtheorem{theoremA}{Theorem}

\crefname{theorem}{Theorem}{Theorems}
\crefname{lemma}{Lemma}{Lemmas}
\crefname{proposition}{Proposition}{Propositions}
\crefname{corollary}{Corollary}{Corollaries}
\crefname{hypothesis}{Hypothesis}{Hypotheses}
\crefname{definition}{Definition}{Definitions}
\crefname{conjecture}{Conjecture}{Conjectures}
\crefname{remark}{Remark}{Remarks}
\crefname{problem}{Problem}{Problems}
\crefname{section}{Section}{Sections}
\crefname{equation}{Eq.}{Eqs.}
\crefname{figure}{Figure}{Figures}
\crefname{theoremA}{Theorem}{Theorems}
\Crefname{theorem}{Theorem}{Theorems}
\Crefname{lemma}{Lemma}{Lemmas}
\Crefname{proposition}{Proposition}{Propositions}
\Crefname{corollary}{Corollary}{Corollaries}
\Crefname{hypothesis}{Hypothesis}{Hypotheses}
\Crefname{definition}{Definition}{Definitions}
\Crefname{conjecture}{Conjecture}{Conjectures}
\Crefname{remark}{Remark}{Remarks}
\Crefname{problem}{Problem}{Problems}
\Crefname{equation}{Equation}{Equations}
\Crefname{figure}{Figure}{Figures}
\Crefname{theoremA}{Theorem}{Theorems}

\usepackage{fancyhdr}
\title[Spectral Dehn functions]{Spectral Dehn functions and a characterisation of word-hyperbolicity}
\author{Mayukh Mukherjee}
\address{Department of Mathematics, Indian Institute of Technology Bombay,
Mumbai 400076, India}
\email{mukherjee@math.iitb.ac.in,mathmukherjee@gmail.com}
\date{March 2026}
\subjclass[2020]{20F65 (primary); 05C50, 20F67, 57M07 (secondary)}
\keywords{Spectral Dehn function, word-hyperbolicity, van Kampen diagram,
Dirichlet eigenvalue, filling length, quasi-isometry invariant,
hereditarily quasi-minimal}

\begin{document}

\begin{abstract}
We introduce a \emph{spectral Dehn function}
\[
\Lambda_{\mathcal P}(n):=\inf \lambda_1(\Delta),
\]
where $\lambda_1(\Delta)$ is the first Dirichlet eigenvalue of the
random-walk Laplacian on a van Kampen diagram $\Delta$, and the infimum
runs over area-minimising diagrams with boundary length at most~$n$.
We prove a spectral-isoperimetric inequality relating
$\Lambda_{\mathcal P}$ to the Dehn function, and show that its
degree-free face-dual variant $\Lambda^\ast_{\mathcal P}$ characterises
word-hyperbolicity: a finitely presented group is word-hyperbolic if and
only if
\[
\inf_n \Lambda^\ast_{\mathcal P}(n)>0.
\]
Every disk diagram satisfies a diagramwise filling-length bound
\[
\mathrm{FL}_b(\Delta)\cdot \operatorname{Area}(\Delta)
\ge c/\lambda_1(\Delta);
\]
combined with a discrete Faber-Krahn inequality, this yields the sharp
exponent $1/2$ in the quadratic case, attained by rectangular
commutator grids over $\mathbb Z^2$.
By passing to the free completion and introducing a hole-free-ancestor
hereditary quasi-minimality condition, we obtain a spectral filling
profile whose positivity criterion is a quasi-isometry invariant of
finitely presented groups and again characterises word-hyperbolicity.
The resulting profile carries finer information than the Dehn function:
it separates presentations within the linear Dehn class.
\end{abstract}

\maketitle

\section{Introduction}\label{sec:intro}

The filling length of a van Kampen diagram, introduced by
Gromov~\cite[Ch.~5]{Gromov1996GeometricGroupTheory}, measures the complexity
of null-homotopies at a finer scale than the Dehn function alone.
The isoperimetric gap theorem
\cite{Papasoglu1995,Bowditch1995} (see also~\cite{Bridson2002})
implies that a sub-quadratic Dehn function forces filling length
to grow at most linearly, the minimal possible coarse order.
Gersten and Riley~\cite{GerstenRiley2002} established further
connections between filling length and gallery length.
The book~\cite{Riley2003} surveys the theory of filling functions, including
the coarse Lipschitz invariance of filling length.
These references use free or fragmenting filling length;
the present paper works with \emph{based} filling length
(\Cref{def:based-FL}), which dominates the free variant.
For general background on word-hyperbolic groups and Dehn functions, see
Gromov~\cite{Gromov1987},
Ghys-de la Harpe~\cite{GhysdelaHarpe1990},
Alonso et al.~\cite{Alonso1991},
Bridson-Haefliger~\cite{BridsonHaefliger1999},
and Dru\c{t}u-Kapovich~\cite{DrutuKapovich2018};
for large-scale geometric aspects,
Nowak-Yu~\cite{NowakYu2012}.

All of these results use combinatorial surgery:
shellings, gallery structures, or van Kampen-diagram manipulations.
The present paper takes a complementary, spectral-analytic approach.
For any combinatorial disk diagram~$\Delta$, let $\lambda_1(\Delta)$
denote the first Dirichlet eigenvalue of the random-walk Laplacian on
its 1-skeleton with absorbing boundary conditions.
Optimising $\lambda_1$ over area-minimising
van Kampen diagrams for cyclically reduced words
with $V^\circ\neq\varnothing$ and
boundary length at most~$n$ yields the \emph{spectral Dehn function}
$\Lambda_{\mathcal P}(n)$.
There is also a face-dual variant $\Lambda^\ast_{\mathcal P}(n)$,
defined by taking the first Dirichlet eigenvalue of the face-dual graph
of the diagram; this variant requires no vertex degree bound and is the
more natural object for the Dehn-function profile.

\subsection*{Main results}

\begin{theoremA}[Spectral characterisation of word-hyperbolicity]
\label{thm:intro-A}
Let $\mathcal P=\langle S\mid R\rangle$ be a finite presentation
with nonempty relator set, all relators cyclically reduced of length
at least $2$,
Dehn function $\delta_{\mathcal P}$, shortest relator length
$\ell_{\min}:=\min_{r\in R}|r|$, and longest relator length
$L_{\max}:=\max_{r\in R}|r|$.
\begin{enumerate}[label=\textup{(\roman*)}]
\item \textup{(Spectral-isoperimetric inequality.)}
Assume that every area-minimising diagram $\Delta$
over $\mathcal P$ satisfies $\deg_\Delta(v)\le D$ at every vertex $v$.
Then for every $n\ge 1$,
\[
\delta_{\mathcal P}(n)
\le
\frac{D}{\ell_{\min}}
\Bigl(1+\frac{1}{\Lambda_{\mathcal P}(n)}\Bigr)n.
\]
A degree-free variant using the face-dual profile gives
$\delta_{\mathcal P}(n)
\le n/\bigl(\ell_{\min}\Lambda^\ast_{\mathcal P}(n)\bigr)$.
\item \textup{(Characterisation of hyperbolicity.)}
$G(\mathcal P)$ is word-hyperbolic if and only if
$\inf_{n\ge 1}\Lambda^\ast_{\mathcal P}(n)>0$.
No vertex degree bound is needed.
Under a uniform degree bound on area-minimising
diagrams,
the same equivalence holds for the primal profile
$\Lambda_{\mathcal P}$.
\item \textup{(Spectral dichotomy.)}
Either $\inf_n\Lambda^\ast_{\mathcal P}(n)>0$
\textup{(}$G(\mathcal P)$ hyperbolic\textup{)} or
$\Lambda^\ast_{\mathcal P}(n)=O(n^{-1})$
\textup{(}$G(\mathcal P)$ non-hyperbolic\textup{)},
with no intermediate decay.
\end{enumerate}
\end{theoremA}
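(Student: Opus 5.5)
The argument splits into three independent parts. Part (i) is a Rayleigh-quotient estimate. Part (ii) follows from (i) plus a converse spectral lower bound in the hyperbolic case. Part (iii) comes from plugging a test function into non-hyperbolic diagrams together with the isoperimetric gap theorem.

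\textbf{Part (i).} Fix an area-minimising diagram $\Delta$ with $|\partial\Delta|\le n$ and $V^\circ\neq\varnothing$. I would use the test function $f=\mathbf 1_{V^\circ}$ in the Rayleigh quotient for the Dirichlet problem of the random-walk Laplacian:
\[
\lambda_1(\Delta)\le \frac{\sum_{x\sim y}(f(x)-f(y))^2}{\sum_v \deg(v) f(v)^2}
=\frac{|E(V^\circ,\partial)|}{\operatorname{vol}(V^\circ)}.
\]
The numerator is at most $D\,n$, since each boundary vertex has degree at most $D$. For the denominator I would count faces against edges. Each face has at least $\ell_{\min}$ edge-sides, so $\ell_{\min}\operatorname{Area}\le 2|E|$. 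Every edge touches $V^\circ$ or lies among the at most $Dn$ edges incident to boundary vertices, giving $|E|\le \operatorname{vol}(V^\circ)+Dn$. Combining,
\[
\ell_{\min}\operatorname{Area}(\Delta)\le D n\bigl(1+1/\lambda_1(\Delta)\bigr),
\]
up to the factor-of-2 bookkeeping, which I would absorb by counting edge-sides rather than edges. Taking the worst diagram and the infimum of $\lambda_1$ gives the stated bound.

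For the face-dual version, take the test function equal to $1$ on every face. Its Dirichlet energy counts the boundary edges, at most $n$. Its weighted volume is at least $\ell_{\min}\operatorname{Area}$, since face degrees are at least $\ell_{\min}$. This gives $\delta(n)\le n/(\ell_{\min}\Lambda^\ast(n))$ with no degree bound.

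\textbf{Part (ii).} One direction is immediate from (i): $\inf\Lambda^\ast>0$ gives a linear Dehn function, hence hyperbolicity. The converse is the main obstacle: for hyperbolic $G$ I need a uniform lower bound on $\lambda_1$ of the face-dual of every area-minimising diagram. I would attempt a Cheeger inequality with Dirichlet boundary, so it suffices to show $h^\ast(\Delta)\ge c>0$. Here $h^\ast(\Delta)$ is the minimum, over sets $F$ of faces, of the number of boundary edges of $F$ (including edges on $\partial\Delta$) divided by the volume of $F$. The key observation is that any subdiagram spanned by a set $F$ of faces of a minimal diagram is again minimal for its own boundary word. Linear isoperimetry, $\operatorname{Area}\le K\cdot|\partial|$, then gives $|F|\le K|\partial F|$.

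Two difficulties remain. First, $F$ need not be a disk, but it decomposes into disk components, and holes can be filled by other subsets; alternatively I would apply linearity to each boundary cycle. Second, the volume weights faces by degree, at most $L_{\max}$, so $\operatorname{vol}(F)\le L_{\max}|F|$. This yields $h^\ast\ge 1/(KL_{\max})$, and Cheeger then gives $\lambda_1^\ast\ge h^{\ast 2}/2$. For the primal profile under the degree bound $D$, the same argument works by transferring vertex sets to sets of adjacent faces, with constants depending on $D$.

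\textbf{Part (iii).} If $G$ is not hyperbolic, the gap theorem \cite{Papasoglu1995,Bowditch1995} gives $\delta(n)\ge cn^2$. So for each large $n$ there is a minimal diagram of boundary length at most $n$ and area at least $cn^2$. Using the face-constant test function from (i) on this diagram gives
\[
\Lambda^\ast(n)\le \frac{n}{\ell_{\min}\,c\,n^2}=O(n^{-1}).
\]
In the hyperbolic case (ii) gives the positive infimum, so no intermediate decay rate can occur.
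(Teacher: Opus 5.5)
Your proposal is correct and follows the paper's route in all three parts. You use indicator test functions for (i), where the handshake identity $2|E|=\sum_v\deg(v)$ removes your factor of $2$. For (ii) you use the Dirichlet Cheeger inequality on the face-dual, hole-filled subdiagrams and linear isoperimetry, and for (iii) the gap theorem together with the face-indicator bound.

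Two points need tightening. First, the hole-filled subdiagram may have boundary cut vertices, and there cut-and-paste only gives minimality of each simple-boundary lobe; the paper deliberately avoids claiming minimality for the whole boundary walk. Summing the lobe-wise bounds still gives $\operatorname{Area}\le K|\partial|$, so your argument survives with this change. Second, the gap theorem only yields $n^2\preceq\delta$ coarsely, so you need monotonicity of $\delta$ to upgrade this to $\delta(n)\ge cn^2$ for all large $n$.
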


\noindent
Parts~(i)-(ii) are
\Cref{thm:spectral-dehn,thm:dual-spectral-dehn,thm:spectral-hyp-iff,thm:dual-spectral-hyp-iff};
part~(iii) is \Cref{cor:spectral-gap}.
(The upgrade from the coarse lower bound $n^2\preceq\delta$ to the
eventual pointwise bound $\delta(n)\ge cn^2$ uses a short
monotonicity lemma, \Cref{lem:coarse-upgrade}.)

\begin{theoremA}[Filling-length rigidity from spectral collapse]
\label{thm:intro-B}
Let $\Delta$ be a disk diagram with
$V^\circ(\Delta)\neq\varnothing$ and all face lengths at most
$L_{\max}$.
For every basepoint $b\in V(\partial\Delta)$:
\begin{enumerate}[label=\textup{(\roman*)}]
\item \textup{(Diagramwise bound.)}
$\mathrm{FL}_b(\Delta)\cdot\operatorname{Area}(\Delta)
\ge c/\lambda_1(\Delta)$,
where $c=c(L_{\max})>0$.
No minimality or vertex degree hypothesis is assumed.
\item \textup{(Dehn-optimised bound.)}
If moreover $\Delta$ is an area-minimising van Kampen diagram over a
finite presentation $\mathcal P$ with Dehn function
$\delta_{\mathcal P}$, then
$\mathrm{FL}_b(\Delta)\cdot
\delta_{\mathcal P}(\mathrm{FL}_b(\Delta))
\ge c/\lambda_1(\Delta)$
(same $c$).
If $\delta_{\mathcal P}(n)\le Kn^\alpha$
for all integers $n\ge 1$ and some $\alpha\ge 1$,
this gives
$\mathrm{FL}_b(\Delta)\ge C\lambda_1(\Delta)^{-1/(\alpha+1)}$,
where $C=C(K,L_{\max},\alpha)>0$.
\item \textup{(Faber-Krahn bound.)}
Under the hypotheses of \textup{(ii)}, if in addition
$\delta_{\mathcal P}(n)\le Kn^\alpha$
for all integers $n\ge 1$ and some $\alpha>1$, then
$\mathrm{FL}_b(\Delta)
\ge C\lambda_1(\Delta)^{-1/(2\alpha-2)}$,
where $C=C(K,L_{\max},\alpha)>0$.
For quadratic Dehn functions ($\alpha=2$),
this matches the sharp exponent $1/2$ exhibited by
\Cref{thm:intro-C}.
No vertex degree bound is assumed.
\end{enumerate}
\end{theoremA}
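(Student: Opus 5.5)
The plan for (i) is a canonical-path (Hardy/Poincaré) argument driven by the based null-homotopy that realises $\mathrm{FL}_b(\Delta)$. I would use the variational characterisation
\[
\lambda_1(\Delta)=\inf_{f|_{\partial\Delta}=0,\ f\neq 0}\frac{\sum_{uv\in E}(f(u)-f(v))^2}{\sum_{v}\deg(v)f(v)^2},
\]
which is the Rayleigh quotient of the random-walk Laplacian with absorbing boundary.

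Fix an optimal based null-homotopy, i.e.\ a sequence of based loops $b=w_0,\dots$ of length at most $\mathrm{FL}_b(\Delta)$ sweeping $\Delta$. Every interior vertex $v$ lies on some intermediate loop. That loop passes through $b\in\partial\Delta$ and runs along edges of $\Delta$. Following the shorter arc from $v$ to $b$, truncated at its first boundary vertex, gives an edge path $\gamma_v$ from $v$ to $\partial\Delta$ with $|\gamma_v|\le \mathrm{FL}_b(\Delta)$.

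For $f$ vanishing on $\partial\Delta$, Cauchy--Schwarz gives $f(v)^2\le |\gamma_v|\sum_{e\in\gamma_v}(df)_e^2$. Multiplying by $\deg(v)$ and summing over $v$, I obtain
\[
\sum_v\deg(v)f(v)^2\le \mathrm{FL}_b(\Delta)\Bigl(\max_e\sum_{v:\,e\in\gamma_v}\deg v\Bigr)\sum_e(df)_e^2 .
\]
I bound the congestion crudely by $\sum_v\deg v=2|E(\Delta)|$, which makes the argument independent of any degree bound. It remains to show $|E|\le c'(L_{\max})\operatorname{Area}(\Delta)$. Every edge lies on a face or on $\partial\Delta$, so $|E|\le L_{\max}\operatorname{Area}+|\partial\Delta|$. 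The term $|\partial\Delta|$ is at most $\mathrm{FL}_b(\Delta)$ (the initial loop is the boundary word), and $\operatorname{Area}\ge1$ because $V^\circ\neq\varnothing$. Hence $\lambda_1\ge 1/\bigl(2\,\mathrm{FL}_b\,(L_{\max}\operatorname{Area}+\mathrm{FL}_b)\bigr)$. Absorbing the second term requires either $\mathrm{FL}_b\lesssim \operatorname{Area}$ or a slightly better congestion count that only counts vertices actually routed. I would check this bookkeeping with care, and if necessary restrict $\gamma_v$ to interior vertices.

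For (ii), I would use area-minimality: $\operatorname{Area}(\Delta)=\delta$-value of the boundary word, which is at most $\delta_{\mathcal P}(|\partial\Delta|)\le\delta_{\mathcal P}(\mathrm{FL}_b(\Delta))$ because $|\partial\Delta|\le\mathrm{FL}_b$ and $\delta_{\mathcal P}$ is monotone. Substituting into (i) gives the first claim. Inserting $\delta_{\mathcal P}(n)\le Kn^\alpha$ then gives $K\,\mathrm{FL}_b^{\alpha+1}\ge c/\lambda_1$, which rearranges to the stated power.

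For (iii), I would replace the path argument with a discrete Faber--Krahn inequality coming from the Dehn function. The key observation is that any subdiagram of an area-minimising diagram is itself area-minimising. Its area is therefore at most $K(\text{boundary length})^\alpha$. Translated to the face-counting (volume) measure, this gives an isoperimetric inequality $|U|\le K'|\partial U|^\alpha$ for finite sets $U$ of interior cells. From this I would derive a Cheeger-type bound $h(U)\ge c|U|^{1/\alpha-1}$ and then, via the co-area/level-set argument, $\lambda_1\ge c\,|\Delta|^{2/\alpha-2}$. Finally, using $|\Delta|\lesssim\operatorname{Area}\le K\,\mathrm{FL}_b^\alpha$ gives $\lambda_1\ge c\,\mathrm{FL}_b^{\,2-2\alpha}$, which is the exponent $1/(2\alpha-2)$.

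The main obstacle is the isoperimetric step. Level sets $U$ need not span disk subdiagrams: they can have holes, be disconnected, or be pinched. One must show that the region they cut out decomposes into disk components, each with area bounded by $\delta_{\mathcal P}$ of its boundary, and that superadditivity ($\sum x_i^\alpha\le(\sum x_i)^\alpha$ for $\alpha>1$) lets the component bounds be summed. One also has to relate vertex degree-volume to face count without a degree bound. To handle this, I would try the face-dual (cell) formulation, where volumes are counted in faces and the edge boundary is controlled by $L_{\max}$.
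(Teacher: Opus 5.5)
Parts (i) and (ii) are sound and are essentially the paper's argument. Sum your routing bound only over interior vertices, which are the only ones you route since $f|_{\partial\Delta}=0$. Using the shorter arc, it then reads $\|f\|_\pi^2\le\tfrac12\,\mathrm{FL}_b(\Delta)\,\mathrm{vol}(V^\circ)\,\mathcal E(f)$. This is the Poincar\'e form of \Cref{thm:extremal-inversion}. There the paper argues at a maximising vertex $v_0$ of the eigenfunction: $1=f(v_0)^2\le R_{\mathrm{eff}}(v_0\leftrightarrow\partial\Delta)\,\mathcal E(f)$, with $R_{\mathrm{eff}}\le d_\Delta(v_0,b)\le\mathrm{FL}_b/2$ and $\mathcal E(f)\le\lambda_1\mathrm{vol}(V^\circ)$.

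The restriction to $V^\circ$ is not optional. Your bound with $2|E(\Delta)|$ does not imply (i): boundary spurs or face-free bridges can make $|\partial\Delta|$, hence $\mathrm{FL}_b$, arbitrarily large relative to the area, so the $\mathrm{FL}_b^2$ term cannot be absorbed. What closes (i) is $\mathrm{vol}(V^\circ)\le L_{\max}\operatorname{Area}(\Delta)$ (\Cref{lem:diagram-capacity}). An edge at an interior vertex is not traversed by $\partial\Delta$, so it has faces on both sides; hence $\mathrm{vol}(V^\circ)\le 2|E^\circ|\le\sum_f|\partial f|$. This gives $c=2/L_{\max}$.

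Part (iii), however, has a genuine gap at exactly the step you flag. Your architecture is the paper's: fractional isoperimetry for connected sets, subadditivity of $x^{1/\alpha}$ over components, $\lambda_1\ge h_D^2/2$, then $\mathrm{vol}(V^\circ)\le KL_{\max}\mathrm{FL}_b^\alpha$. But the Cheeger constant controlling $\lambda_1$ is the degree-weighted vertex one, $|\partial_EA|/\mathrm{vol}(A)$ with $A\subset V^\circ$. You give no way to bound $\mathrm{vol}(A)$ by a power of $|\partial_EA|$ without a degree bound. Passing to the face-dual network, as you propose, by itself controls $\mu_1$, not $\lambda_1$.

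The missing idea is encapsulation (\Cref{lem:encapsulation}):
\begin{enumerate}
\item For connected $A\subset V^\circ$, take the union $\Sigma_A$ of all closed faces meeting $A$. Then $A$ is interior to $\Sigma_A$, and every edge at a vertex of $A$ lies in it.
\item Fill its holes (\Cref{lem:hole-filling}) to obtain a disk subdiagram $\Omega$. Holes are disposed of by filling, which adds faces but no boundary, not by decomposing into disk pieces.
\item Charge each edge of $\partial\Omega$ to an edge of $\partial_EA$ on the same face. This gives $|\partial\Omega|\le 2L_{\max}|\partial_EA|$.
\item Then $\mathrm{vol}(A)\le 2|E(\Omega)|\le L_{\max}\operatorname{Area}(\Omega)+|\partial\Omega|$. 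Lobe-wise minimality gives $\operatorname{Area}(\Omega)=\sum_k\operatorname{FillArea}(w_k)\le K|\partial\Omega|^\alpha$, hence $\mathrm{vol}(A)\le C_0|\partial_EA|^\alpha$.
\end{enumerate}
Only this lobe-wise statement is needed, not minimality of every subdiagram for its whole boundary walk.

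Alternatively, your face-dual plan can be completed, but only with a comparison you did not state. Test $\mu_1$ with the face averages $g(F)$ of a primal eigenfunction, and use that an interior vertex carries exactly $\deg(v)$ face corners. One gets $\mu_1(\Delta)\le C(L_{\max})\,\lambda_1(\Delta)$ with no degree bound. The face-set Cheeger argument of \Cref{prop:dual-profile-bracket}(i), $\widetilde\mu_1(\Delta)\ge c\operatorname{Area}(\Delta)^{2/\alpha-2}$, then transfers to $\lambda_1$.
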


\noindent
Part~(i) is \Cref{thm:area-main};
part~(ii) is \Cref{thm:main-ggt}, and the polynomial specialisation is
immediate by substituting the upper bound
$\delta_{\mathcal P}(n)\le K n^\alpha$;
part~(iii) is \Cref{thm:faber-krahn}, and the sharpness at
$\alpha=2$ is demonstrated by \Cref{thm:intro-C}.

\needspace{4\baselineskip}
\begin{theoremA}[Sharp Euclidean calibration]
\label{thm:intro-C}
For every $K\ge 1$ there exist constants $c_K,C_K>0$ such that
the following holds.
Let $p,q\ge 2$ be integers, and let $Q_{p,q}$ be the rectangular
commutator grid over
the standard presentation of $\mathbb Z^2$
with $K^{-1}\le p/q\le K$,
and let $v\in V(\partial Q_{p,q})$ be a corner vertex.
Then
\[
c_K\lambda_1(Q_{p,q})^{-1/2}
\le
\mathrm{FL}_v(Q_{p,q})
\le
C_K\lambda_1(Q_{p,q})^{-1/2}.
\]
In particular, the sharp $1/2$ exponent already occurs on a
two-parameter family of area-minimising diagrams;
the gap between $1/3$ and $1/2$ in the general comparison argument
is not an artefact of the spectral invariant itself.
\end{theoremA}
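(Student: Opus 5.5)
The plan is to compute both sides separately and check that each is of order $p+q$ (equivalently of order $p$, since $p\asymp_K q$).

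\textbf{Spectral side.} The $1$-skeleton of $Q_{p,q}$ is the $(p+1)\times(q+1)$ grid graph. Its interior vertex set $V^\circ$ is the $(p-1)\times(q-1)$ block, which is nonempty because $p,q\ge2$. Interior vertices have degree $4$, so the random-walk Dirichlet Laplacian on $V^\circ$ is $\tfrac14$ of the combinatorial Dirichlet Laplacian. The latter diagonalises by discrete sines, giving
\[
\lambda_1(Q_{p,q})=1-\tfrac12\Bigl(\cos\tfrac{\pi}{p}+\cos\tfrac{\pi}{q}\Bigr).
\]
I will check the normalisation against the paper's definition of the random-walk Laplacian; if it differs, only constants change. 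This quantity lies between absolute multiples of $p^{-2}+q^{-2}$. Under $K^{-1}\le p/q\le K$ we therefore get $\lambda_1^{-1/2}\asymp_K p\asymp_K q$. As a by-product this confirms that $Q_{p,q}$ is area-minimising: its area $pq$ equals the area of the boundary word $a^pb^qa^{-p}b^{-q}$, by the standard counting argument for $\mathbb Z^2$ (each $a$-edge column must be crossed by $q$ cells).

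\textbf{Upper bound on $\mathrm{FL}_v$.} I will exhibit an explicit based shelling that starts at the corner $v$ and removes the cells column by column. At every stage the boundary curve is a staircase path from $v$ around the remaining region. Its length never exceeds the original perimeter $2(p+q)$ plus a constant, which gives $\mathrm{FL}_v\le 2(p+q)+O(1)\le C_K\lambda_1^{-1/2}$.

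\textbf{Lower bound on $\mathrm{FL}_v$.} The cheap route is to invoke \Cref{thm:faber-krahn} (Theorem~B(iii)) with $\alpha=2$: $\mathbb Z^2$ has $\delta(n)\le Kn^2$, so $\mathrm{FL}_v\ge C\lambda_1^{-1/2}$ directly. If the dependence of constants there is awkward, the direct fallback is that the based filling length is at least the initial boundary length $2(p+q)\asymp_K\lambda_1^{-1/2}$. This holds if the definition of $\mathrm{FL}_b$ counts the starting loop, which is standard.

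\textbf{Main obstacle.} The main obstacle is bookkeeping rather than substance. First, the Faber--Krahn theorem is stated for area-minimising diagrams, so I must confirm minimality and that its constant depends only on $K_{\text{Dehn}}$ and $L_{\max}=4$. Second, I must make sure the shelling in the upper bound respects the paper's precise definition of based filling length, including basepoint conventions and whether edge-folding moves are counted; I would examine \Cref{def:based-FL} first.
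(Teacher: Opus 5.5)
Your proposal is correct and is essentially the paper's proof: it uses the same sine-product computation of $\lambda_1(Q_{p,q})$ and the same staircase shelling (the paper removes rows starting from the corner opposite $v$, so the cell at $v$ goes last and each step changes the boundary length by $4-2k\le 0$, giving exactly $\mathrm{FL}_v\le 2(p+q)$), and your fallback $\mathrm{FL}_v\ge|\partial Q_{p,q}|=2(p+q)$ is precisely the paper's lower bound, since \Cref{def:based-FL} takes the maximum over all stages including $\Sigma_0$, which makes the appeal to \Cref{thm:faber-krahn} unnecessary. The only difference is cosmetic: the paper proves area-minimality via the homomorphism $F(a,b)\to H_3(\mathbb Z)$ sending the boundary word to $z^{pq}$ and each relator conjugate to $z^{\pm1}$, while your corridor count ($p$ disjoint $a$-corridors, each with $b$-exponent sum $q$ along its side) is an equally valid substitute.
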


\noindent
This is \Cref{prop:z2-quasisquare}.

\begin{theoremA}[Spectral detection of hyperbolicity via hfmHQM]
\label{thm:intro-D}
Let $\mathcal P$ be a finite presentation. The following are
equivalent:
\begin{enumerate}[label=\textup{(\roman*)}]
\item $G(\mathcal P)$ is word-hyperbolic.
\item For some $\kappa\ge 1$,
$\inf_{n\ge 1}
\widetilde\Lambda^{\ast,\langle\kappa\rangle,
\mathrm{hfmhqm},\pm}_{\mathcal P}(n)>0$.
\item For every $\kappa\ge 1$,
$\inf_{n\ge 1}
\widetilde\Lambda^{\ast,\langle\kappa\rangle,
\mathrm{hfmhqm},\pm}_{\mathcal P}(n)>0$.
\end{enumerate}
The same equivalence holds with $\mathrm{hfmhqm}$ replaced by
$\mathrm{mhqm}$.
Area-minimising free-completed disk maps are automatically
$1$-mHQM and $1$-hfmHQM; the proof uses hole-filling plus
lobe-wise simple-boundary replacement and requires no
non-simple-boundary surgery.
\end{theoremA}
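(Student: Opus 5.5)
We will argue by the cycle (ii)$\Rightarrow$(i)$\Rightarrow$(iii)$\Rightarrow$(ii). The last implication is trivial. The whole argument rests on comparing the free-completed, hereditarily quasi-minimal face-dual profile $\widetilde\Lambda^{\ast,\langle\kappa\rangle,\mathrm{hfmhqm},\pm}_{\mathcal P}$ with the ordinary face-dual profile $\Lambda^\ast_{\mathcal P}$ of \Cref{thm:intro-A}. Because hyperbolicity is already characterised by $\inf_n\Lambda^\ast_{\mathcal P}(n)>0$, it suffices to show two things. First, a positive lower bound for $\Lambda^\ast_{\mathcal P}$ transfers to the larger class of $\kappa$-hfmHQM free-completed disk maps. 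Second, a positive lower bound on that class forces a linear isoperimetric inequality.

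\textbf{Step 1: (ii)$\Rightarrow$(i).} Fix $\kappa$ with $\inf_n\widetilde\Lambda^{\ast,\langle\kappa\rangle,\mathrm{hfmhqm},\pm}_{\mathcal P}(n)=\lambda_0>0$. Take an area-minimising van Kampen diagram for a word of length $\le n$ and pass to its free completion. By the automatic statement in the theorem, such a map is $1$-mHQM and $1$-hfmHQM, hence $\kappa$-hfmHQM for every $\kappa\ge1$. I expect this to be proved first as a lemma: fill in holes, then replace each lobe by a simple-boundary area-minimiser, checking that the area does not increase. It follows that the profile infimum is taken over a class containing these minimisers, so their face-dual Dirichlet eigenvalue is at least $\lambda_0$. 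Next apply the degree-free bound from \Cref{thm:intro-A}(i), namely $\operatorname{Area}\le n/(\ell_{\min}\lambda_1^\ast)$, diagram by diagram. This uses the Rayleigh quotient with the test function $\mathbf 1$ on faces: $\lambda_1^\ast\le|\partial\text{-faces}|/\operatorname{Area}\le n/\operatorname{Area}$. It gives $\delta_{\mathcal P}(n)\le n/(\ell_{\min}\lambda_0)$, a linear Dehn function, so $G(\mathcal P)$ is hyperbolic.

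\textbf{Step 2: (i)$\Rightarrow$(iii).} Assume $G$ is hyperbolic and fix any $\kappa$. Hyperbolicity makes $\Lambda^\ast_{\mathcal P}$ bounded below, via \Cref{thm:dual-spectral-hyp-iff}. The task is to extend this bound from area-minimisers to $\kappa$-hfmHQM maps. I would reproduce the mechanism behind that theorem. Every subdiagram of a $\kappa$-hfmHQM map whose hole-free ancestor is quasi-minimal has area at most $\kappa$ times minimal. By the linear Dehn function, such a subdiagram then has area at most $\kappa C\cdot(\text{boundary length})$. This gives a linear isoperimetric inequality for all face-subsets $F$ of the dual graph: $|F|\le C'|\partial F|$, where $\partial F$ counts dual edges leaving $F$ together with boundary faces. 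A Cheeger-type inequality for the Dirichlet problem, $\lambda_1^\ast\ge h^2/2$, then yields a uniform lower bound. Dual degrees are bounded by $L_{\max}$, which is why no vertex degree bound is needed.

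The main obstacle is the heredity step. The face-set $F$ minimising the Cheeger ratio need not be a disk: it may have holes or pinched boundary. One must show that the hole-free ancestor condition lets us fill $F$ to a simply connected subdiagram without shrinking the relevant boundary. Then quasi-minimality, and hence the Dehn bound, applies to each lobe separately, and the lobe bounds sum. I would first reduce to connected $F$, then fill holes using hfmHQM, and then split along cut vertices into lobes. This is the same surgery pattern as in the automatic $1$-hfmHQM statement. The $\pm$ orientation convention and the free-completion edges should only affect constants. Finally, QI-invariance of the positivity criterion follows from the equivalence itself, since hyperbolicity is QI-invariant. The replacement of hfmHQM by mHQM follows by the same cycle, because mHQM sits between the two classes used.
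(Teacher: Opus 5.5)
Your proposal is correct and follows the paper's proof. For (ii)$\Rightarrow$(i), you lift area-minimisers to $\widetilde X^{\pm}_{\mathcal P}$, where they are $1$-hfmHQM (a $\mathcal P$-minimiser is also a free-completed minimiser, since free bigons can be folded away), and apply the indicator-function area bound. For (i)$\Rightarrow$(iii), you split a face-set into dual-connected components and hole-fill each. You then apply the hfmHQM inequality to the filled hole-free set with $\operatorname{FillArea}^{\pm}\le C_{\mathrm{iso}}|\gamma|$ on each loop, and finish with the unweighted Cheeger inequality using $d_{\max}\le L^{\pm}_{\max}$.

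One correction: hole-filling is a purely topological step, not a use of hfmHQM, and what it must guarantee is that the multiboundary length does not \emph{increase}. Your phrase ``without shrinking'' has the direction reversed.

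Your mHQM case, via the inclusions (area-minimisers) $\subseteq$ $1$-mHQM $\subseteq$ $\kappa$-mHQM $\subseteq$ $\kappa$-hfmHQM, is a valid shortcut. The paper instead reruns the argument directly, and there no hole-filling is needed because the mHQM inequality already includes the inner loops.
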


\noindent
This is
\Cref{thm:hfmhqm-free-hyp,thm:mhqm-free-hyp,lem:minimal-is-hfmhqm}.

\begin{theoremA}[Quasi-isometry invariance]
\label{thm:intro-E}
Let $\mathcal P$ and $\mathcal Q$ be finite presentations
presenting quasi-isometric groups.
Then the positivity criterion
$\inf_{n\ge 1}
\widetilde\Lambda^{\ast,\langle 1\rangle,
\mathrm{hfmhqm},\pm}_{\mathcal P}(n)>0$
is a quasi-isometry invariant of finitely presented groups.
Combined with \Cref{thm:hfmhqm-free-hyp}, this criterion detects
word-hyperbolicity.
\end{theoremA}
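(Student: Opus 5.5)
Theorem~\ref{thm:intro-E} reduces to Theorem~\ref{thm:intro-D} together with the classical fact that word-hyperbolicity is a quasi-isometry invariant of finitely generated groups. Rather than comparing the spectral filling profiles of $\mathcal P$ and $\mathcal Q$ diagram by diagram, I will route everything through hyperbolicity. For a finite presentation $\mathcal P$, write $\mathrm{Pos}(\mathcal P)$ for the assertion
\[
\inf_{n\ge 1}\widetilde\Lambda^{\ast,\langle 1\rangle,\mathrm{hfmhqm},\pm}_{\mathcal P}(n)>0 .
\]
The goal is to show that, whenever $G(\mathcal P)$ and $G(\mathcal Q)$ are quasi-isometric, $\mathrm{Pos}(\mathcal P)$ holds if and only if $\mathrm{Pos}(\mathcal Q)$ holds.

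The steps, in order:

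\textbf{Step 1.} Apply Theorem~\ref{thm:intro-D} with $\kappa=1$. The implication (i)$\Rightarrow$(iii) specialised to $\kappa=1$, together with (ii)$\Rightarrow$(i) taking $\kappa=1$ as the witness, gives
\[
\mathrm{Pos}(\mathcal P)\iff G(\mathcal P)\text{ is word-hyperbolic}.
\]
This holds for every finite presentation $\mathcal P$, and in particular also for $\mathcal Q$.

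\textbf{Step 2.} Invoke quasi-isometry invariance of hyperbolicity. If $f\colon G(\mathcal P)\to G(\mathcal Q)$ is a quasi-isometry of word metrics, then the Cayley graphs are quasi-isometric geodesic spaces. Gromov hyperbolicity of geodesic spaces is preserved under quasi-isometry by the Morse lemma (stability of quasi-geodesics); see \cite{BridsonHaefliger1999}, III.H.1.9, or \cite{GhysdelaHarpe1990}. Hence $G(\mathcal P)$ is hyperbolic if and only if $G(\mathcal Q)$ is. Word-hyperbolicity is also independent of the chosen finite generating set, since the identity map is a quasi-isometry between word metrics, so only the groups matter.

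\textbf{Step 3.} Chain the equivalences:
\[
\mathrm{Pos}(\mathcal P)\iff G(\mathcal P)\ \text{hyperbolic}\iff G(\mathcal Q)\ \text{hyperbolic}\iff\mathrm{Pos}(\mathcal Q).
\]
In particular, $\mathrm{Pos}$ does not depend on the finite presentation of a fixed group, so it defines a property of finitely presented groups up to quasi-isometry. The final sentence of Theorem~\ref{thm:intro-E} is then just Step 1 restated.

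The only real content is in Theorem~\ref{thm:intro-D}, which is assumed here. The one point needing care is that Theorem~\ref{thm:intro-D} must hold for \emph{arbitrary} finite presentations, with no hypotheses on relator lengths or degree bounds as in Theorem~\ref{thm:intro-A}. Otherwise $\mathcal Q$ might fall outside its scope. The statement as quoted imposes no such hypotheses, so the argument goes through. If a hidden normalisation were needed (for instance cyclically reduced relators of length $\ge 2$), I would first replace $\mathcal Q$ by a Tietze-equivalent presentation satisfying it; this leaves the group, and hence hyperbolicity, unchanged.
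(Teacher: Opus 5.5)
Your argument is correct and is essentially the paper's own proof of \Cref{thm:hfmhqm-qi-invariance}. The paper applies \Cref{thm:hfmhqm-free-hyp} at $\kappa=1$ to each presentation and links the two via quasi-isometry invariance of hyperbolicity, exactly as you do.

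One small caution about your closing contingency. Replacing $\mathcal Q$ by a Tietze-equivalent normalised presentation would not work, because the profile depends on the presentation, so you would only obtain positivity for the new presentation and not for $\mathcal Q$ itself. The contingency is not needed, however: \Cref{thm:hfmhqm-free-hyp} is stated for arbitrary finite presentations, with the non-cyclically-reduced relators of $\mathcal P^{\pm}$ covered by \Cref{rem:pm-dual-hyp}.
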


\noindent
This is \Cref{thm:hfmhqm-qi-invariance}.
The proof uses \Cref{thm:hfmhqm-free-hyp} and the quasi-isometry
invariance of word-hyperbolicity.
We also establish a mixed quantitative interleaving
(\Cref{thm:mixed-qi-interleaving}): with source in the ordinary
free completion and target in a bounded path-completion
(\Cref{def:bounded-path-completion}), the hfmHQM spectral filling
profile interlaces with explicit constants under bounded template data.
Descending from the path-completed target to the ordinary free
completion is obstructed by a hole-freeness failure under face-set
collapse (\Cref{prop:collapse-obstruction,rem:descent-obstruction});
the mixed interleaving is the strongest quantitative comparison obtained.

\subsection*{Quasi-isometry invariance: further context}

It is not known whether area-minimisers satisfy the single-loop $1$-HQM
inequality (\Cref{rem:lobe-vs-hqm}).
Hyperbolicity implies a positive HQM spectral gap
(\Cref{thm:hqm-spectral-hyp}); whether the converse holds
depends on this question.

For Dehn functions $\delta(n)\asymp n^\alpha$ with $\alpha>1$,
the degree-free spectral-isoperimetric inequality together with a
Cheeger-type lower bound pin the face-dual profile to the bracket
$cn^{2-2\alpha}\le\Lambda^\ast_{\mathcal P}(n)\le Cn^{1-\alpha}$
(\Cref{prop:dual-profile-bracket}).
The $\mathbb{Z}^2$ computation, together with the observation that the
face-dual of a rectangular commutator grid is again a rectangular grid,
confirms the lower bound is tight for
$\alpha=2$, and an explicit Heisenberg computation gives
$\widetilde\mu_1(\Delta_n)\le Cn^{-2}$
for a cubic-area family,
showing a contrast between isotropic and anisotropic filling regimes.

\subsection*{Key tools}

The key new tools are:
(a)~a \emph{discrete length-area method}, converting the first Dirichlet
eigenfunction into an admissible metric for extremal length
(\Cref{thm:extremal-inversion});
and
(b)~\emph{subdiagram fractional isoperimetry}, encapsulating connected
interior subsets in area-minimising disk subdiagrams to produce a
Cheeger-type eigenvalue bound (\Cref{thm:faber-krahn}).

The spectral Dehn function should be distinguished from two
established neighbours.
\emph{Poincar\'e profiles} of Hume-Mackay-Tessera~\cite{HumeMackayTessera2020}
measure optimal Poincar\'e inequalities on subsets of Cayley graphs
and are quasi-isometry invariants by construction;
they live on the \emph{ambient} group rather than on fillings.
The \emph{gallery length} of
Gersten-Riley~\cite{GerstenRiley2002} is a dual-diagram
filling invariant proved to be presentation-independent via
fattened presentations and explicit collar constructions.
The spectral Dehn function is analytic in flavour like
Poincar\'e profiles, but defined on fillings like gallery length;
the positivity criterion of the hole-free-ancestor HQM spectral
filling profile is a quasi-isometry invariant of
finitely presented groups and characterises word-hyperbolicity
(\Cref{thm:hfmhqm-qi-invariance}).

\paragraph{Continuous analogies.}
The closest analogue in classical spectral geometry is not the
spectrum of a fixed closed manifold but the Dirichlet spectral theory
of varying domains, where Cheeger-type and Faber-Krahn-type principles
convert spectral information into geometric control of bottlenecks,
isoperimetry, and size.
The analogy should be read at the level of mechanism rather than of
literal objects: each van Kampen diagram plays the role of a
combinatorial filling domain, and the spectral Dehn function asks how
small the first Dirichlet eigenvalue can be among area-minimising
fillings of loops of bounded length.
From this viewpoint, the positive-gap characterisation of hyperbolicity
is a filling-theoretic analogue of the principle that a uniform
spectral gap on spanning discs rules out large-scale bottlenecks and
forces efficient filling.
The filling-length estimates arising from spectral collapse parallel
the spectral-geometric principle that small first eigenvalue is forced
by long, thin geometries; the exponent $\tfrac{1}{2}$ relating filling
length to $\lambda_1$ is the same familiar scale as in the classical
relation between the first Dirichlet eigenvalue and the linear size
of a planar domain.

At the same time, the present setting has features with no direct
classical counterpart.
The admissible domains are not subsets of a fixed manifold but
null-homotopies themselves, so the optimisation ranges simultaneously
over the geometry and the combinatorics of fillings, making the
resulting spectral profiles sensitive to Dehn-type filling behaviour
rather than to ambient geometry alone.
The face-dual quantities $\mu_1$ and $\widetilde\mu_1$ probe the
geometry of the $2$-cells rather than the vertex metric, and have
no standard continuous analogue.
More fundamentally, the quasi-isometry-invariant positivity criterion
is obtained only after introducing hereditary conditions adapted to
filling surgery; the multiloop and hole-free-ancestor HQM conditions;
and the underlying $1$-Cancellation and hole-filling mechanisms use
exact control of how a diagram decomposes into lobes, holes, and
ancestor subdiagrams under bounded local replacement.
This extra discrete structure is what allows one to promote spectral
information on fillings to a quasi-isometry invariant, which has no
obvious direct counterpart in the classical spectral geometry of a
fixed smooth space.

\paragraph{Organisation.}
\S\ref{sec:spectral-dehn} sets up the spectral Dehn function and proves
both hyperbolicity characterisations (vertex and face-dual).
\S\ref{sec:filling-length} establishes filling-length rigidity and the
Euclidean computation.
\S\ref{sec:qi-invariance} introduces the HQM, multiloop-HQM, and
hole-free-ancestor HQM profiles and proves that the hfmHQM positivity
criterion is a quasi-isometry invariant that characterises
word-hyperbolicity, via the $1$-Cancellation.
\S\ref{sec:asymptotics} establishes dual profile asymptotics,
proves that the spectral profile separates presentations within the
linear Dehn class (\Cref{thm:linear-class-spectral-separation}),
and records open problems.

\section{The spectral Dehn function}\label{sec:spectral-dehn}

\subsection{Conventions and setup}
\label{subsec:disc-setup}

Throughout, $\Delta$ is a finite connected planar \emph{disk diagram}
(a finite, connected, simply connected planar $2$-complex) with boundary
walk $\partial\Delta$, $1$-skeleton $\Delta^{(1)}$ with intrinsic
graph distance $d_\Delta$, and interior vertices
$V^\circ:=V(\Delta)\setminus V(\partial\Delta)$.
(When $\Delta$ is homeomorphic to a closed disk, $\partial\Delta$ is a
simple cycle; in general it may revisit vertices at cut points.
Both cases arise for area-minimising van Kampen
diagrams~\cite{vanKampen1933,LyndonSchupp}.)
The $1$-skeleton may have multi-edges (two edges sharing the same
endpoints) but no loop edges; multi-edges arise naturally from
length-$2$ relators or from the free completion.
A disk diagram is \emph{pure} if every edge is incident to at least
one $2$-cell (no face-free edges).
By \Cref{rem:no-interior-face-free} below, every face-free edge in a
disk diagram is a boundary-to-boundary bridge; the weaker condition of
being \emph{interiorly clean} (no face-free edge incident to an
interior vertex) is therefore automatic for disk diagrams.
The face-dual eigenvalues $\mu_1(\Delta)$, $\widetilde\mu_1(\Delta)$
are unaffected by face-free edges
(the face-dual operator sees only faces and their shared edges;
face-free edges bound no face and are invisible to the face-dual graph).

For the primal spectral Dehn function $\Lambda_{\mathcal P}$,
we restrict to \emph{cyclically reduced} null-homotopic boundary words
(this is the standard convention for Dehn functions;
see e.g.~\cite{BridsonHaefliger1999,DrutuKapovich2018,Gersten1996}).
For a cyclically reduced word, area-minimising diagrams have no
boundary spurs (which would force the boundary walk to immediately
retrace an edge) and interior spurs can be pruned without changing
the boundary word or area.
Area-minimising diagrams may still contain
\emph{face-free bridge edges}; the following observation shows these
are always boundary bridges.

\begin{remark}[Face-free interior edges do not occur]
\label{rem:no-interior-face-free}
In a finite connected simply connected planar $2$-complex $\Delta$,
every face-free edge has both endpoints on $\partial\Delta$.
Indeed, embed $\Delta$ in $S^2$.
Euler's formula for $S^2$ gives
$V-E+F_{\mathrm{regions}}=2$, where $F_{\mathrm{regions}}$ counts
all complementary regions including the exterior one.
To see that $H_2(|\Delta|;\mathbb Z)=0$, set
$b_2:=\operatorname{rank}H_2(|\Delta|)$.
Since $H_0=\mathbb Z$, $H_1=0$, and $H_k=0$ for $k\ge 3$
($\Delta$ is $2$-dimensional), we have
$\chi(|\Delta|)=V-E+F_{2\text{-cells}}=1+b_2$.
Euler's formula for the $S^2$ embedding gives
$V-E+F_{\mathrm{regions}}=2$, where $F_{\mathrm{regions}}$
counts all complementary regions.
Subtracting:
$F_{\mathrm{regions}}-F_{2\text{-cells}}=1-b_2$.
Since $\Delta$ has nonempty boundary (it is not all of $S^2$),
at least one complementary region is not a $2$-cell (the
exterior), so $F_{\mathrm{regions}}-F_{2\text{-cells}}\ge 1$,
giving $b_2\le 0$, hence $b_2=0$.
Hence $\chi(|\Delta|)=V-E+F_{2\text{-cells}}=1$.
Subtracting: $F_{\mathrm{regions}}-F_{2\text{-cells}}=1$, so the
exterior region is the \emph{only} unfilled region; every other
complementary region of the $1$-skeleton is filled by a $2$-cell.
If $e$ is face-free, neither region on either side of $e$ is a
$2$-cell, so both sides must be the exterior region; hence $e$ is a
bridge and the boundary walk traverses $e$, making both endpoints
boundary vertices.
In particular, every disk diagram is automatically \emph{interiorly
clean} (no face-free edge incident to an interior vertex).
\end{remark}

\begin{lemma}[Simple-boundary planar disks are topological disks]
\label{lem:simple-boundary-top-disk}
Let $\Delta$ be a finite connected simply connected planar $2$-complex
with $\operatorname{Area}(\Delta)\ge 1$ and simple boundary.
Then $|\Delta|$ is homeomorphic to a closed disk.
\end{lemma}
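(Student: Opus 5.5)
We embed $|\Delta|$ in $S^2$ and reuse the Euler-characteristic count from \Cref{rem:no-interior-face-free}. That remark shows the exterior region $U$ is the only complementary region of the $1$-skeleton $\Delta^{(1)}\subset S^2$ that is not filled by a $2$-cell. So $|\Delta| = S^2\setminus U$, where $U$ is an open connected region. Since the boundary walk is simple, $\partial\Delta$ is a simple closed curve $\gamma$ in $S^2$; here we use $\operatorname{Area}(\Delta)\ge 1$ to exclude the degenerate case of a bare cycle or a point.

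\textbf{Step 1: $\gamma$ is the frontier of $U$.} Every edge or vertex adjacent to $U$ is traversed by the boundary walk, which by definition traces the frontier of the exterior region. Conversely, every edge of $\gamma$ is adjacent to $U$.

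\textbf{Step 2: identify $U$ with a Jordan complementary component.} By the Jordan curve theorem, $S^2\setminus\gamma$ has exactly two components $A$ and $B$, each an open disk, and each has frontier $\gamma$. The region $U$ is connected, disjoint from $\gamma$, and has frontier $\gamma$, so $U$ lies inside one component, say $B$. We must show $U=B$.

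Suppose not. Then $B\setminus U$ contains a point of $|\Delta|$ not on $\gamma$. But $B$ is connected and $U$ is open in $B$. If $\overline U\cap B\neq U$, then $U$ would have frontier points inside $B$, that is, off $\gamma$. This contradicts Step 1. Hence $U$ is both open and closed in $B$, so $U=B$.

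\textbf{Step 3: conclude via Schoenflies.} Now $|\Delta| = S^2\setminus B = A\cup\gamma = \overline A$. By the Schoenflies theorem, the closure of a Jordan domain in $S^2$ is homeomorphic to a closed disk. Since $\gamma$ is a finite polygonal curve in a planar cell complex, the tame (PL) Schoenflies theorem suffices.

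The main obstacle is Step 1: making rigorous that the frontier of the exterior region is exactly the point set traced by the boundary walk, so that simplicity of the walk really yields a Jordan curve equal to $\operatorname{Fr}(U)$. We handle this combinatorially. Each edge $e$ of $\gamma$ is on the frontier of $U$ on at least one side, because the boundary walk traverses $e$ with the exterior region on that side. Each vertex on the frontier of $U$ lies on an edge adjacent to $U$; the one exception is an isolated vertex, which cannot occur because $\Delta$ is connected and $\operatorname{Area}\ge1$.

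We must also rule out an edge with $U$ on both sides, i.e.\ a face-free bridge. Such an edge would be traversed twice by the boundary walk, contradicting simplicity. The same argument shows $\Delta$ is pure, so every edge of $\gamma$ has a $2$-cell on exactly one side.
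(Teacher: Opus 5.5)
Your proof is correct and follows essentially the same route as the paper: exclude face-free edges, apply the Jordan curve theorem to the simple boundary, identify $|\Delta|$ with the closure of one Jordan domain, and conclude by Schoenflies (which the paper uses only implicitly). Your clopen argument, built on the Euler count of \Cref{rem:no-interior-face-free}, also makes explicit the ``no holes'' step that the paper merely asserts; the one case your ``traversed twice contradicts simplicity'' step does not literally cover is the length-$2$ walk $e\bar e$, which is vertex-simple, but it is excluded by $\operatorname{Area}(\Delta)\ge 1$, since its frontier would be an arc with connected complement in $S^2$, forcing $|\Delta|=e$.
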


\begin{proof}
Embed $\Delta$ in $S^2$.
By \Cref{rem:no-interior-face-free}, every face-free edge of $\Delta$
has both endpoints on $\partial\Delta$.
If such an edge existed, then together with one of the two boundary arcs
between its endpoints it would form a simple closed curve in $|\Delta|$
disjoint from the $2$-cells, contradicting simple connectivity.
Hence every edge of $\Delta$ is incident to a $2$-cell.

Because $\partial\Delta$ is simple, it is an embedded circle in $S^2$.
By the Jordan curve theorem, $S^2\setminus|\partial\Delta|$ has exactly
two complementary components, one bounded and one unbounded.
Every $2$-cell of $\Delta$ lies in the bounded component.
Since there are no face-free edges, no $1$-cells lie outside the union
of these faces.
Therefore $|\Delta|$ is exactly the closure of the bounded
complementary component, hence $|\Delta|\cong\bar D^2$.
\end{proof}

\noindent
Since every disk diagram is automatically interiorly clean
(\Cref{rem:no-interior-face-free}), we omit this adjective
from theorem statements below.

\begin{lemma}[Purification of area-minimising diagrams]
\label{lem:purification}
Every area-minimising van Kampen diagram for a cyclically reduced
null-homotopic word is automatically interiorly clean
(by \Cref{rem:no-interior-face-free}).
\end{lemma}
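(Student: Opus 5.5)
The plan is to reduce the lemma directly to \Cref{rem:no-interior-face-free}. It asks for nothing beyond what that remark proves for an arbitrary disk diagram: interior cleanliness (no face-free edge incident to an interior vertex) is a purely topological property. So the argument has three steps: check that an area-minimising van Kampen diagram satisfies the hypotheses of the remark, invoke the remark, and read off the conclusion.

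\textbf{Step 1: the hypotheses of the remark hold.} Let $w$ be a cyclically reduced null-homotopic word and $\Delta$ an area-minimising van Kampen diagram for $w$. By the standard van Kampen construction \cite{vanKampen1933,LyndonSchupp}, $\Delta$ is a finite, connected, simply connected planar $2$-complex with boundary walk reading $w$. These are exactly the hypotheses of \Cref{rem:no-interior-face-free}. Neither area-minimality nor cyclic reducedness is needed for this step. Those hypotheses matter only for the complementary facts recorded in the setup: there are no boundary spurs, and interior spurs can be pruned.

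\textbf{Step 2: apply the remark.} Let $e$ be any face-free edge of $\Delta$. \Cref{rem:no-interior-face-free} uses the Euler-characteristic count $\chi(|\Delta|)=1$ to show that the exterior region is the only complementary region not filled by a $2$-cell. Hence both sides of $e$ lie in the exterior region, so $e$ is a bridge traversed by the boundary walk, and both endpoints of $e$ lie in $V(\partial\Delta)$.

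\textbf{Step 3: conclude.} Since both endpoints of every face-free edge are boundary vertices, no face-free edge is incident to a vertex of $V^\circ$. This is precisely the statement that $\Delta$ is interiorly clean.

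The only point deserving care is Step 1, specifically whether "planar" and "simply connected" are being used in the same sense as in the remark. A van Kampen diagram may be a non-disk (a tree of disks joined at cut vertices and arcs), but it still embeds in $S^2$ with contractible underlying space, so the remark's homology computation applies unchanged. If one wanted the stronger conclusion of \emph{purity}, the argument would have to remove boundary bridges. This would be done by splitting $\Delta$ at each bridge into lobes, where cyclic reducedness together with minimality rules out spur-like bridges. That stronger conclusion is not asserted here, so no such surgery is required.
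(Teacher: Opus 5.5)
Your proposal is correct and is essentially the paper's argument: the paper's proof is a one-line appeal to \Cref{rem:no-interior-face-free}, which shows that every face-free edge of any disk diagram is a boundary bridge, and you correctly note that neither area-minimality nor cyclic reducedness is used. Your closing aside on purity is not needed, and it is slightly off: splitting at bridges makes the lobes pure but not $\Delta$ itself, and the paper explicitly allows face-free bridges in minimal diagrams. The aside does not affect the proof.
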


\begin{proof}
Immediate from \Cref{rem:no-interior-face-free}.
\end{proof}

\noindent\textbf{Standing convention.}
In all results involving the primal Dirichlet eigenvalue
$\lambda_1(\Delta)$, the encapsulation lemma, or the volume bound,
a ``minimal diagram'' means an \emph{area-minimising diagram} for a
cyclically reduced boundary word.
(By \Cref{rem:no-interior-face-free}, such a diagram is automatically
interiorly clean.
Face-free boundary-to-boundary bridges may occur,
but these have no interior vertices and do not affect $\lambda_1$.)
Results involving only the face-dual eigenvalues
$\mu_1(\Delta)$, $\widetilde\mu_1(\Delta)$ require no special
hypothesis.

The primal spectral Dehn function $\Lambda_{\mathcal P}(n)$ is
defined as the infimum of $\lambda_1(\Delta)$ over
area-minimising
diagrams for cyclically reduced words with $V^\circ\neq\varnothing$
and $|\partial\Delta|\le n$; the convention
$\Lambda_{\mathcal P}(n)=+\infty$ applies when no such diagram exists.
The degree $\deg(v)$ counts edge incidences with multiplicity.
Throughout, we assume $R\neq\varnothing$ and that all relators are
cyclically reduced of length at least $2$
(the relator-free case gives a free group, hence a hyperbolic group,
and can be treated separately;
length-$1$ relators can always be eliminated by removing the
corresponding generator).
Thus $\ell_{\min}:=\min_{r\in R}|r|\ge 2$ and
$L_{\max}:=\max_{r\in R}|r|$ are well-defined.

A vertex set $A\subset V(\Delta)$ is \emph{connected} if the induced
subgraph of $\Delta^{(1)}$ on $A$ is connected.
For any subset $A\subset V(\Delta)$, write
$\mathrm{vol}(A):=\sum_{v\in A}\deg(v)$.
Write $d_{\max}(\Delta):=\max_{v\in V(\Delta)}\deg(v)$ for the
maximum vertex degree.
For a null-homotopic word $w$, write
$\operatorname{Area}^{\min}_{\mathcal P}(w)$ for the minimum face count
among all van Kampen diagrams with boundary word~$w$.
We also write $\operatorname{FillArea}(w)
:=\operatorname{Area}^{\min}_{\mathcal P}(w)$
when the presentation is clear from context;
a more general version over arbitrary $2$-complexes is introduced
in \S\ref{sec:qi-invariance}.

The \emph{first Dirichlet eigenvalue} of $\Delta$ is
\[
\lambda_1(\Delta)
:=\inf_{f\not\equiv 0}
\frac{\sum_{e\in E(\Delta)}(\widetilde f(e^+)-\widetilde f(e^-))^2}
{\sum_{v\in V^\circ}\deg(v) f(v)^2},
\]
where $e^+,e^-$ denote the endpoints of the edge $e$
(the sum runs over all edges with multiplicity),
$f\colon V^\circ\to\mathbb R$ and $\widetilde f$ denotes its
extension by zero to $V(\partial\Delta)$.
When $V^\circ=\varnothing$ we set $\lambda_1(\Delta):=+\infty$.
Equivalently, write $P$ for the sub-Markov operator on
$\ell^2(V^\circ,\pi)$, where $\pi(v):=\deg(v)$, defined by
$(Pf)(v):=\deg(v)^{-1}\sum_{e\colon v\sim u, u\in V^\circ}f(u)$
(summing over edges incident to $v$ with multiplicity;
see~\cite{Woess2000} for background on random walks and Laplacians
on graphs),
and $\mathcal L:=I-P$; when $V^\circ\neq\varnothing$,
$\lambda_1(\Delta)$ is the smallest eigenvalue of $\mathcal L$.

Some results require a uniform vertex degree bound
$\deg(v)\le D$ for all $v\in V(\Delta)$, where $D$ is
independent of the diagram.
We call this \emph{bounded geometry $(D,L_{\max})$}.
This is an additional hypothesis
that does \emph{not} follow from finite presentability alone
(\Cref{prop:degree-counterexample}).

\begin{definition}[Based filling length]\label{def:based-FL}
Fix a base vertex $b\in V(\partial\Delta)$.
A \emph{based shelling} is a sequence
$\Delta=\Sigma_0\supset\Sigma_1\supset\cdots\supset\Sigma_N$,
where each step removes a boundary $2$-cell or a boundary spur
(a vertex of degree~$1$ and its incident edge),
$b\in V(\partial\Sigma_j)$ for all $j$,
and $\Sigma_N$ is the single vertex $\{b\}$
(whose boundary walk is the empty walk of length~$0$).
Each intermediate $\Sigma_j$ is a finite connected simply connected
planar subcomplex of $\Delta$; its boundary $\partial\Sigma_j$ is a
closed edge-walk (which may revisit vertices after spur removals).
We write $|\partial\Sigma_j|$ for the length of this walk.
The \emph{based filling length} is
\[
\mathrm{FL}_b(\Delta):=\inf_{\text{based shellings}}\max_{0\le j\le N}|\partial\Sigma_j|.
\]
\end{definition}

\begin{remark}[Convention for removing a boundary $2$-cell; existence]
\label{rem:based-shelling-exists}
When we remove a boundary $2$-cell $\sigma$ in a shelling, and
$\partial\sigma\cap\partial\Sigma_j$ is a connected boundary arc $A$
of length $k$, the next complex $\Sigma_{j+1}$ is obtained by
replacing the arc $A$ in the boundary walk by the complementary arc
$\partial\sigma\setminus A$ (of length $|\partial\sigma|-k$), and
deleting any isolated edges or vertices not containing the basepoint.
In particular, for a $4$-gon with $k\in\{2,3\}$, the boundary length
changes by $4-2k\le 0$.
If the final remaining $2$-cell has its entire boundary on the
current boundary ($k=|\partial\sigma|$), removing it terminates the
shelling.
Equivalently, each shelling step passes to a connected
simply connected planar subcomplex of the previous one obtained by
deleting either a boundary $2$-cell or a boundary spur.
For every finite disk diagram $\Delta$ and every basepoint
$b\in V(\partial\Delta)$, based shellings exist
(see~\cite{Bridson2002,Riley2003}).
Hence the infimum in \Cref{def:based-FL} is over a nonempty set,
$\mathrm{FL}_b(\Delta)\in\mathbb N_0:=\{0,1,2,\dots\}$, and
$\mathrm{FL}_b(\Delta)\ge |\partial\Delta|$
(since the initial stage is $\Delta$ itself).
\end{remark}

\begin{lemma}[Filling length dominates intrinsic radius]\label{lem:FL-dominates-radius}
$\mathrm{FL}_b(\Delta)\ge 2 \mathrm{rad}_b(\Delta)$,
where $\mathrm{rad}_b(\Delta):=\max_{v\in V(\Delta)}d_\Delta(b,v)$.
\end{lemma}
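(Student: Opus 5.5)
Fix any vertex $v\in V(\Delta)$ with $d:=d_\Delta(b,v)=\mathrm{rad}_b(\Delta)$ and any based shelling $\Delta=\Sigma_0\supset\cdots\supset\Sigma_N=\{b\}$. The plan is to find a single stage $\Sigma_j$ whose boundary walk passes through both $b$ and $v$. A closed walk through both points must contain two edge-paths from $b$ to $v$ (the two arcs of the walk between them). Each arc is a path in $\Delta^{(1)}$, because $\Sigma_j^{(1)}\subset\Delta^{(1)}$. So each arc has length at least $d_\Delta(b,v)=d$, and therefore $|\partial\Sigma_j|\ge 2d$. Taking the maximum over $j$ and then the infimum over shellings gives $\mathrm{FL}_b(\Delta)\ge 2\,\mathrm{rad}_b(\Delta)$.

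The key step is to show that such a stage exists. Let $j$ be the last index with $v\in V(\Sigma_j)$. If $v=b$ the claim is trivial, with $d=0$. Otherwise $v\notin\Sigma_N$, so $j<N$ and $v$ disappears in the step $\Sigma_j\to\Sigma_{j+1}$. I will argue that a vertex can only leave the complex while it lies on the current boundary walk. There are two kinds of step.
\begin{itemize}
\item A spur removal deletes a degree-$1$ vertex, and such a vertex lies on the boundary walk.
\item Removing a boundary $2$-cell $\sigma$ under the convention of \Cref{rem:based-shelling-exists} deletes only the interior of the arc $A=\partial\sigma\cap\partial\Sigma_j$, together with isolated edges or vertices left behind. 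All of these are boundary vertices of $\Sigma_j$. The complementary arc $\partial\sigma\setminus A$ survives, so vertices of $\sigma$ that are interior to $\Sigma_j$ stay in $\Sigma_{j+1}$.
\end{itemize}
Hence $v\in V(\partial\Sigma_j)$. The definition of a based shelling also gives $b\in V(\partial\Sigma_j)$, so $\Sigma_j$ is the required stage.

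The main obstacle is making the two-arc argument rigorous when $\partial\Sigma_j$ is a non-simple closed walk, possibly with spurs, or when $\Sigma_j$ is a single edge. Since the walk visits both $b$ and $v$, I will choose an occurrence of $b$ and an occurrence of $v$ in the cyclic sequence. These split the cyclic walk into two subwalks, one from $b$ to $v$ and one from $v$ back to $b$. Each subwalk has length at least $d$ regardless of repeated vertices, which covers all of the degenerate cases.

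A second subtle point arises in the step where $\Sigma_j$ is a single $2$-cell whose entire boundary lies on the current boundary, so that its removal terminates the shelling. In that case all of its vertices except $b$ lie on the boundary walk of $\Sigma_j$, so the same argument applies.
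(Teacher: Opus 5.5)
Your proposal is correct and follows essentially the same route as the paper: take the last stage of the shelling that still contains a vertex $v$ at distance $\mathrm{rad}_b(\Delta)$ from $b$, observe that $v$ must lie on that stage's boundary walk together with $b$, and split the closed walk into two $b$--$v$ subwalks, each of length at least $d_\Delta(b,v)$. Your case check that a vertex can only be deleted while it lies on the current boundary walk (spur removal, or boundary $2$-cell removal under the paper's convention) fills in a step the paper states without justification.
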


\begin{proof}
Fix an arbitrary based shelling
$\Delta=\Sigma_0\supset\Sigma_1\supset\cdots\supset\Sigma_N$.
If $\mathrm{rad}_b(\Delta)=0$ (i.e.\ $\Delta=\{b\}$), the bound
holds trivially.
Otherwise, choose $v\in V(\Delta)\setminus\{b\}$ with
$d_\Delta(b,v)=\mathrm{rad}_b(\Delta)$.
Let $j$ be the last index such that $v\in V(\Sigma_{j-1})$;
then $v\notin V(\Sigma_j)$, so $v$ is removed at step $j$.
Hence $v\in V(\partial\Sigma_{j-1})$.
Since also $b\in V(\partial\Sigma_{j-1})$, the closed boundary walk
$\partial\Sigma_{j-1}$ contains two subwalks from $b$ to $v$,
each of length at least $d_{\Sigma_{j-1}}(b,v)\ge d_\Delta(b,v)$.
Therefore
$|\partial\Sigma_{j-1}|\ge 2d_\Delta(b,v)
=2\mathrm{rad}_b(\Delta)$.
Since this holds for every based shelling, taking the infimum gives
$\mathrm{FL}_b(\Delta)\ge 2\mathrm{rad}_b(\Delta)$.
\end{proof}

\subsection{The spectral Dehn function and characterisation of hyperbolicity}
\label{subsec:spectral-profile}
We package first Dirichlet eigenvalues of area-minimising diagrams
into a nonincreasing profile.

\begin{definition}[Area-minimising diagram; spectral Dehn function]\label{def:spectral-dehn}
A van Kampen diagram $\Delta$ for a null-homotopic word $w$ is
\emph{area-minimising}
if $\operatorname{Area}(\Delta)=\operatorname{Area}^{\min}_{\mathcal P}(w)$.
(Such diagrams exist because area is a nonnegative integer and at least
one van Kampen diagram exists for every null-homotopic word;
see~\cite[Ch.~V]{LyndonSchupp}.)
We write \emph{minimal} as shorthand for area-minimising throughout.
Define
the \emph{Dehn function}
\[
\delta_{\mathcal P}(n)
:=\max\bigl\{\operatorname{Area}^{\min}_{\mathcal P}(w):
w=_{G(\mathcal P)} 1, |w|\le n,\ w\text{ cyclically reduced}\bigr\},
\]
and the \emph{spectral Dehn function}
\[
\Lambda_{\mathcal P}(n):=\inf\bigl\{\lambda_1(\Delta):
\Delta\text{ area-minimising},
V^\circ(\Delta)\neq\varnothing,
|\partial\Delta|\le n\bigr\},
\]
where the infimum is over area-minimising diagrams
for cyclically reduced null-homotopic words of length $\le n$,
with the convention $\Lambda_{\mathcal P}(n):=+\infty$ if no such
diagram exists.
The profile $\Lambda_{\mathcal P}$ is nonincreasing in $n$, since the
admissible set of diagrams enlarges with $n$.
Throughout, we adopt the convention $1/\infty:=0$, so the
spectral-isoperimetric inequality
$\delta_{\mathcal P}(n)\le C(1+1/\Lambda_{\mathcal P}(n))n$
reduces to the linear estimate
$\delta_{\mathcal P}(n)\le Cn$ when $\Lambda_{\mathcal P}(n)=+\infty$.
(For an arbitrary null-homotopic word $w$,
$\operatorname{Area}^{\min}_{\mathcal P}(w)
\le\operatorname{Area}^{\min}_{\mathcal P}(w_{\mathrm{cr}})
\le\delta_{\mathcal P}(|w|)$,
where $w_{\mathrm{cr}}$ is the cyclic reduction of $w$:
free reductions are realised by boundary spurs of zero area, and
cyclic reductions by conjugating off a zero-area whisker.)
\end{definition}

\begin{theorem}[Spectral-isoperimetric inequality]\label{thm:spectral-dehn}
Assume that every area-minimising van Kampen diagram
$\Delta$ over
$\mathcal P$ satisfies $\deg_\Delta(v)\le D$ at every vertex $v$.
For every $n\ge 1$,
\[
\delta_{\mathcal P}(n)\le\frac{D}{\ell_{\min}}\Bigl(1+\frac{1}{\Lambda_{\mathcal P}(n)}\Bigr)n.
\]
\end{theorem}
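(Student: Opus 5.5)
Fix $n\ge 1$ and a cyclically reduced null-homotopic word $w$ with $|w|\le n$, and let $\Delta$ be an area-minimising diagram for $w$. It suffices to show
\[
\operatorname{Area}(\Delta)\le \frac{D}{\ell_{\min}}\Bigl(1+\frac{1}{\lambda_1(\Delta)}\Bigr)|\partial\Delta|,
\]
because $\lambda_1(\Delta)\ge\Lambda_{\mathcal P}(n)$ whenever $V^\circ\neq\varnothing$. If $V^\circ=\varnothing$ we will argue directly. The argument has two steps.

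\textbf{Step 1: area is controlled by total volume.} Each face has boundary length at least $\ell_{\min}$, so summing face boundary lengths gives $\ell_{\min}\operatorname{Area}(\Delta)\le\sum_{\sigma}|\partial\sigma|$. Each edge is counted at most twice in this sum. Edges on the boundary walk are counted at most once plus their boundary traversals, and pure bridges contribute nothing. Hence
\[
\sum_\sigma|\partial\sigma|\le 2E \;=\; \sum_{v}\deg(v)=\mathrm{vol}(V^\circ)+\mathrm{vol}(V(\partial\Delta)).
\]
Using $\deg\le D$ gives $\mathrm{vol}(V(\partial\Delta))\le D\,|V(\partial\Delta)|\le D\,|\partial\Delta|$. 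The case $V^\circ=\varnothing$ then gives $\operatorname{Area}\le Dn/\ell_{\min}$ immediately. In general, the problem reduces to bounding $\mathrm{vol}(V^\circ)\le D|\partial\Delta|/\lambda_1(\Delta)$.

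\textbf{Step 2: interior volume is controlled by $\lambda_1$.} Take the test function $f\equiv 1$ on $V^\circ$ in the Rayleigh quotient. The numerator counts only edges with one interior and one boundary endpoint, since interior–interior edges contribute $0$ and boundary–boundary edges contribute $0$. Each such edge is incident to a boundary vertex, so the numerator is at most $\mathrm{vol}(V(\partial\Delta))\le D|\partial\Delta|$. The denominator is $\mathrm{vol}(V^\circ)$. Therefore
\[
\lambda_1(\Delta)\le \frac{D|\partial\Delta|}{\mathrm{vol}(V^\circ)},
\]
which is exactly the required bound.

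Combining the two steps, and absorbing the factor so that the constant $D/\ell_{\min}$ matches the statement, gives the estimate for each $\Delta$. Taking the maximum over $w$ then yields the theorem, with the convention $1/\infty=0$ covering the case where no diagram has interior vertices.

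The main obstacle is the bookkeeping in Step 1, namely getting the exact constant $D/\ell_{\min}$ rather than $2D/\ell_{\min}$. I would first try a sharper count: each interior edge lies on two faces and each boundary edge on at most one. This gives $\ell_{\min}\operatorname{Area}\le 2E_{\mathrm{int}}+E_{\partial}$. Half of this is bounded by a sum of degrees, where interior edges are charged to endpoints and boundary-adjacent edges are charged to boundary vertices. If the constant still comes out as $2D$, that is harmless for the later applications.
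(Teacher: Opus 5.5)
Your proposal is correct and is essentially the paper's proof: bound $\ell_{\min}\operatorname{Area}(\Delta)\le 2|E(\Delta)|=\mathrm{vol}(V^\circ)+\mathrm{vol}(V(\partial\Delta))$, then use the indicator test $f\equiv 1$ on $V^\circ$ to get $\mathrm{vol}(V^\circ)\le D|\partial\Delta|/\lambda_1(\Delta)$, where rearranging needs $\lambda_1(\Delta)>0$, which holds as the paper notes. Your worry about a factor of $2$ is unfounded: the handshake identity already turns $2|E|$ into the two volumes with coefficient one, so your Steps 1 and 2 give exactly $\frac{D}{\ell_{\min}}\bigl(1+1/\lambda_1(\Delta)\bigr)|\partial\Delta|$ with no sharper edge count or ``absorbing'' needed.
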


\begin{proof}
Let $w$ be a cyclically reduced null-homotopic word with $|w|\le n$,
and let $\Delta$ be an area-minimising diagram for $w$.
Write $\lambda:=\lambda_1(\Delta)$.
If $V^\circ=\varnothing$ then all vertices lie on $\partial\Delta$, so
$2|E(\Delta)|\le D |\partial\Delta|$ and
$\ell_{\min} \operatorname{Area}(\Delta)\le 2|E(\Delta)|$,
giving $\operatorname{Area}(\Delta)\le(D/\ell_{\min})|\partial\Delta|$.
Otherwise, $\lambda>0$ (since the Dirichlet Laplacian on the
finite connected graph $\Delta^{(1)}$ with nonempty boundary is
positive definite).
Testing the Rayleigh quotient with $f\equiv 1$ on $V^\circ$
gives
$\lambda\le E_{\mathrm{cut}}/\mathrm{vol}(V^\circ)$,
where $E_{\mathrm{cut}}$
counts the edges with exactly one endpoint in $V^\circ$.
Since the number of distinct boundary vertices is at most $|\partial\Delta|$
and each has degree $\le D$,
$E_{\mathrm{cut}}\le D |\partial\Delta|$.
Hence $\mathrm{vol}(V^\circ)\le D |\partial\Delta|/\lambda$.
Since $2|E|=\mathrm{vol}(V^\circ)+\sum_{v\in V(\partial\Delta)}\deg(v)
\le D (1+1/\lambda) |\partial\Delta|$
and $\ell_{\min} \operatorname{Area}\le 2|E|$
(every face has $\ge\ell_{\min}$ boundary edges, each edge borders
$\le 2$ faces),
\[
\operatorname{Area}(\Delta)\le\frac{D}{\ell_{\min}}(1+1/\lambda) |\partial\Delta|.
\]
Since $\Delta$ is area-minimising with $V^\circ\neq\varnothing$,
$\lambda=\lambda_1(\Delta)\ge\Lambda_{\mathcal P}(n)$.
Since $|\partial\Delta|=|w|\le n$, taking the maximum over all
cyclically reduced null-homotopic words $w$ with $|w|\le n$ gives
the stated bound on $\delta_{\mathcal P}(n)$.
\end{proof}

\begin{definition}[Dirichlet Cheeger constant]\label{def:cheeger-disk}
For a finite planar disk diagram $\Delta$ with $V^\circ\neq\varnothing$,
the \emph{Dirichlet Cheeger constant} is
\[
h_D(\Delta):=\inf_{\varnothing\neq A\subset V^\circ}
\frac{|\partial_E A|}{\mathrm{vol}(A)},
\]
where $\partial_E A:=\{e=vw\in E(\Delta): v\in A, w\notin A\}$
is the edge boundary (edges to boundary vertices are included).
\end{definition}

\needspace{4\baselineskip}
\begin{lemma}[Discrete Dirichlet Cheeger inequalities]
\label{lem:cheeger-discrete}
Let $\Gamma$ be a finite connected multigraph
(parallel edges and self-loops allowed) with a distinguished nonempty
boundary set $B\subset V(\Gamma)$ and interior vertices
$V^\circ:=V(\Gamma)\setminus B$.
Assume $V^\circ\neq\varnothing$.
Write $\deg(v)$ for the degree of $v$ (counting edge multiplicities;
each self-loop contributes $2$ to the degree of its endpoint).
Write $d_{\max}:=\max_{v\in V^\circ}\deg(v)$.
Define the \emph{Dirichlet Cheeger constant}
$h_D:=\inf_{\varnothing\neq A\subset V^\circ}
|\partial_E A|/\mathrm{vol}(A)$,
where $\mathrm{vol}(A):=\sum_{v\in A}\deg(v)$
and $\partial_E A:=\{e=vw: v\in A, w\notin A\}$
(edges to $B$ are included; self-loops at $v\in A$ are \emph{not}
included in the cut since both endpoints lie in $A$).
Define the \emph{unweighted Cheeger constant}
$\widetilde h_D:=\inf_{\varnothing\neq A\subset V^\circ}
|\partial_E A|/|A|$.
Then:
\begin{enumerate}[label=\textup{(\roman*)}]
\item \emph{(Weighted Cheeger inequality.)}
The first Dirichlet eigenvalue of the random-walk Laplacian
$\mathcal L=I-P$ on $\ell^2(V^\circ,\deg)$ satisfies
$\lambda_1\ge h_D^2/2$.
\item \emph{(Unweighted Cheeger inequality.)}
The first eigenvalue of the combinatorial Laplacian on
$\ell^2(V^\circ,\mathrm{count})$, i.e.\
$\widetilde\lambda_1:=\inf_{f\not\equiv 0}
\mathcal E(f)/\|f\|_2^2$ with
$\|f\|_2^2:=\sum_{v\in V^\circ}f(v)^2$, satisfies
$\widetilde\lambda_1\ge \widetilde h_D^2/(2d_{\max}^2)$.
\end{enumerate}
\end{lemma}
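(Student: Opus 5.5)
The plan is the standard co-area argument for the Dirichlet Cheeger inequality, carried out with multiplicities. Set the Dirichlet form $\mathcal E(f):=\sum_{e}(\widetilde f(e^+)-\widetilde f(e^-))^2$, where $\widetilde f$ extends $f$ by zero to $B$. Self-loops contribute $0$ to $\mathcal E$. Parallel edges are counted with multiplicity.

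First we reduce to nonnegative test functions. Replacing $f$ by $|f|$ does not increase $\mathcal E$, because $||a|-|b||\le|a-b|$, and it leaves the denominators unchanged. So it suffices to bound the Rayleigh quotient from below for $f\ge 0$ with $f\not\equiv 0$.

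The key step is a co-area identity for $g:=\widetilde f^2$. For $t>0$, let $A_t:=\{v\in V^\circ: f(v)^2>t\}$. Then $A_t\subset V^\circ$, since $\widetilde f$ vanishes on $B$. An edge $e=vw$ lies in $\partial_E A_t$ exactly when $t$ lies between $g(v)$ and $g(w)$; self-loops never do. Hence
\[
\int_0^\infty|\partial_E A_t|\,dt=\sum_e|g(e^+)-g(e^-)|,
\qquad
\int_0^\infty\mathrm{vol}(A_t)\,dt=\sum_{v\in V^\circ}\deg(v)f(v)^2 .
\]
Whenever $A_t\neq\varnothing$ we have $|\partial_E A_t|\ge h_D\,\mathrm{vol}(A_t)$. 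Integrating gives
\[
\sum_e|g(e^+)-g(e^-)|\ge h_D\sum_{v\in V^\circ}\deg(v)f(v)^2 .
\]

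Next we apply Cauchy--Schwarz. Writing $|a^2-b^2|=|a-b|(a+b)$, we get
\[
\sum_e|g(e^+)-g(e^-)|\le\mathcal E(f)^{1/2}\Bigl(\sum_e(\widetilde f(e^+)+\widetilde f(e^-))^2\Bigr)^{1/2}.
\]
We then bound $\sum_e(a+b)^2\le 2\sum_e(a^2+b^2)=2\sum_{v\in V^\circ}\deg(v)f(v)^2$. This is where the degree conventions matter: each edge incidence contributes $f(v)^2$ once, a self-loop at $v$ contributes $(2f(v))^2=2\cdot 2f(v)^2$, consistent with its degree contribution of $2$, and boundary endpoints contribute $0$. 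Write $\|f\|_\pi^2:=\sum_{v\in V^\circ}\deg(v)f(v)^2$. Combining the two displays gives $h_D\|f\|_\pi^2\le\mathcal E(f)^{1/2}\sqrt2\,\|f\|_\pi$. Squaring yields $\mathcal E(f)\ge\tfrac{h_D^2}{2}\|f\|_\pi^2$, and taking the infimum over $f$ proves (i).

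For (ii) we run the same argument with the counting measure. The co-area step now gives $\sum_e|g(e^+)-g(e^-)|\ge\widetilde h_D\|f\|_2^2$. The Cauchy--Schwarz step gives
\[
\sum_e(\widetilde f(e^+)+\widetilde f(e^-))^2\le 2\sum_{v\in V^\circ}\deg(v)f(v)^2\le 2d_{\max}\|f\|_2^2 .
\]
Hence $\widetilde\lambda_1\ge\widetilde h_D^2/(2d_{\max})$. Since $d_{\max}\ge1$, this is at least the stated bound $\widetilde h_D^2/(2d_{\max}^2)$.

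The only delicate point is bookkeeping: self-loops must be handled correctly in both the cut and the degree, and edges to $B$ must be counted in the cut while contributing only one endpoint's mass. The paper's stated conventions are chosen precisely so that these cases work out.
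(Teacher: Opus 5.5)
Your proof is correct. Part~(i) is the paper's argument. It uses co-area over the level sets of $f^2$ followed by Cauchy--Schwarz against $\sum_e(\widetilde f(e^+)+\widetilde f(e^-))^2\le 2\|f\|_\pi^2$. You integrate in $t$ where the paper sums over an ordering of the values, which is the same computation. Your up-front reduction to $f\ge 0$ via $\bigl||a|-|b|\bigr|\le|a-b|$ is, if anything, tidier than the paper's bookkeeping with $g=|f|$.

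Part~(ii) is where you genuinely diverge. The paper does not rerun the co-area argument. Instead it reduces to (i) in two lines:
\begin{itemize}
\item $\|f\|_\pi^2\ge\|f\|_2^2$, since interior degrees are at least $1$, so $\widetilde\lambda_1\ge\lambda_1$;
\item $\mathrm{vol}(A)\le d_{\max}|A|$, so $h_D\ge\widetilde h_D/d_{\max}$.
\end{itemize}
Because the second comparison sits inside the squared Cheeger constant, it costs $d_{\max}^2$.

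You redo the co-area step with counting measure instead. Then $d_{\max}$ enters only once, through $2\|f\|_\pi^2\le 2d_{\max}\|f\|_2^2$ in the Cauchy--Schwarz factor. This gives the sharper bound $\widetilde\lambda_1\ge\widetilde h_D^2/(2d_{\max})$. The stated bound follows since $d_{\max}\ge 1$, but you should justify that: it holds because $\Gamma$ is connected and both $B$ and $V^\circ$ are nonempty, so every interior vertex meets a non-loop edge.

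The paper's route is shorter and reuses (i) verbatim. Yours is a few lines longer but gives a better constant. In the face-dual applications, where $d_{\max}\le L_{\max}$, the constant would improve from $1/(2L_{\max}^2)$ to $1/(2L_{\max})$, although nothing qualitative in the paper depends on this.
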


\begin{proof}
\emph{Part (i).}
This is the Dirichlet analogue of the Alon-Milman / Dodziuk
Cheeger inequality for the normalised Laplacian
\cite{Cheeger1970,AlonMilman1985,Dodziuk1984}.
Let $f\colon V^\circ\to\mathbb R$ be nonzero with $f|_B:=0$.
Set $g:=|f|$.
Order the values $g(v_1)\ge g(v_2)\ge\cdots\ge g(v_m)\ge 0$
(with $m:=|V^\circ|$) and set $A_k:=\{v_1,\dots,v_k\}$,
$g(v_{m+1}):=0$.
The co-area formula gives
\[
\mathcal E(f)
\ge\sum_{e=uv}|g(u)^2-g(v)^2|
=\sum_{k=1}^m\bigl(g(v_k)^2-g(v_{k+1})^2\bigr)|\partial_E A_k|
\]
(self-loops contribute $0$ to $\mathcal E(f)$ and $0$ to
$|\partial_E A_k|$, but contribute to $\deg(v)$ and hence to
$\mathrm{vol}(A_k)$; this is the correct bookkeeping for the
killed random walk, where a self-loop is a ``stay put'' transition).
The weighted norm satisfies
$\|f\|_\pi^2=\sum_{v\in V^\circ}\deg(v)g(v)^2
=\sum_{k=1}^m\bigl(g(v_k)^2-g(v_{k+1})^2\bigr)\mathrm{vol}(A_k)$.
By definition $|\partial_E A_k|\ge h_D\mathrm{vol}(A_k)$.
Write $a_k:=g(v_k)^2-g(v_{k+1})^2\ge 0$ and
$w_k:=\mathrm{vol}(A_k)>0$.
The co-area identity gives
$\sum_k a_k|\partial_E A_k|
=\sum_e|g(e^+)^2-g(e^-)^2|
=\sum_e|g(e^+)-g(e^-)|\cdot|g(e^+)+g(e^-)|$.
By Cauchy-Schwarz over edges,
\[
\Bigl(\sum_e|g(e^+)-g(e^-)|\cdot|g(e^+)+g(e^-)|\Bigr)^2
\le
\underbrace{\sum_e(g(e^+)-g(e^-))^2}_{=\mathcal E(f)}
\cdot
\underbrace{\sum_e(g(e^+)+g(e^-))^2}_{\le2\|f\|_\pi^2}.
\]
(The last inequality: $\sum_e(g(u)+g(v))^2
\le 2\sum_e(g(u)^2+g(v)^2)
=2\sum_v\deg(v)g(v)^2=2\|f\|_\pi^2$.)
Since $|\partial_E A_k|\ge h_Dw_k$:
$\sum_k a_k|\partial_E A_k|\ge h_D\sum_k a_k w_k=h_D\|f\|_\pi^2$.
Combining:
$(h_D\|f\|_\pi^2)^2\le 2\mathcal E(f)\|f\|_\pi^2$,
hence $\mathcal E(f)/\|f\|_\pi^2\ge h_D^2/2$.

\emph{Part (ii).}
Since every interior vertex has degree at least $1$,
$\|f\|_\pi^2=\sum_{v\in V^\circ}\deg(v)f(v)^2
\ge\sum_{v\in V^\circ}f(v)^2=\|f\|_2^2$,
so $\widetilde\lambda_1\ge\lambda_1$.
Since $\mathrm{vol}(A)\le d_{\max}|A|$, we have
$h_D\ge\widetilde h_D/d_{\max}$.
Therefore
$\widetilde\lambda_1\ge\lambda_1
\ge h_D^2/2\ge\widetilde h_D^2/(2d_{\max}^2)$.
\end{proof}

In the sequel, we apply part~(i) to the normalised random-walk
Laplacian on the primal $1$-skeleton (where $h_D=h_D(\Delta)$
as defined above; no self-loops occur in the primal $1$-skeleton)
and to the weighted face-dual Laplacian
(where $h_D=h_D(\Delta^\ast)$, to be defined in
\S\ref{subsec:dual-spectral-profile};
the face-dual graph may have self-loops $m(f,f)$
from interior edges with $f$ on both sides,
but these are handled by the lemma as stated).
We apply part~(ii) to the unweighted face-dual Laplacian, where
$d_{\max}\le L_{\max}$: the exterior face $\infty$ plays the role of the
boundary vertex $B=\{\infty\}$ and is excluded from $d_{\max}$ by the
Dirichlet boundary condition; each interior face $f\in F(\Delta)$ has
dual degree at most $|\partial f|\le L_{\max}$
(self-loops $m(f,f)$ contribute $2$ to $\deg(f)$ and $2$ to
$|\partial f|$, so the bound $\deg(f)\le|\partial f|$ holds
with self-loops included).

\begin{lemma}[Encapsulation]\label{lem:encapsulation}
Let $\Delta$ be an area-minimising van Kampen diagram
with
$L_{\max}$-bounded face lengths.
For any nonempty subset $A\subset V^\circ$ that is connected in the
induced subgraph of $\Delta^{(1)}$ on $A$, there exists a disk
subdiagram $\Omega\subset\Delta$ with $A\subset V^\circ(\Omega)$
(every vertex of $A$ is interior to $\Omega$) and
\begin{equation}\label{eq:encapsulation}
|\partial\Omega| \le 2L_{\max} |\partial_E A|.
\end{equation}
Moreover, if $w_1,\dots,w_m$ are the simple lobe boundary walks of
$\partial\Omega$ (decomposed at boundary cut vertices), then
$\operatorname{Area}(\Omega)
=\sum_{k=1}^m\operatorname{FillArea}(w_k)$.
\end{lemma}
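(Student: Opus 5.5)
We take $\Omega$ to be the \emph{closed star} of $A$, followed by hole-filling. Let $F_A$ be the set of $2$-cells of $\Delta$ having at least one vertex in $A$, and let $\Omega_0$ be the subcomplex consisting of these faces together with all their edges and vertices. Since every $v\in A$ is interior to $\Delta$, the faces around $v$ form a full cyclic fan; by \Cref{rem:no-interior-face-free} there are no face-free edges at $v$. Hence every $v\in A$ lies in the topological interior of $|\Omega_0|$. Since $A$ is connected, $\Omega_0$ is connected, but it need not be simply connected.

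\textbf{Filling holes.} Embed $\Delta$ in $S^2$ and let $\Omega$ be the union of $\Omega_0$ with all bounded complementary components of $|\Omega_0|$ inside $|\Delta|$, that is, with all components not meeting $\partial\Delta$. Each such component is a union of faces of $\Delta$. The result $\Omega$ is a connected, simply connected planar subcomplex of $\Delta$, hence a disk subdiagram, and $A\subset V^\circ(\Omega)$ still holds. Adding faces only removes boundary, so $\partial\Omega$ is contained in $\partial\Omega_0$ edgewise.

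\textbf{Length bound.} Every edge of $\partial\Omega_0$ lies on a face $\sigma\in F_A$ and is not incident to $A$: an edge incident to $A$ has both sides in $F_A$. We charge each boundary edge of $\Omega_0$ to a cut edge. Fix $\sigma\in F_A$ with an edge on $\partial\Omega_0$. Walking around $\partial\sigma$ from a vertex of $A$ to that edge, we must leave $A$, and so we cross an edge of $\partial_E A$ lying on $\partial\sigma$. Thus every such face carries at least one cut edge, and each face contributes at most $L_{\max}$ boundary edges. Each cut edge lies on at most $2$ faces. Hence
\[
|\partial\Omega|\le|\partial\Omega_0|\le L_{\max}\cdot\#\{\sigma\in F_A:\sigma\text{ meets }\partial_EA\}\le 2L_{\max}|\partial_E A|.
\]
Counting the boundary walk with multiplicity is fine here, since a boundary edge of a subcomplex is traversed at most twice. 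If it is traversed twice, it is a bridge, and both sides lie outside $\Omega_0$; we absorb the extra factor by charging per face side rather than per edge.

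\textbf{Area identity.} Decompose $\partial\Omega$ at cut vertices into simple lobe walks $w_1,\dots,w_m$. The corresponding lobes $\Omega_k$ are topological disks by \Cref{lem:simple-boundary-top-disk}, or are bridge edges of area $0$. We have $\operatorname{Area}(\Omega)=\sum_k\operatorname{Area}(\Omega_k)$. Each $\Omega_k$ is a subdiagram of the area-minimising diagram $\Delta$. If some $\Omega_k$ were not area-minimising for $w_k$, we could cut it out and paste in a smaller filling of $w_k$. Because $w_k$ is a simple closed curve in the planar diagram, this surgery yields a van Kampen diagram for the same boundary word $w$ with strictly smaller area, which is a contradiction. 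Hence $\operatorname{Area}(\Omega_k)=\operatorname{FillArea}(w_k)$, and summing gives the identity.

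\textbf{Main obstacle.} We expect the main obstacle to be the careful multiplicity bookkeeping in the length bound: faces touching $A$ at several places, self-adjacent faces, and boundary edges traversed twice. The constant $2L_{\max}$ should leave enough slack. The fallback is to charge by face-sides, bounding $\sum_{\sigma\in F_A}|\partial\sigma\setminus\mathrm{st}(A)|$ directly.
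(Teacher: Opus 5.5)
Your proposal is correct and follows essentially the same route as the paper. You take the closed star $\Sigma_A$ of $A$, hole-fill it via \Cref{lem:hole-filling}, charge each boundary edge of $\Omega$ through its face in $\Sigma_A$ to a cut edge on that face (at most $2L_{\max}$ charges per cut edge), and obtain lobe-wise minimality by contradiction with the area-minimality of $\Delta$.

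Two small points need tightening. First, your multiplicity worry is vacuous: $\Omega$ is a union of closed faces, so each boundary edge is traversed exactly once. Second, in the area step the smaller filler of $w_k$ need not have simple boundary, and your justification (simplicity of $w_k$) covers only the hole side of the surgery. The paper avoids this by using the algebraic boundary synthesis of \Cref{lem:boundary-synthesis} (see \Cref{rem:simple-boundary-filler}) instead of a literal cut-and-paste.
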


\begin{proof}
Let $\Sigma_A\subset \Delta$ be the union of all closed $2$-cells
incident to at least one vertex of $A$.

\smallskip
\emph{(i) $\Sigma_A$ is connected.}
If $|A|=1$, say $A=\{v\}$, then $v\in V^\circ$ and its link in
$|\Delta|$ is a full circle of faces; these faces share the vertex $v$,
so $\Sigma_A$ is connected.
If $|A|\ge 2$, choose a spanning tree $T$ in the induced subgraph of
$\Delta^{(1)}$ on the vertex set $A$ (which exists because $A$ is
connected in the induced subgraph).
For each edge $e=uv$ of $T$, the edge $e$ is interior (because
$u,v\in V^\circ$), hence it is incident to at least one $2$-cell of
$\Delta$.
That face lies in $\Sigma_A$ and meets the closed stars of both $u$
and $v$.
Running along the edges of $T$ shows that $\Sigma_A$ is connected.

\smallskip
\emph{(ii) Every vertex of $A$ is interior to $\Sigma_A$.}
If $v\in A$, then every face incident to $v$ belongs to $\Sigma_A$.
Since $v\in V^\circ(\Delta)$, the link of $v$ in $|\Delta|$ is a full
circle of faces, all of which lie in $\Sigma_A$, so $v$ is interior
to $|\Sigma_A|$.

\smallskip
\emph{(iii) Hole filling.}
Since $\Sigma_A$ is a connected union of $2$-cells (by step~(i)),
\Cref{lem:hole-filling} applies.
Let $\Omega:=\widehat\Sigma_A$ be the hole-filled subcomplex.
By \Cref{lem:hole-filling}(i), $\Omega$ is a disk subdiagram
of~$\Delta$.
Moreover, $A\subset V^\circ(\Omega)$: every vertex of $A$ is interior
to $\Sigma_A\subseteq\Omega$ (by step~(ii)), hence interior to
$\Omega$.

\smallskip
\emph{(iv) Minimality transfer.}
By \Cref{rem:simple-boundary-filler}, \eqref{eq:lobe-bound},
$\operatorname{Area}(\Omega_k)
=\operatorname{FillArea}(w_k)$ for each lobe.
Summing: $\operatorname{Area}(\Omega)
=\sum_k\operatorname{FillArea}(w_k)$.

\smallskip
\emph{(v) Boundary charge.}
Since $\Sigma_A$ is a union of $2$-cells and the hole-filling of
step~(iii) only adjoins further $2$-cells, $\Omega$ is a pure
$2$-complex: every edge of $\Omega$ is incident to at least one face.
In particular, every edge $b$ of $\partial\Omega$ is incident to a
face of $\Omega$ on the $\Omega$-side.
Let $f_b\subset \Sigma_A$
be the unique face of $\Omega$ incident to $b$ on the $\Omega$-side
(this face belongs to $\Sigma_A$ by \Cref{lem:hole-filling}(ii),
since every boundary edge of the hole-filling lies on a face of the
original $\Sigma_A$).
Because $f_b$ is incident to at least one vertex of $A$ and $b$ lies on
$\partial\Omega$ (where no vertex of $A$ lies), the cyclic boundary of
$f_b$ contains at least one edge $e_b$ with one endpoint in $A$ and
one in $V(\Delta)\setminus A$; that is, $e_b\in\partial_E A$.
Fix one such choice $b\mapsto e_b$.
Each $e\in\partial_E A$ is incident to at most two faces, each of
boundary length at most $L_{\max}$, so at most $2L_{\max}$ boundary
edges $b$ can be charged to any single $e$.
Summing gives $|\partial\Omega|\le 2L_{\max} |\partial_E A|$.
\end{proof}

The following two lemmas isolate topological arguments used
repeatedly in the sequel
(\Cref{thm:spectral-hyp-iff,thm:dual-spectral-hyp-iff},
\Cref{thm:hqm-spectral-hyp}, and
\Cref{prop:dual-profile-bracket}).
Throughout, if $\Omega$ is a disk subdiagram whose boundary meets
itself at cut vertices, we call the components after cutting the
\emph{lobes} of $\Omega$ and their boundary walks the
\emph{simple lobe boundary walks} of $\partial\Omega$.

\begin{lemma}[Hole-filling produces disk subdiagrams]
\label{lem:hole-filling}
Let $\Delta$ be a disk diagram embedded in $S^2$ with exterior
component $U_\infty$.
Let $\Sigma\subset\Delta$ be a connected union of $2$-cells.
Define $\widehat\Sigma$ by adjoining to $\Sigma$ every $2$-cell of
$\Delta$ that lies in a component of $S^2\setminus|\Sigma|$ other
than the component $U_\infty'$ containing $U_\infty$.
Then:
\begin{enumerate}[label=\textup{(\roman*)}]
\item $\widehat\Sigma$ is a disk subdiagram of $\Delta$.
\item Every boundary edge of $\widehat\Sigma$ lies on a $2$-cell
of $\Sigma$; in particular, if $A\subset F(\Delta)$ is the set of
faces of $\Sigma$ and $\partial_E A$ denotes the dual-edge boundary
(dual edges from $A$ to $F(\Delta)\setminus A$ or to $\infty$),
then $|\partial\widehat\Sigma|\le|\partial_E A|$.
\item If $\Delta$ is area-minimising for its boundary word and
$w_1,\dots,w_m$ are the simple lobe boundary walks of
$\partial\widehat\Sigma$ (obtained by decomposing at boundary cut
vertices), then
$\operatorname{Area}(\widehat\Sigma)
=\sum_{k=1}^m\operatorname{FillArea}(w_k)$.
\end{enumerate}
\end{lemma}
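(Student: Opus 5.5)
Throughout, write $\Sigma=\widehat\Sigma$'s starting complex and work in $S^2$. I will prove the three parts in order, using \Cref{rem:no-interior-face-free} and \Cref{lem:simple-boundary-top-disk} for the topological input.

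\emph{Part (i).} Write $U_1,\dots,U_r$ for the components of $S^2\setminus|\Sigma|$ other than $U_\infty'$ (the ``holes'').
\begin{enumerate}[label=\textup{(\alph*)}]
\item First I show that each hole is a union of open cells of $\Delta$. Since $U_\infty\subset U_\infty'$ and $|\Delta|=S^2\setminus U_\infty$, the closure $\overline{U_i}$ lies in $|\Delta|$ and meets no point of $U_\infty$. By \Cref{rem:no-interior-face-free}, the exterior is the only unfilled region of the $1$-skeleton, and face-free edges are bridges adjacent to the exterior on both sides. Hence every point of $U_i$ lies in the closed star of some $2$-cell of $\Delta$ inside $U_i$, and $|\widehat\Sigma|=S^2\setminus U_\infty'$.
\item Next, $U_\infty'$ is a connected open subset of $S^2$. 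I will check that it is simply connected, or more directly that its complement $|\widehat\Sigma|$ is connected and has $\tilde H_1=0$. Connectedness holds because $\Sigma$ is connected and each hole closure meets $|\Sigma|$.
\item For simple connectivity, use Alexander duality in $S^2$. For a compact polyhedron $K\subset S^2$, $\tilde H_1(K)\cong\tilde H^0(S^2\setminus K)$, and $S^2\setminus|\widehat\Sigma|=U_\infty'$ is connected, so $H_1(|\widehat\Sigma|)=0$. A connected compact planar $2$-complex with $H_1=0$ is contractible: its components of complement are disks, and a planar $2$-complex has free fundamental group, so vanishing $H_1$ gives trivial $\pi_1$.
\item Finally, $\widehat\Sigma$ is a subcomplex of $\Delta$ and a finite connected simply connected planar $2$-complex, i.e.\ a disk subdiagram.
\end{enumerate}

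\emph{Part (ii).} Let $e$ be a boundary edge of $\widehat\Sigma$. Then one side of $e$ lies in $U_\infty'$, and the other side lies in $|\widehat\Sigma|$. The adjacent cell on that other side cannot be a hole face. Hole faces lie in the open set $U_i$, whose closure is disjoint from $U_\infty'$ except along $|\Sigma|$, so an edge of a hole face bordering $U_\infty'$ would lie in $|\Sigma|$. In that case the face on the $\widehat\Sigma$ side at $e$ is a face of $\Sigma$: $e\subset|\Sigma|$, and since $e$ borders $U_\infty'$, some face of $\Sigma$ contains $e$.

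So $e$ lies on a face $f\in A$, and the face across $e$ is either not in $A$ or is the exterior $\infty$. This gives a dual edge in $\partial_E A$. The map from boundary edge occurrences to dual edges is injective, because each edge $e$ together with its side determines a unique dual edge. Edges traversed twice by the boundary walk are face-free bridges, and these do not occur: $\widehat\Sigma$ is pure. Hence $|\partial\widehat\Sigma|\le|\partial_E A|$.

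\emph{Part (iii).} Cut $\widehat\Sigma$ at boundary cut vertices into lobes $\Omega_k$ with simple boundaries $w_k$; each is a topological disk by \Cref{lem:simple-boundary-top-disk}.
\begin{enumerate}[label=\textup{(\alph*)}]
\item The inequality $\operatorname{Area}(\Omega_k)\ge\operatorname{FillArea}(w_k)$ is trivial.
\item For the reverse, suppose some lobe had a strictly smaller filling $D_k$. Excise $\Omega_k$ from $\Delta$ and glue in $D_k$ along $w_k$. Because $w_k$ is simple and $\Omega_k$ is an embedded closed disk in $|\Delta|$, this is a valid cut-and-paste. It yields a van Kampen diagram with the same boundary word and strictly smaller area, contradicting minimality.
\end{enumerate}
Since the lobes partition the faces, summing over $k$ gives the identity.

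The main obstacle is part (i): making rigorous that the holes consist exactly of $2$-cells of $\Delta$, and that the result is simply connected rather than merely $H_1$-trivial. The only extra input needed is the Alexander-duality step together with the freeness of $\pi_1$ for planar complexes.
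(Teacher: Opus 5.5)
Your parts (i) and (ii) follow the paper's proof closely: Alexander duality plus freeness of $\pi_1$ for planar complexes in (i), and in (ii) the observation that a hole face cannot share an edge with $U_\infty'$. Your remark that purity of $\widehat\Sigma$ rules out boundary edges traversed twice usefully makes the injection in (ii) explicit.

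The gap is in (iii)(b). You justify the cut-and-paste by the simplicity of $w_k$ and the fact that $\Omega_k$ is an embedded disk. That only controls the piece you excise. The competing filler $D_k$ is an arbitrary smaller-area diagram for the word $w_k$, and its boundary walk need not be simple. The extreme case shows the problem: if $\operatorname{FillArea}(w_k)=0<\operatorname{Area}(\Omega_k)$, every minimal filler is a tree, whose boundary walk traverses each edge twice. Gluing it along the circle $\partial\Omega_k$ folds that circle, identifying distinct vertices and edges of $\Delta\setminus\operatorname{int}\Omega_k$; boundary cut vertices of $D_k$ likewise pinch the hole. Your argument gives no reason why the glued complex is still planar and simply connected. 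The paper's own cut-and-paste result, \Cref{lem:disk-replacement}, is proved only when the replacement also has simple boundary, and that cannot be arranged in the tree case.

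The paper avoids this issue. Part (iii) is deduced from the lobe-minimality argument of \Cref{rem:simple-boundary-filler}, which works as follows:
\begin{itemize}
\item cap the hole $\partial\Omega_k$ in $\Delta\setminus\operatorname{int}\Omega_k$ with a formal cell;
\item read off the boundary word of $\Delta$ as a product of conjugates of relators, in which the formal cell contributes a single conjugate of $w_k^{\pm1}$;
\item substitute the $\operatorname{Area}(D_k)$ conjugates supplied by $D_k$;
\item apply the converse van Kampen lemma (\Cref{lem:boundary-synthesis}).
\end{itemize}
This yields a diagram with the same boundary word and area $\operatorname{Area}(\Delta)-\operatorname{Area}(\Omega_k)+\operatorname{Area}(D_k)<\operatorname{Area}(\Delta)$, with no simplicity requirement on $D_k$.

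To repair your proof, either switch to this algebraic route, or give a genuine topological argument that gluing a disk diagram along a non-injective boundary walk into a simple hole still produces a disk diagram. With that repair, the rest of your proof goes through.
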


\begin{proof}
\emph{(i)}
Since $\Sigma$ is connected and every complementary component filled
in the definition of $\widehat\Sigma$ meets $\Sigma$ along boundary
edges, the subcomplex $\widehat\Sigma$ is connected.
Moreover, $S^2\setminus|\widehat\Sigma|$ is connected: it is
$U_\infty'$
(which may have absorbed parts of $|\Delta|\setminus|\Sigma|$ lying
in $U_\infty'$, but remains connected because $U_\infty'$ was a
single component of $S^2\setminus|\Sigma|$).
Since $|\widehat\Sigma|$ is the underlying space of a finite
connected subcomplex of the CW decomposition of $S^2$ induced by
$\Delta$, and its complement $S^2\setminus|\widehat\Sigma|$ is
connected, Alexander duality gives
$\widetilde H_1(|\widehat\Sigma|;\mathbb Z)
\cong\widetilde{\check H}^0(S^2\setminus|\widehat\Sigma|;\mathbb Z)=0$.
Since $|\widehat\Sigma|$ is a finite planar $2$-complex, its
fundamental group is free: embed the $1$-skeleton $G$ in $S^2$;
$\pi_1(G)$ is free with one generator per bounded complementary
region of $G$; each $2$-cell of $|\widehat\Sigma|$ fills one such
region and kills the corresponding generator; thus $\pi_1$ is free
of rank $r$ equal to the number of unfilled regions.
Since $\pi_1$ is free, $H_1\cong\pi_1^{\mathrm{ab}}\cong\mathbb Z^r$;
Alexander duality gave $H_1=0$, hence $r=0$ and $\pi_1$ is trivial.
Hence $\widehat\Sigma$ is a simply connected planar subcomplex
of~$\Delta$.

\emph{(ii)}
Let $e$ be a boundary edge of $\widehat\Sigma$.
Then $e$ separates $\widehat\Sigma$ from $U_\infty'$.
The face on the $\widehat\Sigma$-side must belong to $\Sigma$:
a face added during hole-filling lies in a component of
$S^2\setminus|\Sigma|$ different from $U_\infty'$, and so cannot
share an edge with $U_\infty'$.
The face on the $U_\infty'$-side is either a face in
$F(\Delta)\setminus A$ or the exterior face $\infty$.
Hence $e$ contributes a dual edge leaving $A$, giving an injection
$\partial\widehat\Sigma\hookrightarrow\partial_E A$.

\emph{(iii)}
Decompose $\widehat\Sigma$ at its boundary cut vertices into lobes
$\widehat\Sigma_1,\dots,\widehat\Sigma_m$, each with simple
boundary walk $w_k$.
By \Cref{rem:simple-boundary-filler}, \eqref{eq:lobe-bound},
$\operatorname{Area}(\widehat\Sigma_k)
=\operatorname{FillArea}(w_k)$ for each $k$.
Hence $\operatorname{Area}(\widehat\Sigma)
=\sum_k\operatorname{FillArea}(w_k)$.
\end{proof}

\begin{lemma}[Disk subdiagram replacement (simple boundary)]
\label{lem:disk-replacement}
Let $\phi\colon\Delta\to X$ be a combinatorial disk map over a locally
finite combinatorial $2$-complex $X$, and let $\Omega\subset\Delta$ be
a disk subdiagram whose boundary walk $\partial\Omega$ is a
\emph{simple closed curve} (no vertex of $\Delta$ is visited more
than once by $\partial\Omega$).
Let $\Omega'$ be a combinatorial disk map over $X$ together with
a specified combinatorial isomorphism
$|\partial\Omega'|\cong|\partial\Omega|$ respecting the edge labels
and cyclic order
(so the gluing identifies corresponding boundary
edges and vertices without additional identifications).
Then excising $\Omega$ from $\Delta$ and gluing $\Omega'$ along
$\partial\Omega$ (matching the boundary walks edge by edge) produces a
combinatorial disk map $\Delta'\to X$ with the same outer boundary walk
as $\Delta$ and $\operatorname{Area}(\Delta')
=\operatorname{Area}(\Delta)-\operatorname{Area}(\Omega)
+\operatorname{Area}(\Omega')$.
\end{lemma}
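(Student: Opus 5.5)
The plan is to build $\Delta'$ as a quotient of the disjoint union $(\Delta\setminus\operatorname{int}\Omega)\sqcup\Omega'$ and then verify three things in turn: it is a finite connected simply connected planar $2$-complex, $\phi$ extends to a combinatorial map on it, and the face count is as claimed.

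\emph{Step 1: the complement.} Set $\Delta_0:=\Delta\setminus\operatorname{int}|\Omega|$, i.e.\ the subcomplex consisting of all cells of $\Delta$ not lying in the open region bounded by $\partial\Omega$. Embed $\Delta$ in $S^2$. Since $\partial\Omega$ is simple, it is an embedded circle $C$. By \Cref{lem:simple-boundary-top-disk} (when $\operatorname{Area}(\Omega)\ge 1$; otherwise $\Omega$ is a degenerate cycle and the same Jordan-curve argument applies), $|\Omega|$ is the closed Jordan domain $\bar D$ bounded by $C$. Hence $\Delta_0$ is a subcomplex meeting $\bar D$ exactly in $C$, and it contains $\partial\Delta$: no edge of the outer boundary walk can lie in $\operatorname{int} D$, because the exterior region is disjoint from $\bar D$.

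\emph{Step 2: the gluing.} Form $\Delta':=\Delta_0\cup_C\Omega'$, identifying $\partial\Omega'$ with $C$ via the given label-preserving isomorphism. The map $\phi$ is defined on $\Delta_0$, and the disk map $\Omega'\to X$ is defined on $\Omega'$. They agree on $C$ because the isomorphism respects labels and orientations, so together they define a combinatorial map $\Delta'\to X$.

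For planarity, apply \Cref{lem:simple-boundary-top-disk} to $\Omega'$, using that its boundary is simple because it is isomorphic to $C$. Then $|\Omega'|\cong\bar D^2$ via a homeomorphism extending the boundary identification (Schoenflies, or simply coning the circle homeomorphism). Replacing $\bar D\subset S^2$ by $|\Omega'|$ therefore embeds $\Delta'$ in $S^2$, and the exterior region is untouched, so the outer boundary walk equals that of $\Delta$.

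\emph{Step 3: simple connectivity.} Since $|\Delta|=|\Delta_0|\cup_C\bar D$ and $|\Delta'|=|\Delta_0|\cup_C|\Omega'|$ with both attached disks homeomorphic rel $C$, the homeomorphism of disks extends by the identity on $|\Delta_0|$ to a homeomorphism $|\Delta|\cong|\Delta'|$. Hence $\Delta'$ is connected and simply connected. An alternative route is van Kampen's theorem with the intersection a circle, where $\pi_1(\Delta')$ is obtained from $\pi_1(\Delta_0)$ by killing $[C]$ exactly as for $\Delta$.

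\emph{Step 4: area.} The $2$-cells of $\Delta'$ are the disjoint union of the $2$-cells of $\Delta$ not in $\Omega$ and those of $\Omega'$, which gives the stated formula.

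The main obstacle is Step 3, specifically justifying that $|\Omega'|$ is a genuine closed disk glued along exactly $C$ with no extra identifications. This is where simplicity of $\partial\Omega$, and hence of $\partial\Omega'$ through the isomorphism, is essential. I would handle it by citing \Cref{lem:simple-boundary-top-disk} for both $\Omega$ and $\Omega'$. The degenerate zero-area case can be dispatched separately: it would force the simple curve $C$ to bound a face-free region, which \Cref{rem:no-interior-face-free} and simple connectivity exclude unless $C$ is trivial.
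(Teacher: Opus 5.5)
Your proof is correct, but it takes a different route from the paper.

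\emph{Your route.} You use \Cref{lem:simple-boundary-top-disk} to identify two spaces as closed disks:
\begin{itemize}
\item $|\Omega|$, which is the closed Jordan domain of $C$ in the inherited embedding;
\item $|\Omega'|$.
\end{itemize}
You then extend the label-preserving boundary identification to a homeomorphism $|\Omega'|\to|\Omega|$ by coning. Pasting this with the identity on $\Delta_0$ gives a homeomorphism $|\Delta'|\cong|\Delta|$. Connectedness, simple connectivity, planarity and the unchanged outer boundary walk all follow at once. This is the same mechanism the paper uses later in \Cref{lem:cell-substitution}.

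\emph{The paper's route.} The paper's proof of this lemma never identifies $\Omega$ or $\Omega'$ as topological disks. Instead it proceeds in three steps.
\begin{itemize}
\item It gets $H_1(|\Delta'|)=0$ by comparing the Mayer--Vietoris sequences for $\Sigma\cup|\Omega|$ and $\Sigma\cup|\Omega'|$. The first sequence shows that $H_1(|\partial\Omega|)\to H_1(\Sigma)$ is onto.
\item It obtains planarity by splicing rotation systems at the boundary vertices and embedding cellularly in a closed orientable surface via Heffter--Edmonds. The Euler-characteristic count $\chi(|\Delta'|)=1$ then forces that surface to be $S^2$.
\item It deduces $\pi_1(\Delta')=1$ from freeness of $\pi_1$ of planar complexes.
\end{itemize}

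\emph{What each buys.} Your argument is shorter and gives the stronger conclusion that the homeomorphism type is preserved. The price is the Jordan--Schoenflies input hidden in \Cref{lem:simple-boundary-top-disk}. The paper's argument uses only $H_1(|\Omega'|)=0$, $\chi(|\Omega'|)=1$ and the one-gap structure at boundary vertices. However, it needs the heavier rotation-system and surface-classification step.

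\emph{One tidy-up.} Your zero-area discussion is inconsistent between Step~1 and the last paragraph. The clean statement is this: a face-free disk subdiagram is a tree, whose boundary walk traverses every edge twice, so it is not a cycle of positive length. The same applies to $\Omega'$, and you need it there, because you invoke \Cref{lem:simple-boundary-top-disk} for $\Omega'$ without checking $\operatorname{Area}(\Omega')\ge 1$. The only remaining degenerate case, $\Omega$ a single vertex, is trivial.
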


\begin{proof}
\emph{The pushout.}
Write $\Sigma$ for the subcomplex of $\Delta$ obtained by removing the
open $2$-cells, open interior edges, and open interior vertices of
$\Omega$.
Since $\partial\Omega$ is a simple closed curve, its underlying
$1$-complex $|\partial\Omega|$ is a cycle graph (each vertex has
exactly two incident boundary edges).
The specified isomorphism $|\partial\Omega'|\cong|\partial\Omega|$
identifies $|\partial\Omega'|$ with this cycle graph.
Define $\Delta'$ as the pushout
$\Sigma\cup_{|\partial\Omega|}\Omega'$, identifying corresponding
boundary edges and vertices via the given isomorphism.
No vertex identifications within $\Omega'$ are forced (because no
vertex of $|\partial\Omega|$ is visited twice), so $|\Omega'|$
embeds into $|\Delta'|$.

Since $|\partial\Omega|$ is a simple closed curve in the disk
$|\Delta|$, the closure of $|\Delta|\setminus
\operatorname{int}|\Omega|$ is connected; hence $\Sigma$ is connected.

\emph{Simple connectivity.}
We verify $H_1(|\Delta'|;\mathbb Z)=0$.
Write $|\Delta|=\Sigma\cup|\Omega|$ with
$\Sigma\cap|\Omega|=|\partial\Omega|$.
Apply the Mayer-Vietoris sequence:
\[
H_1(|\partial\Omega|)\xrightarrow{i_\ast}
H_1(\Sigma)\oplus H_1(|\Omega|)\to H_1(|\Delta|).
\]
Since $H_1(|\Delta|)=0$ and $H_1(|\Omega|)=0$, the inclusion-induced
map $i_\ast\colon H_1(|\partial\Omega|)\to H_1(\Sigma)$ is
surjective.

Now apply Mayer-Vietoris to $|\Delta'|=\Sigma\cup|\Omega'|$
with the same intersection $|\partial\Omega|$:
\[
H_1(|\partial\Omega|)\xrightarrow{(i_\ast,j'_\ast)}
H_1(\Sigma)\oplus H_1(|\Omega'|)\xrightarrow{\varphi}
H_1(|\Delta'|)\to
H_0(|\partial\Omega|)\xrightarrow{\psi}
H_0(\Sigma)\oplus H_0(|\Omega'|).
\]
Since $|\partial\Omega|$ is connected and maps to the connected
spaces $\Sigma$ and $|\Omega'|$, the map $\psi$ is injective,
so $\varphi$ is surjective.
Since $H_1(|\Omega'|)=0$, the surjectivity of $\varphi$ gives
$H_1(|\Delta'|)\cong H_1(\Sigma)/\operatorname{im}(i_\ast)=0$.

\emph{Planarity.}
Both $\Sigma$ and $\Omega'$ are planar $2$-complexes.
Choose an $S^2$-embedding of $\Delta$ and an $S^2$-embedding of
$\Omega'$.
Each embedding determines a rotation system (cyclic ordering
of edge-ends at each vertex).
Define a rotation system for $\Delta'$:
at interior vertices of $\Sigma$ or $\Omega'$,
keep the rotation from the respective embedding;
at each boundary vertex $v\in|\partial\Omega|$, splice the two
rotations by replacing each $\Omega$-side gap of $\Sigma$'s rotation
by the corresponding interior sector of $\Omega'$'s rotation.
This is well-defined because both rotation systems order the
boundary edges of $|\partial\Omega|$ consistently with the walk,
and the simple-boundary hypothesis ensures each boundary vertex has
exactly one $\Omega$-side gap and $\Omega'$ has exactly one
exterior gap at each boundary vertex.

The resulting rotation system defines a cellular
embedding of $\Delta'$ in a compact orientable surface $M$
(the Heffter-Edmonds theorem).
Since $H_1(|\Delta'|)=0$ and $|\Delta'|$ is connected,
$H_0(|\Delta'|)\cong\mathbb Z$.
For $H_2$, use Euler characteristic additivity:
$|\Delta|=\Sigma\cup|\Omega|$ with $\Sigma\cap|\Omega|=|\partial\Omega|$,
so $\chi(\Sigma)=\chi(|\Delta|)+\chi(|\partial\Omega|)-\chi(|\Omega|)
=1+0-1=0$.
Then $\chi(|\Delta'|)=\chi(\Sigma)+\chi(|\Omega'|)-\chi(|\partial\Omega|)
=0+1-0=1$.
Since $\chi(|\Delta'|)=\mathrm{rank}H_0-\mathrm{rank}H_1
+\mathrm{rank}H_2=1-0+\mathrm{rank}H_2$,
we get $H_2(|\Delta'|)=0$.
Hence $\chi(|\Delta'|)=1$.
Let $c\ge 1$ be the number of complementary regions of the
embedding of $|\Delta'|$ in $M$.
Then $\chi(M)=\chi(|\Delta'|)+c=1+c$.
Since $M$ is a closed orientable surface, $\chi(M)\le 2$,
so $c=1$ and $\chi(M)=2$,
hence $M\cong S^2$.

Since $|\Delta'|$ is a finite planar $2$-complex, its fundamental
group is free of rank $\operatorname{rank}H_1=0$, hence trivial.
The outer boundary walk of $\Delta'$ equals $\partial\Delta$.
The area formula is immediate.
\end{proof}

\begin{lemma}[Boundary synthesis from a planar complex with holes]
\label{lem:boundary-synthesis}
Let $\psi\colon K\to X$ be a combinatorial map from a finite connected
planar $2$-complex with outer boundary walk $\alpha$ and inner boundary
walks $\beta_1,\dots,\beta_m$.
If for each $j$ there is a combinatorial disk map
$\phi_j\colon E_j\to X$ with $\phi_j(\partial E_j)=\psi(\beta_j)^{-1}$,
then $\psi(\alpha)$ bounds a combinatorial disk map $E\to X$ with
\[
\operatorname{Area}(E)
\le
\operatorname{Area}(K)+\sum_{j=1}^m \operatorname{Area}(E_j).
\]
The same holds in the free-completed category $X^{\pm}$.
\end{lemma}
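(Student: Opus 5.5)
The plan is to glue: cap off each inner boundary $\beta_j$ of $K$ with the disk $E_j$, and then extract a genuine disk diagram for $\psi(\alpha)$ from the resulting complex. Concretely, form the pushout
\[
E_0:=K\cup_{\beta_1}E_1\cup_{\beta_2}\cdots\cup_{\beta_m}E_m,
\]
identifying $\partial E_j$ with $\beta_j$ edge by edge. Since $\phi_j(\partial E_j)=\psi(\beta_j)^{-1}$, labels agree along the seams, so $\psi$ and the $\phi_j$ assemble into a combinatorial map $E_0\to X$. The faces of $E_0$ are exactly those of $K$ together with those of the $E_j$, so $\operatorname{Area}(E_0)=\operatorname{Area}(K)+\sum_j\operatorname{Area}(E_j)$, and the outer boundary walk of $E_0$ reads $\psi(\alpha)$.

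The subtlety is that $E_0$ need not be a disk diagram. The walks $\beta_j$ need not be simple, they may touch each other or $\alpha$ at vertices, and gluing along a non-simple walk can force identifications. So I will not attempt to verify planarity or simple connectivity of $E_0$ by a Mayer--Vietoris argument as in \Cref{lem:disk-replacement}. Instead, I will argue via van Kampen's lemma in its area-counting form.

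First, write $\psi(\alpha)$ as a product of conjugates. In the planar complex $K$, choose a spanning tree of the $1$-skeleton that connects the basepoint of $\alpha$ to basepoints of each $\beta_j$ and to each face. Reading around, this gives a free equality
\[
\psi(\alpha)\;=\;\prod_{f\in F(K)} u_f\,r_f^{\pm1}u_f^{-1}\cdot\prod_{j=1}^m v_j\,\psi(\beta_j)^{\pm1}v_j^{-1},
\]
where $r_f$ is the label of face $f$. This is the standard van Kampen reading of a planar complex with holes, in which the holes play the role of extra ``relators''.

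Next, each $\phi_j$ expresses $\psi(\beta_j)^{-1}$ as a product of $\operatorname{Area}(E_j)$ conjugates of relators. Substituting these, $\psi(\alpha)$ becomes freely equal to a product of $\operatorname{Area}(K)+\sum_j\operatorname{Area}(E_j)$ conjugates of relators of $X$. Here ``relators of $X$'' means the attaching words of its $2$-cells, read in the labelled sense in which combinatorial disk maps to $X$ are defined.

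By van Kampen's lemma, such a product is realised by a disk diagram $E\to X$ with boundary $\psi(\alpha)$ and area at most the number of conjugates; folding may only decrease area. This gives the inequality. The free-completed case is the same argument, since $X^{\pm}$ merely adds face types, which van Kampen's lemma handles identically.

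The main obstacle I expect is the first step: justifying the conjugate decomposition when $K$ is only a planar $2$-complex whose boundary walks may be non-simple or mutually tangent. My fallback is to thicken $K$ slightly so that the hole boundaries become disjoint simple curves, apply the classical planar-surface version of van Kampen there, and then note that collapsing the thickening only introduces free reductions and leaves the face count unchanged.
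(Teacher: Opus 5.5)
Your proposal is correct and is essentially the paper's argument: read $\psi(\alpha)$ as a product of conjugates of face relators and hole words, substitute the conjugate products supplied by the fillers $E_j$, and apply the converse van Kampen lemma. The paper avoids the obstacle you anticipate without thickening: it caps each bounded complementary region of an $S^2$-embedding of $K$ with a formal $2$-cell labelled $\psi(\beta_j)^{-1}$, which gives an honest disk diagram $K^+$ whether or not the $\beta_j$ are simple (those regions are open disks because $K$ is connected), and the standard spanning-tree reading of $K^+$ then produces exactly your conjugate decomposition.
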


\begin{proof}
Cap each inner complementary region of an $S^2$-embedding of $K$ by a
formal $2$-cell with boundary word $\psi(\beta_j)^{-1}$.
The result $K^+$ is a planar disk diagram with outer boundary $\alpha$.
Reading off conjugates from $K^+$ via a spanning tree gives a product
for $\psi(\alpha)$ consisting of the relators of the ordinary faces of
$K$ together with one conjugate of each formal word
$\psi(\beta_j)^{-1}$.
Replacing each formal factor by the normal-form product supplied by the
filler $E_j$ and applying the converse van Kampen lemma yields a disk
map $E\to X$ with the stated area bound.
\end{proof}

\begin{lemma}[Boundary synthesis with retained holes]
\label{lem:boundary-synthesis-retained}
Let $\psi\colon K\to X$ be a combinatorial map from a finite connected
planar $2$-complex with outer boundary walk $\alpha$ and inner boundary
walks $\beta_1,\dots,\beta_m,\delta_1,\dots,\delta_n$.
For each $1\le j\le m$, let $\phi_j\colon E_j\to X$ be a disk map with
$\phi_j(\partial E_j)=\psi(\beta_j)^{-1}$.
Then there exists a finite connected planar $2$-complex $Y\to X$ with
outer boundary walk $\alpha$, inner boundary walks
$\delta_1,\dots,\delta_n$, and
\[
\operatorname{Area}(Y)
\le
\operatorname{Area}(K)+\sum_{j=1}^m \operatorname{Area}(E_j).
\]
If $n=0$, then $Y$ is a disk diagram.
The same holds in the free-completed category $X^{\pm}$.
\end{lemma}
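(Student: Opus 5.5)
The plan is to mimic the proof of \Cref{lem:boundary-synthesis}, but to cap only the holes $\beta_1,\dots,\beta_m$ and leave the holes $\delta_1,\dots,\delta_n$ open.

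\textbf{Step 1 (embed and glue).} Fix an $S^2$-embedding of $K$. Its complementary regions are:
\begin{itemize}
\item one outer region, bounded by $\alpha$;
\item inner regions $U_1,\dots,U_m$ bounded by the walks $\beta_j$;
\item inner regions $U'_1,\dots,U'_n$ bounded by the walks $\delta_i$.
\end{itemize}
For each $j$, glue the filler $E_j$ into $U_j$ along $\beta_j$. The boundary words agree, since $\phi_j(\partial E_j)=\psi(\beta_j)^{-1}$ and the orientation is reversed when $\partial E_j$ is read from inside $U_j$. Call the result $Y_0$.

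\textbf{Step 2 (the main obstacle).} The walk $\beta_j$ need not be simple. Because of this, the pushout $K\cup_{\beta_j}E_j$ may force extra vertex identifications. It may also fail to embed in $U_j$ directly, so \Cref{lem:disk-replacement} cannot be quoted.

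I would first try to handle this as in \Cref{lem:boundary-synthesis}. Cap $U_j$ by a single formal $2$-cell $c_j$ with boundary label $\psi(\beta_j)^{-1}$. This cap is always a legitimate planar cell, because the cell is glued along the walk, not the underlying graph. Then replace $c_j$ by $E_j$. The boundary of $E_j$ is a closed walk with the same label, but $E_j$ itself may have cut vertices on its boundary. So decompose $E_j$ into lobes and glue lobe by lobe, each lobe along a simple sub-walk, inside a small disc neighbourhood of the region $U_j$ (which is an open disc in $S^2$). Non-simple points of $\beta_j$ are then realised by pinching, which preserves planarity.

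If gluing turns out to identify vertices inconsistently, fall back on the algebraic route. Read $\psi(\alpha)$ relative to the $\delta_i$ as a product of conjugates of relators of $K$, of the words $\psi(\beta_j)^{-1}$, and of the words $\psi(\delta_i)$. Expand each $\psi(\beta_j)^{-1}$ using $E_j$. Finally apply the annular or planar version of the converse van Kampen lemma to realise the product as a planar complex with $n$ holes.

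\textbf{Step 3 (area and boundary).} No $2$-cells are created or destroyed except the contributions of the $E_j$, and folding or pruning can only decrease the area. Hence
\[
\operatorname{Area}(Y)\le \operatorname{Area}(K)+\sum_j\operatorname{Area}(E_j).
\]
The complement of $Y$ in $S^2$ consists of the outer region and the regions $U'_i$, so the boundary walks of $Y$ are $\alpha$ and $\delta_1,\dots,\delta_n$. Connectedness holds because each $E_j$ is attached along $\beta_j\subset K$.

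If $n=0$, the complement is connected. The Alexander duality and free-$\pi_1$ argument from \Cref{lem:hole-filling}(i) then shows that $Y$ is simply connected, so $Y$ is a disk diagram. In the free-completed category $X^{\pm}$ the same argument applies verbatim, with free-cancellation cells allowed as faces.
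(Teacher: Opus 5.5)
There is a genuine gap, at exactly the step you flag as the main obstacle. You have put the difficulty on the wrong side of the gluing. Non-simplicity of $\beta_j$ is harmless: $U_j$ is an open disc, and a disc glued along its frontier walk only pinches the glued disc, never $K$ (this is why your formal cap $c_j$ is legitimate). The identifications that damage $K$ come from non-simplicity of $\partial E_j$, namely boundary cut vertices and face-free edges of $E_j$, at positions where $\beta_j$ visits distinct vertices or edges of $K$.

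Gluing lobe by lobe does not remove these identifications. A lobe of $E_j$ has a closed simple boundary, but the matching sub-walk of $\beta_j$ runs between two vertices of $K$ that need not coincide. Spurs and bridges of $E_j$ are not lobes at all, and they force pairs of edges of $K$ to be folded together. Identifying two frontier vertices of the single disc $U_j$ can be absorbed by contracting an arc drawn in $U_j$, but edge folds cannot always be absorbed. Suppose $\beta_j$ traverses $f\,g^{-1}$ for distinct parallel edges $f,g$ of $K$ with $\psi(f)=\psi(g)$, and $E_j$ has a spur there. Suppose also that the disc bounded by $f\cup g$ on the side away from $U_j$ is filled by $2$-cells of $K$ (a ``blister''). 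Then folding $f$ onto $g$ closes that disc into a $2$-sphere, and the glued complex is not planar. So the direct route cannot be completed as described, and your hedge ``if gluing turns out to identify vertices inconsistently\ldots'' is where the proof actually has to happen.

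Your fallback is, in outline, the paper's route. It omits the one idea that distinguishes this lemma from \Cref{lem:boundary-synthesis}: how the $\delta_i$ are kept as holes. There is no ``planar converse van Kampen lemma with $n$ holes'' available to quote. The paper proceeds in three steps:
\begin{enumerate}
\item It caps each $\delta_i$ by a formal $2$-cell $F_i$ along $\delta_i^{-1}$, so that $K^\sharp:=K\cup\bigcup_iF_i$ has only the holes $\beta_1,\dots,\beta_m$.
\item It applies \Cref{lem:boundary-synthesis} to $K^\sharp$ and the $E_j$. This gives a disk map $Z$ with $\operatorname{Area}(Z)\le\operatorname{Area}(K)+n+\sum_j\operatorname{Area}(E_j)$.
\item Using that each $F_i$ is an interior $2$-cell of $Z$, it excises the $F_i$ one at a time via \Cref{cor:excise-single-disk}. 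This subtracts exactly $n$ from the area and reopens holes with boundary walks $\delta_i$. For $n=0$ one simply takes $Y=Z$, with no Alexander duality needed.
\end{enumerate}
Your Step~3 cannot substitute for this cap-and-excise step. Its description of $S^2\setminus Y$ as the outer region together with the $U'_i$ refers to the geometric construction, not to the output of the algebraic one. So in the fallback the claimed boundary walks of $Y$ are unjustified.
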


\begin{proof}
For each $1\le i\le n$, adjoin a formal $2$-cell $F_i$ along
$\delta_i^{-1}$.
Let $K^{\sharp}:=K\cup\bigcup_{i=1}^n F_i$, gluing each $F_i$ to the
corresponding inner boundary component $\delta_i$ of $K$.
Then $K^{\sharp}$ is a finite connected planar $2$-complex with outer
boundary walk $\alpha$ and inner boundary walks
$\beta_1,\dots,\beta_m$, and
$\operatorname{Area}(K^{\sharp})=\operatorname{Area}(K)+n$.
Apply \Cref{lem:boundary-synthesis} to $K^{\sharp}$ and the fillers
$E_1,\dots,E_m$; this yields a disk map $Z\to X$ with outer boundary
$\alpha$ and
$\operatorname{Area}(Z)\le\operatorname{Area}(K)+n
+\sum_{j=1}^m\operatorname{Area}(E_j)$.
Each formal cell $F_i$ is an interior $2$-cell of $Z$.
Excise the $n$ formal cells one at a time using
\Cref{cor:excise-single-disk} (each $F_i$ is a single $2$-cell).
The result is a connected planar $2$-complex $Y$ with outer boundary
$\alpha$, inner boundaries $\delta_1,\dots,\delta_n$, and
$\operatorname{Area}(Y)=\operatorname{Area}(Z)-n
\le\operatorname{Area}(K)+\sum_{j=1}^m\operatorname{Area}(E_j)$.
\end{proof}

\begin{lemma}[Area-minimising fillers with simple boundary]
\label{lem:simple-boundary-exists-lemma}
Let $X$ be a locally finite combinatorial $2$-complex and let
$\gamma$ be a closed combinatorial loop in $X^{(1)}$ with
$\operatorname{FillArea}_X(\gamma)\ge 1$.
Then there exists a combinatorial disk map
$\psi\colon D'\to X$ such that:
\begin{enumerate}[label=\textup{(\roman*)}]
\item $\psi(\partial D')=\gamma$;
\item $D'$ has simple boundary;
\item $\operatorname{Area}(D')=\operatorname{FillArea}_X(\gamma)$.
\end{enumerate}
The same statement holds in the free-completed category $X^{\pm}$,
provided $\operatorname{FillArea}^{\pm}_X(\gamma)\ge 1$.
\end{lemma}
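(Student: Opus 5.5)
Start with an area-minimising filler $\psi_0\colon D_0\to X$ of $\gamma$, so $\operatorname{Area}(D_0)=\operatorname{FillArea}_X(\gamma)\ge 1$; one exists because area takes values in $\mathbb N_0$. If $\partial D_0$ is already simple we are done. Otherwise we modify $D_0$ so that its boundary becomes simple while the area and the boundary loop stay the same.

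The key observation is that conditions (i)--(iii) only require $\psi(\partial D')=\gamma$ as a loop in $X$. Nothing forces $D'$ to be a subcomplex of $D_0$, and distinct vertices of $\partial D'$ may map to the same vertex of $X$. So we \emph{unfold} $D_0$ at its boundary cut vertices and spur/bridge edges.

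\begin{itemize}
\item Decompose $D_0$ at its boundary cut vertices into lobes $L_1,\dots,L_m$ (simple-boundary disk pieces, each a topological disk by \Cref{lem:simple-boundary-top-disk}) together with a tree-like arrangement of face-free bridge edges. By \Cref{rem:no-interior-face-free}, every face-free edge is a boundary bridge.
\item Thicken this cactus into a disk. Take a regular-neighbourhood style construction: replace each boundary cut vertex by several copies, one per boundary visit. Replace each face-free bridge edge traversed twice by the boundary walk with two parallel copies bounding a bigon.
\end{itemize}

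Here is where the difficulty lies. A bigon would have to be a $2$-cell, so it adds area, and in $X$ it is not a relator unless we are in $X^{\pm}$.

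The cleaner route is to avoid new cells entirely. Do the unfolding only at the vertex level, so that the boundary walk visits distinct vertices of $D'$. Parallel edges are then handled by noting that a boundary walk traversing an edge $e$ twice does so in opposite directions, producing a subword $xx^{-1}$ in the label only when $e$ is a spur.

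First, reduce to the case with no spurs and no face-free edges. If $\gamma$ contains a backtrack, the lemma's loop is fixed, so spurs can occur. Handle this by showing that, since $\operatorname{FillArea}\ge 1$, the minimal filler may be chosen to have every face-free part collapsible. Then rebuild $D'$ by taking the lobes and gluing consecutive lobes along single vertices only after doubling: each lobe's boundary arc is separated. Concretely, cut $D_0$ along its boundary cut vertices and reglue the lobes in a chain, identifying nothing. Keep the lobes disjoint except through a thin strip along the arcs of $\gamma$ joining them. The strip is realised by wedging, at each former cut vertex, distinct copies of that vertex, all mapped to the same vertex of $X$.

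The main obstacle is making this rigorous in a way that keeps $D'$ simply connected and planar while leaving the area unchanged. In particular, bridge edges of $\gamma$ that are traversed twice cannot be doubled without a cell. My first attempt would be an induction on the number of boundary repetitions. Choose a repeated vertex $v$ of $\partial D_0$ splitting $\gamma$ as $\gamma_1\gamma_2$ at $v$. By minimality, $\operatorname{FillArea}(\gamma)=\operatorname{FillArea}(\gamma_1)+\operatorname{FillArea}(\gamma_2)$, using \Cref{lem:boundary-synthesis} for the inequality $\le$. Apply induction to $\gamma_1,\gamma_2$ to get simple-boundary fillers. Then combine them via \Cref{lem:disk-replacement} on a filler whose face structure realises the concatenation without identifying interior points of the two pieces.

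When one $\gamma_i$ has zero filling area, it must be a tree loop. In that case I would fall back on absorbing it into an adjacent face of the other piece by subdividing the boundary combinatorially, which does not change the cell count. In $X^{\pm}$ this absorption is unnecessary, since the free cancellation cells are available. The same argument applies there verbatim with $\operatorname{FillArea}^{\pm}$.
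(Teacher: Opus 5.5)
There is a genuine gap, and it cannot be closed, because the statement is false as written.

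\textbf{Where your argument breaks.} The gap is in your induction step. After obtaining simple-boundary fillers of $\gamma_1$ and $\gamma_2$, you propose to ``combine them via \Cref{lem:disk-replacement} on a filler whose face structure realises the concatenation without identifying interior points''. No such construction is available.
\begin{itemize}
\item The natural combination is the wedge at $v$, which has exactly the non-simple boundary you started with.
\item A filler with simple boundary and positive area is a closed disk (\Cref{lem:simple-boundary-top-disk}). In such a disk, faces must be glued along interior edges carrying matching edges of $X$.
\item The area is pinned at $\operatorname{FillArea}_X(\gamma_1)+\operatorname{FillArea}_X(\gamma_2)$, so there is no budget for connecting cells.
\end{itemize}
Your fallback for a tree loop fails for a more basic reason. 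Each face of a combinatorial disk map maps homeomorphically onto a cell of $X$ and respects its attaching map, so its boundary length is fixed by the target cell and cannot be ``subdivided''. In $X^{\pm}$ you could insert free bigons, but that raises the area and violates (iii). Your own remark that doubled bridge edges cannot be thickened without a new cell is the real obstruction.

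\textbf{Counterexample.} Let $\mathcal P=\langle a,b\mid a^3,b^3\rangle$. Let $X$ be $X_{\mathcal P}$, $\widetilde X_{\mathcal P}$, or either free completion, and let $\gamma=a^3b^3$.
\begin{itemize}
\item Taking exponent sums in the free group, every filler contains a face labelled $a^{\pm3}$ and a face labelled $b^{\pm3}$; free bigons contribute nothing. So $\operatorname{FillArea}(\gamma)=2$, realised by two triangles wedged at a vertex.
\item A simple-boundary filler of area $2$ would be a closed disk tiled by one $a$-triangle and one $b$-triangle.
\item These two faces must share an interior edge; otherwise the open disk minus its interior vertices would split into two disjoint nonempty open sets. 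That edge would have to be labelled both $a$ and $b$, which is impossible.
\end{itemize}
An even simpler example is $\gamma=a^3bb^{-1}$. It has filling area $1$ (a triangle with a whisker), but a one-face filler with simple boundary would need a cell of boundary length at least $5$.

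\textbf{The paper's proof fails at the same point.} After contracting a maximal tree, it asserts that converse van Kampen synthesis yields a diagram with exactly $N$ faces and \emph{simple} outer boundary. That construction (a bouquet of lollipops followed by folding) only guarantees at most $N$ faces. In general it leaves boundary cut vertices, exactly as in the example above.

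\textbf{What does hold.}
\begin{itemize}
\item If $\gamma$ is itself a simple loop in $X$, every area-minimising filler already has simple boundary, since a repeated vertex of $\partial D$ would map to a repeated vertex of $\gamma$.
\item For general $\gamma$, one only has the lobe-wise statement of \Cref{rem:simple-boundary-filler}: each lobe of a minimal filler is a simple-boundary minimal filler of its own lobe loop.
\end{itemize}
Later applications of this lemma to non-simple loops would need to be re-examined in this light.
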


\begin{proof}
Choose any area-minimising filler $\phi\colon D\to X$ of $\gamma$
with $N:=\operatorname{Area}(D)=\operatorname{FillArea}_X(\gamma)$.
Let $K\subset X$ be the finite connected subcomplex containing the
images of $D$ and $\gamma$.
Choose a maximal tree $\mathcal T\subset K^{(1)}$.
The pair $(K,\mathcal T)$ defines a finite presentation
$\mathcal P_{K}$: generators are the non-tree edges of $K$,
and relators are the boundary words of the $2$-cells of $K$
with tree-edge factors deleted.
Reading off conjugates from $\phi$ via a spanning tree of $D$
gives an expression of $\gamma$ as a product of exactly $N$
conjugates of $2$-cell boundary words of $K$
(\cite[Ch.~V]{LyndonSchupp}).
Deleting tree-edge factors converts this to a product of $N$
conjugates of relators of $\mathcal P_{K}$ for the tree-reduced
word $\pi(\gamma)$.
The converse van Kampen synthesis
(\cite[Ch.~V]{LyndonSchupp}) produces a diagram
$D'$ over $\mathcal P_{K}$ with boundary word $\pi(\gamma)$,
exactly $N$ faces, and simple outer boundary.
Expanding each generator label of $D'$ back to the corresponding
non-tree edge of $K$ and reinserting the tree-path factors in each
face boundary and corridor gives a combinatorial disk map
$\psi\colon D''\to K\subset X$ with boundary word $\gamma$,
exactly $N$ faces, and simple boundary
(the expansion subdivides edges of $D'$ but does not identify
boundary vertices).
The same argument applies in $X^{\pm}$.
\end{proof}

\begin{corollary}[Simple-boundary fillers have connected face-dual]
\label{cor:simple-boundary-connected-dual}
Under the hypotheses of \Cref{lem:simple-boundary-exists-lemma},
the filler $D'$ has connected face-dual graph.
\end{corollary}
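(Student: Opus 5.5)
By \Cref{lem:simple-boundary-top-disk}, $|D'|$ is homeomorphic to a closed disk. Here $D'$ is the filler produced by \Cref{lem:simple-boundary-exists-lemma}; it has simple boundary and $\operatorname{Area}(D')=\operatorname{FillArea}_X(\gamma)\ge 1$. The plan is to deduce connectivity of the face-dual graph from this topological fact. I will show that any two faces are joined by a chain of faces, consecutive ones sharing an edge, and I will never pass through the exterior vertex $\infty$. I read ``face-dual graph'' here as the graph on interior faces $F(D')$, with an edge for each shared edge. Including $\infty$ would only make connectivity easier.

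\textbf{Key step.} Suppose $F(D')=F_1\sqcup F_2$ with both parts nonempty and no edge of $D'$ bordering a face of $F_1$ on one side and a face of $F_2$ on the other. Let $U_i$ be the union of the open $2$-cells in $F_i$ together with the open edges whose faces on both sides lie in $F_i$. Every interior edge borders two faces. These two faces lie in the same class, so each interior edge belongs to exactly one $U_i$.

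Now consider an interior vertex $v$. Since $|D'|$ is a closed disk, the link of $v$ is a circle, so the faces around $v$ form a cyclic sequence in which consecutive faces share an interior edge. Hence all faces around $v$ lie in the same class, and we add $v$ to the corresponding $U_i$. After this, $\operatorname{int}|D'|=U_1\sqcup U_2$, and both pieces are open: open cells are open, and neighbourhoods of interior edges and interior vertices lie in a single class. The interior of a closed disk is connected, which is a contradiction.

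\textbf{Main obstacle.} The delicate point is that $\operatorname{int}|D'|$ really consists only of open faces, interior edges and interior vertices. Since $|D'|$ is a closed disk and $\partial D'$ is a simple cycle, the topological boundary equals $|\partial D'|$. Moreover, by the proof of \Cref{lem:simple-boundary-top-disk} there are no face-free edges. Self-loops in the dual do not matter: an edge with the same face on both sides is harmless.

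\textbf{Free-completed case.} The same argument applies verbatim in $X^{\pm}$, since it uses only the topology of $D'$.
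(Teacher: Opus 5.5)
Your proof is correct and follows essentially the paper's route: both use \Cref{lem:simple-boundary-top-disk} to get $|D'|\cong\bar D^2$, and then deduce face-dual connectivity from the connectedness of the open disk $\operatorname{int}|D'|$. The paper phrases this as a path between two faces that crosses finitely many interior edges transversally; your open-partition argument, which absorbs interior vertices through the cyclic order of corners, is a slightly more careful version of the same idea because it avoids having to perturb that path off vertices.
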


\begin{proof}
Since $D'$ has simple boundary and $\operatorname{Area}(D')\ge 1$,
the underlying space $|D'|$ is homeomorphic to a closed disk
(\Cref{lem:simple-boundary-top-disk}).
Its interior is an open disk subdivided by interior edges into face
regions.
Any two faces can be connected by a path through the open interior
crossing finitely many interior edges; each crossing gives a face-dual
edge.
Hence the face-dual graph is connected.
\end{proof}

\begin{lemma}[Simple-boundary realisation for simple loops]
\label{lem:simple-boundary-free-general}
Let $X$ be a locally finite combinatorial $2$-complex and let
$\gamma$ be a null-homotopic closed combinatorial loop of positive
length in $(X^{\pm})^{(1)}$ whose boundary walk is a
\emph{simple closed curve} (no repeated vertices).
Then there exists a free-completed disk map
$\psi\colon D'\to X^{\pm}$ with simple boundary,
$\psi(\partial D')=\gamma$, and
$\operatorname{Area}(D')
=\operatorname{FillArea}^{\pm}_X(\gamma)$
when $\operatorname{FillArea}^{\pm}_X(\gamma)\ge 1$, or
$\operatorname{Area}(D')=1$
when $\operatorname{FillArea}^{\pm}_X(\gamma)=0$.
\end{lemma}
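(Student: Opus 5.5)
We split into the two cases of the statement. If $\operatorname{FillArea}^{\pm}_X(\gamma)\ge 1$, we simply invoke \Cref{lem:simple-boundary-exists-lemma} in its free-completed form: it gives an area-minimising free-completed disk map $\psi\colon D'\to X^{\pm}$ with $\psi(\partial D')=\gamma$ and simple boundary. Nothing further is needed, and in particular we do not use the simplicity of $\gamma$ in this case.

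The substantive case is $\operatorname{FillArea}^{\pm}_X(\gamma)=0$. Here $\gamma$ bounds a zero-area diagram, i.e.\ a tree, so its label is freely trivial. But then $\gamma$, being a closed edge-walk of positive length, would have to backtrack somewhere, and a walk that backtracks revisits a vertex. This contradicts the hypothesis that $\gamma$ is a simple closed curve, unless $\gamma$ traverses a single edge or a pair of parallel edges.

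\emph{Main obstacle.} The difficulty is to make this precise given multi-edges and the free completion. A simple closed walk of length $1$ is a loop edge, which is excluded by our conventions. A simple closed walk of length $2$ traverses two distinct edges $e,e'$ between the same pair of vertices, and is freely trivial as a word only if its labels cancel formally.

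\emph{Construction of the filler.} In $X^{\pm}$, I would interpret this situation using the free completion, which by its definition adjoins a $2$-cell (a ``free-cancellation cell'') along every freely trivial closed walk of small length, in particular along such a bigon $e e'^{-1}$. I would take $D'$ to be that single $2$-cell, with simple boundary of length $|\gamma|$. This has area $1$, as the statement prescribes.

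If instead the free completion is defined so that freely trivial walks of arbitrary length are admissible, I would argue differently. The simple loop $\gamma$ is then the boundary of a single formal face whose label is a freely trivial word, which is allowed in $X^{\pm}$, so again a one-cell disk $D'$ with simple boundary works, with $\operatorname{Area}(D')=1$.

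In either reading, the fact that $\gamma$ is simple is exactly what guarantees that the one-cell disk is a genuine disk map with simple boundary, since the boundary attachment makes no vertex identifications. This justifies the area-$1$ exceptional clause in the statement.
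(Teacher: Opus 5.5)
Your first case (filling area at least $1$) is exactly the paper's argument. The second case has a genuine gap. You never determine what $\gamma$ actually is when $\operatorname{FillArea}^{\pm}_X(\gamma)=0$. Instead you assert that a simple length-$2$ walk consists of two distinct parallel edges $e\neq e'$, and then justify the one-cell filler by properties of $X^{\pm}$ that it does not have. By \Cref{def:free-completion}, $X^{\pm}$ adjoins exactly one bigon $B_{\underline e}$ per unoriented edge, attached along $e\bar e$. There is no cell along $e e'^{-1}$ for $e\neq e'$, and none along general freely trivial walks of small or arbitrary length; even the bounded path-completion only adds cells along $pp^{-1}$. So neither of your two ``readings'' is the definition in force. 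Under the actual definition, your filler would not exist in the case you single out.

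The missing step is short. A zero-area filler is a disk map $\tau\colon T\to X^{\pm}$ from a tree $T$ (no faces, edges mapped to edges), and $\gamma$ is the image of the boundary walk of $T$. If $T$ has at least two edges, it has a vertex of degree at least $2$. The boundary walk of $T$ visits that vertex more than once, so $\gamma$ revisits its image, contradicting simplicity. Positivity of $|\gamma|$ excludes the one-vertex tree. Hence $T$ is a single edge $a$, and $\gamma=\tau(a)\tau(\bar a)=e\bar e$ for a single edge $e$ of $X$. In particular, the parallel-edge walk $ee'^{-1}$ with $e\neq e'$ is not freely reducible, so it never has filling area $0$.

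Only at this point does the free completion supply a cell. The bigon $B_{\underline e}$, viewed as a one-cell disk whose source boundary is a $2$-cycle on two distinct vertices, is a simple-boundary filler of area $1$. Note also that the source boundary of a one-cell disk is automatically simple. The role of the simplicity of $\gamma$ is to force $|\gamma|=2$ and $\gamma=e\bar e$, not to prevent vertex identifications in the attachment, as your last paragraph suggests.
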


\begin{proof}
If $\operatorname{FillArea}^{\pm}_X(\gamma)\ge 1$, apply
\Cref{lem:simple-boundary-exists-lemma} in the free-completed category.

If $\operatorname{FillArea}^{\pm}_X(\gamma)=0$, then $\gamma$ is
freely trivial: it bounds a face-free tree filler.
Since $\gamma$ is a simple closed curve, the tree must be a single
edge (a tree with $\ge 2$ edges has a boundary walk that revisits
vertices).
Hence $|\gamma|=2$, say $\gamma=e\bar e$, and a single free bigon
with boundary $e\bar e$ is a simple-boundary filler of area $1$.
\end{proof}

\begin{remark}[Scope of simple-boundary realisation]
\label{rem:scope-simple-boundary}
\Cref{lem:simple-boundary-free-general} requires the loop to be a
simple closed curve.
For non-simple loops of length $\ge 4$ with
$\operatorname{FillArea}^{\pm}=0$, no simple-boundary filler exists:
an Euler-characteristic count shows that a disk whose faces are all
free bigons has boundary length at most $2$.
The later proofs handle non-simple loops via
\Cref{lem:boundary-synthesis}, which imposes no simple-boundary
condition on the hole-fillers.
\end{remark}

\begin{lemma}[Bounded-length fillers in the free-completed category]
\label{lem:bounded-length-connected-dual}
Let $X$ be the universal cover of a finite presentation complex, and
fix $K\ge 1$.
Then there exists $A_K\ge 1$ such that every null-homotopic closed
edge-loop $\beta$ in $(X^{\pm})^{(1)}$ with $1\le |\beta|\le K$
bounds a free-completed disk map $E_\beta\to X^{\pm}$ with
$\operatorname{Area}(E_\beta)\le A_K$.
If $\operatorname{FillArea}^{\pm}(\beta)\ge 1$, one may further
choose $E_\beta$ with simple boundary
(\Cref{lem:simple-boundary-exists-lemma}).
\end{lemma}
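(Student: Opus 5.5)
The plan is a finiteness argument: there are only finitely many null-homotopic loops of length at most $K$ up to the deck action, and we take the largest filling area among them. Let $G=G(\mathcal P)$ act on $X$ by deck transformations. The action is cellular, preserves labels, and is transitive on vertices. It extends to the free completion $X^{\pm}$, since the free bigons are attached equivariantly to pairs of edges. Every closed edge-loop $\beta$ of length at most $K$ in $(X^{\pm})^{(1)}$ has the same $1$-skeleton data as a loop in $X^{(1)}$. Translating $\beta$ so that its initial vertex is the base vertex $\tilde 1$ gives a loop $g\beta$ with $|g\beta|=|\beta|$. Any disk map filling $g\beta$, composed with $g^{-1}$, fills $\beta$ with the same area. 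So it suffices to bound filling areas for loops based at $\tilde 1$.

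The key steps are as follows.

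\emph{Finiteness.} Because the presentation is finite, $X^{(1)}$ is the Cayley graph on $S$, which is locally finite of degree at most $2|S|$. Hence there are at most $\sum_{\ell=1}^{K}(2|S|)^\ell<\infty$ edge-loops of length at most $K$ based at $\tilde 1$. The free completion does not enlarge the $1$-skeleton, so the same count applies in $(X^{\pm})^{(1)}$.

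\emph{Existence of fillers.} Let $\beta$ be one of these finitely many loops. If it is null-homotopic in $X^{\pm}$, then its label is trivial in $G$: the free bigons kill nothing in $\pi_1$, because $X^{\pm}$ deformation retracts to $X$ up to bigon collapses. So $\beta$ bounds a van Kampen diagram over $\mathcal P$, hence a disk map into $X\subset X^{\pm}$ of finite area. Choose such a filler $E_\beta$ of minimal area $\operatorname{FillArea}^{\pm}(\beta)$.

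\emph{Definition of $A_K$.} Set $A_K$ to be the maximum of $1$ and these finitely many minimal areas. Freely trivial loops, whose minimal area is $0$, still admit a filler of area at most $1$ or $0$, so taking the maximum with $1$ causes no harm.

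\emph{Simple boundary.} When $\operatorname{FillArea}^{\pm}(\beta)\ge 1$, the simple-boundary clause follows at once from \Cref{lem:simple-boundary-exists-lemma} in the category $X^{\pm}$. That lemma produces a simple-boundary filler of exactly the minimal area, which is at most $A_K$.

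The only delicate point is checking that null-homotopy in $X^{\pm}$ agrees with null-homotopy in $X$ and that the deck action respects the free completion. Both follow from the construction of $X^{\pm}$ as $X$ with bigons attached along pairs of edges $e,\bar e$. These bigons are contractible additions that do not change the $1$-skeleton, and they are $G$-equivariant. If the free completion is defined in the later section with additional cells, I would check that it remains $G$-cocompact. In that case the same orbit-counting argument applies verbatim, with local finiteness of the $1$-skeleton as the only input needed.
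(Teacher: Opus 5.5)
Your proposal is correct and follows the paper's argument: finitely many labelled loops of length at most $K$ up to deck transformations, a chosen filler for each, $A_K$ defined as the maximum area, transport of fillers by deck transformations, and the simple-boundary clause taken from \Cref{lem:simple-boundary-exists-lemma}. One small inaccuracy, which does not affect the argument: attaching bigons along the loops $e\bar e$ does not make $X^{\pm}$ deformation retract onto $X$, since each bigon is a cellular $2$-cycle and so wedges on a $2$-sphere. You do not need that claim anyway, because $X$ is simply connected, so every closed loop in $X^{(1)}$ already bounds a van Kampen diagram mapping into $X\subset X^{\pm}$.
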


\begin{proof}
Because the presentation is finite, there are only finitely many
labelled loop types of length at most $K$ in $(X^{\pm})^{(1)}$,
up to deck transformation.
For each null-homotopic labelled loop type, choose a filler
(which exists since the loop is null-homotopic).
Let $A_K$ be the maximum area over these finitely many choices.
Transporting the chosen model diagram by the appropriate deck
transformation yields the required filler for any based loop of the
same labelled type.
\end{proof}

\needspace{4\baselineskip}
\begin{remark}[Lobe minimality]
\label{rem:simple-boundary-filler}
If $\Delta$ is area-minimising, then each lobe $\Omega_k$
(obtained by cutting a disk subdiagram at boundary cut vertices)
satisfies
\begin{equation}\label{eq:lobe-bound}
\operatorname{Area}(\Omega_k)
=\operatorname{FillArea}_X(\phi(\partial\Omega_k)).
\end{equation}

\emph{Proof.}
Let $w_k:=\phi(\partial\Omega_k)$ and suppose
$\operatorname{Area}(\Omega_k)>\operatorname{FillArea}_X(w_k)$.
Let $D$ be any filler of $w_k$ with
$\operatorname{Area}(D)=\operatorname{FillArea}_X(w_k)$
($D$ need not have simple boundary).
The complement $K:=\Delta\setminus\operatorname{int}(\Omega_k)$ is a
finite connected planar $2$-complex whose outer boundary walk is
$\partial\Delta$ and whose unique inner boundary walk is
$\partial\Omega_k$.
Applying \Cref{lem:boundary-synthesis} to $K$ and $D$ produces a
disk map for $\phi(\partial\Delta)$ of area
$\operatorname{Area}(\Delta)-\operatorname{Area}(\Omega_k)
+\operatorname{FillArea}_X(w_k)
<\operatorname{Area}(\Delta)$,
contradicting area-minimality.
The same proof works in $X^{\pm}$.
\end{remark}

\begin{corollary}[Disk subdiagram replacement (general boundary)]
\label{cor:disk-replacement-general}
\Cref{lem:disk-replacement} extends to disk subdiagrams
$\Omega\subset\Delta$ whose boundary walk may revisit vertices, with
the following modification: one \emph{decomposes $\Omega$ at its
boundary cut vertices} into lobes
$\Omega_1,\dots,\Omega_m$ (each a disk subdiagram with simple
boundary walk $w_k$, meeting pairwise only at boundary cut vertices)
and replaces each lobe $\Omega_k$ by a disk map
$\Omega'_k$ with boundary word $w_k$ and \emph{simple boundary}.
By applying \Cref{lem:disk-replacement} to each lobe in
succession, the
result is a disk map $\Delta'\to X$ with the same outer boundary walk
as $\Delta$ and
$\operatorname{Area}(\Delta')
=\operatorname{Area}(\Delta)-\operatorname{Area}(\Omega)
+\sum_{k=1}^m\operatorname{Area}(\Omega'_k)$.
\end{corollary}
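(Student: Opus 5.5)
The plan is to reduce the general-boundary case to \Cref{lem:disk-replacement} applied lobe by lobe. First, I would decompose $\Omega$ at the boundary cut vertices of $\partial\Omega$ into lobes $\Omega_1,\dots,\Omega_m$. The tree structure of cut vertices in a planar disk gives that each lobe is a disk subdiagram of $\Delta$ whose boundary walk $w_k$ is a simple closed curve. Distinct lobes meet only at cut vertices, and $\operatorname{Area}(\Omega)=\sum_k\operatorname{Area}(\Omega_k)$, because every $2$-cell of $\Omega$ lies in exactly one lobe. This uses \Cref{lem:simple-boundary-top-disk} for lobes of positive area. Lobes of zero area are single bridge edges; I would leave them untouched, or equivalently replace them by themselves.

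Next, I would perform the replacements inductively. Set $\Delta^{(0)}:=\Delta$. At stage $k$, the lobe $\Omega_k$ is still a subcomplex of $\Delta^{(k-1)}$, since the earlier replacements only altered the interiors of $\Omega_1,\dots,\Omega_{k-1}$, and those interiors are disjoint from $\Omega_k$. Its boundary is still a simple closed curve in $\Delta^{(k-1)}$, because the vertices of $\partial\Omega_k$ are vertices of $\Delta$ that are not interior to earlier lobes. I then apply \Cref{lem:disk-replacement} with $\Omega'_k$, which has simple boundary matching $w_k$ edge by edge. This gives a disk map $\Delta^{(k)}\to X$ with the same outer boundary walk and area $\operatorname{Area}(\Delta^{(k-1)})-\operatorname{Area}(\Omega_k)+\operatorname{Area}(\Omega'_k)$. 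Telescoping over $k$ and summing the lobe areas yields the stated formula.

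The main obstacle is the bookkeeping claim that, after earlier replacements, each remaining lobe $\Omega_k$ is still a disk subdiagram with simple boundary inside the new diagram, and in particular that no boundary vertex of $\Omega_k$ became identified with another. I would handle this by noting that the pushout in \Cref{lem:disk-replacement} is the identity on the complement $\Sigma$, which contains $\Omega_k$ together with all its boundary vertices, including the shared cut vertices. The rotation-system splicing only changes sectors on the $\Omega_j$-side for $j<k$, so at each cut vertex the sector belonging to $\Omega_k$ is unchanged.
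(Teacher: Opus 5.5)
Your proposal is correct and follows the paper's argument: cut $\Omega$ into lobes at its boundary cut vertices, apply \Cref{lem:disk-replacement} to the lobes one at a time, and telescope the areas. Your added bookkeeping is sound. Later lobes survive unchanged because the pushout leaves the complement $\Sigma$ intact. Your caveat that zero-area bridge-edge lobes must stay in place (i.e.\ be replaced by themselves) covers a case the paper passes over: \Cref{lem:disk-replacement} needs $|\partial\Omega_k|$ to be a cycle graph, so it does not cover replacing such a lobe by a positive-area disk.
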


\begin{theorem}[Spectral characterisation of hyperbolicity]
\label{thm:spectral-hyp-iff}
Let $\mathcal P=\langle S\mid R\rangle$ be a finite presentation.
\begin{enumerate}[label=\textup{(\roman*)}]
\item If $\inf_{n\ge 1}\Lambda_{\mathcal P}(n)>0$ and a uniform vertex
degree bound $\deg_\Delta(v)\le D$ holds for all
area-minimising diagrams,
then $G(\mathcal P)$ is word-hyperbolic.
\item If $G(\mathcal P)$ is word-hyperbolic, then
$\inf_{n\ge 1}\Lambda_{\mathcal P}(n)>0$.
No vertex degree bound is assumed.
\end{enumerate}
\end{theorem}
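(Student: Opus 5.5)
Part~(i) follows from \Cref{thm:spectral-dehn}. Put $\lambda_0:=\inf_n\Lambda_{\mathcal P}(n)>0$. The spectral-isoperimetric inequality gives, for every $n\ge 1$,
\[
\delta_{\mathcal P}(n)\le\frac{D}{\ell_{\min}}\bigl(1+\lambda_0^{-1}\bigr)n .
\]
This is a linear isoperimetric inequality for cyclically reduced words. The final remark of \Cref{def:spectral-dehn} extends it to all null-homotopic words, since $\operatorname{Area}^{\min}(w)\le\operatorname{Area}^{\min}(w_{\mathrm{cr}})$. The standard characterisation that a linear Dehn function is equivalent to hyperbolicity \cite{Gromov1987,BridsonHaefliger1999} then shows $G(\mathcal P)$ is word-hyperbolic.

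For part~(ii) we have no degree bound, so I would work with the Cheeger constant rather than with degrees. By \Cref{lem:cheeger-discrete}(i), $\lambda_1(\Delta)\ge h_D(\Delta)^2/2$. It therefore suffices to find $c>0$ such that $|\partial_E A|\ge c\,\mathrm{vol}(A)$ for every area-minimising $\Delta$ and every nonempty $A\subset V^\circ$.

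\emph{Step 1: reduce to connected $A$.} Split $A$ into the components of its induced subgraph. There are no edges between distinct components, so $\partial_E A$ is the disjoint union of the $\partial_E A_i$ and $\mathrm{vol}(A)=\sum_i\mathrm{vol}(A_i)$. The ratio for $A$ is thus at least the minimum of the ratios for the $A_i$.

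\emph{Step 2: encapsulate.} For connected $A$, apply \Cref{lem:encapsulation} to obtain a disk subdiagram $\Omega$ with $A\subset V^\circ(\Omega)$, $|\partial\Omega|\le 2L_{\max}|\partial_E A|$, and $\operatorname{Area}(\Omega)=\sum_k\operatorname{FillArea}(w_k)$ over its simple lobe boundary walks $w_k$. Hyperbolicity gives a linear Dehn function, $\operatorname{FillArea}(w)\le K|w|$, and hence
\[
\operatorname{Area}(\Omega)\le K\sum_k|w_k|=K|\partial\Omega|.
\]

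\emph{Step 3: bound the volume by the area.} Every vertex $v\in A$ is interior to $\Omega$. Every edge at $v$ is interior, so by \Cref{rem:no-interior-face-free} it lies on a face, and that face lies in $\Omega$. Each edge borders at most two faces and each face has at most $L_{\max}$ edges, so
\[
\mathrm{vol}(A)\le 2\cdot\#\{\text{edges of }\Omega\text{ meeting }A\}\le 2\cdot 2L_{\max}\operatorname{Area}(\Omega).
\]
Chaining the three steps gives $\mathrm{vol}(A)\le 4L_{\max}\cdot K\cdot 2L_{\max}|\partial_E A|$. Hence $h_D\ge(8KL_{\max}^2)^{-1}$ and $\Lambda_{\mathcal P}(n)\ge\tfrac12(8KL_{\max}^2)^{-2}$ for all $n$.

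The main obstacle I expect is Step~2's reliance on lobe minimality. The boundary walks $w_k$ need not be cyclically reduced or even reduced, so the linear bound has to be applied to the area function of arbitrary null-homotopic words. That is exactly what the remark after \Cref{def:spectral-dehn} supplies. One must also check that area-minimality of $\Delta$ transfers to each lobe, which is \Cref{rem:simple-boundary-filler}. Finally, the double counting in Step~3 must handle multi-edges correctly, and it does, because multi-edges are counted with multiplicity in both $\deg$ and $\operatorname{Area}$ bookkeeping.
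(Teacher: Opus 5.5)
Your proposal is correct and follows the paper's proof essentially step for step: part~(i) goes through \Cref{thm:spectral-dehn} and the linear-Dehn characterisation of hyperbolicity, and part~(ii) goes through the Dirichlet Cheeger inequality, the reduction to connected $A$, encapsulation with lobe-wise minimality, and the linear isoperimetric inequality. The only deviation is in Step~3, where you bound $\mathrm{vol}(A)$ by charging each edge at $A$ to a face of $\Omega$ containing it, instead of using the identity $2|E(\Omega)|=\sum_f|\partial f|+|\partial\Omega|$; the fact you need there is that each such edge lies on \emph{at least one} face (not ``at most two''), and the change only affects the constant.
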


\begin{proof}
\emph{(i)}
A uniform lower bound $\Lambda_{\mathcal P}(n)\ge c>0$ together with
the degree bound gives a linear isoperimetric inequality by
\Cref{thm:spectral-dehn}, hence hyperbolicity.

\emph{(ii)}
Hyperbolicity gives a linear isoperimetric inequality
$\operatorname{Area}^{\min}_{\mathcal P}(w)\le C_{\mathrm{iso}}|w|$.
If the defining class of $\Lambda_{\mathcal P}(n)$ is empty, then
$\Lambda_{\mathcal P}(n)=+\infty$ and there is nothing to prove.
Otherwise let $\Delta$ be an area-minimising van
Kampen diagram in that class, with
$|\partial\Delta|\le n$; then $V^\circ(\Delta)\neq\varnothing$
by the definition of the class.
We show $h_D(\Delta)\ge c_*>0$ uniformly
(\Cref{def:cheeger-disk}).
It suffices to bound connected subsets: for general
$\varnothing\neq A\subset V^\circ$,
the connected component $A_0$ of $A$ (in the induced subgraph)
maximising $\mathrm{vol}(A_0)/|\partial_E A_0|$
satisfies $|\partial_E A_0|/\mathrm{vol}(A_0)\le|\partial_E A|/\mathrm{vol}(A)$,
so we may assume $A$ is connected.

Apply \Cref{lem:encapsulation}: the disk subdiagram $\Omega$
satisfies $A\subset V^\circ(\Omega)$ and
$|\partial\Omega|\le 2L_{\max}|\partial_E A|$.
Since every vertex of $A$ is interior to $\Omega$,
every edge incident to a vertex of $A$ lies in $\Omega$, so
$\mathrm{vol}(A)\le 2|E(\Omega)|$.
By the edge-face incidence identity
(each interior edge of $\Omega$ contributes $2$ to
$\sum_f|\partial f|$ and $0$ to $|\partial\Omega|$;
each boundary edge contributes $1$ to each; summing gives
$2|E(\Omega)|=\sum_f|\partial f|+|\partial\Omega|$),
$2|E(\Omega)|=\sum_{f\subset\Omega}|\partial f|+|\partial\Omega|
\le L_{\max} \operatorname{Area}(\Omega)+|\partial\Omega|$.
By the lobe-wise minimality conclusion of \Cref{lem:encapsulation}
and the linear isoperimetric inequality applied to each lobe
(the lobe boundary words need not be cyclically reduced, but
$\operatorname{FillArea}(w_k)\le\delta_{\mathcal P}(|w_k|)$
holds for all null-homotopic words by cyclic reduction),
$\operatorname{Area}(\Omega)
=\sum_k\operatorname{FillArea}(w_k)
\le C_{\mathrm{iso}}\sum_k|w_k|
= C_{\mathrm{iso}}|\partial\Omega|
\le 2C_{\mathrm{iso}}L_{\max}|\partial_E A|$.
Hence
$\mathrm{vol}(A)\le 2L_{\max}(C_{\mathrm{iso}}L_{\max}+1)
|\partial_E A|$,
so $h_D(\Delta)\ge c_*:=
1/(2L_{\max}(C_{\mathrm{iso}}L_{\max}+1))$.

The Dirichlet Cheeger inequality (\Cref{lem:cheeger-discrete}(i)) gives
$\lambda_1(\Delta)\ge c_*^2/2$,
with $c_*=c_*(L_{\max},C_{\mathrm{iso}})$.
Since $\Delta$ was an arbitrary area-minimising diagram with
$|\partial\Delta|\le n$, it follows that
$\Lambda_{\mathcal P}(n)\ge c_*^2/2$ for all $n$.
\end{proof}

\begin{remark}[Role of bounded geometry]\label{rem:bg-role}
Direction~(i) uses the vertex degree bound through
\Cref{thm:spectral-dehn}.
It can be weakened to $d_{\max}(n)=o(n)$
(where $d_{\max}(n):=\sup\{d_{\max}(\Delta):
\Delta\text{ area-minimising},\ |\partial\Delta|\le n\}$)
via the Isoperimetric Gap Theorem
\cite{Papasoglu1995,Bowditch1995}.
Direction~(ii) requires only $L_{\max}$.
\Cref{prop:degree-counterexample} shows that a uniform vertex degree
bound does not follow from finite presentability alone.
\end{remark}

\needspace{4\baselineskip}
\begin{proposition}[Automatic bounded geometry fails]\label{prop:degree-counterexample}
Let $\mathcal P_{\mathrm{fan}}:=\langle a,b,t\mid atb,\ b^{-1}ta^{-1}\rangle$.
For every $m\ge 1$ there exists an area-minimising diagram $\Delta_m$ with
boundary word $t^{2m}$ and an interior vertex of degree $2m$.
\end{proposition}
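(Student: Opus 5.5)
We construct an explicit fan diagram and certify its area-minimality with an exponent-sum count. In $G(\mathcal P_{\mathrm{fan}})$ the relators give $t=a^{-1}b^{-1}=(ba)^{-1}$ and $t=ba$. Hence $t^2=1$, so $t^{2m}$ is null-homotopic, and it is cyclically reduced.

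\emph{Construction.} Take a central vertex $v$ and boundary vertices $x_0,\dots,x_{2m-1}$, with indices mod $2m$. Join consecutive boundary vertices by edges $x_i\to x_{i+1}$ labelled $t$. Join $v$ to each $x_i$ by a spoke oriented $v\to x_i$, labelled $a$ for $i$ even and $b^{-1}$ for $i$ odd. Each triangle $(x_i,x_{i+1},v)$ is then a $2$-cell:
\begin{itemize}
\item for $i$ even, reading $x_i\to x_{i+1}\to v\to x_i$ gives $t\,b\,a$, a cyclic conjugate of $atb$;
\item for $i$ odd, the same reading gives $t\,a^{-1}b^{-1}$, a cyclic conjugate of $b^{-1}ta^{-1}$.
\end{itemize}
The triangles close up consistently around $v$ precisely because the number of triangles, $2m$, is even, so the spoke labels alternate without conflict. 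The result $\Delta_m$ is a planar fan: a closed disk with simple boundary walk $t^{2m}$, $\operatorname{Area}(\Delta_m)=2m$, and a single interior vertex $v$. That vertex has $\deg(v)=2m$, one edge incidence per spoke.

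\emph{Minimality.} Consider the homomorphism from the free group $F(a,b,t)$ to $\mathbb Z$ recording the exponent sum of $t$. Each relator $atb$ and $b^{-1}ta^{-1}$ has $t$-exponent sum exactly $1$. Any van Kampen diagram of area $N$ writes its boundary word freely as a product of $N$ conjugates of relators$^{\pm1}$. The $t$-exponent of that product therefore has absolute value at most $N$. Since $t^{2m}$ has $t$-exponent $2m$, every diagram for $t^{2m}$ has area at least $2m$. Hence $\operatorname{Area}^{\min}(t^{2m})=2m=\operatorname{Area}(\Delta_m)$, and $\Delta_m$ is area-minimising.

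Once the fan is written down, the lower-bound argument is immediate. The only real check is label consistency around $v$, which is where the even length $2m$ is used.
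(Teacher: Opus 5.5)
Your proposal is correct and follows the paper's proof: the same alternating fan with spokes $a$ and $b^{-1}$, and the same $t$-exponent-sum lower bound $N\ge 2m$. You derive that bound from the product-of-conjugates form of van Kampen's lemma, while the paper counts cancelling interior $t$-edges in the diagram, which is the same argument. Your explicit check of the face labels $tba$ and $ta^{-1}b^{-1}$, and of the parity condition where the spoke labels wrap around, is a useful detail that the paper leaves implicit.
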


\begin{proof}
The word $t^{2m}$ is null-homotopic:
the relators give $t=a^{-1}b^{-1}=(ba)^{-1}$ and $t=ba$.
Hence $ba=(ba)^{-1}$, so $(ba)^2=1$, and therefore
$t^2=(ba)^2=1$,
giving $t^{2m}=1$.

Form an alternating fan: a $2m$-gon with central vertex $v$,
spokes labelled alternately $a$ and $b^{-1}$, boundary edges
labelled $t$.
Each of the $2m$ triangular faces reads off a cyclic conjugate of
$atb$ or $b^{-1}ta^{-1}$.
For area-minimality: each $2$-cell boundary has $t$-exponent $+1$
or $-1$ (one $t$-edge with appropriate sign).
In any van Kampen diagram for $t^{2m}$ with $N$ faces, interior
$t$-edges cancel in pairs, so the boundary $t$-exponent $2m$ equals
the sum of $N$ values in $\{+1,-1\}$.
Hence $N\ge 2m$.
Since the fan has exactly $2m$ faces, it is area-minimising.
\end{proof}

\subsection{A face-dual spectral profile}
\label{subsec:dual-spectral-profile}
The vertex eigenvalue $\lambda_1(\Delta)$ is not the right
object for the Dehn profile, because the simplest indicator test
introduces a factor of maximal vertex degree
(see \Cref{thm:spectral-dehn}).
We move the spectrum to the face-dual graph,
where the relevant degrees are face lengths and are therefore
automatically bounded by $L_{\max}$.

\begin{definition}[Face-dual Dirichlet eigenvalue]
\label{def:dual-spectral-dehn}
Let $\Delta$ be a disk diagram (not necessarily area-minimising).
For each $2$-cell $f\in F(\Delta)$, let $|\partial f|$ denote its
boundary length.
The \emph{face-dual network} $\Delta^\ast$ has vertex set $F(\Delta)$
and conductances $m(f,f')$ between faces $f,f'\in F(\Delta)$
(including the case $f=f'$), defined as follows.
For distinct faces $f\neq f'$, $m(f,f')$ counts the number of
primal edges shared by $f$ and $f'$ (parallel edges contribute with
multiplicity).
For a single face $f$, the self-conductance $m(f,f)$ is
\emph{twice} the number of interior primal edges that have $f$ on
both sides: each such edge is traversed twice in $\partial f$,
contributing $2$ to $m(f,f)$.
For each face $f$, write $b(f)$ for the number of edge occurrences
on $\partial f$ that lie on $\partial\Delta$.
Then
\[
|\partial f|=\sum_{f'\in F(\Delta)}m(f,f')+b(f),
\]
because each edge occurrence on $\partial f$ is either an interior
edge separating $f$ from some face $f'$ (possibly $f'=f$) or a
boundary edge.
Define the \emph{killed transition operator} on functions
$g\colon F(\Delta)\to\mathbb R$ by
\[
(P^\ast_{\mathrm{kill}} g)(f)
:=\frac{1}{|\partial f|}
\sum_{f'\in F(\Delta)}m(f,f') g(f').
\]
The identity above shows $P^\ast_{\mathrm{kill}}$ is sub-Markov.
Equivalently, from a face $f$, choose uniformly one of the
$|\partial f|$ edge occurrences on $\partial f$; cross it to the
adjacent face $f'$ (possibly $f'=f$) if the edge is interior, or
kill the walk if the edge lies on $\partial\Delta$.

The \emph{first Dirichlet eigenvalue} of the face-dual is
\[
\mu_1(\Delta)
:=\inf_{g\not\equiv 0}
\frac{\sum_{\{f,f'\}}m(f,f')(g(f)-g(f'))^2
+\sum_{f\in F(\Delta)}b(f) g(f)^2}
{\sum_{f\in F(\Delta)}|\partial f| g(f)^2},
\]
where the first sum runs over unordered pairs
of \emph{distinct} faces sharing at least one primal edge.
(Self-loop terms $m(f,f)$ vanish from the numerator because
$(g(f)-g(f))^2=0$, but they contribute to the denominator via
$|\partial f|$.)
Setting $\widetilde g(\infty):=0$ and $\widetilde g(f):=g(f)$,
one checks that the Rayleigh quotient equals
$\langle(I-P^\ast_{\mathrm{kill}})g,g\rangle_\pi
/\langle g,g\rangle_\pi$
where $\langle g,h\rangle_\pi:=\sum_f|\partial f| g(f) h(f)$.
Since $P^\ast_{\mathrm{kill}}$ is self-adjoint with respect to
this inner product (because $m(f,f')=m(f',f)$) and sub-Markov
(so $\rho(P^\ast_{\mathrm{kill}})\le 1$),
the top eigenvalue of $P^\ast_{\mathrm{kill}}$ equals its spectral
radius, hence
$\mu_1(\Delta)=1-\rho(P^\ast_{\mathrm{kill}})$
(see e.g.~\cite[Ch.~2]{Woess2000} for the corresponding statement
on sub-Markov operators with absorbing boundary).
When $F(\Delta)=\varnothing$ we set $\mu_1(\Delta):=+\infty$.

Define the \emph{face-dual spectral Dehn function}
\[
\Lambda^\ast_{\mathcal P}(n)
:=\inf\bigl\{\mu_1(\Delta):\Delta\text{ minimal},
|\partial\Delta|\le n\bigr\}.
\]
(No purity restriction is needed here: the face-dual operator is
insensitive to face-free edges, which bound no face and contribute
nothing to $\mu_1$ or $\widetilde\mu_1$.)
The profile $\Lambda^\ast_{\mathcal P}$ is nonincreasing in $n$,
since the admissible set of diagrams enlarges with $n$.
Here ``minimal'' means area-minimising for a cyclically reduced
null-homotopic boundary word, as in the standing convention;
boundary whiskers do not affect the face-dual spectrum.

We also define the \emph{combinatorial (unweighted) Dirichlet eigenvalue}
\[
\widetilde\mu_1(\Delta)
:=\inf_{g\not\equiv 0}
\frac{\sum_{\{f,f'\}}m(f,f')(g(f)-g(f'))^2
+\sum_{f}b(f) g(f)^2}
{\sum_{f\in F(\Delta)}g(f)^2},
\]
where, as in $\mu_1$, the first sum runs over unordered pairs
of distinct faces.
This replaces the $|\partial f|$-weighted norm by the
counting-measure $\ell^2$-norm.
When $F(\Delta)=\varnothing$ we set $\widetilde\mu_1(\Delta):=+\infty$.
Since $\ell_{\min}\le|\partial f|\le L_{\max}$ for every $2$-cell $f$,
\begin{equation}\label{eq:comb-vs-weighted}
\ell_{\min} \mu_1(\Delta)
\le\widetilde\mu_1(\Delta)
\le L_{\max} \mu_1(\Delta).
\end{equation}
The combinatorial variant is better suited to the
bounded-local-replacement arguments of \S\ref{sec:qi-invariance},
while the weighted variant $\mu_1$ is the natural object for the
Cheeger inequality and the spectral-isoperimetric inequality.
\end{definition}

\begin{remark}[Connectivity of the killed face-dual network]
\label{rem:dual-killed-connected}
Let $\Delta$ be a disk diagram with $F(\Delta)\neq\varnothing$.
Every connected component of the face-dual graph on $F(\Delta)$
contains a face incident to $\partial\Delta$.
Equivalently, after adjoining the absorbing boundary vertex $\infty$
and connecting each face $f$ to $\infty$ with multiplicity $b(f)$,
the resulting killed face-dual network is connected.
Indeed, let $A\subseteq F(\Delta)$ be a dual-connected component and
let $\Sigma$ be the union of its faces.
Since $\Sigma$ is a nonempty connected union of faces in a disk
diagram, it has a boundary edge.
Any boundary edge of $\Sigma$ that is shared with a face outside $A$
would give a dual edge leaving $A$, contradicting the maximality
of the component.
Hence some boundary edge of $\Sigma$ lies on $\partial\Delta$,
so some face of $A$ is adjacent to $\infty$.
\end{remark}

\begin{lemma}[Subdiagram monotonicity for the dual Dirichlet spectrum]
\label{lem:dual-subdiagram-monotonicity}
Let $\Omega\subset \Delta$ be a disk subdiagram.
Then
$\widetilde\mu_1(\Delta)\le \widetilde\mu_1(\Omega)$
and
$\mu_1(\Delta)\le \mu_1(\Omega)$.
\end{lemma}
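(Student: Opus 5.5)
The plan is the standard domain-monotonicity argument for Dirichlet eigenvalues: extend a test function on the faces of $\Omega$ by zero to all of $F(\Delta)$, and check that neither Rayleigh quotient increases. Let $g\colon F(\Omega)\to\mathbb R$ be any nonzero test function and define $G\colon F(\Delta)\to\mathbb R$ by $G:=g$ on $F(\Omega)$ and $G:=0$ on $F(\Delta)\setminus F(\Omega)$. If $F(\Omega)=\varnothing$ both right-hand sides are $+\infty$ and there is nothing to prove, so assume $F(\Omega)\neq\varnothing$.

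The denominators agree. For $\widetilde\mu_1$ the denominator is $\sum_f G(f)^2=\sum_{f\in F(\Omega)}g(f)^2$. For $\mu_1$ it is $\sum_f|\partial f|G(f)^2$, and this matches because a face of $\Omega$ is a $2$-cell of $\Delta$ with the same boundary length $|\partial f|$.

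The main step is to compare numerators edge occurrence by edge occurrence. Each edge occurrence $\epsilon$ on $\partial f$ with $f\in F(\Omega)$ is one of three kinds:
\begin{enumerate}[label=\textup{(\alph*)}]
\item interior to $\Omega$ with face $f'\in F(\Omega)$ on the other side. It lies in the interior of $\Delta$ with the same two faces, so $m_\Omega(f,f')$ and $m_\Delta(f,f')$ count it identically, including self-conductances with $f'=f$.
\item on $\partial\Omega$ and also on $\partial\Delta$. It contributes to $b_\Omega(f)$ and to $b_\Delta(f)$ alike.
\item on $\partial\Omega$ but interior to $\Delta$. It contributes $1$ to $b_\Omega(f)$, while in $\Delta$ it is a dual edge to a face $f'\notin F(\Omega)$, contributing $m_\Delta(f,f')(g(f)-0)^2=g(f)^2$ per occurrence. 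The two contributions are equal.
\end{enumerate}
Pairs of faces both outside $\Omega$ contribute $0$ in $\Delta$, and $b_\Delta(f')G(f')^2=0$ for $f'\notin F(\Omega)$. So the $\Delta$-numerator of $G$ equals the $\Omega$-numerator of $g$ exactly.

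The point needing most care is that this classification is correct. I must check that an edge of $\partial\Omega$ has exactly one $\Omega$-side face occurrence and that the occurrence on the other side belongs to a face outside $\Omega$ or to $\partial\Delta$. The concern is an edge that is interior in $\Delta$, with $\Omega$-faces on both sides, yet lies on $\partial\Omega$. This cannot happen once "disk subdiagram" is read as a subcomplex that is a union of closed $2$-cells together with all their edges: an edge lying in the interior of $\Delta$ with $\Omega$-faces on both sides is interior to $|\Omega|$. A boundary edge traversed twice by $\partial\Omega$ (a bridge) is also harmless, since the face-dual quantities ignore face-free edges and each face occurrence is counted once on its own side. The identity $|\partial f|=\sum_{f'}m(f,f')+b(f)$, applied in both complexes, serves as a consistency check on this bookkeeping.

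With the denominators equal and the numerators equal, the Rayleigh quotients in $\Delta$ and $\Omega$ coincide for these test functions. Taking the infimum over $g$ gives $\widetilde\mu_1(\Delta)\le\widetilde\mu_1(\Omega)$ and $\mu_1(\Delta)\le\mu_1(\Omega)$.
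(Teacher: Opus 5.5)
Your proof is correct and follows essentially the same route as the paper: extend the test function by zero, note that the (weighted or unweighted) denominators are unchanged, and match each $b_\Omega(f)$ occurrence on $\partial\Omega$ either with a $\partial\Delta$ occurrence or with a cross-term $m_\Delta(f,h)\,g(f)^2$ to a face $h\notin F(\Omega)$, so the two numerators agree. Your three-case classification of edge occurrences, including self-conductances, is a somewhat more explicit version of the same bookkeeping the paper does.
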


\begin{proof}
If $F(\Omega)=\varnothing$, then $\widetilde\mu_1(\Omega)
=\mu_1(\Omega)=+\infty$ by convention, and the inequalities are
trivial; assume $F(\Omega)\neq\varnothing$.
Let $g\colon F(\Omega)\to\mathbb R$ be any nonzero test function
and extend it by zero to
$\bar g\colon F(\Delta)\to\mathbb R$.
For distinct faces $f,f'\in F(\Omega)$, the conductance
$m_\Delta(f,f')=m_\Omega(f,f')$ (shared primal edges are the same
in both complexes).
If $f\in F(\Omega)$ and $h\in F(\Delta)\setminus F(\Omega)$ share a
primal edge in $\Delta$, that edge is a boundary edge of $\Omega$,
contributing to $b_\Omega(f)$.
The cross-term in the $\Delta$-numerator is
$m_\Delta(f,h)\bar g(f)^2$ (since $\bar g(h)=0$).
Hence the numerator of the Rayleigh quotient for $\bar g$ on
$\Delta$ equals the numerator for $g$ on $\Omega$:
every boundary contribution $b_\Omega(f) g(f)^2$ on $\Omega$
is accounted for by boundary edges in $\Delta$ (those in
$\partial\Delta$) plus cross-terms to faces outside $\Omega$.
The denominator is unchanged (both weighted and unweighted), since
$\bar g$ is supported on $F(\Omega)$.
Taking the infimum over all nonzero $g$ on $F(\Omega)$ gives the
bound; the $\mu_1$ case follows identically with the
$|\partial f|$-weighted denominator.
\end{proof}

The key diagramwise bound uses only the indicator test function and
does not require area-minimality.

\begin{lemma}[Diagramwise dual-spectral area bound]
\label{lem:dual-area-bound}
For any disk diagram $\Delta$ with $F(\Delta)\neq\varnothing$,
\[
\operatorname{Area}(\Delta)
\le\frac{|\partial\Delta|}{\ell_{\min} \mu_1(\Delta)}.
\]
\end{lemma}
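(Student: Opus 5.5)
Test the Rayleigh quotient defining $\mu_1(\Delta)$ with the indicator function $g\equiv 1$ on $F(\Delta)$, which is admissible because $F(\Delta)\neq\varnothing$. This is the face-dual analogue of the indicator test in the proof of \Cref{thm:spectral-dehn}; here the weights are face lengths, so no vertex degree enters.

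\emph{Numerator.} For every unordered pair of distinct faces the term $m(f,f')(g(f)-g(f'))^2$ vanishes, since $g$ is constant. The numerator therefore reduces to $\sum_{f}b(f)$. This sum counts the edge occurrences on face boundaries that lie on $\partial\Delta$. Each occurrence corresponds to a traversal of that edge by the boundary walk of $\Delta$ on the side of the face $f$, and distinct (face, edge-occurrence) pairs give distinct traversals. Hence $\sum_f b(f)\le|\partial\Delta|$. Face-free bridge edges, which by \Cref{rem:no-interior-face-free} are the only other kind of boundary edge, are traversed twice but contribute nothing to $b(f)$, so they only help the inequality. The one point needing care is the case of an edge with faces on both sides that still lies on $\partial\Delta$. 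This cannot happen: an edge counted in $b(f)$ has the exterior region on its other side, so it is seen by exactly one boundary traversal.

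\emph{Denominator.} With $g\equiv1$ the denominator is $\sum_f|\partial f|\ge\ell_{\min}\operatorname{Area}(\Delta)$, because every face is a relator cell of length at least $\ell_{\min}$.

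Combining the two estimates gives $\mu_1(\Delta)\le|\partial\Delta|/(\ell_{\min}\operatorname{Area}(\Delta))$, which rearranges to the claimed bound. The rearrangement needs $\mu_1(\Delta)>0$. This follows from \Cref{rem:dual-killed-connected}: the killed face-dual network is connected to $\infty$, so no nonzero $g$ has zero Dirichlet energy. I expect the main (minor) obstacle to be the bookkeeping in the numerator step, namely matching the $b(f)$ to boundary traversals when the boundary walk is non-simple. The rest is immediate.
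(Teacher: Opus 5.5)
Your proposal is correct and matches the paper's proof: test the Rayleigh quotient with $g\equiv 1$, bound the numerator $\sum_f b(f)$ by $|\partial\Delta|$ and the denominator $\sum_f|\partial f|$ below by $\ell_{\min}\operatorname{Area}(\Delta)$, then rearrange. Your extra check that $\mu_1(\Delta)>0$ (via \Cref{rem:dual-killed-connected}) supplies a point the paper leaves implicit in the word ``rearranging''.
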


\begin{proof}
Test the Rayleigh quotient defining $\mu_1$ with $g\equiv 1$ on
$F(\Delta)$.
The numerator equals
$\sum_f b(f)\le|\partial\Delta|$
(with equality when every boundary edge belongs to a face;
boundary spurs, if present, are invisible to $\sum_f b(f)$ and
only strengthen the bound).
The denominator equals
$\sum_f|\partial f|\ge\ell_{\min} \operatorname{Area}(\Delta)$.
Hence $\mu_1\le|\partial\Delta|/(\ell_{\min} \operatorname{Area}(\Delta))$,
and rearranging gives the claim.
\end{proof}

\needspace{4\baselineskip}
\begin{theorem}[Degree-free spectral-isoperimetric inequality]
\label{thm:dual-spectral-dehn}
For every finite presentation $\mathcal P$ and every $n\ge 1$,
\[
\delta_{\mathcal P}(n)
\le
\frac{n}{\ell_{\min} \Lambda^\ast_{\mathcal P}(n)}.
\]
\end{theorem}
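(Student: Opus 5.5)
The plan is to apply \Cref{lem:dual-area-bound} to each area-minimising diagram entering the definition of $\delta_{\mathcal P}(n)$, and then replace $\mu_1(\Delta)$ by the profile value $\Lambda^\ast_{\mathcal P}(n)$. Fix $n\ge 1$ and a cyclically reduced null-homotopic word $w$ with $|w|\le n$, and choose an area-minimising van Kampen diagram $\Delta$ for $w$. I would split into two cases according to whether $\Delta$ has any $2$-cells.

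\emph{Case $F(\Delta)=\varnothing$.} Here $\operatorname{Area}^{\min}_{\mathcal P}(w)=0$, and the desired bound $0\le n/(\ell_{\min}\Lambda^\ast_{\mathcal P}(n))$ holds trivially. This uses only that $\Lambda^\ast_{\mathcal P}(n)\in(0,+\infty]$ and the convention $1/\infty:=0$.

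\emph{Case $F(\Delta)\neq\varnothing$.} \Cref{lem:dual-area-bound} gives
\[
\operatorname{Area}(\Delta)\le\frac{|\partial\Delta|}{\ell_{\min}\mu_1(\Delta)}.
\]
Since $\Delta$ is minimal for a cyclically reduced word of length $\le n$, it lies in the admissible class defining $\Lambda^\ast_{\mathcal P}(n)$. Hence $\mu_1(\Delta)\ge\Lambda^\ast_{\mathcal P}(n)$, and with $|\partial\Delta|=|w|\le n$ we get $\operatorname{Area}^{\min}_{\mathcal P}(w)=\operatorname{Area}(\Delta)\le n/(\ell_{\min}\Lambda^\ast_{\mathcal P}(n))$. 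Taking the maximum over all such $w$ gives the claimed bound on $\delta_{\mathcal P}(n)$.

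The only genuine point to check is that $\Lambda^\ast_{\mathcal P}(n)>0$, so that the right-hand side is meaningful. This is the step I expect to need care. For a single diagram with $F(\Delta)\neq\varnothing$, I would show $\mu_1(\Delta)>0$ using \Cref{rem:dual-killed-connected}. The killed face-dual network is connected to the absorbing vertex $\infty$. Therefore a test function $g$ with vanishing Rayleigh numerator must be constant on each dual component and vanish on every face with $b(f)>0$, so $g\equiv 0$. Thus $I-P^\ast_{\mathrm{kill}}$ is positive definite on this finite-dimensional space.

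For the infimum over diagrams, positivity is not needed for the inequality itself. If $\Lambda^\ast_{\mathcal P}(n)=0$, I would read the right-hand side as $+\infty$, and the statement is vacuous. Alternatively, and more robustly, there are only finitely many words of length $\le n$, and $\delta_{\mathcal P}(n)$ is a maximum of finitely many areas. It then suffices to bound each $\operatorname{Area}(\Delta)$ by $n/(\ell_{\min}\mu_1(\Delta))$ and use $\mu_1(\Delta)\ge\Lambda^\ast_{\mathcal P}(n)$ pointwise. Either way the argument is a direct consequence of the indicator-function test already carried out in \Cref{lem:dual-area-bound}.
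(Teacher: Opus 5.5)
Your proof is correct and follows the paper's argument: apply the indicator-test bound of \Cref{lem:dual-area-bound} to a minimal diagram for each admissible word, then use $\mu_1(\Delta)\ge\Lambda^\ast_{\mathcal P}(n)$ and $|\partial\Delta|\le n$, and take the maximum, with the faceless case trivial. Your extra check that $\mu_1(\Delta)>0$ via \Cref{rem:dual-killed-connected} is a harmless supplement that the paper leaves implicit.
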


\begin{proof}
Fix a cyclically reduced null-homotopic word $w$ with $|w|\le n$,
and let $\Delta$ be a minimal diagram for $w$.
If $F(\Delta)=\varnothing$, then $\operatorname{Area}(\Delta)=0$.
Otherwise, by \Cref{lem:dual-area-bound},
$\operatorname{Area}(\Delta)\le|\partial\Delta|/(\ell_{\min} \mu_1(\Delta))$.
Since $\mu_1(\Delta)\ge\Lambda^\ast_{\mathcal P}(n)$ and
$|\partial\Delta|=|w|\le n$:
$\operatorname{Area}(\Delta)\le n/(\ell_{\min} \Lambda^\ast_{\mathcal P}(n))$.
Taking the maximum over all such $w$ gives
$\delta_{\mathcal P}(n)\le n/(\ell_{\min} \Lambda^\ast_{\mathcal P}(n))$.
\end{proof}

\begin{theorem}[Spectral characterisation of hyperbolicity via the dual profile]
\label{thm:dual-spectral-hyp-iff}
Let $\mathcal P=\langle S\mid R\rangle$ be a finite presentation.
Then $G(\mathcal P)$ is word-hyperbolic if and only if
$\inf_{n\ge 1}\Lambda^\ast_{\mathcal P}(n)>0$.
\end{theorem}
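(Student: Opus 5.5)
The two directions separate cleanly. One goes through \Cref{thm:dual-spectral-dehn}; the other is a face-dual version of the Cheeger argument in \Cref{thm:spectral-hyp-iff}(ii).

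\emph{Positive gap implies hyperbolicity.} Suppose $\inf_n\Lambda^\ast_{\mathcal P}(n)\ge c>0$. \Cref{thm:dual-spectral-dehn} then gives $\delta_{\mathcal P}(n)\le n/(\ell_{\min}c)$ for every $n$. This is a linear isoperimetric inequality, so $G(\mathcal P)$ is word-hyperbolic. No degree bound is needed here, because face degrees are automatically controlled by face lengths.

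\emph{Hyperbolicity implies a positive gap.} Assume $\operatorname{Area}^{\min}_{\mathcal P}(w)\le C_{\mathrm{iso}}|w|$ for all null-homotopic $w$. This holds for all such words, not only cyclically reduced ones, by the remark after \Cref{def:spectral-dehn}. Let $\Delta$ be minimal with $F(\Delta)\neq\varnothing$. I will bound the Dirichlet Cheeger constant $h_D(\Delta^\ast)$ of the face-dual network from below. Here the boundary set is $B=\{\infty\}$, and the volume of a face set $A\subset F(\Delta)$ is $\mathrm{vol}(A)=\sum_{f\in A}|\partial f|$.

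First, reduce to dual-connected $A$ by the same component argument as in \Cref{thm:spectral-hyp-iff}. Self-loops cause no trouble, since they are internal to components.

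Let $\Sigma$ be the union of the faces in $A$; it is a connected union of $2$-cells. Apply \Cref{lem:hole-filling} to $\Sigma$:
\begin{itemize}
\item by part~(i), $\widehat\Sigma$ is a disk subdiagram;
\item by part~(ii), $|\partial\widehat\Sigma|\le|\partial_E A|$;
\item by part~(iii), $\operatorname{Area}(\widehat\Sigma)=\sum_k\operatorname{FillArea}(w_k)$, where the $w_k$ are the lobe boundary words.
\end{itemize}
Since the lobe walks partition $\partial\widehat\Sigma$, the linear isoperimetric inequality gives
\[
|A|\le\operatorname{Area}(\widehat\Sigma)\le C_{\mathrm{iso}}\sum_k|w_k|=C_{\mathrm{iso}}|\partial\widehat\Sigma|\le C_{\mathrm{iso}}|\partial_E A|.
\]
Combining this with $\mathrm{vol}(A)\le L_{\max}|A|$ yields
\[
h_D(\Delta^\ast)\ge c_*:=\frac{1}{C_{\mathrm{iso}}L_{\max}}.
\]
\Cref{lem:cheeger-discrete}(i), applied to the killed face-dual network, then gives $\mu_1(\Delta)\ge c_*^2/2$. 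That lemma allows self-loops, and the network is connected by \Cref{rem:dual-killed-connected}. Taking the infimum over all minimal $\Delta$ gives $\inf_n\Lambda^\ast_{\mathcal P}(n)\ge c_*^2/2>0$.

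\emph{Main obstacle.} The delicate step is matching the face-dual bookkeeping with \Cref{lem:cheeger-discrete}. Two points need checking.
\begin{itemize}
\item The edge boundary $\partial_E A$ must count the dual edges to $\infty$ with multiplicity $b(f)$. These are exactly the boundary terms in the Rayleigh numerator of $\mu_1$.
\item The network's degree must agree with the weight in the Rayleigh denominator. The degree of $f$ is $\sum_{f'}m(f,f')+b(f)=|\partial f|$, with each self-loop contributing $2$. This matches the $|\partial f|$-weighted denominator.
\end{itemize}
Finally, the injection in \Cref{lem:hole-filling}(ii) must land in this multiplicity-counted boundary. It does, since each boundary edge of $\widehat\Sigma$ is one edge occurrence on a face of $A$ whose other side lies outside $A$.
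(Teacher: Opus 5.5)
Your proposal is correct and follows the paper's proof essentially step for step. The reverse direction goes through \Cref{thm:dual-spectral-dehn}. The forward direction hole-fills each dual-connected piece, uses lobe-wise minimality with the linear isoperimetric inequality to get $|A|\le C_{\mathrm{iso}}|\partial_E A|$, and finishes with the weighted Dirichlet Cheeger inequality. The only differences are cosmetic: you reduce to a single component by the minimum-ratio argument where the paper sums over components, and you spell out the connectivity of the killed network and the degree and multiplicity bookkeeping, which the paper leaves implicit.
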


\begin{proof}
$(\Leftarrow)$
A uniform lower bound on $\Lambda^\ast_{\mathcal P}$ implies a linear
Dehn function by \Cref{thm:dual-spectral-dehn}, hence
$G(\mathcal P)$ is hyperbolic.

$(\Rightarrow)$
Assume $G(\mathcal P)$ is hyperbolic with linear isoperimetric constant
$C_{\mathrm{iso}}$.
Fix a minimal diagram $\Delta$.
If $F(\Delta)=\varnothing$, then $\mu_1(\Delta)=+\infty$, so
assume $F(\Delta)\neq\varnothing$.
Define the Dirichlet Cheeger constant of the dual network by
\[
 h_D(\Delta^\ast)
 :=
 \inf_{\varnothing\ne A\subset F(\Delta)}
 \frac{|\partial_E A|}{\mathrm{vol}^\ast(A)},
\qquad
\mathrm{vol}^\ast(A):=\sum_{f\in A}|\partial f|,
\]
where $\partial_E A$ counts dual edges with exactly one endpoint in $A$
(including edges to $\infty$, counted with multiplicity).
We claim $h_D(\Delta^\ast)\ge 1/(L_{\max}C_{\mathrm{iso}})$.

Let $\varnothing\neq A\subset F(\Delta)$.
Decompose $A$ into dual-connected components $A=A_1\sqcup\cdots\sqcup A_k$.
For each $A_j$, let $\Sigma_j$ be the union of its faces and let
$\widehat\Sigma_j$ be the hole-filled subcomplex of
\Cref{lem:hole-filling}.
By \Cref{lem:hole-filling}(i), $\widehat\Sigma_j$ is a disk
subdiagram of $\Delta$;
by part~(iii), each lobe is area-minimising;
and by part~(ii),
$|\partial\widehat\Sigma_j|\le|\partial_E A_j|$.

By \eqref{eq:lobe-bound} and the linear isoperimetric inequality,
\[
\operatorname{Area}(\widehat\Sigma_j)
=\sum_k\operatorname{FillArea}(w_k)
\le C_{\mathrm{iso}}|\partial\widehat\Sigma_j|
\le C_{\mathrm{iso}}|\partial_E A_j|.
\]
Since $A_j\subseteq F(\widehat\Sigma_j)$,
$|A_j|\le C_{\mathrm{iso}}|\partial_E A_j|$.
Summing: $|A|\le C_{\mathrm{iso}}|\partial_E A|$.
Since $\mathrm{vol}^\ast(A)\le L_{\max}|A|$:
$h_D(\Delta^\ast)\ge 1/(L_{\max}C_{\mathrm{iso}})$.

The Dirichlet Cheeger inequality
(\Cref{lem:cheeger-discrete}(i), applied to the weighted face-dual
Laplacian) gives
$\mu_1(\Delta)\ge h_D(\Delta^\ast)^2/2
\ge 1/(2(L_{\max}C_{\mathrm{iso}})^2)$.
The bound is uniform, so $\inf_n\Lambda^\ast_{\mathcal P}(n)>0$.
\end{proof}

\begin{lemma}[Coarse lower bound upgrade]\label{lem:coarse-upgrade}
Let $f\colon\mathbb N\to[0,\infty)$ be nondecreasing.
If $n^2\preceq f$ in the coarse sense
(i.e.\ there exists $A\ge 1$ with
$n^2\le A f(An+A)+An+A$ for all $n$),
then there exist $c>0$ and $N$ such that
$f(n)\ge cn^2$ for all $n\ge N$.
\end{lemma}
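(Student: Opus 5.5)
The plan is a substitution followed by a shift of variable, using monotonicity of $f$ to pass from the coarse argument $An+A$ back to $n$. Given $n^2\le A f(An+A)+An+A$ for all $n$, we have for all sufficiently large $n$ (say $n\ge N_0:=4A$, so that $An+A\le n^2/2$ once $n^2\ge 2An+2A$) that
\[
f(An+A)\ \ge\ \frac{n^2-An-A}{A}\ \ge\ \frac{n^2}{2A}.
\]
So $f$ is quadratically large along the arithmetic progression $m_n:=An+A$.

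Next we transfer this to every integer $m$. Given $m\ge N$, set $n:=\lfloor m/A\rfloor-1$, so that $An+A\le m$. Since $f$ is nondecreasing, $f(m)\ge f(An+A)\ge n^2/(2A)$, provided $n\ge N_0$. Moreover $n\ge m/A-2\ge m/(2A)$ whenever $m\ge 4A$. This gives $f(m)\ge m^2/(8A^3)$. We then take $c:=1/(8A^3)$ and $N:=\max\{4A,\ A(N_0+2)\}$, which guarantees both $n\ge N_0$ and $m\ge 4A$.

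There is no real obstacle; the only care needed is the bookkeeping of thresholds. The term $An+A$ must be absorbed into $n^2/2$, and the floor must still leave $n$ large. If $A$ is not an integer, $An+A$ need not be an integer, so we interpret $f(An+A)$ via $f(\lceil An+A\rceil)$ (or note that the coarse relation is stated on $\mathbb N$ with $A$ taken to be an integer, which we may assume by enlarging $A$, since $f$ is nondecreasing). Enlarging $A$ in this way only changes the constants $c$ and $N$.
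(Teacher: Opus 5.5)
Your proposal is correct and matches the paper's proof essentially verbatim. Both arguments round $A$ up to an integer, substitute to get $f(An+A)\ge n^2/(2A)$ for $n\ge 4A$, and then, for $m\ge 4A^2+2A$, take $n=\lfloor m/A\rfloor-1=\lfloor (m-A)/A\rfloor$ and use monotonicity to conclude $f(m)\ge m^2/(8A^3)$. Your threshold $N=A(N_0+2)=4A^2+2A$ and constant $c=1/(8A^3)$ coincide with the paper's, and your check that $n\ge m/(2A)$ is more explicit than the paper's ``for large $m$''.
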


\begin{proof}
Choose $A\ge 1$ witnessing $n^2\preceq f$.
(Without loss of generality, replace $A$ by $\lceil A\rceil$ so that
$A\in\mathbb N$ and $An+A\in\mathbb N$ for all $n$.)
For $n\ge 4A$, $An+A\le \frac12 n^2$, so
$A f(An+A)\ge \frac12 n^2$, giving
$f(An+A)\ge n^2/(2A)$.
For $m\ge 4A^2+2A$, set $n=\lfloor(m-A)/A\rfloor$.
Then $An+A\le m$ and $n\ge m/(2A)$ for large $m$.
Since $f$ is nondecreasing,
$f(m)\ge f(An+A)\ge n^2/(2A)\ge m^2/(8A^3)$.
\end{proof}

\begin{corollary}[Spectral gap dichotomy]
\label{cor:spectral-gap}
For every finite presentation $\mathcal P$ of a finitely presented
group $G$, exactly one of the following holds:
\begin{enumerate}[label=\textup{(\alph*)}]
\item $G$ is word-hyperbolic and
$\inf_{n\ge 1}\Lambda^\ast_{\mathcal P}(n)>0$;
\item $G$ is not word-hyperbolic and
$\Lambda^\ast_{\mathcal P}(n)=O(n^{-1})$.
\end{enumerate}
In particular, for any finite presentation $\mathcal P$,
there is no $\alpha\in(0,1)$ with
$\Lambda^\ast_{\mathcal P}(n)\asymp n^{-\alpha}$:
the dual profile is either uniformly positive, or eventually at most
reciprocal-linear, with no intermediate decay.
\end{corollary}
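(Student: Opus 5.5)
The dichotomy splits according to whether $G$ is hyperbolic. If $G$ is word-hyperbolic, \Cref{thm:dual-spectral-hyp-iff} already gives $\inf_n\Lambda^\ast_{\mathcal P}(n)>0$, so (a) holds. The alternatives are mutually exclusive: (b) forces $\Lambda^\ast_{\mathcal P}(n)\to 0$, which contradicts (a). So the real content is the non-hyperbolic case, where I must show $\Lambda^\ast_{\mathcal P}(n)\le C/n$ for all large $n$.

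Assume $G$ is not hyperbolic. By the isoperimetric gap theorem \cite{Papasoglu1995,Bowditch1995}, $\delta_{\mathcal P}$ is not subquadratic, so $n^2\preceq\delta_{\mathcal P}$ in the coarse sense. The Dehn function is nondecreasing (the maximum runs over an enlarging set), so \Cref{lem:coarse-upgrade} upgrades this to a pointwise bound: there are $c>0$ and $N$ with $\delta_{\mathcal P}(n)\ge cn^2$ for all $n\ge N$.

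Next I would convert this into a face-dual bound by reading \Cref{thm:dual-spectral-dehn} backwards. That theorem gives
\[
\Lambda^\ast_{\mathcal P}(n)\le \frac{n}{\ell_{\min}\,\delta_{\mathcal P}(n)}
\]
whenever $\delta_{\mathcal P}(n)>0$; this is automatic once $cn^2>0$. Combining, for $n\ge N$,
\[
\Lambda^\ast_{\mathcal P}(n)\le \frac{1}{c\,\ell_{\min}\,n},
\]
so $\Lambda^\ast_{\mathcal P}(n)=O(n^{-1})$. The only care needed is the convention $1/\infty=0$. If $\Lambda^\ast_{\mathcal P}(n)=+\infty$, then \Cref{thm:dual-spectral-dehn} gives $\delta_{\mathcal P}(n)\le 0$, which contradicts $\delta_{\mathcal P}(n)\ge cn^2$. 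Hence $\Lambda^\ast_{\mathcal P}(n)$ is finite and the rearrangement is legitimate.

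The "in particular" clause follows directly. In case (a) the profile is bounded below, which excludes $\Lambda^\ast_{\mathcal P}\asymp n^{-\alpha}$ with $\alpha>0$. In case (b), $n^{-\alpha}\lesssim\Lambda^\ast_{\mathcal P}(n)\lesssim n^{-1}$ is impossible for $\alpha<1$.

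The main obstacle is the invocation of the gap theorem, which is external. One must check that the paper's Dehn function, a maximum over cyclically reduced words only, agrees with the standard one up to $\simeq$. This holds because, as noted after \Cref{def:spectral-dehn}, arbitrary null-homotopic words have area bounded by that of their cyclic reductions. With that settled, subquadratic $\delta_{\mathcal P}$ implies linear $\delta_{\mathcal P}$, which implies hyperbolicity, and the contrapositive gives the quadratic lower bound used above.
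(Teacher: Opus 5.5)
Your proposal is correct and follows the paper's proof: the gap theorem, then \Cref{lem:coarse-upgrade}, then the face-dual area estimate. You reach that estimate by rearranging \Cref{thm:dual-spectral-dehn}, as the paper itself does in \Cref{prop:dual-profile-bracket}(ii), whereas the paper applies \Cref{lem:dual-area-bound} directly to a minimal diagram $\Delta_n$ realising $\delta_{\mathcal P}(n)$; this is the same inequality.

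One caution: the bound $n^2\preceq\delta_{\mathcal P}$ has to come from the form of the gap theorem the paper cites (non-hyperbolic implies at least quadratic). The bare contrapositive of ``$\delta=o(n^2)\Rightarrow$ linear'' in your last paragraph only gives $\limsup_n\delta_{\mathcal P}(n)/n^2>0$, which is not enough to feed \Cref{lem:coarse-upgrade}.
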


\begin{proof}
Case (a) is \Cref{thm:dual-spectral-hyp-iff}.
For (b), the Isoperimetric Gap Theorem
\cite{Papasoglu1995,Bowditch1995} gives, for non-hyperbolic groups,
$n^2\preceq\delta_G$ in the coarse sense.
Since the standard Dehn function is coarsely equivalent to the
cyclically reduced version $\delta_{\mathcal P}$ used in
\Cref{def:spectral-dehn}
(by the cyclic-reduction remark after \Cref{def:spectral-dehn}),
$\delta_{\mathcal P}\asymp\delta_G$, and hence
$n^2\preceq\delta_{\mathcal P}$.
Because $\delta_{\mathcal P}$ is nondecreasing,
\Cref{lem:coarse-upgrade} gives constants $c>0$ and $N$ with
$\delta_{\mathcal P}(n)\ge cn^2$ for all $n\ge N$.
For each such $n$, choose a cyclically reduced null-homotopic word
$w_n$ with $|w_n|\le n$ and
$\operatorname{Area}^{\min}_{\mathcal P}(w_n)
=\delta_{\mathcal P}(n)$,
and let $\Delta_n$ be a minimal diagram for $w_n$.
Since $\delta_{\mathcal P}(n)\ge cn^2>0$, we have
$F(\Delta_n)\neq\varnothing$, and \Cref{lem:dual-area-bound} gives
$\mu_1(\Delta_n)\le|\partial\Delta_n|/
(\ell_{\min}\operatorname{Area}(\Delta_n))
\le n/(\ell_{\min}\delta_{\mathcal P}(n))
\le 1/(\ell_{\min} cn)$.
Since $\Lambda^\ast_{\mathcal P}(n)\le\mu_1(\Delta_n)$,
$\Lambda^\ast_{\mathcal P}(n)=O(n^{-1})$.
\end{proof}

\section{Filling length rigidity from spectral collapse}
\label{sec:filling-length}

By \Cref{thm:spectral-hyp-iff}, if $G(\mathcal P)$ is non-hyperbolic
(under bounded geometry) then $\inf_n\Lambda_{\mathcal P}(n)=0$.
Since $\Lambda_{\mathcal P}$ is nonincreasing, this gives
$\Lambda_{\mathcal P}(n)\to 0$.
We convert this spectral collapse into filling-length lower bounds
parameterised by the Dehn function.

\subsection{The discrete length-area method}

For a family $\Gamma$ of edge paths in a finite graph $G=(V,E)$,
an \emph{admissible metric} is a function $m\colon E\to[0,\infty)$ with
$L_m(\gamma):=\sum_{e\in\gamma}m(e)\ge 1$ for every $\gamma\in\Gamma$.
The \emph{extremal length} of $\Gamma$ is
\[
\mathrm{EL}(\Gamma):=\sup_m\frac{1}{\mathcal A(m)},
\qquad
\mathcal A(m):=\sum_{e\in E}m(e)^2,
\]
the supremum over all admissible metrics.
For any disjoint vertex sets $A,B\subset V$ and the family
$\Gamma(A,B)$ of all edge paths from $A$ to $B$,
when $G$ has unit edge conductances
$\mathrm{EL}(\Gamma(A,B))=R_{\mathrm{eff}}(A\leftrightarrow B)$
is the effective resistance
(see e.g.~\cite[Ch.~2]{LyonsPeres}).

\begin{theorem}[Dirichlet extremal length inversion]\label{thm:extremal-inversion}
Let $\Delta$ be a disk diagram with $V^\circ\neq\varnothing$.
Let $f\colon V^\circ\to\mathbb R$ be a nonnegative first Dirichlet
eigenfunction for $\lambda_1(\Delta)$,
extended by $0$ to $V(\partial\Delta)$, and
normalised so that $f(v_0)=1$ at a maximising vertex
$v_0\in V^\circ$.
Let $\Gamma_{\mathrm{esc}}$ be the family of all edge paths from $v_0$
to $V(\partial\Delta)$. Then
\begin{equation}\label{eq:EL-bound}
\mathrm{EL}(\Gamma_{\mathrm{esc}})\ge\frac{1}{\lambda_1\cdot\mathrm{vol}(V^\circ)}.
\end{equation}
\end{theorem}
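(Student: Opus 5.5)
The plan is to use the eigenfunction itself as an admissible metric. Define $m(e):=|\widetilde f(e^+)-\widetilde f(e^-)|$ for every edge $e$ of $\Delta^{(1)}$, where $\widetilde f$ is the extension by zero to $V(\partial\Delta)$.

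The first step is admissibility. For any edge path $\gamma\in\Gamma_{\mathrm{esc}}$ running from $v_0$ to a boundary vertex $u$, the triangle inequality along the path gives
\[
L_m(\gamma)=\sum_{e\in\gamma}|\widetilde f(e^+)-\widetilde f(e^-)|\ge \widetilde f(v_0)-\widetilde f(u)=1-0=1 .
\]
So $m$ is admissible, and therefore $\mathrm{EL}(\Gamma_{\mathrm{esc}})\ge 1/\mathcal A(m)$.

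The second step computes the area. By construction $\mathcal A(m)=\sum_e(\widetilde f(e^+)-\widetilde f(e^-))^2$, which is exactly the Dirichlet energy in the numerator of the Rayleigh quotient. Since $f$ is a first eigenfunction, this numerator equals $\lambda_1\sum_{v\in V^\circ}\deg(v)f(v)^2$. (This identity follows from summation by parts of $\mathcal L f=\lambda_1 f$ against $f$, or directly from $f$ attaining the infimum.)

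The third step bounds the denominator. Because $f$ is nonnegative and normalised so that its maximum $f(v_0)$ equals $1$, we have $0\le f\le 1$ on $V^\circ$. Hence $\sum_{v\in V^\circ}\deg(v)f(v)^2\le\mathrm{vol}(V^\circ)$. Combining the three steps gives $\mathcal A(m)\le\lambda_1\,\mathrm{vol}(V^\circ)$, which yields \eqref{eq:EL-bound}.

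The only point needing care is the existence of a nonnegative first eigenfunction that is positive somewhere. The infimum of the Rayleigh quotient over the finite-dimensional space $\mathbb R^{V^\circ}$ is attained. Replacing a minimiser $f$ by $|f|$ does not increase the numerator, since $\bigl||a|-|b|\bigr|\le|a-b|$, and leaves the denominator unchanged, so $|f|$ is also a minimiser and hence an eigenfunction. A maximising vertex $v_0$ has $f(v_0)>0$ because $f\not\equiv 0$, which makes the normalisation $f(v_0)=1$ legitimate. Connectivity and Perron–Frobenius are not needed, since the statement assumes only some nonnegative eigenfunction and the bound holds for any such function. I do not expect a genuine obstacle; the argument is essentially a single line once $m=|\nabla f|$ is chosen.
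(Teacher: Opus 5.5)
Your proposal is correct and follows the paper's proof essentially step for step. Both use the same Dirichlet metric $m(e)=|\widetilde f(e^+)-\widetilde f(e^-)|$, admissibility by telescoping along escape paths, the energy identity $\mathcal A(m)=\lambda_1\sum_{v\in V^\circ}\deg(v)f(v)^2$, the bound $0\le f\le 1$ giving $\mathcal A(m)\le\lambda_1\,\mathrm{vol}(V^\circ)$, and the same existence argument (a minimiser exists, and replacing it by $|f|$ gives a nonnegative one), where you additionally make explicit why $f(v_0)>0$ so the normalisation is legitimate.
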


\begin{proof}
A nonnegative eigenfunction for $\lambda_1$ exists
(the Rayleigh infimum is attained because $V^\circ$ is finite;
replace any minimiser by its absolute value).
Choose $v_0\in V^\circ$ with $f(v_0)=\max f$ and normalise so that
$f(v_0)=1$.
Since $f\ge 0$ on $V^\circ$ and $f=0$ on $V(\partial\Delta)$,
the \emph{Dirichlet metric} $m(e):=|f(u)-f(v)|$ for $e=\{u,v\}$
is admissible:
for any path $\gamma=(v_0,v_1,\dots,v_k)\in\Gamma_{\mathrm{esc}}$
with $v_k\in V(\partial\Delta)$, the triangle inequality gives
$L_m(\gamma)=\sum_{i=1}^k|f(v_i)-f(v_{i-1})|
\ge|f(v_0)-f(v_k)|=1$.
Its metric area is
\[
\mathcal A(m)=\sum_{e\in E}(f(e^+)-f(e^-))^2
=\lambda_1\|f\|_\pi^2,
\]
where $\|f\|_\pi^2:=\sum_{v\in V^\circ}\deg(v) f(v)^2$.
The equality holds by the Dirichlet-form identity: the left-hand side
includes boundary edges (where $f$ vanishes on one endpoint by the zero
extension), and the standard computation gives
$\sum_{e\in E}(f(e^+)-f(e^-))^2
=\sum_{v\in V^\circ}\deg(v) f(v)^2
-\sum_{v\in V^\circ}f(v)\sum_{e\colon v\sim u}f(u)
=\lambda_1\|f\|_\pi^2$
(since $f$ is a Dirichlet eigenfunction; adjacency sums count edges
with multiplicity).
Since $f\le 1$, $\|f\|_\pi^2\le\mathrm{vol}(V^\circ)$.
Substituting into the extremal-length supremum yields \eqref{eq:EL-bound}.
\end{proof}

\subsection{Structural bounds linking filling length to volume and extremal length}

\begin{lemma}[Filling length dominates escape extremal length]
\label{lem:escape-EL}
For any disk diagram $\Delta$, basepoint $b\in V(\partial\Delta)$,
and $v_0\in V^\circ$,
\begin{equation}\label{eq:FL-EL}
\mathrm{FL}_b(\Delta)\ge 2 \mathrm{EL}(\Gamma_{\mathrm{esc}}).
\end{equation}
\end{lemma}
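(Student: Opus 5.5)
The plan is to bound the extremal length from above by a graph distance and then use \Cref{lem:FL-dominates-radius}. Concretely, I would prove the chain
\[
\mathrm{EL}(\Gamma_{\mathrm{esc}})\le d_\Delta\bigl(v_0,V(\partial\Delta)\bigr)\le d_\Delta(v_0,b)\le\mathrm{rad}_b(\Delta)\le\tfrac12\,\mathrm{FL}_b(\Delta).
\]
No eigenfunction, minimality or degree hypothesis is needed; the statement is purely about the graph $\Delta^{(1)}$ and the shelling.

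\emph{Step 1 (extremal length is at most escape distance).}
Since $\Delta$ is connected, $\Gamma_{\mathrm{esc}}$ is nonempty. Let $d:=d_\Delta(v_0,V(\partial\Delta))\ge 1$, and fix a geodesic edge path $\gamma_0\in\Gamma_{\mathrm{esc}}$ of length $d$; it has $d$ distinct edges. For any admissible metric $m$, admissibility and Cauchy--Schwarz along $\gamma_0$ give
\[
1\le\sum_{e\in\gamma_0}m(e)\le\sqrt d\Bigl(\sum_{e\in\gamma_0}m(e)^2\Bigr)^{1/2}\le\sqrt d\,\mathcal A(m)^{1/2}.
\]
Hence $1/\mathcal A(m)\le d$, and taking the supremum over admissible $m$ yields $\mathrm{EL}(\Gamma_{\mathrm{esc}})\le d$. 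This is the discrete form of the statement that effective resistance is at most the length of a single series path, by Rayleigh monotonicity.

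\emph{Step 2 (comparison with the radius).}
Since $b\in V(\partial\Delta)$, we have $d\le d_\Delta(v_0,b)\le\mathrm{rad}_b(\Delta)$ by the definition of $\mathrm{rad}_b$. Then \Cref{lem:FL-dominates-radius} gives $\mathrm{FL}_b(\Delta)\ge 2\,\mathrm{rad}_b(\Delta)\ge 2d\ge 2\,\mathrm{EL}(\Gamma_{\mathrm{esc}})$, which is \eqref{eq:FL-EL}.

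The only point needing care is Step~1. One must use a shortest path, so that it has no repeated edges and the Cauchy--Schwarz count is exactly $d$. One must also confirm that multi-edges cause no trouble: $\mathcal A(m)$ sums over all edges with multiplicity, so restricting to the edges of $\gamma_0$ can only decrease it. Everything else is a direct citation.
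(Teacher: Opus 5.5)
Your proof is correct and follows the paper's argument: the same chain $\mathrm{EL}(\Gamma_{\mathrm{esc}})\le d_\Delta(v_0,V(\partial\Delta))\le d_\Delta(v_0,b)\le\mathrm{rad}_b(\Delta)\le\tfrac12\mathrm{FL}_b(\Delta)$, closed off by \Cref{lem:FL-dominates-radius}. The only difference is cosmetic: you bound the extremal length directly by Cauchy--Schwarz along a geodesic, whereas the paper uses the identity $\mathrm{EL}=R_{\mathrm{eff}}$ and routes a unit flow along a shortest path, which is the dual form of the same estimate.
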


\begin{proof}
The elementary bound
$R_{\mathrm{eff}}(v_0\leftrightarrow\partial\Delta)
\le d_\Delta(v_0,\partial\Delta)$
(route a unit flow along a shortest path; its energy equals the path
length) gives
$\mathrm{EL}(\Gamma_{\mathrm{esc}})
=R_{\mathrm{eff}}(v_0\leftrightarrow\partial\Delta)
\le d_\Delta(v_0,b)$.
By \Cref{lem:FL-dominates-radius},
$\mathrm{FL}_b(\Delta)\ge 2 \mathrm{rad}_b(\Delta)
\ge 2 d_\Delta(b,v_0)$.
\end{proof}

\begin{lemma}[Diagramwise volume bound]\label{lem:diagram-capacity}
For any disk diagram $\Delta$ with all face lengths at most $L_{\max}$
in which every edge incident to
an interior vertex is incident to at least one $2$-cell
(i.e.\ $\Delta$ is interiorly clean),
\begin{equation}\label{eq:diagram-capacity}
\mathrm{vol}(V^\circ)\le L_{\max} \operatorname{Area}(\Delta).
\end{equation}
By \Cref{rem:no-interior-face-free}, every area-minimising diagram for a
cyclically reduced word is automatically interiorly clean.
\end{lemma}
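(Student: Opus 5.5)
We aim to prove $\mathrm{vol}(V^\circ)\le L_{\max}\operatorname{Area}(\Delta)$ by a double count of edge–face incidences, restricted to edges that touch an interior vertex.

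Let $E^\circ\subset E(\Delta)$ be the set of edges with at least one endpoint in $V^\circ$. Counting incidences gives
\[
\mathrm{vol}(V^\circ)=\sum_{v\in V^\circ}\deg(v)=\sum_{e\in E^\circ}\#\{\text{endpoints of }e\text{ in }V^\circ\}\le 2|E^\circ|.
\]
There are no loop edges in the primal $1$-skeleton, so each edge contributes at most $2$.

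Next we show that every $e\in E^\circ$ is interior and appears exactly twice in $\sum_f|\partial f|$, counted with multiplicity. By interior cleanliness (automatic by \Cref{rem:no-interior-face-free}), $e$ borders at least one $2$-cell. Since $e$ has an endpoint in $V^\circ$, $e$ is not on $\partial\Delta$; hence both of its sides are $2$-cells. The two sides may be the same face, in which case $e$ occurs twice in that face's boundary walk. Either way, $e$ contributes exactly $2$ edge occurrences to $\sum_{f\in F(\Delta)}|\partial f|$. Therefore
\[
2|E^\circ|\le\sum_{f\in F(\Delta)}|\partial f|\le L_{\max}\operatorname{Area}(\Delta).
\]
Combining this with the first display gives \eqref{eq:diagram-capacity}.

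The main point requiring care is the claim that an edge with an interior endpoint cannot lie on $\partial\Delta$ and has faces on both sides. We will justify it as follows. If $e=uv$ with $u\in V^\circ$ lay on the boundary walk, then $u$ would be a boundary vertex, a contradiction. In the $S^2$ embedding of \Cref{rem:no-interior-face-free}, the only unfilled region is the exterior one, so a non-boundary edge has a $2$-cell on each side. This matches the edge–face incidence identity $2|E(\Omega)|=\sum_f|\partial f|+|\partial\Omega|$ already used in the proof of \Cref{thm:spectral-hyp-iff}, and we may cite that identity directly.
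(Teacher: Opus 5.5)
Your proposal is correct and follows essentially the paper's argument. Both bound $\mathrm{vol}(V^\circ)\le 2|E^\circ|$, observe that each edge of $E^\circ$ is a non-boundary edge contributing exactly two face–edge incidences, and conclude via $\sum_f|\partial f|\le L_{\max}\operatorname{Area}(\Delta)$. Your justification that such edges carry a $2$-cell on each side, via the $S^2$-embedding of \Cref{rem:no-interior-face-free}, is if anything slightly more explicit than the paper's.
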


\begin{proof}
By definition, $V^\circ=V(\Delta)\setminus V(\partial\Delta)$.
Let
$E^\circ:=\{e\in E(\Delta): e \text{ has at least one endpoint in }
V^\circ\}$.
Every boundary edge has both endpoints in $V(\partial\Delta)$, so no
edge of $E^\circ$ is a boundary edge.
By the interiorly clean hypothesis, every edge of $E^\circ$ is
incident to at least one $2$-cell, and hence contributes two
face-edge incidences to $\sum_f|\partial f|$
(one for each side, even if the same face appears on both sides).
Split $E^\circ=E_{ii}\sqcup E_{i\partial}$, where $E_{ii}$
consists of edges with both endpoints in $V^\circ$ and $E_{i\partial}$
of edges with exactly one endpoint in $V^\circ$.
Then
$\mathrm{vol}(V^\circ)
=2|E_{ii}|+|E_{i\partial}|
\le 2|E^\circ|$.
Since each edge of $E^\circ$ contributes two face-edge incidences,
$2|E^\circ|\le\sum_f|\partial f|
\le L_{\max} \operatorname{Area}(\Delta)$.
\end{proof}

\begin{lemma}[Dehn isoperimetric capacity]\label{lem:dehn-capacity}
Let $\Delta$ be a minimal van Kampen diagram over $\mathcal P$
with $V^\circ\neq\varnothing$, and
let $b\in V(\partial\Delta)$.
Then
\begin{equation}\label{eq:dehn-cap}
\mathrm{vol}(V^\circ)\le L_{\max}
\delta_{\mathcal P}\bigl(\mathrm{FL}_b(\Delta)\bigr).
\end{equation}
\end{lemma}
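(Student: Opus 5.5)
We combine the diagramwise volume bound of \Cref{lem:diagram-capacity} with the area-minimality of $\Delta$ and the fact that filling length dominates boundary length. The chain of inequalities we aim for is
\[
\mathrm{vol}(V^\circ)\le L_{\max}\operatorname{Area}(\Delta)
= L_{\max}\operatorname{Area}^{\min}_{\mathcal P}(w)
\le L_{\max}\delta_{\mathcal P}(|w|)
\le L_{\max}\delta_{\mathcal P}\bigl(\mathrm{FL}_b(\Delta)\bigr),
\]
where $w$ is the cyclically reduced boundary word of $\Delta$.

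\emph{Step 1 (interior volume to area).} By the standing convention, $\Delta$ is area-minimising for a cyclically reduced word. By \Cref{rem:no-interior-face-free} it is therefore interiorly clean, and its face lengths are bounded by $L_{\max}$. So \Cref{lem:diagram-capacity} applies and gives $\mathrm{vol}(V^\circ)\le L_{\max}\operatorname{Area}(\Delta)$.

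\emph{Step 2 (area to Dehn function).} By minimality, $\operatorname{Area}(\Delta)=\operatorname{Area}^{\min}_{\mathcal P}(w)$. Since $w$ is cyclically reduced and null-homotopic with $|w|=|\partial\Delta|$, the definition of $\delta_{\mathcal P}$ in \Cref{def:spectral-dehn} gives $\operatorname{Area}(\Delta)\le\delta_{\mathcal P}(|\partial\Delta|)$.

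\emph{Step 3 (monotonicity).} By \Cref{rem:based-shelling-exists}, $\mathrm{FL}_b(\Delta)\ge|\partial\Delta|$, because the initial stage of every based shelling is $\Delta$ itself. The Dehn function $\delta_{\mathcal P}$ is nondecreasing, since its defining maximum runs over an admissible set of words that enlarges with $n$. Hence $\delta_{\mathcal P}(|\partial\Delta|)\le\delta_{\mathcal P}(\mathrm{FL}_b(\Delta))$. Note that $\mathrm{FL}_b(\Delta)\ge|\partial\Delta|\ge 1$ here, because $V^\circ\neq\varnothing$ forces a face and hence a nonempty boundary. So $\delta_{\mathcal P}$ is evaluated at a positive integer and no edge case at $0$ arises.

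No step is a real obstacle. The only point needing care is Step 1: we must confirm that the interiorly-clean hypothesis of \Cref{lem:diagram-capacity} is met. Possible face-free boundary bridges do not matter, since they have no interior endpoints and contribute nothing to $\mathrm{vol}(V^\circ)$.
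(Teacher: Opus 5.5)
Your proposal is correct and follows essentially the same argument as the paper: both use area-minimality to get $\operatorname{Area}(\Delta)\le\delta_{\mathcal P}(|\partial\Delta|)$, then $\mathrm{FL}_b(\Delta)\ge|\partial\Delta|$ together with monotonicity of $\delta_{\mathcal P}$, and finally \Cref{lem:diagram-capacity}. Your explicit check that $\Delta$ is interiorly clean via \Cref{rem:no-interior-face-free} matches the remark the paper attaches to that lemma.
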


\begin{proof}
Let $w$ be the boundary word of $\Delta$.
Since $\Delta$ is area-minimising,
$\operatorname{Area}(\Delta)=\operatorname{Area}^{\min}_{\mathcal P}(w)
\le\delta_{\mathcal P}(|w|)=\delta_{\mathcal P}(|\partial\Delta|)$.
Because $\mathrm{FL}_b(\Delta)\ge|\partial\Delta|$ and
$\delta_{\mathcal P}$ is nondecreasing,
$\operatorname{Area}(\Delta)\le\delta_{\mathcal P}(\mathrm{FL}_b(\Delta))$.
By \Cref{lem:diagram-capacity},
$\mathrm{vol}(V^\circ)\le L_{\max} \operatorname{Area}(\Delta)
\le L_{\max} \delta_{\mathcal P}(\mathrm{FL}_b(\Delta))$.
\end{proof}

\subsection{Diagramwise and Dehn-optimised rigidity}

Combining the extremal-length inversion (\Cref{thm:extremal-inversion}),
the escape-extremal-length bound (\Cref{lem:escape-EL}), the volume
bound (\Cref{lem:diagram-capacity}), and the Dehn capacity
(\Cref{lem:dehn-capacity}) yields the filling-length rigidity theorems
below.

\begin{theorem}[Diagramwise filling-length rigidity]\label{thm:area-main}
Let $\Delta$ be a disk diagram
with $V^\circ\neq\varnothing$
and all face lengths at most $L_{\max}$.
For every basepoint $b\in V(\partial\Delta)$,
\begin{equation}\label{eq:area-main}
\mathrm{FL}_b(\Delta)\cdot\operatorname{Area}(\Delta)
\ge\frac{c}{\lambda_1(\Delta)},
\end{equation}
where $c=2/L_{\max}$.
No minimality hypothesis and no vertex degree bound is assumed.
\end{theorem}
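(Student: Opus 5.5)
The plan is to chain together the three lemmas already set up in this section. Let $f$ be a nonnegative first Dirichlet eigenfunction normalised by $f(v_0)=1$ at a maximising interior vertex $v_0$, as in \Cref{thm:extremal-inversion}. That theorem gives
\[
\mathrm{EL}(\Gamma_{\mathrm{esc}})\ge\frac{1}{\lambda_1(\Delta)\,\mathrm{vol}(V^\circ)}.
\]
Then \Cref{lem:escape-EL}, applied with the same $v_0$ and the given basepoint $b$, yields
\[
\mathrm{FL}_b(\Delta)\ge 2\,\mathrm{EL}(\Gamma_{\mathrm{esc}})\ge\frac{2}{\lambda_1(\Delta)\,\mathrm{vol}(V^\circ)}.
\]
It remains to replace $\mathrm{vol}(V^\circ)$ by an area term. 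For this I invoke \Cref{lem:diagram-capacity}, which gives $\mathrm{vol}(V^\circ)\le L_{\max}\operatorname{Area}(\Delta)$. Multiplying through produces $\mathrm{FL}_b(\Delta)\operatorname{Area}(\Delta)\ge 2/(L_{\max}\lambda_1(\Delta))$, which is the claim with $c=2/L_{\max}$.

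The order of steps is as follows. First, record that $\lambda_1(\Delta)$ is finite and positive when $V^\circ\neq\varnothing$. Next, choose the eigenfunction and $v_0$. Then apply the two extremal-length results, and finally the volume bound. None of these steps uses area-minimality or a degree bound, which matches the statement.

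The main point needing care is the hypothesis of \Cref{lem:diagram-capacity}: it requires every edge incident to an interior vertex to lie on a $2$-cell. This holds for every disk diagram by \Cref{rem:no-interior-face-free}, since a face-free edge is a boundary-to-boundary bridge. I would state this explicitly so that no minimality assumption is smuggled in. I would also check the degenerate case $\operatorname{Area}(\Delta)=0$. That case cannot occur: with $V^\circ\neq\varnothing$, any interior vertex has a full circle of faces around it, so $\operatorname{Area}(\Delta)\ge1$ and the division is legitimate.

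A second, minor point concerns \Cref{lem:escape-EL}. It routes through $R_{\mathrm{eff}}\le d_\Delta(v_0,\partial\Delta)\le d_\Delta(v_0,b)$ and uses \Cref{lem:FL-dominates-radius}, and both are valid for arbitrary disk diagrams. So the chain closes without further work.
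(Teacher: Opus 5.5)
Your proposal is correct and is essentially the paper's proof: it chains \Cref{thm:extremal-inversion}, \Cref{lem:escape-EL} and \Cref{lem:diagram-capacity}, only in a slightly different order (the paper first combines the extremal-length inversion with the volume bound, then applies $\mathrm{EL}(\Gamma_{\mathrm{esc}})\le \mathrm{FL}_b(\Delta)/2$). Your extra checks are harmless additions that the paper leaves implicit: that the interiorly-clean hypothesis of \Cref{lem:diagram-capacity} holds automatically by \Cref{rem:no-interior-face-free}, and that $V^\circ\neq\varnothing$ forces $\operatorname{Area}(\Delta)\ge 1$.
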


\begin{proof}
Choose a nonnegative first Dirichlet eigenfunction $f$ and a maximising
vertex $v_0\in V^\circ$ with $f(v_0)=\max_{V^\circ} f$, as in
\Cref{thm:extremal-inversion}.
Write $E:=\mathrm{EL}(\Gamma_{\mathrm{esc}})$,
$V:=\mathrm{vol}(V^\circ)$,
$F:=\mathrm{FL}_b(\Delta)$, and $A:=\operatorname{Area}(\Delta)$.
By \Cref{thm:extremal-inversion}, $V\ge 1/(\lambda_1 E)$.
By \Cref{lem:diagram-capacity}, $V\le L_{\max} A$.
Combining gives $L_{\max} A\ge 1/(\lambda_1 E)$.
By \Cref{lem:escape-EL}, $E\le F/2$, so
$L_{\max} A\ge 2/(\lambda_1 F)$.
Rearranging yields \eqref{eq:area-main} with $c=2/L_{\max}$.
\end{proof}

\begin{corollary}[Linear-area diagrams have the $1/2$ exponent]\label{cor:linear-area-half}
Suppose a family of disk diagrams with
$L_{\max}$-bounded face lengths
and basepoint $b\in V(\partial\Delta)$ satisfies
$\operatorname{Area}(\Delta)\le A \mathrm{FL}_b(\Delta)$
for some constant $A\ge 1$.
Then for every $\Delta$ in the family with $V^\circ\neq\varnothing$,
$\mathrm{FL}_b(\Delta)\ge c_A \lambda_1(\Delta)^{-1/2}$,
where $c_A=c_A(A,L_{\max})>0$.
\end{corollary}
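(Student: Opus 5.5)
The plan is to feed the hypothesis $\operatorname{Area}(\Delta)\le A\,\mathrm{FL}_b(\Delta)$ directly into the diagramwise rigidity estimate of \Cref{thm:area-main}. That theorem needs no minimality and no degree bound, only $V^\circ\neq\varnothing$ and face lengths at most $L_{\max}$, and both hold for every member of the family under consideration. For such $\Delta$ it gives
\[
\mathrm{FL}_b(\Delta)\cdot\operatorname{Area}(\Delta)\ge\frac{2}{L_{\max}\,\lambda_1(\Delta)}.
\]

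Next I replace the area factor using the family hypothesis. This yields
\[
A\,\mathrm{FL}_b(\Delta)^2\ge\mathrm{FL}_b(\Delta)\operatorname{Area}(\Delta)\ge\frac{2}{L_{\max}\lambda_1(\Delta)}.
\]
Here $\lambda_1(\Delta)$ is finite and positive, because $V^\circ\neq\varnothing$ and the Dirichlet form is positive definite, as noted in the proof of \Cref{thm:spectral-dehn}. We may therefore take square roots and obtain
\[
\mathrm{FL}_b(\Delta)\ge\Bigl(\frac{2}{A L_{\max}}\Bigr)^{1/2}\lambda_1(\Delta)^{-1/2},
\]
so we can take $c_A:=(2/(AL_{\max}))^{1/2}$.

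No real obstacle arises. The one point to check is that \Cref{thm:area-main} truly applies to each member of the family without an interiorly clean hypothesis. Its proof calls \Cref{lem:diagram-capacity}, which assumes interior cleanliness. By \Cref{rem:no-interior-face-free}, however, every disk diagram is automatically interiorly clean, so nothing further is needed.
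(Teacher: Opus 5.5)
Your proposal is correct and follows the paper's proof exactly: the paper also substitutes $\operatorname{Area}(\Delta)\le A\,\mathrm{FL}_b(\Delta)$ into \Cref{thm:area-main} to get $A\,\mathrm{FL}_b(\Delta)^2\ge c/\lambda_1(\Delta)$ and then takes square roots. Your explicit constant $c_A=(2/(AL_{\max}))^{1/2}$ and your remark that interior cleanliness is automatic by \Cref{rem:no-interior-face-free} are both consistent with the paper.
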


\begin{proof}
By \Cref{thm:area-main},
$A \mathrm{FL}_b(\Delta)^2
\ge \mathrm{FL}_b(\Delta)\cdot\operatorname{Area}(\Delta)
\ge c/\lambda_1(\Delta)$.
Taking square roots gives the claim.
\end{proof}

\begin{theorem}[Dehn-optimised filling-length rigidity]\label{thm:main-ggt}
Let $\Delta$ be an area-minimising van Kampen diagram
with $V^\circ\neq\varnothing$
over a presentation with Dehn function $\delta_{\mathcal P}$.
For every basepoint $b\in V(\partial\Delta)$,
\begin{equation}\label{eq:main-ggt}
\mathrm{FL}_b(\Delta)\cdot\delta_{\mathcal P}(\mathrm{FL}_b(\Delta))
\ge\frac{c}{\lambda_1(\Delta)},
\end{equation}
where $c=c(L_{\max})>0$.
No vertex degree bound is assumed.
\end{theorem}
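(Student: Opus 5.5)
The plan is to run the proof of \Cref{thm:area-main} verbatim, with one change: the volume bound of \Cref{lem:diagram-capacity} is replaced by the Dehn capacity bound of \Cref{lem:dehn-capacity}, which is where area-minimality enters. Throughout, write $F:=\mathrm{FL}_b(\Delta)$, $V:=\mathrm{vol}(V^\circ)$ and $E:=\mathrm{EL}(\Gamma_{\mathrm{esc}})$.

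First, choose a nonnegative first Dirichlet eigenfunction $f$ normalised so that $f(v_0)=1$ at a maximising vertex $v_0\in V^\circ$. This exists because $V^\circ\neq\varnothing$. \Cref{thm:extremal-inversion} then gives $E\ge 1/(\lambda_1(\Delta)V)$, equivalently $V\ge 1/(\lambda_1(\Delta)E)$. Second, \Cref{lem:escape-EL} with the same $v_0$ and the given basepoint $b$ gives $E\le F/2$, so $V\ge 2/(\lambda_1(\Delta)F)$. Third, since $\Delta$ is area-minimising for a cyclically reduced word (the standing convention), it is interiorly clean by \Cref{rem:no-interior-face-free}. Hence \Cref{lem:dehn-capacity} applies and gives $V\le L_{\max}\,\delta_{\mathcal P}(F)$.

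Chaining the three bounds gives $L_{\max}\,\delta_{\mathcal P}(F)\ge 2/(\lambda_1(\Delta)F)$, which rearranges to \eqref{eq:main-ggt} with $c=2/L_{\max}$. This is the same constant as in \Cref{thm:area-main}, consistent with the ``same $c$'' claim in \Cref{thm:intro-B}(ii). No degree bound is used anywhere: every step controls volume only through face lengths.

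There is no real obstacle here. The only point to check is that \Cref{lem:dehn-capacity} legitimately passes from $\operatorname{Area}(\Delta)\le\delta_{\mathcal P}(|\partial\Delta|)$ to $\delta_{\mathcal P}(F)$. This needs two facts: $F\ge|\partial\Delta|$, from \Cref{rem:based-shelling-exists}, and that $\delta_{\mathcal P}$ is nondecreasing, which holds because its defining maximum ranges over an increasing family of words.

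For the polynomial specialisation, substitute $\delta_{\mathcal P}(F)\le KF^\alpha$ to get $KF^{\alpha+1}\ge c/\lambda_1(\Delta)$. Taking $(\alpha+1)$-th roots gives $F\ge (c/K)^{1/(\alpha+1)}\lambda_1(\Delta)^{-1/(\alpha+1)}$. This is valid because $F\ge|\partial\Delta|\ge1$: the boundary is nonempty, since $\Delta$ has interior vertices and therefore at least one face.
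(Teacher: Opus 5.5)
Your proof is correct and follows essentially the paper's argument: the paper applies \Cref{thm:area-main} (extremal inversion, escape extremal length and the volume bound) and then replaces $\operatorname{Area}(\Delta)$ by $\delta_{\mathcal P}(\mathrm{FL}_b(\Delta))$ using $\mathrm{FL}_b(\Delta)\ge|\partial\Delta|$ and monotonicity, while you make the same area-to-Dehn substitution one step earlier by citing \Cref{lem:dehn-capacity}, and both routes give $c=2/L_{\max}$. (Your appeal to cyclic reduction is harmless but unnecessary, since $\operatorname{Area}^{\min}_{\mathcal P}(w)\le\delta_{\mathcal P}(|w|)$ holds for every null-homotopic $w$ by the remark after \Cref{def:spectral-dehn}, and interior cleanliness holds for every disk diagram by \Cref{rem:no-interior-face-free}.)
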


\begin{proof}
\Cref{thm:area-main} applies.
Let $w$ be the boundary word of $\Delta$.
By the remark after \Cref{def:spectral-dehn},
$\operatorname{Area}(\Delta)=\operatorname{FillArea}(w)
\le\delta_{\mathcal P}(|w|)$.
Since $\mathrm{FL}_b(\Delta)\ge|\partial\Delta|=|w|$ and
$\delta_{\mathcal P}$ is nondecreasing,
$\operatorname{Area}(\Delta)\le\delta_{\mathcal P}(\mathrm{FL}_b(\Delta))$.
By \Cref{thm:area-main},
$\mathrm{FL}_b(\Delta)\cdot\operatorname{Area}(\Delta)
\ge c/\lambda_1(\Delta)$.
Hence
$\mathrm{FL}_b(\Delta)\cdot\delta_{\mathcal P}(\mathrm{FL}_b(\Delta))
\ge c/\lambda_1(\Delta)$.
\end{proof}

\begin{corollary}[Filling-length bound for polynomial Dehn functions]
\label{cor:polynomial-dehn}
Let $\Delta$ be a minimal van Kampen diagram with
$V^\circ\neq\varnothing$ and basepoint $b\in V(\partial\Delta)$.
If $\delta_{\mathcal P}(n)\le Kn^\alpha$ for all integers $n\ge 1$,
some $\alpha\ge 1$
and $K\ge 1$, then
\[
\mathrm{FL}_b(\Delta)\ge C'\lambda_1(\Delta)^{-1/(\alpha+1)},
\]
where $C'=C'(K,L_{\max},\alpha)$.
\end{corollary}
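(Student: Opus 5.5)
The plan is to substitute the polynomial upper bound directly into \Cref{thm:main-ggt}. Write $F:=\mathrm{FL}_b(\Delta)$ and $\lambda:=\lambda_1(\Delta)$. Since $\Delta$ is minimal with $V^\circ\neq\varnothing$, \Cref{thm:main-ggt} gives $F\cdot\delta_{\mathcal P}(F)\ge c/\lambda$ with $c=c(L_{\max})>0$; by \Cref{thm:area-main} one may take $c=2/L_{\max}$.

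We first check that $F$ is a positive integer, so that the hypothesis $\delta_{\mathcal P}(n)\le Kn^\alpha$ (assumed only for integers $n\ge 1$) applies at $n=F$. By \Cref{rem:based-shelling-exists}, $F\in\mathbb N_0$ and $F\ge|\partial\Delta|$. The case $F=0$ is impossible. Indeed, $V^\circ\neq\varnothing$ means $\Delta$ has an interior vertex $v_0$, and then \Cref{lem:FL-dominates-radius} gives $F\ge 2\,\mathrm{rad}_b(\Delta)\ge 2d_\Delta(b,v_0)\ge 2$, since $v_0\ne b$. Hence $F\ge 1$, and indeed $F\ge 2$.

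Substituting $\delta_{\mathcal P}(F)\le KF^\alpha$ then yields $KF^{\alpha+1}\ge c/\lambda$, so
\[
\mathrm{FL}_b(\Delta)\ge\Bigl(\frac{c}{K}\Bigr)^{1/(\alpha+1)}\lambda_1(\Delta)^{-1/(\alpha+1)} .
\]
Setting $C':=(c/K)^{1/(\alpha+1)}=(2/(KL_{\max}))^{1/(\alpha+1)}$ gives the claim, and this $C'$ depends only on $K$, $L_{\max}$ and $\alpha$.

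I expect no real obstacle. The only point needing care is the integrality and positivity of $F$ just discussed. Since $\lambda_1(\Delta)>0$ whenever $V^\circ\neq\varnothing$ (positive definiteness of the Dirichlet form), the right-hand side is finite, and raising both sides to the power $1/(\alpha+1)$ is a monotone operation, so no further checking is needed.
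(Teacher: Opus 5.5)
Your proposal is correct and follows the paper's proof: substitute $\delta_{\mathcal P}(F)\le KF^\alpha$ into \Cref{thm:main-ggt} and take $(\alpha+1)$st roots, giving $C'=(c/K)^{1/(\alpha+1)}$. Your extra check that $F\ge 2$ (via \Cref{lem:FL-dominates-radius} and $v_0\neq b$), which makes the integer-$n$ hypothesis applicable at $n=F$, is a valid refinement that the paper leaves implicit.
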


\begin{proof}
Write $F:=\mathrm{FL}_b(\Delta)$.
Since $\delta_{\mathcal P}(F)\le KF^\alpha$,
\Cref{thm:main-ggt} gives
$F\cdot KF^\alpha\ge F\cdot\delta_{\mathcal P}(F)\ge c/\lambda_1(\Delta)$.
Hence $F^{\alpha+1}\ge c/(K\lambda_1(\Delta))$.
Taking $(\alpha+1)$st roots:
$\mathrm{FL}_b(\Delta)=F
\ge (c/K)^{1/(\alpha+1)}\lambda_1(\Delta)^{-1/(\alpha+1)}$,
which is the claimed bound with $C'=(c/K)^{1/(\alpha+1)}$.
\end{proof}

\begin{corollary}[Logarithmic bound for exponential Dehn functions]\label{cor:exponential-dehn}
Let $\Delta$ be a minimal van Kampen diagram with
$V^\circ\neq\varnothing$ and basepoint $b\in V(\partial\Delta)$.
If $\delta_{\mathcal P}(n)\le Ka^n$ for all integers $n\ge 1$,
some $a>1$ and $K\ge 1$, then
\[
\mathrm{FL}_b(\Delta)\ge C'\log\Bigl(\frac{1}{\lambda_1(\Delta)}\Bigr),
\]
where $C'=C'(K,L_{\max},a)>0$.
\end{corollary}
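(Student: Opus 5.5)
The plan mirrors the proof of \Cref{cor:polynomial-dehn}, starting from the Dehn-optimised inequality of \Cref{thm:main-ggt}. Write $F:=\mathrm{FL}_b(\Delta)$ and $\lambda:=\lambda_1(\Delta)$. \Cref{thm:main-ggt} gives
\[
F\cdot\delta_{\mathcal P}(F)\ge \frac{c}{\lambda},
\qquad c=c(L_{\max})>0.
\]
First, $F\ge 1$. Indeed $F\ge|\partial\Delta|$ by \Cref{rem:based-shelling-exists}, and $F\ge 2\,\mathrm{rad}_b(\Delta)\ge 2$ by \Cref{lem:FL-dominates-radius}, since $V^\circ\neq\varnothing$ forces an interior vertex at positive distance from $b$. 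So we may substitute the hypothesis $\delta_{\mathcal P}(F)\le Ka^F$, which yields $FKa^F\ge c/\lambda$.

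Next, absorb the linear factor into the exponential. Since $F\le a^F/\ln a$ for all $F\ge 1$ (indeed $x\le e^{x\ln a}/\ln a$ because $e^y\ge y$), we get
\[
\frac{K}{\ln a}\,a^{2F}\ge \frac{c}{\lambda}.
\]
Taking logarithms,
\[
2F\ln a\ge \log\frac1\lambda+\log\frac{c\ln a}{K},
\]
that is,
\[
F\ge \frac{1}{2\ln a}\log\frac1\lambda-\frac{1}{2\ln a}\log\frac{K}{c\ln a}.
\]

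The main obstacle is the additive constant $-C_0$ with $C_0:=\frac{1}{2\ln a}\log\frac{K}{c\ln a}$, which may be positive. A purely multiplicative bound $F\ge C'\log(1/\lambda)$ must absorb it. When $\lambda$ is small, say $\log(1/\lambda)\ge 2C_0\cdot 2\ln a$, the constant costs at most half the main term. Then
\[
F\ge \frac{1}{4\ln a}\log\frac1\lambda .
\]
When $\lambda$ is not that small, $\log(1/\lambda)$ is bounded above by a constant $M=M(K,L_{\max},a)$, and we use $F\ge 2$ instead. This gives $F\ge (2/M)\log(1/\lambda)$ whenever $\log(1/\lambda)\ge 0$.

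If $\lambda\ge 1$, the right-hand side $\log(1/\lambda)$ is nonpositive, so the bound holds trivially with any $C'>0$. Finally set $C':=\min\{1/(4\ln a),\,2/M\}$, which depends only on $K$, $L_{\max}$ and $a$, as required.
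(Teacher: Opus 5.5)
Your proof is correct and follows essentially the paper's route: start from \Cref{thm:main-ggt}, absorb the factor $F$ into the exponential (you via the elementary bound $F\le a^F/\ln a$, the paper via a contradiction argument with a threshold $t_0$), then remove the additive constant by splitting into small and not-small $\lambda_1$. One cosmetic fix: $C_0$ may be nonpositive, so choose $M$ positive, e.g.\ $M:=\max\{2\log(K/(c\ln a)),1\}$; otherwise $2/M$ is not a valid positive constant.
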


\begin{proof}
Write $F:=\mathrm{FL}_b(\Delta)$ and
$t:=c/(K\lambda_1(\Delta))$, where $c=c(L_{\max})$ is the constant
from \Cref{thm:main-ggt}.
Since $\delta_{\mathcal P}(F)\le Ka^F$,
\Cref{thm:main-ggt} gives $F\cdot Ka^F\ge c/\lambda_1$,
hence $Fa^F\ge t$.
Choose $t_0=t_0(a)$ large enough that
$\frac12(\log_a t)\cdot t^{1/2}<t$ for all $t\ge t_0$.
If $t\ge t_0$ and $F<\frac12\log_a t$, then
$a^F<t^{1/2}$, so
$Fa^F<\frac12(\log_a t)\cdot t^{1/2}<t$,
contradicting $Fa^F\ge t$.
Hence $F\ge\frac12\log_a t
=\frac{1}{2\log a}\log(c/(K\lambda_1))$
for all $\lambda_1$ with $t\ge t_0$.
Since $\log(c/(K\lambda_1))=\log(1/\lambda_1)+\log(c/K)$
and $\log(1/\lambda_1)\to\infty$ as $\lambda_1\to 0$,
there exists $\lambda_0>0$ such that
$\log(c/(K\lambda_1))\ge\frac12\log(1/\lambda_1)$ for all
$\lambda_1\le\lambda_0$.
Hence $F\ge\frac{1}{4\log a}\log(1/\lambda_1)$ whenever
$\lambda_1\le\min\{c/(Kt_0),\lambda_0\}$.
For $\lambda_1$ above that threshold, $F\ge 1$
and $\log(1/\lambda_1)$ is bounded, so the bound holds after
choosing $C'$ small enough.
\end{proof}

\subsection{Closing the exponent gap via subdiagram isoperimetry}
\label{subsec:isoperimetric-rigidity}

The filling-length bound of \Cref{cor:polynomial-dehn} yields exponent
$1/(\alpha+1)$, which gives $1/3$ when $\alpha=2$.
By deploying fractional isoperimetry on subdiagrams, we obtain a
discrete Faber-Krahn bound that achieves the sharp exponent $1/2$ in
the quadratic Dehn case.

\begin{theorem}[Faber-Krahn filling-length rigidity]\label{thm:faber-krahn}
Let $\Delta$ be an area-minimising van Kampen diagram
with
$V^\circ \neq \varnothing$ over a presentation with Dehn function
$\delta_{\mathcal{P}}(n) \le K n^\alpha$ for all integers $n\ge 1$
and some $\alpha > 1$.
For every basepoint $b\in V(\partial\Delta)$,
\begin{equation}\label{eq:faber-krahn-bound}
\mathrm{FL}_b(\Delta) \ge c' \lambda_1(\Delta)^{-\frac{1}{2\alpha - 2}},
\end{equation}
where $c' = c'(K, L_{\max}, \alpha) > 0$. No vertex degree bound is
assumed.
\end{theorem}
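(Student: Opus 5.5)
The plan is to replace the crude volume estimate $\mathrm{vol}(V^\circ)\le L_{\max}\delta_{\mathcal P}(\mathrm{FL}_b)$ from \Cref{lem:dehn-capacity} by a Cheeger/Faber-Krahn type lower bound on $\lambda_1$ for a subdiagram. The subdiagram is captured by encapsulation, and its perimeter is then controlled by the filling length. Concretely, we aim at
\[
\lambda_1(\Delta)\ \ge\ c_1\,\mathrm{FL}_b(\Delta)^{-(2\alpha-2)},
\]
which rearranges to \eqref{eq:faber-krahn-bound}.

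\emph{Step 1 (fractional isoperimetry for connected interior sets).} Let $A\subset V^\circ$ be nonempty and connected. Apply \Cref{lem:encapsulation} to obtain a disk subdiagram $\Omega$ with $A\subset V^\circ(\Omega)$ and $|\partial\Omega|\le 2L_{\max}|\partial_E A|$. The lobes of $\Omega$ are area-minimising, so
\[
\operatorname{Area}(\Omega)=\sum_k\operatorname{FillArea}(w_k)\le K\sum_k|w_k|^\alpha\le K|\partial\Omega|^\alpha .
\]
Arguing exactly as in the proof of \Cref{thm:spectral-hyp-iff}(ii), we get
\[
\mathrm{vol}(A)\le L_{\max}\operatorname{Area}(\Omega)+|\partial\Omega|\le C|\partial_E A|^\alpha .
\]
This uses $\alpha\ge1$ and $|\partial_E A|\ge1$, the latter holding since $A$ is interior. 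The resulting isoperimetric inequality is $|\partial_E A|\ge c\,\mathrm{vol}(A)^{1/\alpha}$.

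\emph{Step 2 (bounding volumes by filling length).} We need $\mathrm{vol}(A)\le \mathrm{vol}(V^\circ)\le L_{\max}\delta_{\mathcal P}(F)\le L_{\max}KF^\alpha$, where $F=\mathrm{FL}_b(\Delta)$; this follows from \Cref{lem:dehn-capacity}. Combining with Step 1 gives
\[
\frac{|\partial_E A|}{\mathrm{vol}(A)}\ge c\,\mathrm{vol}(A)^{1/\alpha-1}\ge c''F^{-(\alpha-1)}
\]
for every connected $A$. General $A$ reduces to connected components, as in \Cref{thm:spectral-hyp-iff}. Hence $h_D(\Delta)\ge c''F^{-(\alpha-1)}$, and \Cref{lem:cheeger-discrete}(i) yields $\lambda_1(\Delta)\ge (c'')^2F^{-(2\alpha-2)}/2$. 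Solving for $F$ gives the claim with $c'=((c'')^2/2)^{1/(2\alpha-2)}$; here $\alpha>1$ is needed so that the exponent is finite. No degree bound enters, because we only use $\mathrm{vol}(A)\le 2|E(\Omega)|$ and face lengths.

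\emph{Main obstacle.} The delicate point is Step 1, specifically the passage from $\sum_k|w_k|^\alpha$ to $|\partial\Omega|^\alpha$ together with the validity of the Dehn bound on lobe words that need not be cyclically reduced. The latter follows from the cyclic-reduction remark after \Cref{def:spectral-dehn}, and the former from superadditivity of $x\mapsto x^\alpha$. We also need to make sure the additive $|\partial\Omega|$ term is absorbed, using $|\partial\Omega|\le|\partial\Omega|^\alpha$. A subsidiary check is that the powers combine correctly. Step 1 gives $h\gtrsim \mathrm{vol}^{-(1-1/\alpha)}$, and Step 2 gives $\mathrm{vol}\lesssim F^\alpha$, so $h\gtrsim F^{-(\alpha-1)}$ and $\lambda_1\gtrsim F^{-2(\alpha-1)}$. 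This matches the stated exponent $1/(2\alpha-2)$, which equals $1/2$ at $\alpha=2$.
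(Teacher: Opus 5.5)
Your proposal is correct and follows the paper's proof essentially step for step: encapsulation plus lobe-wise minimality gives the fractional isoperimetric inequality $|\partial_E A|\ge C_0^{-1/\alpha}\mathrm{vol}(A)^{1/\alpha}$ for connected $A$. The Dirichlet Cheeger inequality together with $\mathrm{vol}(V^\circ)\le L_{\max}K\,\mathrm{FL}_b(\Delta)^\alpha$ then finishes the argument, and the only difference is cosmetic: the paper extends the isoperimetric inequality to disconnected sets via subadditivity of $x\mapsto x^{1/\alpha}$ before inserting the volume bound, whereas you insert the volume bound on each component and pass to general sets by the mediant inequality, which works equally well.
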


\begin{proof}
By \Cref{lem:encapsulation}, for any connected subset
$A \subset V^\circ$ (connected in the induced subgraph on $A$),
there exists a disk subdiagram $\Omega \subset \Delta$ with
$A \subset V^\circ(\Omega)$ and
$|\partial \Omega| \le 2 L_{\max} |\partial_E A|$.
Since every vertex of $A$ is interior to $\Omega$, every edge
incident to a vertex of $A$ lies in $\Omega$.
Since $\Omega$ is pure (every edge of $\Omega$ borders a face,
by step~(v) of \Cref{lem:encapsulation}),
the edge-face incidence identity gives
\[
\mathrm{vol}(A) \le 2|E(\Omega)|
= \sum_{f\subset\Omega}|\partial f| + |\partial\Omega|
\le L_{\max} \operatorname{Area}(\Omega) + |\partial \Omega|.
\]
By lobe-wise minimality (\Cref{lem:encapsulation}),
$\operatorname{Area}(\Omega)
=\sum_k\operatorname{FillArea}(w_k)
\le K|\partial\Omega|^\alpha$
(using $\operatorname{FillArea}(w_k)\le\delta_{\mathcal P}(|w_k|)
\le K|w_k|^\alpha$ on each lobe,
where the first inequality holds for arbitrary null-homotopic
words by the remark after \Cref{def:spectral-dehn},
and $\sum_k|w_k|^\alpha\le(\sum_k|w_k|)^\alpha$ for $\alpha\ge 1$).
Substituting the boundary bound yields:
\[
\mathrm{vol}(A) \le K L_{\max} (2 L_{\max} |\partial_E A|)^\alpha
+ 2 L_{\max} |\partial_E A|.
\]
Since $A\subset V^\circ$ is nonempty and $A\neq V(\Delta)$ (because
$V(\partial\Delta)\neq\varnothing$), and $\Delta$ is connected,
some edge has one endpoint in $A$
and one outside $A$, so $|\partial_E A| \ge 1$.
Because $\alpha > 1$,
$|\partial_E A| \le |\partial_E A|^\alpha$, so
\[
\mathrm{vol}(A) \le C_0 |\partial_E A|^\alpha
\]
with $C_0 = K L_{\max} (2 L_{\max})^\alpha + 2 L_{\max}$.
Rearranging gives a fractional isoperimetric inequality for all
connected subsets $A \subset V^\circ$:
\begin{equation}\label{eq:fractional-isop}
|\partial_E A| \ge C_0^{-1/\alpha} \mathrm{vol}(A)^{1/\alpha}.
\end{equation}
Now let $U \subset V^\circ$ be an arbitrary nonempty subset.
Decompose $U$ into its connected components
$U = \sqcup_{i=1}^k U_i$.
Since there are no edges between distinct components,
$\partial_E U = \sqcup_{i=1}^k \partial_E U_i$.
By subadditivity of $x\mapsto x^{1/\alpha}$ (since $1/\alpha<1$):
\[
|\partial_E U| = \sum_{i=1}^k |\partial_E U_i|
\ge C_0^{-1/\alpha} \sum_{i=1}^k \mathrm{vol}(U_i)^{1/\alpha}
\ge C_0^{-1/\alpha} \mathrm{vol}(U)^{1/\alpha}.
\]
The Dirichlet Cheeger constant (\Cref{def:cheeger-disk}) therefore
satisfies
\[
h_D(\Delta)
= \inf_{\varnothing \neq U \subset V^\circ}
\frac{|\partial_E U|}{\mathrm{vol}(U)}
\ge C_0^{-1/\alpha} \inf_{U} \mathrm{vol}(U)^{1/\alpha - 1}.
\]
Since $1/\alpha-1 < 0$, the right-hand side is minimised at the
largest available volume $\mathrm{vol}(V^\circ)$.
Hence
$h_D(\Delta) \ge C_0^{-1/\alpha}
\mathrm{vol}(V^\circ)^{1/\alpha - 1}$.

The Dirichlet Cheeger inequality
(\Cref{lem:cheeger-discrete}(i))
$\lambda_1(\Delta) \ge h_D(\Delta)^2/2$ yields
\begin{equation}\label{eq:fk-eigenvalue}
\lambda_1(\Delta) \ge \tfrac{1}{2} C_0^{-2/\alpha}
\mathrm{vol}(V^\circ)^{2/\alpha - 2}.
\end{equation}
By \Cref{lem:diagram-capacity},
$\mathrm{vol}(V^\circ) \le L_{\max} \operatorname{Area}(\Delta)$.
Let $w$ be the boundary word of $\Delta$.
Since $\Delta$ is area-minimising,
$\operatorname{Area}(\Delta)=\operatorname{FillArea}(w)
\le\delta_{\mathcal P}(|w|)
\le\delta_{\mathcal P}(\mathrm{FL}_b(\Delta))
\le K \mathrm{FL}_b(\Delta)^\alpha$
(the first inequality uses the remark after \Cref{def:spectral-dehn}
for arbitrary null-homotopic words).
Hence
$\mathrm{vol}(V^\circ) \le C_1 \mathrm{FL}_b(\Delta)^\alpha$
with $C_1=KL_{\max}$.
Since $2/\alpha - 2 < 0$, the upper bound on volume gives a lower
bound on $\mathrm{vol}(V^\circ)^{2/\alpha - 2}$:
\[
\lambda_1(\Delta) \ge \tfrac{1}{2} C_0^{-2/\alpha}
(C_1 \mathrm{FL}_b^\alpha)^{2/\alpha - 2}
= c'' \mathrm{FL}_b^{2 - 2\alpha}.
\]
Rearranging yields
$\mathrm{FL}_b^{2\alpha - 2} \ge c'' / \lambda_1(\Delta)$,
and thus $\mathrm{FL}_b \ge c' \lambda_1^{-1/(2\alpha - 2)}$.
\end{proof}

\begin{corollary}[Closing the exponent gap for quadratic Dehn functions]
\label{cor:sharp-exponent}
Let $\Delta$ be a minimal van Kampen diagram with
$V^\circ\neq\varnothing$ over a presentation with
$\delta_{\mathcal P}(n) \le Kn^2$ for all integers $n\ge 1$.
For every basepoint $b\in V(\partial\Delta)$,
\[
\mathrm{FL}_b(\Delta) \ge c' \lambda_1(\Delta)^{-1/2}.
\]
This yields the exponent $1/2$, which matches the sharp exponent
exhibited by the Euclidean grid family
(\Cref{prop:z2-quasisquare}).
\end{corollary}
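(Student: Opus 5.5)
This corollary is the special case $\alpha=2$ of \Cref{thm:faber-krahn}, so the plan is simply to check that its hypotheses are met and to evaluate the exponent. The hypotheses match exactly: $\Delta$ is area-minimising with $V^\circ\neq\varnothing$, and $\delta_{\mathcal P}(n)\le Kn^2$ holds for all integers $n\ge 1$. Since $\alpha=2>1$, the theorem applies. Then $2\alpha-2=2$, so \eqref{eq:faber-krahn-bound} reads $\mathrm{FL}_b(\Delta)\ge c'\lambda_1(\Delta)^{-1/2}$ with $c'=c'(K,L_{\max},2)$.

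For a self-contained constant, one can trace the chain in the proof of \Cref{thm:faber-krahn} at $\alpha=2$.
\begin{itemize}
\item Encapsulation (\Cref{lem:encapsulation}) together with lobe-wise minimality gives $\mathrm{vol}(A)\le C_0|\partial_E A|^2$ for connected $A\subset V^\circ$, with $C_0=4KL_{\max}^3+2L_{\max}$.
\item Subadditivity of $x\mapsto x^{1/2}$ over components extends this to all $U\subset V^\circ$. This gives $h_D(\Delta)\ge C_0^{-1/2}\mathrm{vol}(V^\circ)^{-1/2}$.
\item \Cref{lem:cheeger-discrete}(i) then yields $\lambda_1(\Delta)\ge \tfrac12 C_0^{-1}\mathrm{vol}(V^\circ)^{-1}$.
\item \Cref{lem:diagram-capacity}, combined with $\operatorname{Area}(\Delta)\le\delta_{\mathcal P}(\mathrm{FL}_b(\Delta))\le K\,\mathrm{FL}_b(\Delta)^2$, gives $\mathrm{vol}(V^\circ)\le KL_{\max}\mathrm{FL}_b(\Delta)^2$.
\end{itemize}
Combining the last two bounds gives $\mathrm{FL}_b(\Delta)^2\ge (2C_0KL_{\max}\lambda_1(\Delta))^{-1}$. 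Taking square roots yields the claim with $c'=(2C_0KL_{\max})^{-1/2}$.

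No step is a genuine obstacle, since all of the work is contained in \Cref{thm:faber-krahn}. The only point to watch is that $\alpha=2$ is strictly greater than $1$. This is needed to absorb the linear boundary term into $|\partial_E A|^\alpha$ and to apply subadditivity. The sharpness remark is a statement about \Cref{prop:z2-quasisquare}, proved later, and is not part of the inequality itself.
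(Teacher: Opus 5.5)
Your proposal is correct and follows the paper's route exactly: the corollary is the $\alpha=2$ specialisation of \Cref{thm:faber-krahn}, and your traced constants $C_0=4KL_{\max}^3+2L_{\max}$ and $c'=(2C_0KL_{\max})^{-1/2}$ agree with that proof. (One small correction to your closing remark: absorbing the linear term and subadditivity of $x^{1/\alpha}$ need only $\alpha\ge 1$; the strict inequality $\alpha>1$ is what makes $\mathrm{vol}^{1/\alpha-1}$ decreasing and the final exponent $2\alpha-2$ positive, and this is harmless at $\alpha=2$.)
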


\begin{remark}[Phase transition and bracket tightness]
\label{rem:phase-transition}
The crossover $1/(2\alpha-2)=1/(\alpha+1)$ occurs at $\alpha=3$:
for $1<\alpha<3$ the Faber-Krahn bound $\lambda_1^{-1/(2\alpha-2)}$
dominates, for $\alpha>3$ the filling-length bound
$\lambda_1^{-1/(\alpha+1)}$ from \Cref{cor:polynomial-dehn}
dominates, and at $\alpha=3$ both yield $1/4$.
(For finitely presented non-hyperbolic groups, the isoperimetric gap
theorem forces $\alpha\ge 2$ coarsely, so the regime
$1<\alpha<2$ does not arise in group theory.)
At the level of the face-dual profile, the bracket of
\Cref{prop:dual-profile-bracket} is tight at the lower end for
$\alpha=2$: the face-dual of a rectangular commutator grid is again
a rectangular grid with the same $n^{-2}$ spectral order
(\Cref{prop:z2-quasisquare}).
For $\alpha=3$, the profile upper bound $\Lambda^\ast\le Cn^{-2}$
follows from \Cref{prop:dual-profile-bracket}(ii) and
$\delta_{H_3}(n)\asymp n^3$;
the explicit Heisenberg family of \Cref{prop:heisenberg-upper}
gives
$\widetilde\mu_1(\Delta_n)\le Cn^{-2}$, though
area-minimality of those diagrams has not been established.
This reflects a qualitative contrast: for $\alpha=2$, the lower-end
exponent of the bracket is achieved by isotropic grids whose eigenvalue is
controlled by area; for $\alpha=3$, the anisotropic corridor
structure of the explicit Heisenberg family yields eigenvalue
$O(n^{-2})$, saturating the upper end of the bracket
$[cn^{-4}, Cn^{-2}]$ up to a logarithmic factor.
Whether the minimal profile itself has order $n^{-2}$ (rather
than a smaller power between $n^{-4}$ and $n^{-2}$) remains open
(\Cref{conj:heisenberg-sharp}).
\end{remark}

\subsection{A sharp Euclidean calibration}

\begin{definition}[Rectangular commutator grids]\label{def:z2-rect-grid}
Let
\[
\mathcal P_{\mathbb Z^2}:=\langle a,b\mid aba^{-1}b^{-1}\rangle.
\]
For integers $p,q\ge 2$, let $Q_{p,q}$ denote the rectangular van Kampen
diagram over $\mathcal P_{\mathbb Z^2}$ tiled by $pq$ square relator faces,
with boundary word
\[
a^p b^q a^{-p} b^{-q}.
\]
\end{definition}

\begin{proposition}[Sharp $1/2$ exponent on quasi-square commutator grids]\label{prop:z2-quasisquare}
Let $Q_{p,q}$ be the rectangular commutator grid from
\Cref{def:z2-rect-grid}, and let $v$ be a corner vertex of
$\partial Q_{p,q}$.
Then $Q_{p,q}$ is area-minimising; in fact every edge of $Q_{p,q}$
lies on a relator face, so $Q_{p,q}$ is pure, hence minimal in the
standing sense. Moreover,
\[
\mathrm{FL}_v(Q_{p,q})=2(p+q),
\]
and
\[
\lambda_1(Q_{p,q})
=
1-\frac12\Bigl(\cos\frac{\pi}{p}+\cos\frac{\pi}{q}\Bigr).
\]
Consequently there exist absolute constants $c,C>0$ such that
\[
c\bigl(p^{-2}+q^{-2}\bigr)
\le \lambda_1(Q_{p,q})
\le C\bigl(p^{-2}+q^{-2}\bigr).
\]
If, in addition, the aspect ratio is bounded:
$K^{-1}\le p/q\le K$,
then
\[
c_K \lambda_1(Q_{p,q})^{-1/2}
\le \mathrm{FL}_v(Q_{p,q})
\le C_K \lambda_1(Q_{p,q})^{-1/2},
\]
where $c_K,C_K>0$ depend only on $K$.
\end{proposition}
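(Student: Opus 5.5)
The plan is to prove the four assertions in order: area-minimality of $Q_{p,q}$, the exact filling length, the exact eigenvalue, and then the elementary two-sided estimates.

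\emph{Area-minimality.} I would use a corridor argument. In any van Kampen diagram $\Delta$ over $\mathcal P_{\mathbb Z^2}$, each face contains exactly two $a$-edges and two $b$-edges.
\begin{itemize}
\item Crossing the two $a$-edges of successive faces gives the \emph{$a$-corridors}; crossing the two $b$-edges gives the \emph{$b$-corridors}.
\item Each corridor is either an annulus or joins two boundary edges carrying the same letter with opposite orientations in the boundary reading.
\item Corridors of the same letter are face-disjoint, since each face has only two edges of each letter.
\end{itemize}
For the boundary word $a^pb^qa^{-p}b^{-q}$, orientation forces each $a$-edge of the bottom side to be joined by an $a$-corridor to an $a$-edge of the top side. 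Likewise each $b$-edge of the right side is joined to one on the left side. Consider the union of an $a$-corridor and a $b$-corridor together with the boundary. The Jordan curve theorem then forces these two corridors, which join alternating pairs of sides, to share a face. Distinct pairs share distinct faces, because a face lies on at most one $a$-corridor and at most one $b$-corridor. Hence $\operatorname{Area}(\Delta)\ge pq=\operatorname{Area}(Q_{p,q})$. Purity is immediate, since every grid edge borders a square.

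\emph{Filling length.} The lower bound $\mathrm{FL}_v\ge|\partial Q_{p,q}|=2(p+q)$ comes from \Cref{rem:based-shelling-exists}. For the upper bound, I would shell so that every intermediate $\Sigma_j$ is a Young-diagram (staircase) region anchored at the corner $v$. At each step I remove a cell at an outer corner of the staircase, i.e. a cell meeting the boundary in an arc of length $2$; by \Cref{rem:based-shelling-exists} this leaves the boundary length unchanged. The perimeter of such a region is at most $2(p+q)$. Once only a single row or column remains, the cells are removed similarly, and finally the remaining spurs are removed down to $\{b\}=\{v\}$, which only decreases the length. This gives equality.

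\emph{Eigenvalue.} The interior vertices form a $(p-1)\times(q-1)$ grid. Every interior vertex has degree $4$, and its neighbours are exactly its four grid neighbours, with boundary neighbours killed. Hence $P$ acts as $\tfrac12(P_p\otimes I+I\otimes P_q)$, where $P_p$ is the Dirichlet path averaging operator on $\{1,\dots,p-1\}$. The function $f(i,j)=\sin(\pi i/p)\sin(\pi j/q)$ is a strictly positive eigenfunction of $P$ with eigenvalue $\tfrac12(\cos\frac{\pi}{p}+\cos\frac{\pi}{q})$. By Perron--Frobenius, or by the explicit tensor diagonalisation with eigenvalues $\cos(\pi k/p)$, this is the top eigenvalue. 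This gives the stated formula for $\lambda_1$.

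\emph{Estimates.} On $[0,\pi/2]$ we have $2x^2/\pi^2\le 1-\cos x\le x^2/2$, which gives $\lambda_1\asymp p^{-2}+q^{-2}$. Under $K^{-1}\le p/q\le K$ we get $p^{-2}+q^{-2}\asymp_K (p+q)^{-2}=4\,\mathrm{FL}_v^{-2}$, which yields both inequalities.

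I expect the main obstacle to be the area-minimality step. The points needing care are the planarity/intersection claim, which must be justified via Jordan even when $\Delta$ has cut vertices or annular corridors, and the verification that distinct pairs of corridors meet in distinct faces. The staircase shelling near the final row and the endpoint bookkeeping need a little care but are routine.
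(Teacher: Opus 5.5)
Your proof is correct. For the filling length, the eigenvalue and the final estimates it is essentially the paper's argument: a Ferrers-region shelling toward $v$, the sine-product eigenfunction, and $1-\cos x\asymp x^2$. Identifying the top eigenvalue by Perron--Frobenius, instead of the paper's completeness argument for the sine basis (orthogonality plus dimension count), is a harmless shortcut.

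The genuine difference is area-minimality. The paper maps $F(a,b)\to H_3(\mathbb Z)$ by $a\mapsto x$, $b\mapsto y$. Every conjugate of the relator goes to the central element $z^{\pm1}$, and the boundary word goes to $[x^p,y^q]=z^{pq}$. Since $z$ has infinite order, van Kampen's lemma gives $N\ge pq$ in two lines, with no planarity, no Jordan curve theorem and no cut-vertex bookkeeping. Your corridor count costs more topology but gives more. It yields an injection from pairs (one $a$-corridor, one $b$-corridor) into faces, i.e.\ structural control over every filling. It is also the technique that still works when no convenient central extension is available.

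The topological issue you flag closes quickly here:
\begin{itemize}
\item No proper nonempty cyclic subword of $a^pb^qa^{-p}b^{-q}$ has zero exponent sum in both letters. An arc containing $b$-letters of both signs must contain a whole $a^{\pm p}$ block. Balancing the $a$-exponent then forces the other block, and then the whole word.
\item Hence no diagram for this word revisits a boundary vertex, since the label between two visits would be trivial in $\mathbb Z^2$.
\item The word is not freely trivial, so the area is positive, and \Cref{lem:simple-boundary-top-disk} makes $|\Delta|$ a closed disk with no face-free edges.
\end{itemize}
So every boundary edge starts a genuine corridor whose midline is an embedded arc. The Jordan step is then clean, and annular corridors never enter the count.

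There is one small slip in the shelling. If you remove only corners meeting the boundary in an arc of length $2$, you stall at the hook $(p,1,\dots,1)$, not at a single row or column. Corners in the bottom row or first column meet the boundary in arcs of length $3$. This does not matter: your observation that every Young-diagram region in the $p\times q$ box has perimeter $2(\mathrm{width}+\mathrm{height})\le 2(p+q)$ already bounds every stage.
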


\begin{proof}
For minimality, let
\[
H_3(\mathbb Z):=\langle x,y,z\mid z=[x,y],\ [x,z]=[y,z]=1\rangle
\]
be the discrete Heisenberg group, and define a homomorphism
$\phi:F(a,b)\to H_3(\mathbb Z)$ by $\phi(a)=x$, $\phi(b)=y$.
Then
$\phi(a^p b^q a^{-p} b^{-q})=[x^p,y^q]=z^{pq}$
(since $H_3(\mathbb Z)$ is nilpotent of class $2$,
$[x^p,y^q]=[x,y]^{pq}=z^{pq}$).
Every conjugate of the relator $aba^{-1}b^{-1}$ or its inverse maps to
$z$ or $z^{-1}$.
If a van Kampen diagram for $a^pb^qa^{-p}b^{-q}$ has $N$ faces,
its boundary word maps to a product of $N$ elements $z^{\pm 1}$
(by the standard normal-closure form supplied by van Kampen's
lemma~\cite{LyndonSchupp}),
hence to $z^m$ with $|m|\le N$.
Since the boundary maps to $z^{pq}$
and $z$ has infinite order in $H_3(\mathbb Z)$
(the displayed upper-unitriangular matrices satisfy the defining
relations, so there is a homomorphism to $\mathrm{UT}_3(\mathbb Z)$
under which $z^n$ has top-right entry $n$;
$x\mapsto\bigl(\begin{smallmatrix}1&1&0\\0&1&0\\0&0&1\end{smallmatrix}\bigr)$,
$y\mapsto\bigl(\begin{smallmatrix}1&0&0\\0&1&1\\0&0&1\end{smallmatrix}\bigr)$,
$z\mapsto\bigl(\begin{smallmatrix}1&0&1\\0&1&0\\0&0&1\end{smallmatrix}\bigr)$),
we get $N\ge pq$.
Since $Q_{p,q}$ has exactly $pq$ faces, it is area-minimising.
The boundary word $a^p b^q a^{-p} b^{-q}$ is cyclically reduced, and
every edge of $Q_{p,q}$ lies on a relator face, so
$Q_{p,q}$ is pure, hence interiorly clean; together with
area-minimality, $Q_{p,q}$ is minimal in the standing sense.

Because the initial boundary length is $2(p+q)$,
$\mathrm{FL}_v(Q_{p,q})\ge |\partial Q_{p,q}|=2(p+q)$.
The dihedral symmetry group of the rectangular grid acts transitively
on the four corners and preserves both $\mathrm{FL}_v(Q_{p,q})$ and
$\lambda_1(Q_{p,q})$, so all four corner choices give the same values;
we may therefore assume that $v$ is the southwest corner.
Shell $Q_{p,q}$ by removing squares row by row starting from the
northeast corner: in each row remove squares from right to left, then
continue with the next row below.
After each deletion, the remaining cells form a connected simply
connected Ferrers region containing the southwest corner, so the next
chosen square is again a boundary square and its intersection with the
current boundary is a connected arc of length $k\in\{2,3\}$.
Under the shelling convention of \Cref{rem:based-shelling-exists},
removing such a square deletes those $k$ boundary edges and exposes
the complementary arc of length $4-k$, so the boundary length changes
by $4-2k\le 0$.
Thus every intermediate boundary has length at most the initial
perimeter $2(p+q)$.
After all other squares have been removed, only the square incident
to $v$ remains; its entire boundary lies on the current boundary,
and removing this final square terminates the shelling at $\{v\}$.
Hence $\mathrm{FL}_v(Q_{p,q})\le 2(p+q)$, and therefore
$\mathrm{FL}_v(Q_{p,q})=2(p+q)$.

Now index the interior vertices of $Q_{p,q}$ by
$(i,j)\in\{1,\dots,p-1\}\times\{1,\dots,q-1\}$,
so every interior vertex has degree $4$.
For $1\le r\le p-1$ and $1\le s\le q-1$, define
\[
u_{r,s}(i,j):=\sin\Bigl(\frac{\pi r i}{p}\Bigr)
\sin\Bigl(\frac{\pi s j}{q}\Bigr).
\]
The Dirichlet boundary values vanish on $i=0,p$ and $j=0,q$, and the
sine addition formula gives
\[
Pu_{r,s}
=
\frac12\Bigl(\cos\frac{\pi r}{p}+\cos\frac{\pi s}{q}\Bigr)u_{r,s}.
\]
Therefore $u_{r,s}$ is a Dirichlet eigenfunction of
$\mathcal L=I-P$ with eigenvalue
\[
\lambda_{r,s}
=
1-\frac12\Bigl(\cos\frac{\pi r}{p}+\cos\frac{\pi s}{q}\Bigr).
\]
Since $\cos$ decreases on $[0,\pi]$, the smallest eigenvalue is attained at
$(r,s)=(1,1)$, giving the stated formula for $\lambda_1(Q_{p,q})$.
(The functions $u_{r,s}$ for $1\le r\le p-1$, $1\le s\le q-1$ are
pairwise orthogonal in the counting inner product by the standard
discrete sine orthogonality relations, and number
$(p-1)(q-1)=|V^\circ(Q_{p,q})|$.
Since every interior vertex has degree $4$, orthogonality in
$\ell^2(V^\circ,\pi)$ is orthogonality in the counting inner
product up to the constant factor $4$.
Hence these functions form a complete basis of Dirichlet
eigenfunctions and the listed $\lambda_{r,s}$ exhaust the full
Dirichlet spectrum.)

Finally, using
$\frac{2x^2}{\pi^2}\le 1-\cos x\le \frac{x^2}{2}$
for $0\le x\le \pi$,
we obtain
$p^{-2}+q^{-2}
\le \lambda_1(Q_{p,q})
\le \frac{\pi^2}{4}(p^{-2}+q^{-2})$.
If $K^{-1}\le p/q\le K$, then $p+q\asymp_K p\asymp_K q$, while
$p^{-2}+q^{-2}\asymp_K p^{-2}$.
Combining these estimates with $\mathrm{FL}_v(Q_{p,q})=2(p+q)$ yields the
two-sided bound
$\mathrm{FL}_v(Q_{p,q})\asymp_K \lambda_1(Q_{p,q})^{-1/2}$.
\end{proof}

This confirms that the gap between $1/3$ (\Cref{cor:polynomial-dehn}) and
$1/2$ is a loss in the general comparison argument, not in the spectral
invariant itself: the sharp $1/2$ exponent already occurs on a
two-parameter family of area-minimising diagrams.
(The sharpness statement is with respect to based filling length
$\mathrm{FL}_b$.
For rectangular grids, row-by-row shelling from a corner achieves the
same boundary-length bound for the free filling length
$\mathrm{FL}_{\mathrm{free}}$
(the infimum of $\max_j|\partial\Sigma_j|$ over all shellings, not
necessarily based at a single vertex), so
$\mathrm{FL}_{\mathrm{free}}(Q_{p,q})\asymp\mathrm{FL}_b(Q_{p,q})
\asymp p+q$ and the sharp exponent holds for both variants.)

\section{Toward quasi-isometry invariance}\label{sec:qi-invariance}

The minimal face-dual profile $\Lambda^\ast_{\mathcal P}$ of
\S\ref{subsec:dual-spectral-profile} detects hyperbolicity
(\Cref{thm:dual-spectral-hyp-iff}), but is defined using
area-minimising van Kampen diagrams for a fixed presentation.
We now introduce a larger admissible class: \emph{hereditarily
quasi-minimal combinatorial disk maps}, and define an associated
family of spectral profiles
$\{\widetilde\Lambda^{\ast,\langle\kappa\rangle,\mathrm{hqm}}_{\mathcal P}\}_{\kappa\ge 1}$.
We prove that hyperbolicity implies a positive HQM spectral gap.
We then introduce a \emph{multiloop} variant (mHQM,
\S\ref{subsec:qi-invariance-free}) indexed by dual-connected face-sets
rather than disk subdiagrams, and a further \emph{hole-free-ancestor}
variant (hfmHQM) that restricts to face-sets without enclosed source
holes.
Area-minimising free-completed diagrams are automatically $1$-mHQM
and $1$-hfmHQM (\Cref{lem:minimal-is-hfmhqm}), and the hfmHQM
profile detects hyperbolicity (\Cref{thm:hfmhqm-free-hyp}) and is a
quasi-isometry invariant (\Cref{thm:hfmhqm-qi-invariance}).

\subsection{Disk maps and the HQM admissible class}

\begin{definition}[Combinatorial disk map]\label{def:disk-map}
Let $X$ be a locally finite combinatorial $2$-complex
(a CW $2$-complex, possibly infinite, whose attaching maps are
combinatorial: each open cell maps homeomorphically onto its image,
and boundary attaching maps send edges to edge-paths;
this includes standard presentation complexes, their universal covers,
and their free completions).
A \emph{combinatorial disk map over $X$} is a cellular map
$\phi\colon D\to X$, where $D$ is a finite, connected, simply
connected planar $2$-complex (a disk diagram in the sense of
\S\ref{subsec:disc-setup}), every open cell of $D$ maps
homeomorphically onto an open cell of $X$, and
$\phi$ respects attaching maps.
Write $\partial D$ for the boundary walk of $D$
and $\phi(\partial D)$ for its image,
a closed combinatorial edge-path in~$X^{(1)}$.
A \emph{disk subdiagram} of $D$ is a subcomplex $\Omega\subset D$
that is itself a finite, connected, simply connected planar
$2$-complex; the restriction
$\phi|_\Omega\colon\Omega\to X$ is then a combinatorial disk map.

The face-dual eigenvalues $\mu_1(D),\widetilde\mu_1(D)$ are defined
exactly as in \Cref{def:dual-spectral-dehn}.
When $X$ has uniformly bounded face lengths
(i.e.\ $\ell_{\min}(X)\ge 1$ and $L_{\max}(X)<\infty$, which holds
for presentation complexes, their universal covers, and free
completions), all face lengths in disk maps over $X$ lie in
$[\ell_{\min}(X),L_{\max}(X)]$, and the comparison
\eqref{eq:comb-vs-weighted} gives
$\ell_{\min}(X) \mu_1(D)
\le\widetilde\mu_1(D)
\le L_{\max}(X) \mu_1(D)$.

We write $\operatorname{FillArea}_X(\gamma)$ for the minimal area
of a combinatorial disk map $\psi\colon D'\to X$ with
$\psi(\partial D')=\gamma$, where $\gamma$ is a closed combinatorial
edge-path in $X^{(1)}$.
(If no such disk map exists, set
$\operatorname{FillArea}_X(\gamma):=+\infty$.)
\end{definition}

\begin{definition}[Hereditarily quasi-minimal disk maps]
\label{def:hqm-diskmap}
Let $X$ be a locally finite combinatorial $2$-complex and let $\kappa\ge 1$.
A combinatorial disk map $\phi\colon D\to X$ is
\emph{$\kappa$-hereditarily quasi-minimal}
(abbreviated $\kappa$-HQM) if for every disk subdiagram
$\Omega\subset D$,
\[
\operatorname{Area}(\Omega)
\le
\kappa \operatorname{FillArea}_X(\phi(\partial\Omega))
+\kappa|\partial\Omega|.
\]
(Here $\phi(\partial\Omega)$ denotes the image loop of the boundary
walk of $\Omega$ under the restriction $\phi|_\Omega$.)
\end{definition}

\begin{lemma}[Lobe-wise optimality of minimal disk maps]
\label{lem:minimal-is-hqm}
If $\phi\colon D\to X$ is area-minimising
($\operatorname{Area}(D)=\operatorname{FillArea}_X(\phi(\partial D))$),
then for every disk subdiagram $\Omega\subset D$ with lobe
boundary walks $w_1,\dots,w_m$,
\begin{equation}\label{eq:lobe-wise-bound}
\operatorname{Area}(\Omega_k)
=\operatorname{FillArea}_X(\phi(w_k))
\quad\text{for each }k.
\end{equation}
Summing:
$\operatorname{Area}(\Omega)
=\sum_{k=1}^m\operatorname{FillArea}_X(\phi(w_k))$.
The same holds in the free-completed category
$X^{\pm}$.
\end{lemma}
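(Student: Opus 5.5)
The argument is an exchange argument by contradiction, essentially the one in \Cref{rem:simple-boundary-filler}, run for a general combinatorial disk map $\phi\colon D\to X$. Fix a disk subdiagram $\Omega\subset D$ and cut it at its boundary cut vertices into lobes $\Omega_1,\dots,\Omega_m$ with simple boundary walks $w_1,\dots,w_m$. First I would record the trivial inequality. Each $\Omega_k$ is itself a disk subdiagram, and $\phi|_{\Omega_k}$ is a filler of $\phi(w_k)$, so $\operatorname{Area}(\Omega_k)\ge\operatorname{FillArea}_X(\phi(w_k))$. It then suffices to rule out a strict inequality for some $k$.

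Suppose $\operatorname{Area}(\Omega_k)>\operatorname{FillArea}_X(\phi(w_k))$, and choose an optimal filler $E\to X$ of $\phi(w_k)$. Form the complement $K:=D\setminus\operatorname{int}(\Omega_k)$, that is, delete the open faces, open interior edges and open interior vertices of $\Omega_k$. Since $w_k$ is a simple closed curve in the planar disk $|D|$, the same reasoning as in the pushout step of \Cref{lem:disk-replacement} gives the following: $K$ is a finite connected planar $2$-complex whose outer boundary walk is $\partial D$ and whose only inner boundary walk is $w_k$. The map $\phi$ restricts to $\psi:=\phi|_K$.

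Next I would apply \Cref{lem:boundary-synthesis} to $K$ with the single filler $E$, orienting its boundary as the lemma requires, i.e.\ so that it reads $\psi(w_k)^{-1}$ relative to $K$. This produces a disk map for $\phi(\partial D)$ of area at most
\[
\operatorname{Area}(K)+\operatorname{Area}(E)=\operatorname{Area}(D)-\operatorname{Area}(\Omega_k)+\operatorname{FillArea}_X(\phi(w_k))<\operatorname{Area}(D).
\]
This contradicts $\operatorname{Area}(D)=\operatorname{FillArea}_X(\phi(\partial D))$, so each lobe satisfies \eqref{eq:lobe-wise-bound}. The lobes meet only at cut vertices, so their face sets partition $F(\Omega)$, and summing over $k$ gives the stated formula. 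If $\operatorname{FillArea}_X(\phi(w_k))=+\infty$, the inequality is vacuous, since $\Omega_k$ itself is a filler.

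The step I expect to need the most care is the topological claim that $K$ is connected, planar, and has exactly one inner boundary walk. This is where the simplicity of the lobe boundary is essential, and I would take it from the Jordan-curve and Mayer-Vietoris arguments already used in \Cref{lem:disk-replacement}. The free-completed case $X^{\pm}$ goes through verbatim, because \Cref{lem:boundary-synthesis} is stated there as well. The only additional point is that if $\operatorname{FillArea}^{\pm}=0$, then $E$ may be a face-free tree, and the synthesis lemma imposes no simple-boundary requirement on its fillers.
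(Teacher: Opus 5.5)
Your proposal is correct and follows essentially the same route as the paper, which proves the lemma by appeal to \Cref{rem:simple-boundary-filler}: excise the lobe $\Omega_k$, note that $K=D\setminus\operatorname{int}(\Omega_k)$ has outer boundary $\partial D$ and unique inner boundary $\partial\Omega_k$, and apply \Cref{lem:boundary-synthesis} with an optimal (not necessarily simple-boundary) filler of $\phi(w_k)$ to contradict area-minimality. Your extra details---the trivial inequality $\operatorname{Area}(\Omega_k)\ge\operatorname{FillArea}_X(\phi(w_k))$, the summation over the face partition, and the justification of the topology of $K$ via the argument of \Cref{lem:disk-replacement}---only make explicit what the paper leaves implicit.
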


\begin{proof}
By \Cref{rem:simple-boundary-filler}, \eqref{eq:lobe-bound}
(applied to the
area-minimising diagram $D$ and each lobe $\Omega_k$).
\end{proof}

\begin{remark}[The single-loop HQM gap]
\label{rem:lobe-vs-hqm}
The lobe-wise optimality \eqref{eq:lobe-wise-bound} gives
$\operatorname{Area}(\Omega)
=\sum_k\operatorname{FillArea}_X(\phi(w_k))$.
The sum $\sum_k\operatorname{FillArea}_X(\phi(w_k))$
is in general \emph{strictly larger} than
$\operatorname{FillArea}_X(\phi(\partial\Omega))$
(filling each lobe independently can be costlier than filling
the whole walk at once).
Hence the lobe-wise optimality does \emph{not} imply the single-loop
inequality
$\operatorname{Area}(\Omega)\le
\operatorname{FillArea}_X(\phi(\partial\Omega))+|\partial\Omega|$
that defines $1$-HQM (\Cref{def:hqm-diskmap}).
Direct replacement of a non-simple-boundary subdiagram by a whole-walk
filler is topologically invalid (it can change the fundamental group).
Whether area-minimising free-completed diagrams are $1$-HQM remains
open.
However, the multiloop variants $1$-mHQM and $1$-hfmHQM are proved
unconditionally (\Cref{lem:minimal-is-hfmhqm}), which suffices for
all QI-invariance applications.
\end{remark}

The hereditary condition is the essential distinction from a global
quasi-minimality bound.
A global bound
$\operatorname{Area}(D)\le\kappa \operatorname{FillArea}_X(\phi(\partial D))
+\kappa|\partial D|$
would permit long \emph{dipole corridors}: chains of cancellable face
pairs creating pendant paths in the face-dual graph with eigenvalue
$O(m^{-2})$, forcing any global quasi-minimal profile to degenerate.
The hereditary condition prevents this: a corridor of $m$ dipoles
inside an area-minimising diagram contradicts minimality directly
(the dipoles are cancellable).
For non-minimising diagrams in the $\kappa$-HQM class, corridors
are controlled by the combined effect of the hereditary
inequality on subdiagrams and the additive $\kappa|\partial\Omega|$ term.

\begin{definition}[HQM spectral profile family]
\label{def:hqm-profile}
For a locally finite combinatorial $2$-complex $X$ and $\kappa\ge 1$, define
\[
\widetilde\Lambda^{\ast,\langle\kappa\rangle,\mathrm{hqm}}_X(n)
:=
\inf\Bigl\{\widetilde\mu_1(D):
\phi\colon D\to X\text{ is a }\kappa\text{-HQM disk map},\
|\partial D|\le n\Bigr\},
\]
with the convention $\inf\varnothing:=+\infty$
(the profile takes values in $[0,+\infty]$).
Define $\Lambda^{\ast,\langle\kappa\rangle,\mathrm{hqm}}_X$ analogously
using $\mu_1$.
If $\mathcal P$ is a finite presentation, write $X_{\mathcal P}$ for its
presentation complex and abbreviate
$\widetilde\Lambda^{\ast,\langle\kappa\rangle,\mathrm{hqm}}_{\mathcal P}
:=\widetilde\Lambda^{\ast,\langle\kappa\rangle,\mathrm{hqm}}_{X_{\mathcal P}}$.
\end{definition}

\begin{definition}[Coarse equivalence of profile families]
\label{def:coarse-equivalence}
Two nonincreasing functions $f,g\colon\mathbb N\to[0,\infty]$ satisfy
$f\preceq g$ if there exists $C\ge 1$ such that
$f(\lceil Cn\rceil)\le C g(n)$ for all $n\ge 1$;
they are \emph{coarsely equivalent}, written $f\asymp g$, if
$f\preceq g$ and $g\preceq f$.
Two profile families
$\{F^{\langle\kappa\rangle}\}_{\kappa\ge 1}$ and
$\{G^{\langle\kappa\rangle}\}_{\kappa\ge 1}$
are \emph{coarsely equivalent} if for every $\kappa\ge 1$ there exists
$\kappa'\ge 1$ such that
$G^{\langle\kappa'\rangle}\preceq F^{\langle\kappa\rangle}$,
and for every $\kappa'\ge 1$ there exists $\kappa\ge 1$ such that
$F^{\langle\kappa\rangle}\preceq G^{\langle\kappa'\rangle}$.
\end{definition}

\subsection{The HQM spectral gap}

\begin{theorem}[HQM spectral gap from hyperbolicity]
\label{thm:hqm-spectral-hyp}
Let $\mathcal P=\langle S\mid R\rangle$ be a finite presentation.
If $G(\mathcal P)$ is word-hyperbolic, then for every $\kappa\ge 1$,
$\inf_{n\ge 1}
\widetilde\Lambda^{\ast,\langle\kappa\rangle,\mathrm{hqm}}_{\mathcal P}(n)>0$.
The converse (positive HQM profile implies hyperbolicity) would
require that area-minimising diagrams lie in some fixed
$\kappa$-HQM class; the sharpest version ($\kappa=1$) is the
open single-loop question of \Cref{rem:lobe-vs-hqm}.
The full unconditional ``if and only if'' characterisation is provided
by the mHQM and hfmHQM profiles
(\Cref{thm:mhqm-free-hyp,thm:hfmhqm-free-hyp})
and, without any HQM hypothesis, by the dual spectral profile
(\Cref{thm:dual-spectral-hyp-iff}).
\end{theorem}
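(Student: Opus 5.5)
The plan follows the proof of \Cref{thm:dual-spectral-hyp-iff}. We first bound the dual Cheeger constant uniformly over all $\kappa$-HQM disk maps $\phi\colon D\to X_{\mathcal P}$. Then we apply \Cref{lem:cheeger-discrete}.

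Fix $\kappa\ge 1$ and let $C_{\mathrm{iso}}$ be a linear isoperimetric constant for $G(\mathcal P)$. This bound should transfer to $\operatorname{FillArea}_{X_{\mathcal P}}(\gamma)\le C_{\mathrm{iso}}|\gamma|$ for null-homotopic loops $\gamma$ in the presentation complex. Such loops are exactly null-homotopic words, and disk maps over $X_{\mathcal P}$ are van Kampen diagrams. Take a $\kappa$-HQM disk map $D$ with $F(D)\neq\varnothing$ and a nonempty set $A\subset F(D)$. Split $A$ into dual-connected components $A_j$, and let $\widehat\Sigma_j$ be the hole-filling of the union of the faces of $A_j$ (\Cref{lem:hole-filling}). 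Part~(i) of that lemma uses no minimality, so $\widehat\Sigma_j$ is a disk subdiagram of $D$. Part~(ii) gives $|\partial\widehat\Sigma_j|\le|\partial_E A_j|$.

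This is where the argument departs from the minimal case. There we used lobe-wise minimality, part~(iii) of \Cref{lem:hole-filling}, which is unavailable here. Instead we invoke the $\kappa$-HQM inequality directly on the disk subdiagram $\Omega=\widehat\Sigma_j$:
\[
|A_j|\le\operatorname{Area}(\widehat\Sigma_j)
\le\kappa\operatorname{FillArea}(\phi(\partial\widehat\Sigma_j))+\kappa|\partial\widehat\Sigma_j|
\le\kappa(C_{\mathrm{iso}}+1)|\partial_E A_j|.
\]
The loop $\phi(\partial\widehat\Sigma_j)$ is null-homotopic because it bounds the disk map $\phi|_{\widehat\Sigma_j}$, so its FillArea is finite and obeys the linear bound. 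This bound applies to the whole boundary walk, with no cyclic reduction needed, by the remark after \Cref{def:spectral-dehn}. The main point to check carefully is this single-loop step. It is exactly where the HQM hypothesis replaces minimality, and it avoids the lobe decomposition entirely, since the HQM inequality is stated for arbitrary disk subdiagrams, simple boundary or not.

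Summing over $j$ uses $\partial_E A=\bigsqcup_j\partial_E A_j$, which holds because distinct components share no dual edges. This gives $|A|\le\kappa(C_{\mathrm{iso}}+1)|\partial_E A|$, that is, $\widetilde h_D(D^\ast)\ge 1/(\kappa(C_{\mathrm{iso}}+1))$. We then apply \Cref{lem:cheeger-discrete}(ii) to the unweighted face-dual Laplacian, with boundary vertex $\infty$ and $d_{\max}\le L_{\max}$, as noted after that lemma. This yields
\[
\widetilde\mu_1(D)\ge\frac{1}{2L_{\max}^2\kappa^2(C_{\mathrm{iso}}+1)^2},
\]
uniformly in $D$ and $n$. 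Maps with $F(D)=\varnothing$ have $\widetilde\mu_1(D)=+\infty$, and an empty admissible class gives the value $+\infty$ by convention, so $\inf_n\widetilde\Lambda^{\ast,\langle\kappa\rangle,\mathrm{hqm}}_{\mathcal P}(n)>0$. The remaining sentences of the statement are commentary on the converse and need no proof.
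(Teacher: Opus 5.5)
Your proposal is correct and is the paper's argument. You decompose $A$ into dual-connected components and hole-fill each one via \Cref{lem:hole-filling}(i)--(ii). You then apply the $\kappa$-HQM inequality to the resulting disk subdiagram together with the linear isoperimetric bound, and finish with the unweighted Cheeger inequality \Cref{lem:cheeger-discrete}(ii) with $d_{\max}\le L_{\max}$; your explicit constant $\bigl(2L_{\max}^2\kappa^2(C_{\mathrm{iso}}+1)^2\bigr)^{-1}$ is the constant implicit in the paper's $c_0(\kappa)$.
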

\begin{proof}
Fix $\kappa\ge 1$ and let $\phi\colon D\to X_{\mathcal P}$ be any
$\kappa$-HQM disk map with $F(D)\neq\varnothing$.
We bound the unweighted dual Cheeger constant
$\widetilde h_D(D^\ast):=\inf_{\varnothing\ne A\subset F(D)}
|\partial_E A|/|A|$.

Let $\varnothing\neq A\subset F(D)$ and decompose $A$ into
dual-connected components $A=A_1\sqcup\cdots\sqcup A_k$.
For each $j$, let $\Sigma_j$ be the union of faces in $A_j$
and let $\widehat\Sigma_j$ be the hole-filled subcomplex of
\Cref{lem:hole-filling}.
By \Cref{lem:hole-filling}(i), $\widehat\Sigma_j$ is a disk
subdiagram of $D$, and by part~(ii),
$|\partial\widehat\Sigma_j|\le|\partial_E A_j|$.

Because $D$ is $\kappa$-HQM, the subdiagram bound applies to
$\widehat\Sigma_j$:
$|A_j|
\le\operatorname{Area}(\widehat\Sigma_j)
\le\kappa \operatorname{FillArea}_{X_{\mathcal P}}
(\phi(\partial\widehat\Sigma_j))
+\kappa|\partial\widehat\Sigma_j|$.
By hyperbolicity,
$\operatorname{FillArea}_{X_{\mathcal P}}(\phi(\partial\widehat\Sigma_j))
\le C_{\mathrm{iso}}|\partial\widehat\Sigma_j|$.
Hence $|A_j|\le\kappa(C_{\mathrm{iso}}+1)|\partial_E A_j|$.
Summing: $|A|\le\kappa(C_{\mathrm{iso}}+1)|\partial_E A|$, so
$\widetilde h_D(D^\ast)\ge[\kappa(C_{\mathrm{iso}}+1)]^{-1}$.

The unweighted Dirichlet Cheeger inequality
(\Cref{lem:cheeger-discrete}(ii), with $d_{\max}\le L_{\max}$) gives
$\widetilde\mu_1(D)
\ge\widetilde h_D(D^\ast)^2/(2L_{\max}^2)
\ge c_0(\kappa)>0$,
where $c_0$ depends on $\kappa$, $C_{\mathrm{iso}}$, and $L_{\max}$
but not on $D$.
Taking the infimum over all $\kappa$-HQM disk maps $D$ with
$|\partial D|\le n$ gives
$\widetilde\Lambda^{\ast,\langle\kappa\rangle,\mathrm{hqm}}_{\mathcal P}(n)
\ge c_0(\kappa)$ for every $n$.
\end{proof}

\subsection{Spectral stability under bounded local replacement}

The key technical tool for proving invariance is a purely graph-theoretic
spectral stability result.
We first extend the Dirichlet eigenvalue to general finite multigraphs.
A \emph{Dirichlet multigraph} is a finite multigraph $\Gamma$
(parallel edges and self-loops allowed) with a distinguished
boundary vertex $\infty$.
Write $V^\circ(\Gamma):=V(\Gamma)\setminus\{\infty\}$ for the interior
vertices.
For interior vertices $u,v$ (not necessarily distinct), let
$m(u,v)$ denote the number of edges between $u$ and $v$;
for an interior vertex $v$, let $b(v)$ denote the number of edges
from $v$ to $\infty$.
The \emph{combinatorial Dirichlet eigenvalue} is
\[
\widetilde\mu_1(\Gamma)
:=\inf_{g\not\equiv 0}
\frac{\displaystyle\sum_{\{u,v\}\subset V^\circ}
m(u,v) (g(u)-g(v))^2
+\sum_{v\in V^\circ}b(v) g(v)^2}
{\displaystyle\sum_{v\in V^\circ(\Gamma)}g(v)^2},
\]
where the first sum runs over unordered pairs of distinct interior
vertices and $g\colon V^\circ(\Gamma)\to\mathbb R$.
(Self-loops $m(v,v)$ contribute zero to the numerator and are ignored.)
When $V^\circ(\Gamma)=\varnothing$ we set
$\widetilde\mu_1(\Gamma):=+\infty$.
For a disk diagram $\Delta$, the face-dual Dirichlet eigenvalue
$\widetilde\mu_1(\Delta)$ of \Cref{def:dual-spectral-dehn} is exactly
$\widetilde\mu_1(\Delta^\ast)$ in this sense: the dual multigraph
$\Delta^\ast$ has vertex set $F(\Delta)\cup\{\infty\}$,
edge multiplicities $m(f,f')$ between distinct faces, boundary
multiplicities $b(f)$, and the exterior face plays the role of
$\infty$.

\begin{definition}[Bounded local replacement of Dirichlet multigraphs]
\label{def:bounded-local-replacement}
Fix three finite families:
\begin{enumerate}[label=\textup{(\alph*)}]
\item a family of \emph{core gadgets}, each a finite connected
multigraph (parallel edges and self-loops allowed) equipped
with a finite ordered family of nonempty distinguished vertex-lists
(called \emph{port tuples}); each port tuple is a linearly ordered
list of vertices of the gadget, and repetitions are allowed;
\textup{(Self-loops contribute $0$ to the Dirichlet energy
$(h(v)-h(v))^2=0$ and are spectrally inert; they arise naturally
from self-glued template interfaces.)}
\item a family of \emph{interface gadgets}, each a finite bipartite
multigraph (parallel edges allowed) together with two distinguished
ordered nonempty vertex-lists $P^{-}$ and $P^{+}$ (its \emph{left}
and \emph{right boundary port tuples}), repetitions allowed, whose
underlying vertex-sets $\underline P^{-}$ and $\underline P^{+}$
form the two parts of a bipartition of $V(G)$, and such that every
vertex is incident to at least one edge;
\item a family of \emph{boundary gadgets}, each a finite connected
multigraph with one distinguished ordered nonempty vertex-list $P$
(entries in $V(H)\setminus\{\infty\}$, repetitions allowed)
and one boundary vertex labelled $\infty$.
\end{enumerate}
The port tuples of a core gadget are allowed to share vertices and may
even coincide (this occurs, e.g., for a single-face template whose
face-dual graph is a single vertex with all port tuples equal).
Interface gadgets contain no non-port vertices, since
$V(G)=\underline P^{-}\sqcup \underline P^{+}$.
For any port tuple $P$, write $\underline P$ for its underlying set
of distinct vertices.

Let $\Gamma$ be a Dirichlet multigraph.
A Dirichlet multigraph $\Gamma'$ is obtained from $\Gamma$ by
\emph{bounded local replacement} if:
\begin{enumerate}[label=\textup{(\roman*)}]
\item for each interior vertex $v\in V^{\circ}(\Gamma)$ one chooses a core
 gadget $C_v$ whose port tuples are indexed by the half-edges of
 $\Gamma$ incident to $v$;
\item for each interior edge $e$ of $\Gamma$ with endpoints $u,v$
 (counted with multiplicity; $u=v$ allowed), one chooses an interface
 gadget $G_e$ whose left and right boundary port tuples have lengths
 equal to the lengths of the corresponding port tuples of $C_u$ and
 $C_v$, and identifies those two boundary lists
 position-by-position with the corresponding core port tuples
 (so several positions may be identified with the same core vertex
 if the corresponding core port tuple has repeated entries);
\item for each boundary edge from $v$ to $\infty$ in $\Gamma$, one chooses a
 boundary gadget $H_e$ whose distinguished port tuple has length
 equal to the length of the corresponding port tuple of $C_v$,
 and identifies that list position-by-position with the
 corresponding core port tuple, and identifies the distinguished
 boundary vertex of $H_e$ with the single boundary vertex $\infty$
 of $\Gamma'$.
\end{enumerate}
The resulting $\Gamma'$ is the pushout (quotient of the disjoint
union) of all chosen gadgets under the port identifications above;
these identifications may create self-loops or parallel edges in
$\Gamma'$, which is allowed.
All gadgets are taken from the fixed finite families.
Because the families are finite, there is a maximum number of port
tuples on a core gadget and a maximum length of any port tuple.
The construction can be applied only when, for each interior vertex
and each incident half-edge, compatible gadgets with matching ordered
port tuples exist; in particular, this requires a uniform bound on
the interior degree of $\Gamma$.
\end{definition}

\begin{proposition}[Dirichlet spectral stability under bounded local replacement]
\label{prop:graph-local-replacement}
Fix finite families of core, interface, and boundary gadgets as in
\Cref{def:bounded-local-replacement}.
Then there exists $C\ge 1$ such that whenever $\Gamma'$ is obtained from a
finite Dirichlet multigraph $\Gamma$ (with maximum interior degree at most
$d^{\mathrm{gad}}$) by bounded local replacement, one has
\[
C^{-1}\widetilde\mu_1(\Gamma)
\le
\widetilde\mu_1(\Gamma')
\le
C\widetilde\mu_1(\Gamma).
\]
\end{proposition}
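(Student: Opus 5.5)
The plan is to compare Rayleigh quotients in both directions, using two transfer maps between $\ell^2(V^\circ(\Gamma))$ and $\ell^2(V^\circ(\Gamma'))$. The argument rests on three uniform constants, each coming from finiteness of the gadget families. First, every gadget has at most $M$ vertices and edges. Second, every interior vertex of $\Gamma$ has degree at most $d^{\mathrm{gad}}$. Third, because interface gadgets have no non-port vertices and the parts $\underline P^-,\underline P^+$ are disjoint, each vertex of $\Gamma'$ is the image of a core-gadget vertex of a \emph{single} $C_v$ or a non-port vertex of a single $H_e$. Port identifications only merge vertices inside one core gadget; they never merge two different cores. So the vertex set of $\Gamma'$ is partitioned into blocks $B_v$ (the image of $C_v$, nonempty) and blocks $B_e$ (the non-port, non-$\infty$ vertices of $H_e$). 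I would establish this structural fact first, as a short lemma. It also gives bounded overlap: every edge of $\Gamma'$ belongs to exactly one gadget, and each gadget is attached to one vertex or one edge of $\Gamma$.

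\emph{Upper bound $\widetilde\mu_1(\Gamma')\le C\widetilde\mu_1(\Gamma)$.} Given $g$ on $V^\circ(\Gamma)$, I set $g'\equiv g(v)$ on $B_v$ and on $B_e$ for every boundary edge $e$ at $v$. Core edges and boundary-gadget edges not touching $\infty$ contribute zero energy. An interface gadget $G_e$ for $e=uv$ contributes at most $M(g(u)-g(v))^2$, and this vanishes when $u=v$. A boundary gadget contributes at most $M g(v)^2$. Hence $\mathcal E'(g')\le M\,\mathcal E(g)$. Since $|B_v|\ge1$, we get $\|g'\|^2\ge\|g\|^2$, and so $\widetilde\mu_1(\Gamma')\le M\widetilde\mu_1(\Gamma)$.

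\emph{Lower bound $\widetilde\mu_1(\Gamma)\le C\widetilde\mu_1(\Gamma')$.} Given $h$ on $V^\circ(\Gamma')$ (extended by $0$ at $\infty$), I define $g(v)$ to be the mean of $h$ over $B_v$. There are three local estimates to prove, each obtained from a path of length at most $M$ inside one connected gadget together with Cauchy--Schwarz.
\begin{enumerate}
\item A Poincar\'e inequality on each core: $\sum_{x\in B_v}(h(x)-g(v))^2\le M^2\mathcal E'_{C_v}(h)$.
\item An edge estimate. For $e=uv$, pick any interface edge $xy$ with $x\in B_u$, $y\in B_v$; one exists because every interface vertex has an incident edge. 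Then $(g(u)-g(v))^2\le 3\bigl[(g(u)-h(x))^2+(h(x)-h(y))^2+(h(y)-g(v))^2\bigr]$, which is bounded by a multiple of the energy of $h$ on $C_u\cup G_e\cup C_v$.
\item A boundary estimate. For a boundary edge at $v$, joining a port of $H_e$ to $\infty$ inside the connected gadget $H_e$ gives $g(v)^2\lesssim\sum_{B_v}(h-g(v))^2+\mathcal E'_{H_e}(h)$. The same path argument bounds $\sum_{B_e}h^2$ by local energy plus $\sum_{B_v}h^2$.
\end{enumerate}
Summing, and using that each gadget is charged at most $O(d^{\mathrm{gad}})$ times, yields $\mathcal E(g)\le C_1\mathcal E'(h)$ and $\|h\|^2\le C_1\bigl(\|g\|^2+\mathcal E'(h)\bigr)$.

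Now let $h$ realise $\mu':=\widetilde\mu_1(\Gamma')$, and split into two cases. If $C_1\mu'\le\tfrac12$, then $\|g\|^2\ge\|h\|^2/(2C_1)$, so $\widetilde\mu_1(\Gamma)\le 2C_1^2\mu'$. Otherwise $\mu'>1/(2C_1)$; since $\widetilde\mu_1(\Gamma)\le d^{\mathrm{gad}}$ (test with the indicator of a single vertex), we get $\widetilde\mu_1(\Gamma)\le 2C_1 d^{\mathrm{gad}}\mu'$.

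I expect the main obstacle to be the combinatorial bookkeeping, not the analysis. The delicate cases are repeated port entries, self-glued interfaces with $u=v$, and parallel or self-loop edges created by the pushout. The key points are that these identifications never fuse distinct blocks and never destroy the connectivity used in the path arguments. Once that is settled, the remaining estimates are routine, with all constants depending only on $M$ and $d^{\mathrm{gad}}$.
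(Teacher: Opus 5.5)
Your proposal is correct and follows the paper's proof essentially step for step: constant extension on core blocks for the upper bound; block averaging for the lower bound, combined with a core Poincar\'e inequality, a single crossing interface edge, and a path-to-$\infty$ estimate inside each boundary gadget; and the same two-case assembly using $\widetilde\mu_1(\Gamma)\le d^{\mathrm{gad}}$. Your variations are harmless simplifications of the same argument: constant rather than energy-minimising extension into boundary gadgets, block means rather than port averages, and making explicit that port identifications never fuse distinct cores, which the paper uses only implicitly.
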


\begin{proof}
If $V^\circ(\Gamma)=\varnothing$, then $V^\circ(\Gamma')=\varnothing$
and both eigenvalues are $+\infty$; the bound holds trivially.
Assume $V^\circ(\Gamma)\neq\varnothing$.

We first record a uniform gadget estimate.
Each core or boundary gadget $G_*$ is a fixed finite connected multigraph.
For a function $h\colon V(G_*)\to\mathbb R$ and a nonempty subset
$S\subset V(G_*)$ define $\bar h_S:=|S|^{-1}\sum_{x\in S}h(x)$.
On each connected gadget $G_*$, the discrete Poincar\'e inequality gives a
constant $\pi(G_*)$ (depending only on the combinatorics of $G_*$)
such that
\begin{equation}\label{eq:gadget-poincare}
\sum_{x\in V(G_*)}(h(x)-\bar h_S)^2
\le\pi(G_*)\sum_{e\in E(G_*)}(h(e^+)-h(e^-))^2
\end{equation}
for every $h$ and every nonempty $S$.
(For instance $\pi(G_*)=|V(G_*)|^2$ works by the standard path
argument; the point is that $\pi(G_*)$ is a computable constant of
the finite graph $G_*$.)
In particular, for any $p\in S\subset V(G_*)$,
\begin{equation}\label{eq:port-deviation}
(h(p)-\bar h_S)^2\le\pi(G_*)\mathcal E_{G_*}(h),
\end{equation}
where $\mathcal E_{G_*}(h):=\sum_{e\in E(G_*)}(h(e^+)-h(e^-))^2$.
(Note: \eqref{eq:gadget-poincare} requires connectivity and is
applied only to core and boundary gadgets.
Interface gadgets may be disconnected; their contribution to the
energy and norm comparisons is handled separately below.)
Since the gadget families are finite, let
$\pi_{\max}:=\max_{G_*}\pi(G_*)$,
$n_{\max}:=\max_{G_*}|V(G_*)|$, and
$e_{\max}:=\max_{G_*}|E(G_*)|$.
All are finite constants depending only on the gadget families.
Write $d^{\mathrm{gad}}$ for the maximum number of port tuples on
any core gadget (equivalently, the maximum admissible interior degree
of $\Gamma$).

\textbf{Upper bound} ($\widetilde\mu_1(\Gamma')\le C\widetilde\mu_1(\Gamma)$).
After gluing, interface gadgets contribute edges but no new vertices
(their underlying vertex-sets are identified with the corresponding
core port vertices), while boundary gadgets may contribute new
interior vertices.
Let $f\colon V(\Gamma)\to\mathbb R$ with $f(\infty)=0$.
Define an extension $Ef$ on $\Gamma'$:
on each core gadget $C_v$ set $Ef\equiv f(v)$;
on each interface gadget $G_e$ connecting port tuples corresponding
to an edge $e$ with endpoints $x,y$,
take the energy-minimising extension with boundary value $f(x)$ on
every vertex of the underlying left port set $\underline P^{-}$ and
boundary value $f(y)$ on every vertex of the underlying right port
set $\underline P^{+}$;
on each boundary gadget from $x$ to $\infty$,
take the energy-minimising extension with value $f(x)$ on the
underlying port set $\underline P$ and $0$ on $\infty$.
(By the discrete maximum principle for energy minimisers on finite
networks, $Ef$ on each gadget takes values in the convex hull of
its prescribed boundary values.)

\emph{Norm bound.}
Each core gadget $C_v$ contributes $|V(C_v)|\cdot f(v)^2$ to
$\|Ef\|_2^2$.
Since $|V(C_v)|\ge 1$,
$\|Ef\|_2^2\ge\sum_{v\in V^\circ(\Gamma)}f(v)^2=\|f\|_2^2$.
For the complementary upper bound: interface gadgets contribute no new
vertices to $\|Ef\|_2^2$ (all their vertices already belong to core
gadgets).
Each core gadget contributes at most $n_{\max}\cdot f(v)^2$.
Each boundary gadget from $x$ to $\infty$ has at most $n_{\max}$ new
non-port vertices; by the maximum principle, $Ef$ on each such vertex
lies between $0$ and $f(x)$, contributing at most
$n_{\max}\cdot f(x)^2$.
Summing over all gadgets and using the bounded degree yields
$\|Ef\|_2^2\le C_1'\|f\|_2^2$ for a constant $C_1'$ depending only
on the families.

\emph{Energy bound.}
On each core gadget, $Ef$ is constant, so the internal energy is zero.
On each interface gadget for an edge $e=\{u,v\}$, the energy-minimising
extension has energy at most
$C_{\mathrm{eff}}\cdot(f(u)-f(v))^2$ where $C_{\mathrm{eff}}$ is
a uniform bound on the effective conductance between the two port
subsets (finite because the gadget families are finite).
The total energy from interface gadgets is at most
$C_{\mathrm{eff}}\mathcal E_\Gamma(f)$.
Similarly, boundary gadgets contribute at most
$e_{\max}\sum_{v\in V^\circ}b(v)f(v)^2
\le e_{\max}\mathcal E_\Gamma(f)$
(since the boundary terms are part of the Dirichlet energy).

Taking the Rayleigh quotient: since
$\mathcal E_{\Gamma'}(Ef)\le(C_{\mathrm{eff}}+e_{\max})\mathcal E_\Gamma(f)$
and $\|Ef\|_2^2\ge\|f\|_2^2$,
\[
\frac{\mathcal E_{\Gamma'}(Ef)}{\|Ef\|_2^2}
\le (C_{\mathrm{eff}}+e_{\max})
\frac{\mathcal E_\Gamma(f)}{\|f\|_2^2}.
\]
Taking the infimum over nonzero $f$ gives
$\widetilde\mu_1(\Gamma')\le C_1\widetilde\mu_1(\Gamma)$
with $C_1=C_{\mathrm{eff}}+e_{\max}$.

\textbf{Lower bound} ($\widetilde\mu_1(\Gamma)\le C\widetilde\mu_1(\Gamma')$).
Let $u\colon V(\Gamma')\to\mathbb R$ with $u(\infty)=0$.
For each core gadget $C_v$ define the \emph{core average}
$Pu(v):=|V(C_v)|^{-1}\sum_{x\in V(C_v)}u(x)$
(this local averaging operator should not be confused with the
sub-Markov operator $P$ of \S\ref{subsec:disc-setup}).

\emph{Energy comparison.}
For each edge $e=\{x,y\}$ of $\Gamma$ with $x,y\in V^\circ$,
let $\underline P_{x,e}\subseteq V(C_x)$ and
$\underline P_{y,e}\subseteq V(C_y)$ be the underlying sets of the
two attached port tuples, and let $G_e$ be the interface gadget.
By the triangle inequality,
\[
(Pu(x)-Pu(y))^2
\le 3\bigl[(Pu(x)-\bar u_{\underline P_{x,e}})^2
+(\bar u_{\underline P_{x,e}}-\bar u_{\underline P_{y,e}})^2
+(\bar u_{\underline P_{y,e}}-Pu(y))^2\bigr].
\]
By Jensen's inequality and \eqref{eq:gadget-poincare}
(applied to $C_x$ with $S=\underline P_{x,e}$),
$(Pu(x)-\bar u_{\underline P_{x,e}})^2
=\bigl(\tfrac{1}{|C_x|}\sum_{w\in C_x}
(u(w)-\bar u_{\underline P_{x,e}})\bigr)^2
\le\tfrac{1}{|C_x|}\sum_{w\in C_x}
(u(w)-\bar u_{\underline P_{x,e}})^2
\le\pi_{\max}\mathcal E_{C_x}(u)$;
similarly the third term is at most
$\pi_{\max}\mathcal E_{C_y}(u)$.
For the middle term, choose any edge $(p,q)$ of $G_e$ with
$p\in \underline P_{x,e}$ and $q\in \underline P_{y,e}$
(such an edge exists by the
port-incidence condition in \Cref{def:bounded-local-replacement}(b)).
Then
$(\bar u_{\underline P_{x,e}}-\bar u_{\underline P_{y,e}})^2
\le 3[(\bar u_{\underline P_{x,e}}-u(p))^2+(u(p)-u(q))^2
+(u(q)-\bar u_{\underline P_{y,e}})^2]$.
By \eqref{eq:gadget-poincare} on $C_x$, the first term is at most
$\pi_{\max}\mathcal E_{C_x}(u)$; the middle is at most
$\mathcal E_{G_e}(u)$; and the third is at most
$\pi_{\max}\mathcal E_{C_y}(u)$ by \eqref{eq:gadget-poincare}
on $C_y$.
Each core gadget $C_v$ has degree at most $d^{\mathrm{gad}}$ in
$\Gamma$, so $\mathcal E_{C_v}(u)$ is charged at most
$d^{\mathrm{gad}}$ times.
Summing over all edges gives
\begin{equation}\label{eq:Pu-energy-v2}
\mathcal E_\Gamma(Pu)\le C_3\mathcal E_{\Gamma'}(u),
\end{equation}
where $C_3=9(2\pi_{\max} d^{\mathrm{gad}}+1)$.
For a boundary edge from $v$ to $\infty$, $Pu(\infty)=0$ by the
Dirichlet condition.
Let $\underline P_{v,e}$ be the underlying set of the attached
boundary port tuple.
The same triangle inequality gives
$(Pu(v))^2\le 3[(Pu(v)-\bar u_{\underline P_{v,e}})^2
+(\bar u_{\underline P_{v,e}})^2]$;
the first term is bounded by $\pi_{\max}\mathcal E_{C_v}(u)$
(Jensen plus \eqref{eq:gadget-poincare}),
and the second by $\pi_{\max}\mathcal E_{H_e}(u)$
(since the boundary gadget $H_e$ is connected and contains the
boundary vertex $\infty$ with $u(\infty)=0$).
Including these boundary contributions in the sum does not change
the form of \eqref{eq:Pu-energy-v2}.

\emph{Norm comparison.}
For each core gadget $C_v$, writing $u|_{C_v}=Pu(v)+(u-Pu(v))$ gives
$\sum_{x\in C_v}u(x)^2\le 2|V(C_v)|\cdot Pu(v)^2
+2\sum_{x\in C_v}(u(x)-Pu(v))^2$.
By \eqref{eq:gadget-poincare}, the second term is at most
$2\pi_{\max}\mathcal E_{C_v}(u)$.
For each interface gadget $G_e$, all vertices lie in core gadgets
($V(G_e)=\underline P^-\sqcup \underline P^+\subseteq V(C_u)\cup V(C_v)$),
so interface gadgets contribute no new vertices to the norm.
For each boundary gadget $H_e$ from $v$ to $\infty$, the non-port
vertices (if any) contribute at most
$C_4' Pu(v)^2+C_4'\mathcal E_{H_e}(u)$
(using \eqref{eq:gadget-poincare} on the connected gadget $H_e$
and the port-deviation estimate on $C_v$).
Summing over all gadgets gives
\begin{equation}\label{eq:Pu-norm-v2}
\|u\|_2^2\le C_4\|Pu\|_2^2+C_4\mathcal E_{\Gamma'}(u),
\end{equation}
where $C_4$ depends only on the gadget families.

\emph{Assembly.}
Let $u$ be a normalised first eigenfunction of $\Gamma'$, so
$\|u\|_2=1$ and $\mathcal E_{\Gamma'}(u)=\widetilde\mu_1(\Gamma')$.
If $\widetilde\mu_1(\Gamma')\ge(2C_4)^{-1}$, then
$\widetilde\mu_1(\Gamma)\le d^{\mathrm{gad}}
\le 2C_4 d^{\mathrm{gad}}\widetilde\mu_1(\Gamma')$.
(Here $\widetilde\mu_1(\Gamma)\le d^{\mathrm{gad}}$ because
testing the Rayleigh quotient with the indicator of any single interior
vertex gives $\widetilde\mu_1\le\deg(v)\le d^{\mathrm{gad}}$.)
Otherwise \eqref{eq:Pu-norm-v2} gives
$\|Pu\|_2^2\ge(2C_4)^{-1}>0$ (ensuring $Pu\not\equiv 0$), and
\eqref{eq:Pu-energy-v2} yields
$\widetilde\mu_1(\Gamma)
\le\mathcal E_\Gamma(Pu)/\|Pu\|_2^2
\le 2C_4C_3\widetilde\mu_1(\Gamma')$.
Combining both cases gives $\widetilde\mu_1(\Gamma)\le C\widetilde\mu_1(\Gamma')$.
\end{proof}

\subsection{Quasi-isometry invariance via free completion}
\label{subsec:qi-invariance-free}

Write
$L_{\max}^{\pm}$ for the maximum face length in the free-completed
complex $X_{\mathcal P}^{\pm}$; equivalently,
$L_{\max}^{\pm}=\max\{L_{\max},2\}$ when $R\neq\varnothing$
and $L_{\max}^{\pm}=2$ when $R=\varnothing$
(the free bigons $s\bar s$ have length $2$).

For the HQM profile, hyperbolicity implies a positive HQM spectral gap
(\Cref{thm:hqm-spectral-hyp}).
To obtain a spectral filling profile whose positivity criterion is a
quasi-isometry invariant,
we pass to the \emph{free completion} of the universal cover and
introduce a \emph{hole-free-ancestor} hereditary condition (hfmHQM).
Area-minimising free-completed diagrams are automatically
$1$-mHQM and $1$-hfmHQM (\Cref{lem:minimal-is-hfmhqm}).
The positivity of the free-completed hfmHQM profile detects
word-hyperbolicity (\Cref{thm:hfmhqm-free-hyp}) and, for $\kappa=1$,
is a quasi-isometry invariant
(\Cref{thm:hfmhqm-qi-invariance}).
We also develop a bounded-template pushforward framework,
including a $1$-Cancellation (\Cref{thm:hfmhqm-transfer}),
and establish a mixed quantitative interleaving into a bounded
path-completion (\Cref{thm:mixed-qi-interleaving}).

\begin{definition}[Free completion]
\label{def:free-completion}
Let $X$ be a locally finite combinatorial $2$-complex.
For each unoriented $1$-cell $\underline{e}$ of $X$, choose either
orientation $e$ and attach a $2$-cell $B_{\underline{e}}$ along the
loop $e\bar e$.
Since $e\bar e$ and $\bar e e$ are cyclic shifts, the result is
independent of the chosen orientation.
The resulting complex is the \emph{free completion} $X^{\pm}$.
Note that $(X^{\pm})^{(1)}=X^{(1)}$, so loops in $X^{(1)}$ may be
regarded interchangeably as loops in $(X^{\pm})^{(1)}$.
A \emph{free-completed disk map over $X$} is a combinatorial disk map
over $X^{\pm}$.
For a loop $\gamma$ in $X^{(1)}$ define
$\operatorname{FillArea}_X^{\pm}(\gamma)
:=\min\{\operatorname{Area}(D):
\phi\colon D\to X^{\pm}\text{ disk map with }
\phi(\partial D)=\gamma\}$
(set $\operatorname{FillArea}_X^{\pm}(\gamma):=+\infty$ if no such
disk map exists).
A free-completed disk map $\phi\colon D\to X^{\pm}$ is
\emph{$\kappa$-HQM} if for every disk subdiagram $\Omega\subset D$,
$\operatorname{Area}(\Omega)
\le\kappa \operatorname{FillArea}_X^{\pm}(\phi(\partial\Omega))
+\kappa|\partial\Omega|$.
\end{definition}

Free completion records free reductions by explicit bigon $2$-cells.
Under our disk-map conventions, a backtracking pair $e\bar e$ already
admits an area-$0$ filler (a single-edge face-free simply connected
diagram); the free bigon provides an additional simple-boundary filler
of area $1$.
The role of the free completion is therefore not to change filling
areas, but to package free cancellations as explicit bounded local
$2$-cells for the bounded-template pushforward and path-completion
constructions below.
Combined with the hereditary HQM inequality, this removes the standard
dipole-corridor obstruction.
In the model obstruction, let $\phi\colon D\to X^{\pm}$ be
$\kappa$-HQM and let $\Omega\subset D$ be a corridor of $m$ dipoles
whose image boundary loop $\phi(\partial\Omega)$ is a backtracking
pair $e\bar e$.
Then $\operatorname{Area}(\Omega)=2m$, while
$\operatorname{FillArea}_X^{\pm}(\phi(\partial\Omega))=0$,
so the HQM inequality gives
$2m=\operatorname{Area}(\Omega)
\le\kappa\cdot 0+\kappa\cdot 2=2\kappa$.

\begin{definition}[Carrier of a face-set]
\label{def:face-carrier}
Let $D$ be a disk diagram and let $A\subseteq F(D)$ be a face-set.
The \emph{carrier} of $A$, denoted $\Sigma_A$, is the closed subcomplex
of $D$ consisting of the faces of $A$ together with all incident edges
and vertices.
\end{definition}

\begin{definition}[Multiboundary of a face-set]
\label{def:multiboundary}
Let $D$ be a disk diagram (with ambient orientation inherited from the
plane) and let
$\varnothing\neq A\subseteq F(D)$ be a nonempty face-set.
Let $E_\partial(A)$ be the set of oriented edges $e$ of $D$ such that
a face of $A$ lies on the left of $e$, while a face of
$F(D)\setminus A$ or the exterior face $\infty$ lies on the right.
For $e\in E_\partial(A)$, let $v=t(e)$ be the terminal vertex of $e$.
Define the successor $\sigma(e)$ to be the first oriented edge of
$E_\partial(A)$ encountered after $\bar e$ when rotating
counterclockwise in the cyclic order of oriented edge-germs based
at $v$; equivalently, $\sigma(e)$ is obtained by turning as far left
as possible while keeping a face of $A$ on the left.
This successor permutation decomposes $E_\partial(A)$ into disjoint
directed cycles; the corresponding closed combinatorial edge-paths
\[
\partial^{\mathrm{multi}} A=(\gamma_1,\dots,\gamma_m)
\]
are the \emph{multiboundary loops} of $A$.
Set $\|\partial^{\mathrm{multi}} A\|:=\sum_{i=1}^m |\gamma_i|$.
\end{definition}

\begin{lemma}[Multiboundary equals dual-edge boundary]
\label{lem:multiboundary-identity}
For every nonempty face-set $\varnothing\neq A\subseteq F(D)$,
\begin{equation}\label{eq:multiboundary-dual-boundary}
\|\partial^{\mathrm{multi}} A\|=|\partial_E A|,
\end{equation}
where $\partial_E A$ is the edge boundary of $A$ in the face-dual
graph, counting edges to $\infty$ with multiplicity.
\end{lemma}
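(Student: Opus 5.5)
The identity will follow by constructing a bijection between the oriented edges of $E_\partial(A)$ and the dual edges in $\partial_E A$. Then it suffices to observe that the successor permutation $\sigma$ partitions $E_\partial(A)$ into cycles whose lengths sum to $|E_\partial(A)|$. The second half is formal. Since $\sigma$ is a permutation of the finite set $E_\partial(A)$, its orbits are disjoint directed cycles covering $E_\partial(A)$, and each loop $\gamma_i$ traverses the oriented edges of exactly one orbit, each once. Hence $\|\partial^{\mathrm{multi}}A\|=\sum_i|\gamma_i|=|E_\partial(A)|$. Before this I will check that $\sigma$ really is a well-defined bijection. It is well defined because $\bar e$ itself is a germ at $v$ that has a face of $A$ on one side, so rotating around $v$ from $\bar e$ one eventually leaves the $A$-sector and meets an edge in $E_\partial(A)$. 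It is injective because the inverse rule, turning as far right as possible at the initial vertex, recovers $e$ from $\sigma(e)$. This is the standard face-tracing argument for rotation systems.

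For the first half, I will send each $e\in E_\partial(A)$ to the dual edge joining the face of $A$ on its left to the face, or to $\infty$, on its right. Here I must be careful with multiplicities, which is the main bookkeeping point. An unoriented primal edge $\underline e$ with faces $f,f'$ on its two sides, where $f\in A$ and $f'\notin A$ (with $f'$ possibly $\infty$), contributes exactly one dual edge to $\partial_E A$. It also contributes exactly one oriented edge to $E_\partial(A)$, namely the orientation with $f$ on the left; the opposite orientation has a non-$A$ face on its left. If both sides lie in $A$, including the self-loop case $f=f'$, the edge contributes to neither set. If neither side lies in $A$, again it contributes to neither. For edges on $\partial\Delta$, the "right side" is $\infty$, and $b(f)$ counts exactly these occurrences with multiplicity. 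This matches the convention that edges to $\infty$ count with multiplicity. Face-free bridge edges have no face on either side, so they lie in neither set. Thus the map is a bijection and $|E_\partial(A)|=|\partial_E A|$.

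The only potential subtlety is the well-definedness of "left" and "right" for an edge whose two sides belong to the same face, or which is traversed twice by $\partial D$ at a cut vertex. I will handle this by working with edge-sides (half-edge occurrences in face boundaries) rather than faces. Each unoriented edge has exactly two sides in the planar embedding, and each side is labelled by a face of $D$ or by $\infty$. The counting above is then a statement about the two sides of each edge, and it holds uniformly in all cases.
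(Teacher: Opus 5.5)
Your proposal is correct and follows the same route as the paper: each primal edge separating a face of $A$ from a face outside $A$ or from $\infty$ gives exactly one oriented edge of $E_\partial(A)$, and the successor permutation partitions $E_\partial(A)$ into the multiboundary cycles without repetition. Your additional check that $\sigma$ is a genuine permutation fills in a step the paper leaves implicit, but the direction in your well-definedness sentence is inverted: rotating counterclockwise from $\bar e$ you first sweep the non-$A$ sector on the right of $e$ and stop at the first dart that enters an $A$-sector (you do not ``leave the $A$-sector''), and this is exactly why each incoming boundary dart at $v$ is matched with a unique outgoing one.
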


\begin{proof}
Each primal edge separating a face of $A$ from a face outside $A$
or from $\infty$ contributes exactly one oriented edge to
$E_\partial(A)$ (the orientation with $A$ on the left).
Conversely every oriented edge in $E_\partial(A)$ comes from such a
primal edge.
The successor permutation packages these oriented boundary edges into
cycles without repetition.
Hence the total multiboundary length equals the number of dual
boundary edges.
\end{proof}

\begin{lemma}[Hole-filling does not increase multiboundary length]
\label{lem:hole-filling-multiboundary}
Let $D$ be a disk diagram and let $A\subseteq F(D)$ be a nonempty
dual-connected face-set. Let $\widehat A:=F(\widehat\Sigma_A)$
be the face-set of the hole-filled carrier.
Then $\widehat A$ is dual-connected and \emph{hole-free}
(the carrier $\Sigma_{\widehat A}$ has no bounded complementary
components in $|D|$), $A\subseteq\widehat A$,
and
$\|\partial^{\mathrm{multi}}\widehat A\|
\le\|\partial^{\mathrm{multi}}A\|$.
\end{lemma}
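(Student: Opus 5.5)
The plan is to reduce everything to \Cref{lem:hole-filling} and \Cref{lem:multiboundary-identity}, and to argue directly in an $S^2$-embedding of $D$. Write $\Sigma:=\Sigma_A$ for the carrier; it is a connected union of $2$-cells because $A$ is dual-connected. Let $\widehat\Sigma=\widehat\Sigma_A$ be the hole-filled subcomplex and $\widehat A:=F(\widehat\Sigma)$.

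The containment $A\subseteq\widehat A$ is immediate, since $\widehat\Sigma$ is obtained from $\Sigma$ by adjoining $2$-cells. For the length inequality, \Cref{lem:multiboundary-identity} converts both sides into dual-edge boundaries, so it suffices to show $|\partial_E\widehat A|\le|\partial_E A|$. I will give an injection from $\partial_E\widehat A$ into $\partial_E A$. Let $e$ be a primal edge separating a face $f\in\widehat A$ from a face $g\notin\widehat A$ or from $\infty$. The face $g$ (or the exterior) does not lie in any bounded hole of $\Sigma$, since every face in such a hole was adjoined to $\widehat\Sigma$. Hence $g$ lies in the component $U'_\infty$ of $S^2\setminus|\Sigma|$. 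The edge $e$ is then on the frontier of $U'_\infty$, so $e\subset|\Sigma|$. The face $f$ on the other side is therefore either a face of $A$, or a face in a hole; the latter is impossible, because a hole face cannot share an edge with $U'_\infty$. This is exactly the argument of \Cref{lem:hole-filling}(ii). So $f\in A$ and $g\notin A$, and the map $e\mapsto e$ is the required injection. One subtlety needs checking: an edge with $f$ on both sides contributes nothing to either boundary, so no double counting arises.

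For dual-connectedness of $\widehat A$, take a hole $U$, that is, a bounded component of $S^2\setminus|\Sigma|$ other than $U'_\infty$. Every face of $D$ inside $U$ is joined to $A$ by a dual path within $\overline U$. To see this, note that the faces of $D$ in $U$ tile $U$, because $D$ is a disk diagram and $U$ avoids the exterior region (\Cref{rem:no-interior-face-free}). Any component of the dual graph restricted to these faces has an edge on $\partial U\subset|\Sigma|$. The face across that edge is then in $A$; if it were another face of $U$, we would get a contradiction with maximality of the component, as in \Cref{rem:dual-killed-connected}. For hole-freeness, $S^2\setminus|\widehat\Sigma|$ is connected by the proof of \Cref{lem:hole-filling}(i), and the carrier of $\widehat A$ is exactly $\widehat\Sigma$. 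Hence there are no bounded complementary components in $|D|$.

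The main obstacle I expect is the tiling claim: that every hole of $\Sigma$ inside $|D|$ is genuinely filled by $2$-cells of $D$, rather than containing unfilled regions or face-free edges. I would settle it with the Euler-characteristic argument of \Cref{rem:no-interior-face-free}, which shows that the only unfilled region of $D$ in $S^2$ is the exterior one. The exterior region lies in $U'_\infty$, so any hole of $\Sigma$ is covered by faces of $D$.
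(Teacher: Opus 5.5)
Your proposal is correct and follows essentially the same route as the paper: you reduce the length inequality via \Cref{lem:multiboundary-identity} to showing that every primal edge of $\partial_E\widehat A$ already lies in $\partial_E A$ (because its non-$\widehat A$ side lies in $U'_\infty$), you connect each hole face to $A$ across the frontier $\partial K\subset|\Sigma_A|$, and you read off hole-freeness from \Cref{lem:hole-filling}(i). The only differences are cosmetic: you link hole faces to $A$ by a maximal-dual-component argument in the style of \Cref{rem:dual-killed-connected} rather than by the paper's transverse path through $K$, and you make explicit (via \Cref{rem:no-interior-face-free}) that each hole is tiled by faces of $D$, which the paper's proof uses without comment.
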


\begin{proof}
By \Cref{lem:multiboundary-identity}, it suffices to show
$|\partial_E\widehat A|\le |\partial_E A|$.
Since $A$ is dual-connected, $\Sigma_A$ is connected, so
\Cref{lem:hole-filling} applies.
Let $e$ contribute to $\partial_E\widehat A$.
Then $e$ separates a face of $\widehat A$ from either a face of
$F(D)\setminus\widehat A$ or from $\infty$.
Since hole-filling adjoins all faces lying in bounded complementary
components of $\Sigma_A$, the non-$\widehat A$ side of $e$ lies in the
outer complementary component of $|\Sigma_A|$ (the component
containing the exterior of $D$).
Hence the same primal edge already contributed to $\partial_E A$.
Therefore every dual edge leaving $\widehat A$ already left $A$, so
$|\partial_E\widehat A|\le |\partial_E A|$.

Dual-connectedness of $\widehat A$ follows from that of $A$:
let $\tau\in \widehat A\setminus A$.
Then $\tau$ lies in the closure of some bounded complementary component
$K$ of $|\Sigma_A|$.
Since $K$ is connected, choose a path in $K$ from the interior of
$\tau$ to a point in the interior of an edge of $\partial K$, and
perturb it to avoid vertices and cross edges transversely.
Reading off the faces traversed by this path gives a dual path
in $\widehat A$ from $\tau$ to some face $\tau_0\subset\overline K$
that shares an edge with $\partial K$;
that edge is also incident to a face of $A$, because
$\partial K\subset |\Sigma_A|$.
Hence $\tau_0$ is dual-adjacent to $A$.
Since $A$ is dual-connected, every added face is joined to $A$ by a
dual path in $\widehat A$, so $\widehat A$ is dual-connected.

The inclusion $A\subseteq\widehat A$ is immediate, and hole-freeness
follows from \Cref{lem:hole-filling}(i).
\end{proof}

\begin{lemma}[Outer and inner multiboundary loops]
\label{lem:multiboundary-outer-inner}
Let $D$ be a disk diagram and let $A\subseteq F(D)$ be nonempty and
dual-connected, with carrier $\Sigma_A$ (\Cref{def:face-carrier}) and
hole-filling $\widehat\Sigma_A$ (\Cref{lem:hole-filling}).
After reindexing
$\partial^{\mathrm{multi}}A=(\gamma_1,\dots,\gamma_m)$,
there exists $r\in\{1,\dots,m\}$ such that:
\begin{enumerate}[label=\textup{(\roman*)}]
\item $\gamma_1,\dots,\gamma_r$ are exactly the simple lobe boundary
walks of $\partial\widehat\Sigma_A$;
\item $\gamma_{r+1},\dots,\gamma_m$ are the inner multiboundary loops,
namely the successor cycles supported on the frontiers of the bounded
complementary components of $\Sigma_A$;
for each bounded complementary component $K$, if $H(K)$ denotes the
hole-filled disk subdiagram obtained from the closure $\overline K$,
then the inner loops supported on $K$ coincide, up to orientation
reversal, with the simple lobe boundary walks of $\partial H(K)$,
and each such loop $\gamma_i$ bounds a disk subdiagram contained in
$\overline K$.
\end{enumerate}
In particular, if $A$ is hole-free, then $r=m$.
\end{lemma}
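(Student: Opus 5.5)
The plan is to work entirely with the oriented-edge set $E_\partial(A)$ and its successor permutation $\sigma$, and to sort its cycles by which complementary region of $|\Sigma_A|$ in $S^2$ sits on their right. Embed $D$ in $S^2$. The components of $S^2\setminus|\Sigma_A|$ are the outer component $U_\infty'$, which contains the exterior of $D$, and the bounded components $K$. Every $e\in E_\partial(A)$ has a face of $A$ on its left. Its right-hand side lies in exactly one of these components, and this component is constant along a $\sigma$-cycle. The reason is that $\sigma$ turns as far left as possible around $t(e)$ while keeping $A$ on the left, so the right-hand germ sweeps through a single complementary sector at $t(e)$. Hence each cycle $\gamma_i$ is supported on the frontier of one component. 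I would call $\gamma_i$ \emph{outer} if that component is $U_\infty'$, and \emph{inner} otherwise, and then reindex so the outer cycles come first. Then I would show $r\ge1$: $\Sigma_A$ is a nonempty finite union of faces in $S^2$, so $U_\infty'$ has nonempty frontier consisting of edges of $A$-faces, and the cycle through any such oriented edge is outer.

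For (i), I would use \Cref{lem:hole-filling}(ii): every boundary edge of $\widehat\Sigma_A$ lies on a face of $\Sigma_A$ with $U_\infty'$ on the other side. Conversely, every outer edge of $E_\partial(A)$ is a boundary edge of $\widehat\Sigma_A$, since hole-filling adds only faces in bounded components. So the outer edges are exactly the oriented edges of $\partial\widehat\Sigma_A$. By \Cref{lem:hole-filling}(i), $\widehat\Sigma_A$ is a disk subdiagram. Its boundary walk, traversed with the interior on the left, follows the leftmost-turn rule, and it splits at boundary cut vertices into simple lobe walks. The remaining local check is at a cut vertex $v$. There the leftmost-turn successor of an incoming lobe edge is the outgoing edge of the \emph{same} lobe, because the lobes meet at $v$ in disjoint angular sectors separated by $U_\infty'$-sectors. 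So the outer $\sigma$-cycles coincide with the lobe walks rather than with the concatenated boundary walk.

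For (ii), fix a bounded component $K$. Its closure $\overline K$ is a union of faces of $D$: the whole region bounded by $\partial D$ is covered by faces, and face-free edges are boundary bridges by \Cref{rem:no-interior-face-free}, so they cannot lie in a bounded hole. I would apply the same analysis to the face-set $F(\overline K)$, or to a dual-connected piece of it, with orientations reversed. The oriented edges of $E_\partial(A)$ whose right side lies in $K$ are exactly the reverses of the oriented edges that have $\overline K$ on the left and $U_\infty'$-side material (namely $\Sigma_A$) on the right. Running the argument of (i) for $H(K)$, the hole-filling of $\overline K$, identifies these cycles with the reversed simple lobe walks of $\partial H(K)$. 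Each such lobe is a disk subdiagram inside $\overline K$ with that boundary, which gives the last clause. If $A$ is hole-free there are no bounded components, hence no inner cycles, and $r=m$.

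The main obstacle is the local combinatorics at vertices where $\Sigma_A$ pinches, that is, where several $A$-sectors and complementary sectors alternate around one vertex. There one must verify that $\sigma$ really stays within one complementary region and one lobe, and this needs a careful rotation-system argument. A second subtle point is that $\overline K$ may be disconnected dually or may itself have holes, which contain faces of $A$. I would handle this by taking $H(K)$ to be the hole-filling of each dual-connected component of the faces of $\overline K$ that is adjacent to $\partial K$, and checking that inner holes of $\overline K$ contribute no edges to $\partial H(K)$.
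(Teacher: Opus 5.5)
Your outline is the paper's: split the successor cycles by the component $R(e)$ of $S^2\setminus|\Sigma_A|$ on their right, show $R$ is constant along cycles, and match the $U_\infty'$-cycles with $\partial\widehat\Sigma_A$ and the $K$-cycles with $\partial H(K)$. The gap is that your two local claims need opposite turning rules, so the rotation-system check you defer cannot deliver both.

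\emph{Constancy of $R$.} This holds for the successor as literally defined in \Cref{def:multiboundary}. Rotating counterclockwise from $\bar e$ sweeps the complementary wedge on the right of $e$, so $e$ and $\sigma(e)$ bound the same wedge at $t(e)$; this is the content of your ``right-hand germ'' sentence. In the usual orientation this is the sharpest \emph{right} turn onto an edge of $E_\partial(A)$. For the leftmost-turn rule you invoke, which sweeps the $A$-wedge on the left instead, constancy is false. In a large square grid, let $A$ be the seven unit squares with lower-left corners $(0,0),(0,-1),(1,-1),(2,-1),(2,0),(2,1),(1,1)$. This is a dual-connected chain enclosing the hole $K=(1,2)\times(0,1)$, closed up only at the vertex $(1,1)$, where the outer region also touches. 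The leftmost turn sends $(1,0)\to(1,1)$, which has the hole on its right, to $(1,1)\to(0,1)$, which has the outer region on its right. That rule produces one $16$-edge cycle, not a $12$-edge outer loop and a $4$-edge inner loop.

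\emph{The cut-vertex step in (i).} Your claim that the successor of an incoming lobe edge is an outgoing edge of the same lobe holds only for the leftmost rule. Under the counterclockwise rule the successor sweeps the $U_\infty'$-sector at a cut vertex and lands on the next lobe. The cycles would then be full boundary walks, not lobe walks.

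\emph{The missing fact.} What resolves this is that no cut vertices occur. By \Cref{lem:hole-filling-multiboundary}, $\widehat A$ is dual-connected, and dual paths pass through open faces and open edges, so they avoid vertices. Suppose the boundary walk of the pure complex $\widehat\Sigma_A$ visited a vertex $v$ twice; then there would be two exterior wedges at $v$. Since $S^2\setminus|\widehat\Sigma_A|$ is connected, the Jordan curve theorem forces $v$ to separate $|\widehat\Sigma_A|$ with faces of $\widehat A$ on both sides, contradicting dual-connectedness. So $\partial\widehat\Sigma_A$ is simple and $r=1$. Each of its vertices carries exactly one $U_\infty'$-wedge, so the counterclockwise rule traces exactly this walk.

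\emph{The same for $K$.} The faces inside $K$ are dual-connected by generic paths in $K$. Moreover $\overline K$ has no holes. All of the dual-connected $A$ lies in the component of $S^2\setminus\overline K$ containing $U_\infty'$. Any other component would have to contain some $K'$ together with an $A$-face adjacent to it, which is impossible. Hence $H(K)=\overline K$ has simple boundary with a single $K$-wedge at each boundary vertex, and the $K$-cycle is $\partial H(K)$ reversed. This also settles your worry about a dually disconnected or holed $\overline K$.

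\emph{On the paper.} The paper's proof uses the same ``turn as far left'' wording, and the ``equivalently'' clause in \Cref{def:multiboundary} is not an equivalence at pinch vertices such as $(1,1)$. Its argument, however, only uses constancy of $R$, that is, the counterclockwise rule. Its lobe identification in (i) is correct because there is only one lobe.
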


\begin{proof}
Since $A$ is dual-connected, its carrier $\Sigma_A$ is connected.
For each oriented edge $e\in E_\partial(A)$, let $R(e)$ be the
connected component of $S^2\setminus|\Sigma_A|$ lying on the right of
$e$ (where $S^2$ is the sphere embedding of $|D|$).
If $e'=\sigma(e)$ is the successor of $e$ in the sense of
\Cref{def:multiboundary}, then $R(e')=R(e)$: the ``turn as far left
as possible'' rule follows the same complementary region on the right.
Hence the successor permutation preserves the partition of
$E_\partial(A)$ according to the right-hand complementary component.

If $R(e)=U_\infty$, the component containing the exterior of $D$,
then the successor cycles supported on $U_\infty$ are exactly the
boundary circuits of $|\Sigma_A|$ incident to the outer complementary
region.
After hole-filling, these become exactly the simple lobe boundary
walks of $\partial\widehat\Sigma_A$.
This gives~(i).

If $R(e)$ is a bounded complementary component $K$, then the
successor cycles supported on $K$ run along the frontier of $K$.
Let $B(K)\subseteq F(D)$ be the set of faces whose interiors lie in
$\overline K$.
As in the generic-path argument of
\Cref{lem:hole-filling-multiboundary}, $B(K)$ is dual-connected.
Let $H(K):=\widehat\Sigma_{B(K)}$ be the hole-filled carrier of $B(K)$.
By \Cref{lem:hole-filling}, $H(K)$ is a disk subdiagram of $D$.
The successor cycles of $A$ supported on $K$ are precisely the boundary
circuits of the frontier of $\overline K$ facing the region $K$.
Viewed as successor cycles of $B(K)$ supported on its outer
complementary region, the same argument as for~(i) shows they are
exactly the simple lobe boundary walks of $\partial H(K)$.
Each such simple lobe boundary walk bounds the corresponding lobe disk
subdiagram of $H(K)$, hence a disk subdiagram of $D$ contained in
$\overline K$.
This gives~(ii).

If $A$ is hole-free, there are no bounded complementary components,
so $r=m$.
\end{proof}

\begin{definition}[Free-completed multiloop-HQM disk maps]
\label{def:mhqm-diskmap}
Let $X$ be a locally finite combinatorial $2$-complex and let $\kappa\ge 1$.
A free-completed disk map $\phi\colon D\to X^{\pm}$ is
\emph{$\kappa$-multiloop-hereditarily quasi-minimal}
(abbreviated $\kappa$-mHQM) if for every nonempty dual-connected
face-set $A\subseteq F(D)$ with multiboundary
$\partial^{\mathrm{multi}} A=(\gamma_1,\dots,\gamma_m)$,
\begin{equation}\label{eq:mhqm}
|A|
\le
\kappa \sum_{i=1}^m
\operatorname{FillArea}_X^{\pm}\bigl(\phi(\gamma_i)\bigr)
+\kappa\|\partial^{\mathrm{multi}} A\|.
\end{equation}
\end{definition}

\begin{lemma}[HQM implies mHQM]\label{lem:hqm-implies-mhqm}
If $\phi\colon D\to X^{\pm}$ is $\kappa$-HQM, then it is
$\kappa$-mHQM.
\end{lemma}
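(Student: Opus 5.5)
Fix a $\kappa$-HQM free-completed disk map $\phi\colon D\to X^{\pm}$ and a nonempty dual-connected face-set $A\subseteq F(D)$ with multiboundary $(\gamma_1,\dots,\gamma_m)$. The plan is to bound $|A|$ by the area of the hole-filled carrier $\widehat\Sigma_A$, minus the faces lying in the holes. We then control the outer part through the HQM inequality applied lobe by lobe, and the hole parts through the inner loops. We use \Cref{lem:multiboundary-outer-inner} to reindex the loops so that $\gamma_1,\dots,\gamma_r$ are the simple lobe boundary walks of $\partial\widehat\Sigma_A$, and $\gamma_{r+1},\dots,\gamma_m$ are the inner loops supported on the frontiers of the bounded complementary components $K$ of $\Sigma_A$.

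\emph{Step 1 (outer part).} Let $\Omega_1,\dots,\Omega_r$ be the lobes of $\widehat\Sigma_A$, with $\partial\Omega_k=\gamma_k$. Each lobe is a disk subdiagram of $D$, so the $\kappa$-HQM hypothesis gives
$\operatorname{Area}(\Omega_k)\le\kappa\operatorname{FillArea}^{\pm}_X(\phi(\gamma_k))+\kappa|\gamma_k|$.
The lobes meet only at cut vertices, so $\operatorname{Area}(\widehat\Sigma_A)=\sum_{k\le r}\operatorname{Area}(\Omega_k)$. Since $A\subseteq F(\widehat\Sigma_A)$, we get $|A|\le\sum_{k\le r}\operatorname{Area}(\Omega_k)$. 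In the hole-free case $r=m$, this already yields \eqref{eq:mhqm}.

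\emph{Step 2 (holes).} In general, summing over $k\le r$ gives
\[
|A|\le\kappa\sum_{i=1}^{r}\operatorname{FillArea}^{\pm}_X(\phi(\gamma_i))+\kappa\sum_{i=1}^{r}|\gamma_i|.
\]
Every term on the right of this bound is one of the terms on the right of \eqref{eq:mhqm}. The terms for $i>r$ are nonnegative, so they can simply be added, and \eqref{eq:mhqm} follows. Note that this route is cheaper than subtracting the hole faces $F(H(K))$ and refilling each hole by the inner-loop fillers. The crude inclusion $A\subseteq F(\widehat\Sigma_A)$ suffices, because the mHQM right-hand side dominates the lobe-wise HQM right-hand side.

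\emph{Main obstacle.} The only delicate point is the identification in Step 1, namely that the outer successor cycles are exactly the lobe boundary walks of $\widehat\Sigma_A$ with the same lengths and images. This is supplied by \Cref{lem:multiboundary-outer-inner}(i). We must also confirm that each lobe $\Omega_k$ is itself a disk subdiagram, so that the HQM inequality applies to it. This holds because cutting a disk subdiagram at its boundary cut vertices yields connected, simply connected planar subcomplexes, and \Cref{lem:hole-filling}(i) shows that $\widehat\Sigma_A$ is a disk subdiagram. If one wished to avoid lobes, one could instead apply HQM once to $\widehat\Sigma_A$. That would require subadditivity $\operatorname{FillArea}^{\pm}(\phi(\partial\widehat\Sigma_A))\le\sum_{k\le r}\operatorname{FillArea}^{\pm}(\phi(\gamma_k))$, obtained by gluing the lobe fillers at the cut vertices via \Cref{lem:boundary-synthesis}. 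Either route gives the claim.
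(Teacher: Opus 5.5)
Your argument is correct. It differs from the paper's in where the hypothesis is applied.

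The paper applies the $\kappa$-HQM inequality once, to the whole hole-filled carrier $\widehat\Sigma_A$ with boundary walk $\eta$. It then proves the subadditivity
$\operatorname{FillArea}^{\pm}_X(\phi(\eta))\le\sum_{i\le r}\operatorname{FillArea}^{\pm}_X(\phi(\gamma_i))$
by gluing area-minimising fillers of the lobe walks directly along the tree of boundary cut vertices (not via \Cref{lem:boundary-synthesis}). Only after that does it use $|\eta|=\sum_{i\le r}|\gamma_i|$ and discard the nonnegative inner terms.

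You instead apply the hypothesis $r$ times, once to each lobe $\Omega_k$, and sum using $\operatorname{Area}(\widehat\Sigma_A)=\sum_k\operatorname{Area}(\Omega_k)$. No filler has to be built. The price is that each lobe must itself be a disk subdiagram of $D$. That is easy to supply: a lobe is a planar subcomplex, and it is a retract of the simply connected $\widehat\Sigma_A$ (collapse each other branch of the lobe tree onto the cut vertex where it attaches), hence simply connected. The paper also treats lobes as disk subdiagrams elsewhere.

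The paper's route invokes HQM on a single subdiagram and passes through a formally stronger intermediate bound involving the single loop $\phi(\eta)$. Yours is shorter and avoids the gluing construction. Both reach the same final inequality.
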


\begin{proof}
Let $A\subseteq F(D)$ be nonempty and dual-connected, with
$\partial^{\mathrm{multi}} A=(\gamma_1,\dots,\gamma_m)$.
Since $A$ is dual-connected, its carrier $\Sigma_A$ is connected, so
\Cref{lem:hole-filling} applies.
Let $\widehat\Sigma_A$ be the hole-filling of $\Sigma_A$;
write $\widehat A:=F(\widehat\Sigma_A)$ and
$\eta:=\partial\widehat\Sigma_A$.
By \Cref{lem:hole-filling}(i), $\widehat\Sigma_A$ is a disk
subdiagram of $D$.
By \Cref{lem:multiboundary-outer-inner}(i), after reindexing,
$\gamma_1,\dots,\gamma_r$ are exactly the simple lobe boundary walks
of $\eta$.
Since $A\subseteq\widehat A$ and $D$ is $\kappa$-HQM:
\[
|A|
\le
|\widehat A|
=
\operatorname{Area}(\widehat\Sigma_A)
\le
\kappa \operatorname{FillArea}_X^{\pm}\bigl(\phi(\eta)\bigr)
+\kappa|\eta|.
\]
For each $i=1,\dots,r$, the corresponding lobe of
$\widehat\Sigma_A$ already gives a filler for $\phi(\gamma_i)$,
so an area-minimising filler $D_i$ over $X^{\pm}$ with boundary walk
$\phi(\gamma_i)$ exists.
The lobes of $\widehat\Sigma_A$ meet along a tree of boundary cut
vertices.
Gluing $D_1,\dots,D_r$ along that lobe tree at the corresponding
boundary cut vertices yields a finite connected simply connected
planar $2$-complex whose boundary walk is $\phi(\eta)$ and whose
area is
$\sum_{i=1}^r\operatorname{FillArea}_X^{\pm}(\phi(\gamma_i))$.
Hence
\[
\operatorname{FillArea}_X^{\pm}\bigl(\phi(\eta)\bigr)
\le
\sum_{i=1}^r
\operatorname{FillArea}_X^{\pm}\bigl(\phi(\gamma_i)\bigr).
\]
Also the boundary edges of $\eta$ are partitioned by its simple
lobes, so
$|\eta|=\sum_{i=1}^r|\gamma_i|
\le\|\partial^{\mathrm{multi}} A\|$.
Therefore
\[
|A|
\le
\kappa\sum_{i=1}^r
\operatorname{FillArea}_X^{\pm}\bigl(\phi(\gamma_i)\bigr)
+\kappa\|\partial^{\mathrm{multi}} A\|
\le
\kappa \sum_{i=1}^m
\operatorname{FillArea}_X^{\pm}\bigl(\phi(\gamma_i)\bigr)
+\kappa\|\partial^{\mathrm{multi}} A\|.
\]
\end{proof}

\begin{lemma}[Lift invariance of free-completed filling area]
\label{lem:lift-invariance}
Let $\pi\colon\widetilde X_{\mathcal P}^{\pm}\to X_{\mathcal P}^{\pm}$
be the natural covering map extending the universal cover
$\widetilde X_{\mathcal P}\to X_{\mathcal P}$
(this covering is universal; see
\Cref{lem:free-completion-identification}).
For every closed combinatorial loop $\gamma$ in
$(\widetilde X_{\mathcal P}^{\pm})^{(1)}
=(\widetilde X_{\mathcal P})^{(1)}$,
$\operatorname{FillArea}^{\pm}_{\widetilde X_{\mathcal P}}(\gamma)
=\operatorname{FillArea}^{\pm}_{X_{\mathcal P}}(\pi\gamma)$.
\end{lemma}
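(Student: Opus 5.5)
The plan is to prove the two inequalities $\le$ and $\ge$ separately by transporting fillers through $\pi$, and to use area-preservation of lifts and projections to compare minima.

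\emph{Projection direction.} Let $\psi\colon D\to\widetilde X_{\mathcal P}^{\pm}$ be any free-completed disk map with $\psi(\partial D)=\gamma$. The composition $\pi\circ\psi\colon D\to X_{\mathcal P}^{\pm}$ is again a combinatorial disk map. Indeed, $\pi$ is a covering map that is cellular and restricts to homeomorphisms on open cells, so each open cell of $D$ still maps homeomorphically onto an open cell and attaching maps are respected. The composite has boundary image $\pi\gamma$ and the same area. Hence
\[
\operatorname{FillArea}^{\pm}_{X_{\mathcal P}}(\pi\gamma)\le\operatorname{FillArea}^{\pm}_{\widetilde X_{\mathcal P}}(\gamma).
\]
This also covers the case where the right-hand side is $+\infty$.

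\emph{Lifting direction.} Let $\phi\colon D\to X_{\mathcal P}^{\pm}$ be a disk map with $\phi(\partial D)=\pi\gamma$. Fix a boundary vertex $b\in\partial D$ and the lift $\widetilde{\phi(b)}$ equal to the corresponding vertex of $\gamma$. Since $|D|$ is simply connected and locally path connected, and $\pi$ is a covering map (\Cref{lem:free-completion-identification}), the lifting criterion gives a unique lift $\widetilde\phi\colon|D|\to\widetilde X_{\mathcal P}^{\pm}$ with $\pi\circ\widetilde\phi=\phi$. We then check that $\widetilde\phi$ is a combinatorial disk map:
\begin{itemize}
\item Each open cell $\sigma$ of $D$ maps homeomorphically onto an open cell $\phi(\sigma)$.
\item $\pi$ restricts to a homeomorphism from each component of $\pi^{-1}(\phi(\sigma))$ onto $\phi(\sigma)$.
\item The connected set $\widetilde\phi(\sigma)$ lies in one such component.
\end{itemize}
So $\widetilde\phi|_\sigma$ is a homeomorphism onto an open cell, and attaching maps are respected because lifts commute with $\pi$.

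It remains to show that the boundary image $\widetilde\phi(\partial D)$ equals $\gamma$. Both are lifts of the closed path $\pi\gamma$ starting at the same vertex, so they agree by unique path lifting. Areas coincide, so
\[
\operatorname{FillArea}^{\pm}_{\widetilde X_{\mathcal P}}(\gamma)\le\operatorname{FillArea}^{\pm}_{X_{\mathcal P}}(\pi\gamma).
\]

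\emph{Main obstacle.} The step needing the most care is the lifting step when $\partial D$ is non-simple or $D$ has face-free bridges or area-$0$ tree components. Unique path lifting still applies there, because the boundary walk is a path in $|D|$ starting at $b$, so it lifts consistently. Another point to check is that the free bigons $B_{\underline e}$ of $X_{\mathcal P}^{\pm}$ lift to the bigons of $\widetilde X_{\mathcal P}^{\pm}$. This holds because $\pi^{-1}$ of a bigon attached along $e\bar e$ is the disjoint union of the bigons attached along the lifts $\tilde e\bar{\tilde e}$, which is exactly the content of \Cref{lem:free-completion-identification}. Once these checks are made, the two inequalities combine to the asserted equality.
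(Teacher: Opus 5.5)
Your proof is correct and takes the same route as the paper: composing with $\pi$ gives $\operatorname{FillArea}^{\pm}_{X_{\mathcal P}}(\pi\gamma)\le\operatorname{FillArea}^{\pm}_{\widetilde X_{\mathcal P}}(\gamma)$, and lifting a downstairs filler through the covering (using simple connectivity of $D$, with the lifted boundary identified with $\gamma$ by unique path lifting) gives the reverse inequality. The paper leaves two checks implicit, and you supply them: that the lift is again a combinatorial disk map, and that bigons lift to bigons.
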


\begin{proof}
Projecting a filler upstairs gives a filler downstairs with the same
area, so $\ge$ is immediate.
Conversely, given a disk filler
$f\colon D\to X_{\mathcal P}^{\pm}$ of $\pi\gamma$, choose
a boundary point $x\in\partial D$ mapping to the initial vertex of
$\pi\gamma$ and lift $f$ starting at the initial vertex of $\gamma$.
Since $D$ is simply connected, the lift exists uniquely and its
boundary lift is exactly $\gamma$.
Hence $\le$ as well.
\end{proof}

\begin{definition}[Free-completed mHQM profile]
\label{def:group-free-mhqm-profile}
Fix a finite presentation $\mathcal P=\langle S\mid R\rangle$ with
universal cover $\widetilde X_{\mathcal P}$ and free completion
$\widetilde X_{\mathcal P}^{\pm}$.
For $\kappa\ge 1$ define
\[
\widetilde\Lambda_{\mathcal P}^{\ast,\langle\kappa\rangle,
\mathrm{mhqm},\pm}(n)
:=
\inf\Bigl\{\widetilde\mu_1(D):
\phi\colon D\to\widetilde X_{\mathcal P}^{\pm}
\text{ is }\kappa\text{-mHQM},\
|\partial D|\le n\Bigr\},
\]
with the convention $\inf\varnothing:=+\infty$.
\end{definition}

\begin{lemma}[Identification with the augmented presentation]
\label{lem:free-completion-identification}
Let
$\mathcal P^{\pm}:=\langle S\mid R\cup\{ss^{-1}:s\in S\}\rangle$.
Then
$\widetilde X_{\mathcal P^{\pm}}
\cong
\widetilde X_{\mathcal P}^{\pm}$.
Under this identification, the admissible classes of $\kappa$-mHQM
disk maps, their filling areas, multiboundaries, and face-dual
eigenvalues all coincide.
In particular, for every $\kappa\ge 1$ and $n\ge 1$, the
free-completed mHQM profile satisfies
\[
\widetilde\Lambda_{\mathcal P}^{\ast,\langle\kappa\rangle,
\mathrm{mhqm},\pm}(n)
=
\inf\Bigl\{\widetilde\mu_1(D):
\phi\colon D\to\widetilde X_{\mathcal P^{\pm}}
\text{ is }\kappa\text{-mHQM},\
|\partial D|\le n\Bigr\}.
\]
\end{lemma}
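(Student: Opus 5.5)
The plan is to build an explicit isomorphism of $2$-complexes $\widetilde X_{\mathcal P^{\pm}}\to\widetilde X_{\mathcal P}^{\pm}$ that is the identity on the common $1$-skeleton, the Cayley graph of $G(\mathcal P)$ with respect to $S$. Every remaining claim in the statement then follows by transport of structure.

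\emph{Step 1: the groups and $1$-skeleta agree.} Each added relator $ss^{-1}$ is freely trivial, so $G(\mathcal P^{\pm})=G(\mathcal P)$ via the identity on $S$. The two presentations have the same generating set, so the $1$-skeleta of the universal covers are the same Cayley graph $\mathrm{Cay}(G,S)$.

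\emph{Step 2: matching the $2$-cells.}
\begin{itemize}
\item The $2$-cells of $\widetilde X_{\mathcal P^{\pm}}$ are the $G$-translates of the lifts of the relator cells.
\item For $r\in R$, these are exactly the $2$-cells of $\widetilde X_{\mathcal P}$, so they match directly.
\item For the new relator $ss^{-1}$, the lift based at $g$ is a $2$-cell attached along the loop $e\bar e$, where $e$ is the edge from $g$ to $gs$.
\item In $\widetilde X_{\mathcal P}^{\pm}$, by \Cref{def:free-completion}, each unoriented edge $\underline e$ carries exactly one bigon $B_{\underline e}$, attached along $e\bar e$ (independent of orientation up to cyclic shift).
\end{itemize}
So the bigons of the two complexes correspond bijectively, with the same attaching maps. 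This uses the fact that each unoriented edge $\{g,gs\}$ arises from exactly one pair $(g,s)$ with $s\in S$; that holds provided no generator is an involution identified with its own inverse in the combinatorial model. I would add a remark on this convention, since it is the only place the count could go wrong.

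\emph{Step 3: the cellular isomorphism.} Assemble the identity on $1$-skeleta with the cell bijection of Step 2. Because attaching maps agree cell by cell, this is a cellular homeomorphism. It is also $G$-equivariant, so it is compatible with the covering $\pi$ of \Cref{lem:lift-invariance}. The same argument shows that $\widetilde X_{\mathcal P}^{\pm}$ is simply connected, which justifies calling $\pi$ universal.

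\emph{Step 4: transport of the profile.} Composing with the isomorphism gives a bijection between combinatorial disk maps $D\to\widetilde X_{\mathcal P}^{\pm}$ and $D\to\widetilde X_{\mathcal P^{\pm}}$. This bijection leaves the domain $D$ unchanged, so it preserves:
\begin{itemize}
\item face sets, dual-connected face-sets and multiboundaries (\Cref{def:multiboundary}), which are intrinsic to $D$;
\item boundary lengths;
\item the face-dual eigenvalue $\widetilde\mu_1(D)$, which depends only on $D$.
\end{itemize}
Image loops correspond under the isomorphism, and filling areas are defined as minima over disk maps into the target, so $\operatorname{FillArea}^{\pm}$ of corresponding loops agree. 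Hence inequality \eqref{eq:mhqm} holds for one disk map iff it holds for its counterpart, so the $\kappa$-mHQM classes correspond. Taking the infimum of $\widetilde\mu_1(D)$ over the corresponding classes with $|\partial D|\le n$ gives the displayed equality.

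The main obstacle is the bookkeeping in Step 2: checking that the bigons correspond exactly once per unoriented edge, with no duplicates. The remaining steps are formal.
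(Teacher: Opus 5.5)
Your proposal is correct and is essentially the paper's argument. Your Step~2 count (exactly one lifted $ss^{-1}$-cell per unoriented edge, with attaching loop $e\bar e$) is the paper's observation that each downstairs bigon has exactly one lift along each lifted edge. The paper extends the covering map over the bigons and gets the isomorphism from simple connectivity of $\widetilde X_{\mathcal P}^{\pm}$ together with uniqueness of universal covers. You instead match cells directly in the Cayley-complex model and read off simple connectivity afterwards; that is the only difference.

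The involution caveat you flag is unnecessary. Edges of the universal cover are lifts of the loop $1$-cells and are indexed bijectively by pairs $(g,s)$. So when $s^2=1$ the edges $g\to gs$ and $gs\to g$ are distinct, and each receives its own bigon on both sides.
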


\begin{proof}
The standard presentation complex of
$\mathcal P^{\pm}=\langle S\mid R\cup\{ss^{-1}:s\in S\}\rangle$
is exactly the free completion $X_{\mathcal P}^{\pm}$ of
$X_{\mathcal P}$.
Let $p\colon \widetilde X_{\mathcal P}\to X_{\mathcal P}$ be the
universal covering map.
For each unoriented edge $\underline{\widetilde e}$ of
$\widetilde X_{\mathcal P}$, choose either orientation
$\widetilde e$ and attach a bigon along the loop
$\widetilde e\overline{\widetilde e}$.
The resulting complex is $\widetilde X_{\mathcal P}^{\pm}$,
and $p$ extends over the attached bigons to a covering map
$\widetilde X_{\mathcal P}^{\pm}\to X_{\mathcal P}^{\pm}
= X_{\mathcal P^{\pm}}$:
each downstairs bigon has exactly one lift along each lifted
unoriented edge, and the extension sends each upstairs bigon
homeomorphically onto the corresponding downstairs bigon.
Because $\widetilde X_{\mathcal P}$ is simply connected and each
bigon is attached along a null-homotopic loop
$\widetilde e\overline{\widetilde e}$, the complex
$\widetilde X_{\mathcal P}^{\pm}$ is still simply connected.
Hence it is the universal cover of $X_{\mathcal P^{\pm}}$,
giving $\widetilde X_{\mathcal P^{\pm}}\cong
\widetilde X_{\mathcal P}^{\pm}$.

Under this identification, a disk map to one side is the same
combinatorial disk map to the other.
The source diagram $D$ is unchanged, so its face-set combinatorics,
multiboundaries, hole-free condition, and face-dual eigenvalue are
identical on both sides.
Filling areas agree because the target complex is the same.
Therefore the defining inequalities for the mHQM profile are
identical, giving the stated profile equality.
\end{proof}

\begin{remark}[Applying the face-dual hyperbolicity criterion to the
augmented presentation]
\label{rem:pm-dual-hyp}
Although the added relators $ss^{-1}$ in
$\mathcal P^{\pm}:=\langle S\mid R\cup\{ss^{-1}:s\in S\}\rangle$
are not cyclically reduced, the proofs of
\Cref{thm:dual-spectral-dehn,thm:dual-spectral-hyp-iff}
use only finite presentability, bounded face lengths, and
area-minimality, not cyclic reduction of the defining relators.
Hence those results apply unchanged to $\mathcal P^{\pm}$.
Equivalently, one may regard them as statements about the finite
free-completed complex $X_{\mathcal P}^{\pm}$.
\end{remark}

\begin{theorem}[The free-completed mHQM profile detects hyperbolicity]
\label{thm:mhqm-free-hyp}
Let $\mathcal P=\langle S\mid R\rangle$ be a finite presentation.
Then $G(\mathcal P)$ is word-hyperbolic if and only if there exists
$\kappa\ge 1$ such that
$\inf_{n\ge 1}
\widetilde\Lambda_{\mathcal P}^{\ast,\langle\kappa\rangle,
\mathrm{mhqm},\pm}(n)>0$.
Equivalently, this positivity holds for every $\kappa\ge 1$.
\end{theorem}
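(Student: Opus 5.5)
The proof has two directions. For each I would reduce to results already in hand: hyperbolicity gives a Cheeger bound for mHQM maps, and area-minimisers give the converse.

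\emph{Hyperbolic $\Rightarrow$ positivity for every $\kappa$.}
Fix $\kappa\ge1$ and a $\kappa$-mHQM disk map $\phi\colon D\to\widetilde X^{\pm}_{\mathcal P}$ with $F(D)\neq\varnothing$. I would bound the unweighted dual Cheeger constant $\widetilde h_D(D^\ast)$ from below, as in \Cref{thm:hqm-spectral-hyp}.

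Take $\varnothing\neq A\subseteq F(D)$ and split it into dual-connected components $A_j$. Since no dual edges join distinct components, $\partial_E A=\bigsqcup_j\partial_E A_j$, so it suffices to treat each $A_j$ separately.

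Hyperbolicity of $G(\mathcal P)$ gives a linear isoperimetric inequality for $\mathcal P$, hence for $\mathcal P^{\pm}$. The extra relators do not change the group, and free-completed filling area is at most the ordinary filling area. By \Cref{lem:lift-invariance} and \Cref{lem:free-completion-identification} this yields
\[
\operatorname{FillArea}^{\pm}(\gamma)\le C_{\mathrm{iso}}|\gamma|
\]
for every closed loop $\gamma$ in $\widetilde X_{\mathcal P}^{(1)}$. This holds for all null-homotopic words, by cyclic reduction.

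Applying \eqref{eq:mhqm} to $A_j$ and then \Cref{lem:multiboundary-identity} gives
\[
|A_j|\le\kappa(C_{\mathrm{iso}}+1)\|\partial^{\mathrm{multi}}A_j\|=\kappa(C_{\mathrm{iso}}+1)|\partial_E A_j|.
\]
Summing over $j$ gives $\widetilde h_D(D^\ast)\ge[\kappa(C_{\mathrm{iso}}+1)]^{-1}$. Then \Cref{lem:cheeger-discrete}(ii), with $d_{\max}\le L^{\pm}_{\max}$, gives a uniform positive lower bound on $\widetilde\mu_1(D)$.

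\emph{Positivity for some $\kappa$ $\Rightarrow$ hyperbolic.}
Here I need the admissible class to contain area-minimising free-completed diagrams. Let $\phi\colon D\to\widetilde X^{\pm}_{\mathcal P}$ be area-minimising and let $A$ be dual-connected with multiboundary loops $\gamma_1,\dots,\gamma_m$. I would show $|A|\le\sum_i\operatorname{FillArea}^{\pm}(\phi(\gamma_i))$, which is $1$-mHQM and hence $\kappa$-mHQM for all $\kappa\ge1$.

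The argument is a cut-and-paste using \Cref{lem:multiboundary-outer-inner}.
\begin{itemize}
\item By (i), the outer loops $\gamma_1,\dots,\gamma_r$ are the lobe walks of $\widehat\Sigma_A$. By \Cref{lem:minimal-is-hqm},
\[
|\widehat A|=\sum_{i\le r}\operatorname{FillArea}^{\pm}(\phi(\gamma_i)).
\]
\item By (ii), the holes $\widehat A\setminus A$ decompose into lobes of the disks $H(K)$, whose boundaries are the inner loops $\gamma_{r+1},\dots,\gamma_m$.
\end{itemize}
The main obstacle is making sure the subtraction of hole faces is exact. I would argue that $|A|=|\widehat A|-\sum_K|F(H(K))|$ and drop the subtracted term. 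If the faces of the $H(K)$ are only partly disjoint from $A$, this identity becomes an inequality in the wrong direction. The safe fallback is simpler: $|A|\le|\widehat A|$, and the inner terms in \eqref{eq:mhqm} are nonnegative, so the required inequality follows anyway. This fallback is the route I would actually use.

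With $1$-mHQM in hand, the positivity hypothesis applies to area-minimising free-completed diagrams. The hypothesis is for some $\kappa$, but $1$-mHQM implies $\kappa$-mHQM for every $\kappa\ge1$, so these diagrams are admissible. Using \eqref{eq:comb-vs-weighted} to pass from $\widetilde\mu_1$ to $\mu_1$, we get $\inf_n\Lambda^\ast_{\mathcal P^{\pm}}(n)>0$. Here area-minimising diagrams over $\widetilde X^{\pm}_{\mathcal P}$ project to area-minimising diagrams over $X_{\mathcal P^{\pm}}$ with the same $\widetilde\mu_1$.

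\Cref{rem:pm-dual-hyp} and \Cref{thm:dual-spectral-hyp-iff} then give hyperbolicity of $G(\mathcal P^{\pm})=G(\mathcal P)$. One point needs checking: \Cref{thm:dual-spectral-dehn} uses the profile only on minimal diagrams whose boundary words are cyclically reduced over $\mathcal P$, and these lie in the class above. Finally, "for every $\kappa$" follows from the first direction.
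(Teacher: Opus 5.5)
Your proposal is correct and follows the paper's proof essentially step for step. The forward direction uses dual-connected components, the mHQM inequality, the lifted linear isoperimetric bound, \Cref{lem:multiboundary-identity} and \Cref{lem:cheeger-discrete}(ii). The reverse direction shows that area-minimisers are $1$-mHQM via $|A|\le|\widehat A|=\sum_{i\le r}\operatorname{FillArea}^{\pm}(\phi(\gamma_i))$, which is exactly the paper's \Cref{lem:minimal-is-hfmhqm}, and then invokes \Cref{thm:dual-spectral-hyp-iff} for $\mathcal P^{\pm}$ via \Cref{rem:pm-dual-hyp}.

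One point of wording should be fixed. The correspondence you need in the reverse direction is the \emph{lifting} one, as in the paper, not projection: every minimal diagram over $\mathcal P^{\pm}$ lifts to an area-minimising disk map into $\widetilde X^{\pm}_{\mathcal P}$ with the same $\widetilde\mu_1$, by \Cref{lem:lift-invariance}. That is what makes your lower bound cover the whole defining class of $\Lambda^\ast_{\mathcal P^{\pm}}$.
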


\begin{proof}
$(\Leftarrow)$
Assume there exist $\kappa_0\ge 1$ and $c>0$ such that
$\widetilde\Lambda_{\mathcal P}^{\ast,\langle\kappa_0\rangle,
\mathrm{mhqm},\pm}(n)\ge c$ for all $n\ge 1$.
Let $\Delta$ be any area-minimising van Kampen diagram over
$\mathcal P^{\pm}$ with $|\partial\Delta|\le n$.
Lift $\Delta$ to a disk map
$\widetilde\Delta\to\widetilde X_{\mathcal P^{\pm}}
\cong\widetilde X_{\mathcal P}^{\pm}$
(\Cref{lem:free-completion-identification});
since the source is simply connected, fillers lift uniquely once the
boundary lift is fixed, so $\widetilde\Delta$ is area-minimising.
(Indeed, any cheaper filler for the lifted boundary would project to
a cheaper filler for the boundary word of $\Delta$;
moreover $\widetilde\mu_1(\widetilde\Delta)=\widetilde\mu_1(\Delta)$
since the source diagram is unchanged.)
By \Cref{lem:minimal-is-hfmhqm}, $\widetilde\Delta$ is
$1$-mHQM, hence also $\kappa_0$-mHQM.
Therefore
$c\le\widetilde\Lambda_{\mathcal P}^{\ast,\langle\kappa_0\rangle,
\mathrm{mhqm},\pm}(n)
\le\widetilde\mu_1(\widetilde\Delta)
=\widetilde\mu_1(\Delta)
\le L_{\max}^{\pm}\mu_1(\Delta)$.
Taking the infimum over all such $\Delta$ gives
$\Lambda_{\mathcal P^{\pm}}^\ast(n)\ge c/L_{\max}^{\pm}$
for all $n$, and hyperbolicity follows from
\Cref{thm:dual-spectral-hyp-iff}
(which applies to $\mathcal P^{\pm}$ by \Cref{rem:pm-dual-hyp}).

$(\Rightarrow)$
Since $\mathcal P^{\pm}$ presents the same group as $\mathcal P$,
word-hyperbolicity is equivalent for the two presentations.
Assume $G(\mathcal P)$ is hyperbolic; fix a
linear isoperimetric constant $C_{\mathrm{iso}}$ for $\mathcal P^{\pm}$.
By \Cref{lem:lift-invariance}, the same linear bound applies to
closed loops in $\widetilde X_{\mathcal P}^{\pm}$.
Fix $\kappa\ge 1$ and let $\phi\colon D\to
\widetilde X_{\mathcal P}^{\pm}$ be $\kappa$-mHQM.
If $F(D)=\varnothing$, then $\widetilde\mu_1(D)=+\infty$, so assume
$F(D)\neq\varnothing$.
Let $\varnothing\neq U\subseteq F(D)$ and decompose $U$ into
dual-connected components $U=U_1\sqcup\cdots\sqcup U_t$.
For each $U_j$ with
$\partial^{\mathrm{multi}} U_j=(\gamma_{j,1},\dots,\gamma_{j,m_j})$,
\[
|U_j|
\le
\kappa\sum_i \operatorname{FillArea}^{\pm}(\phi(\gamma_{j,i}))
+\kappa\|\partial^{\mathrm{multi}} U_j\|.
\]
Each loop $\phi(\gamma_{j,i})$ is null-homotopic in
$\widetilde X_{\mathcal P}^{\pm}$.
By \Cref{lem:lift-invariance,lem:free-completion-identification}
and the linear isoperimetric inequality for $\mathcal P^{\pm}$,
\[
\operatorname{FillArea}^{\pm}(\phi(\gamma_{j,i}))
\le C_{\mathrm{iso}}|\gamma_{j,i}|.
\]
Hence
\[
|U_j|
\le
\kappa(C_{\mathrm{iso}}+1)\|\partial^{\mathrm{multi}} U_j\|
=
\kappa(C_{\mathrm{iso}}+1)|\partial_E U_j|
\]
by \eqref{eq:multiboundary-dual-boundary}.
Summing gives $|U|\le\kappa(C_{\mathrm{iso}}+1)|\partial_E U|$.
The unweighted Cheeger inequality
(\Cref{lem:cheeger-discrete}(ii), with
$d_{\max}(D^\ast)\le L_{\max}^{\pm}$) then gives
$\widetilde\mu_1(D)\ge c(\kappa)>0$.
Since this bound is uniform over all $\kappa$-mHQM disk maps $D$
with $|\partial D|\le n$, taking the infimum gives
$\widetilde\Lambda_{\mathcal P}^{\ast,\langle\kappa\rangle,
\mathrm{mhqm},\pm}(n)\ge c(\kappa)$ for all $n\ge 1$.
\end{proof}

We now introduce a variant of the mHQM condition that admits
unconditional transfer under pushforward by restricting the
hereditary condition to face-sets whose source ancestor is hole-free.

\begin{definition}[Hole-free-ancestor mHQM]
\label{def:hfmhqm-diskmap}
Let $X$ be a locally finite combinatorial $2$-complex, $\kappa\ge 1$,
and $\phi\colon D\to X^{\pm}$ a free-completed disk map.
Say that $\phi$ is \emph{$\kappa$-hfmHQM} if for every nonempty
dual-connected face-set $A\subseteq F(D)$ that is \emph{hole-free}
(i.e.\ whose carrier $\Sigma_A$ has no bounded complementary
components in $|D|$),
\begin{equation}\label{eq:hfmhqm}
|A|
\le
\kappa \sum_{i=1}^m
\operatorname{FillArea}_X^{\pm}\bigl(\phi(\gamma_i)\bigr)
+\kappa\|\partial^{\mathrm{multi}} A\|,
\end{equation}
where $\partial^{\mathrm{multi}} A=(\gamma_1,\dots,\gamma_m)$.
\end{definition}

\begin{lemma}[Area-minimising free-completed disk maps are
$1$-mHQM and $1$-hfmHQM]
\label{lem:minimal-is-hfmhqm}
Every area-minimising free-completed disk map
$\phi\colon D\to X^{\pm}$ is $1$-mHQM and $1$-hfmHQM.
\end{lemma}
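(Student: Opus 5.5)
Since the $1$-hfmHQM condition is the $1$-mHQM inequality \eqref{eq:mhqm} restricted to hole-free face-sets, it suffices to prove $1$-mHQM; the hfmHQM statement follows at once. So fix an area-minimising $\phi\colon D\to X^{\pm}$ and a nonempty dual-connected face-set $A\subseteq F(D)$ with multiboundary $(\gamma_1,\dots,\gamma_m)$. The aim is
\[
|A|\le\sum_{i=1}^m\operatorname{FillArea}^{\pm}_X(\phi(\gamma_i))+\|\partial^{\mathrm{multi}}A\|,
\]
and in fact the plan proves the stronger bound without the additive term. The strategy is replacement: excise the faces of $A$, refill along the $\gamma_i$, and appeal to minimality of $D$.

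First, pass to the hole-filling. By \Cref{lem:multiboundary-outer-inner}, after reindexing, $\gamma_1,\dots,\gamma_r$ are the simple lobe boundary walks of $\partial\widehat\Sigma_A$, where $\widehat\Sigma_A$ is a disk subdiagram by \Cref{lem:hole-filling}(i). The remaining loops $\gamma_{r+1},\dots,\gamma_m$ are, up to orientation, the simple lobe boundary walks of the disk subdiagrams $H(K)$ attached to the bounded complementary components $K$ of $\Sigma_A$. Lobe-wise minimality (\Cref{lem:minimal-is-hqm}, valid in $X^{\pm}$) then gives
\[
\operatorname{Area}(\widehat\Sigma_A)=\sum_{i\le r}\operatorname{FillArea}^{\pm}(\phi(\gamma_i)),
\qquad
\operatorname{Area}(H(K))=\sum_{\gamma_i\text{ on }K}\operatorname{FillArea}^{\pm}(\phi(\gamma_i)).
\]

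Second, account for the faces. The faces of $\widehat\Sigma_A$ are those of $A$ together with those of the holes $\overline K$, and each hole's face-set is exactly $F(H(K))$, since $H(K)$ is the hole-filled carrier of $B(K)$ and lies in $\overline K$. Hence
\[
|A|=\operatorname{Area}(\widehat\Sigma_A)-\sum_K\operatorname{Area}(H(K))\le\sum_{i\le r}\operatorname{FillArea}^{\pm}(\phi(\gamma_i)),
\]
which is already at most the right-hand side of \eqref{eq:mhqm}. This argument uses only lobe-wise optimality and the simple-boundary structure of the lobes, with no whole-walk replacement, in keeping with \Cref{rem:lobe-vs-hqm}.

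The main obstacle is the identification of the hole faces with the $H(K)$: one must check that distinct holes give disjoint $H(K)$, each contained in $\overline K$ and disjoint from $A$. Nested components cannot occur, because a component of $S^2\setminus|\Sigma_A|$ is open and connected. The needed facts come from \Cref{lem:multiboundary-outer-inner}(ii), together with the observation that $F(\overline K)=B(K)$ is already hole-free relative to $K$. If that identification proves delicate, the fallback is a cruder but sufficient bound. Apply \Cref{lem:boundary-synthesis-retained}, or simply discard the holes: since $A\subseteq\widehat A$,
\[
|A|\le|\widehat A|=\sum_{i\le r}\operatorname{FillArea}^{\pm}(\phi(\gamma_i)),
\]
exactly as in the proof of \Cref{lem:hqm-implies-mhqm} but without invoking any HQM hypothesis. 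This fallback alone gives \eqref{eq:mhqm} with $\kappa=1$, so I would write the proof this way and keep the finer count as a remark.
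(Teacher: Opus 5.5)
Your fallback argument is exactly the paper's proof. It passes to the disk subdiagram $\widehat\Sigma_A$, identifies $\gamma_1,\dots,\gamma_r$ with its simple lobe walks via \Cref{lem:multiboundary-outer-inner}(i), applies lobe-wise minimality (\Cref{lem:minimal-is-hqm}) to get $|A|\le\operatorname{Area}(\widehat\Sigma_A)=\sum_{i\le r}\operatorname{FillArea}^{\pm}_X(\phi(\gamma_i))$, and then enlarges the right-hand side. Deducing $1$-hfmHQM directly from $1$-mHQM is also valid (the paper instead notes $r=m$ for hole-free $A$ and gets equality), and your finer hole-subtraction count ends in the same bound, so dropping it loses nothing.
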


\begin{proof}
Let $\varnothing\neq A\subseteq F(D)$ be dual-connected, with
$\partial^{\mathrm{multi}}A=(\gamma_1,\dots,\gamma_m)$.
Let $\Sigma_A$ be its carrier (\Cref{def:face-carrier}) and let
$\widehat\Sigma_A$ be its hole-filling (\Cref{lem:hole-filling}).
By \Cref{lem:hole-filling}(i), $\widehat\Sigma_A$ is a disk
subdiagram of $D$.
By \Cref{lem:multiboundary-outer-inner}, after reindexing there
exists $r\le m$ such that $\gamma_1,\dots,\gamma_r$ are exactly the
simple lobe boundary walks of $\partial\widehat\Sigma_A$.

Apply \Cref{lem:minimal-is-hqm} in the free-completed category to
the area-minimising disk map $\phi$:
by \eqref{eq:lobe-wise-bound},
$\operatorname{Area}(\widehat\Sigma_A)
=\sum_{i=1}^r\operatorname{FillArea}^{\pm}_X(\phi(\gamma_i))$.
Since $A\subseteq F(\widehat\Sigma_A)$:
\[
|A|
\le\sum_{i=1}^r\operatorname{FillArea}^{\pm}_X(\phi(\gamma_i))
\le\sum_{i=1}^m\operatorname{FillArea}^{\pm}_X(\phi(\gamma_i))
\le\sum_{i=1}^m\operatorname{FillArea}^{\pm}_X(\phi(\gamma_i))
+\|\partial^{\mathrm{multi}}A\|.
\]
This is the $1$-mHQM inequality.

If $A$ is additionally hole-free, then $\widehat\Sigma_A=\Sigma_A$
and \Cref{lem:multiboundary-outer-inner} gives $r=m$.
Since $\Sigma_A$ has exactly the faces of $A$,
$|A|=\operatorname{Area}(\Sigma_A)
=\sum_{i=1}^m\operatorname{FillArea}^{\pm}_X(\phi(\gamma_i))$,
which is stronger than the $1$-hfmHQM inequality.
\end{proof}

\begin{definition}[Free-completed hfmHQM profile]
\label{def:group-free-hfmhqm-profile}
Fix a finite presentation $\mathcal P=\langle S\mid R\rangle$ with
universal cover $\widetilde X_{\mathcal P}$ and free completion
$\widetilde X_{\mathcal P}^{\pm}$.
For $\kappa\ge 1$ define
\[
\widetilde\Lambda_{\mathcal P}^{\ast,\langle\kappa\rangle,
\mathrm{hfmhqm},\pm}(n)
:=
\inf\Bigl\{\widetilde\mu_1(D):
\phi\colon D\to\widetilde X_{\mathcal P}^{\pm}
\text{ is }\kappa\text{-hfmHQM},\
|\partial D|\le n\Bigr\},
\]
with the convention $\inf\varnothing:=+\infty$.
\end{definition}

\begin{theorem}[The free-completed hfmHQM profile detects hyperbolicity]
\label{thm:hfmhqm-free-hyp}
Let $\mathcal P=\langle S\mid R\rangle$ be a finite presentation.
Then $G(\mathcal P)$ is word-hyperbolic if and only if there exists
$\kappa\ge 1$ such that
$\inf_{n\ge 1}
\widetilde\Lambda_{\mathcal P}^{\ast,\langle\kappa\rangle,
\mathrm{hfmhqm},\pm}(n)>0$.
Equivalently, this positivity holds for every $\kappa\ge 1$.
\end{theorem}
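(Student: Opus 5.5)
The plan is to follow the proof of \Cref{thm:mhqm-free-hyp} almost verbatim, with one modification in the $(\Rightarrow)$ direction. Since the hfmHQM inequality is only imposed on hole-free face-sets, the Cheeger argument must first reduce an arbitrary face-set to hole-free ones.

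$(\Leftarrow)$ Suppose $\widetilde\Lambda_{\mathcal P}^{\ast,\langle\kappa_0\rangle,\mathrm{hfmhqm},\pm}(n)\ge c>0$ for all $n$. Take any area-minimising diagram $\Delta$ over $\mathcal P^{\pm}$ with $|\partial\Delta|\le n$. Lift it to $\widetilde\Delta\to\widetilde X_{\mathcal P}^{\pm}$ via \Cref{lem:free-completion-identification}; the lift is still area-minimising and has the same source, so $\widetilde\mu_1(\widetilde\Delta)=\widetilde\mu_1(\Delta)$. By \Cref{lem:minimal-is-hfmhqm}, $\widetilde\Delta$ is $1$-hfmHQM, hence $\kappa_0$-hfmHQM, since the inequality only weakens as $\kappa$ grows. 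This gives
\[
c\le\widetilde\mu_1(\Delta)\le L_{\max}^{\pm}\mu_1(\Delta),
\]
so $\inf_n\Lambda^\ast_{\mathcal P^{\pm}}(n)>0$. Hyperbolicity then follows from \Cref{thm:dual-spectral-hyp-iff}, which applies to $\mathcal P^{\pm}$ by \Cref{rem:pm-dual-hyp}.

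$(\Rightarrow)$ Fix $\kappa\ge1$ and a $\kappa$-hfmHQM map $\phi\colon D\to\widetilde X_{\mathcal P}^{\pm}$ with $F(D)\ne\varnothing$. The goal is a uniform bound $|U|\le C|\partial_E U|$ for every nonempty $U\subseteq F(D)$. Decompose $U$ into dual-connected components; it suffices to treat a single dual-connected $A$.

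\begin{enumerate}
\item Replace $A$ by its hole-filling $\widehat A=F(\widehat\Sigma_A)$. By \Cref{lem:hole-filling-multiboundary}, $\widehat A$ is dual-connected and hole-free, contains $A$, and satisfies $\|\partial^{\mathrm{multi}}\widehat A\|\le\|\partial^{\mathrm{multi}}A\|=|\partial_E A|$.
\item Apply \eqref{eq:hfmhqm} to $\widehat A$. Bound each filling area by $C_{\mathrm{iso}}|\gamma_i|$, using the linear isoperimetric inequality for $\mathcal P^{\pm}$ transported upstairs by \Cref{lem:lift-invariance}. This yields
\[
|A|\le|\widehat A|\le\kappa(C_{\mathrm{iso}}+1)\|\partial^{\mathrm{multi}}\widehat A\|\le\kappa(C_{\mathrm{iso}}+1)|\partial_E A|.
\]
\item Sum over the components of $U$. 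This bounds the unweighted dual Cheeger constant below by $[\kappa(C_{\mathrm{iso}}+1)]^{-1}$.
\item Apply \Cref{lem:cheeger-discrete}(ii) with $d_{\max}\le L_{\max}^{\pm}$. This gives $\widetilde\mu_1(D)\ge c(\kappa)>0$ uniformly in $D$ and $n$.
\end{enumerate}

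Since this works for every $\kappa$, the "for some" and "for every" formulations coincide. For the reverse implication, "for every $\kappa$" trivially gives "for some $\kappa$."

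The only step that is not a transcription of the mHQM proof is the passage from $A$ to $\widehat A$. There I must check three things: that the hole-free condition in \Cref{def:hfmhqm-diskmap} is exactly the one delivered by \Cref{lem:hole-filling-multiboundary}, namely no bounded complementary components of the carrier in $|D|$; that the isoperimetric bound is applied to the multiboundary loops of $\widehat A$ rather than those of $A$; and that the inequality $|A|\le|\widehat A|$ goes in the right direction. I expect this bookkeeping to be the main point of care rather than a genuine obstacle.
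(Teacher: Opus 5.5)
Your proposal is correct and follows the paper's proof essentially step for step. In the $(\Leftarrow)$ direction you lift area-minimising diagrams over $\mathcal P^{\pm}$, invoke \Cref{lem:minimal-is-hfmhqm}, and conclude via \Cref{thm:dual-spectral-hyp-iff} and \Cref{rem:pm-dual-hyp}; in the $(\Rightarrow)$ direction you pass each dual-connected component to its hole-filling by \Cref{lem:hole-filling-multiboundary}, then apply \eqref{eq:hfmhqm}, the lifted linear isoperimetric inequality, and \Cref{lem:cheeger-discrete}(ii), exactly as the paper does.
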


\begin{proof}
$(\Leftarrow)$
Assume there exist $\kappa_0\ge 1$ and $c>0$ such that
$\widetilde\Lambda_{\mathcal P}^{\ast,\langle\kappa_0\rangle,
\mathrm{hfmhqm},\pm}(n)\ge c$ for all $n\ge 1$.
Let $\Delta$ be any area-minimising van Kampen diagram over
$\mathcal P^{\pm}$ with $|\partial\Delta|\le n$.
Lift $\Delta$ to a disk map
$\widetilde\Delta\to\widetilde X_{\mathcal P^{\pm}}
\cong\widetilde X_{\mathcal P}^{\pm}$
(\Cref{lem:free-completion-identification});
since the source is simply connected, fillers lift uniquely once the
boundary lift is fixed, so $\widetilde\Delta$ is area-minimising.
(Indeed, any cheaper filler for the lifted boundary would project to
a cheaper filler for the boundary word of $\Delta$;
moreover $\widetilde\mu_1(\widetilde\Delta)=\widetilde\mu_1(\Delta)$
since the source diagram is unchanged.)
By \Cref{lem:minimal-is-hfmhqm}, $\widetilde\Delta$ is
$1$-hfmHQM, hence also $\kappa_0$-hfmHQM.
Therefore
$c\le\widetilde\Lambda_{\mathcal P}^{\ast,\langle\kappa_0\rangle,
\mathrm{hfmhqm},\pm}(n)
\le\widetilde\mu_1(\widetilde\Delta)
=\widetilde\mu_1(\Delta)
\le L_{\max}^{\pm}\mu_1(\Delta)$.
Taking the infimum over all such $\Delta$ gives
$\Lambda_{\mathcal P^{\pm}}^\ast(n)\ge c/L_{\max}^{\pm}$
for all $n$, and hyperbolicity follows from
\Cref{thm:dual-spectral-hyp-iff}
(which applies to $\mathcal P^{\pm}$ by \Cref{rem:pm-dual-hyp}).

$(\Rightarrow)$
Since $\mathcal P^{\pm}$ presents the same group as $\mathcal P$,
word-hyperbolicity is equivalent for the two presentations.
Assume $G(\mathcal P)$ is hyperbolic; fix a
linear isoperimetric constant $C_{\mathrm{iso}}$ for $\mathcal P^{\pm}$.
By \Cref{lem:lift-invariance}, the same linear bound applies to
closed loops in $\widetilde X_{\mathcal P}^{\pm}$.
Fix $\kappa\ge 1$ and let
$\phi\colon D\to\widetilde X_{\mathcal P}^{\pm}$ be $\kappa$-hfmHQM.
If $F(D)=\varnothing$, then $\widetilde\mu_1(D)=+\infty$, so assume
$F(D)\neq\varnothing$.
Let $\varnothing\neq U\subseteq F(D)$ and decompose it into
dual-connected components $U_j$.
For each $U_j$, let $\Sigma_j$ be its carrier,
$\widehat\Sigma_j$ its hole-filling
(\Cref{lem:hole-filling,lem:hole-filling-multiboundary}),
and $\widehat U_j:=F(\widehat\Sigma_j)$.
By \Cref{lem:hole-filling-multiboundary},
$\widehat U_j$ is dual-connected and hole-free,
contains $U_j$, and
$\|\partial^{\mathrm{multi}}\widehat U_j\|
\le\|\partial^{\mathrm{multi}} U_j\|$.
The hfmHQM inequality \eqref{eq:hfmhqm} applies to $\widehat U_j$:
$|\widehat U_j|
\le\kappa\sum_i\operatorname{FillArea}^{\pm}(\phi(\eta_{j,i}))
+\kappa\|\partial^{\mathrm{multi}}\widehat U_j\|$,
where $\{\eta_{j,i}\}=\partial^{\mathrm{multi}}\widehat U_j$.
Each loop $\phi(\eta_{j,i})$ is null-homotopic in
$\widetilde X_{\mathcal P}^{\pm}$.
By \Cref{lem:lift-invariance,lem:free-completion-identification}
and the linear isoperimetric inequality for $\mathcal P^{\pm}$,
$\operatorname{FillArea}^{\pm}(\phi(\eta_{j,i}))
\le C_{\mathrm{iso}}|\eta_{j,i}|$.
Hence
$|\widehat U_j|
\le\kappa(C_{\mathrm{iso}}+1)\|\partial^{\mathrm{multi}}\widehat U_j\|
\le\kappa(C_{\mathrm{iso}}+1)\|\partial^{\mathrm{multi}} U_j\|
=\kappa(C_{\mathrm{iso}}+1)|\partial_E U_j|$,
where the middle inequality uses
\Cref{lem:hole-filling-multiboundary} and the final equality is
\eqref{eq:multiboundary-dual-boundary}.
Since $|U_j|\le|\widehat U_j|$, summing gives
$|U|\le\kappa(C_{\mathrm{iso}}+1)|\partial_E U|$.
The unweighted Cheeger inequality
(\Cref{lem:cheeger-discrete}(ii), with
$d_{\max}(D^\ast)\le L_{\max}^{\pm}$) then gives
$\widetilde\mu_1(D)\ge c(\kappa)>0$.
Since this bound is uniform over all $\kappa$-hfmHQM disk maps $D$
with $|\partial D|\le n$, taking the infimum gives
$\widetilde\Lambda_{\mathcal P}^{\ast,\langle\kappa\rangle,
\mathrm{hfmhqm},\pm}(n)\ge c(\kappa)$ for all $n\ge 1$.
\end{proof}

We now develop the tools needed to prove that the hfmHQM property
transfers under bounded template pushforward.

\begin{definition}[Nondegenerate bounded template data]
\label{def:bounded-template-data}
Let $\widetilde X$ and $\widetilde Y$ be simply connected locally finite
combinatorial $2$-complexes.
A \emph{nondegenerate bounded template data set} consists of
combinatorial maps of $1$-skeleta
$q\colon\widetilde X^{(1)}\to\widetilde Y^{(1)}$ and
$r\colon\widetilde Y^{(1)}\to\widetilde X^{(1)}$,
where each map sends vertices to vertices and each oriented edge
$e\colon v\to w$ to an oriented edge-path from $q(v)$ to $q(w)$
(resp.\ $r(v)$ to $r(w)$), preserving orientation:
$q(\bar e)=\overline{q(e)}$, $r(\bar f)=\overline{r(f)}$.
The data set has constants $L,A,S\ge 1$ satisfying:
\begin{enumerate}[label=\textup{(\alph*)}]
\item every oriented edge maps to an edge-path of length in $[1,L]$
(nondegeneracy: no edge collapses to a point);
\item for each $2$-cell $\sigma$ of $\widetilde X$, the loop
$q(\partial\sigma)$ bounds a free-completed disk map
$T^q_\sigma\to\widetilde Y^{\pm}$ of area in $[1,A]$
with simple boundary and connected face-dual graph
(\Cref{cor:simple-boundary-connected-dual});
symmetrically for each $2$-cell $\tau$ of $\widetilde Y$;
all templates come from finite families;
\item for each vertex $v\in\widetilde X^{(0)}$ there is a path $P_v$ in
$\widetilde X^{(1)}$ from $v$ to $rq(v)$ of positive length at most $L$;
for each oriented edge $e\colon v\to w$ of $\widetilde X$ there is a
free-completed disk map $S_e\to\widetilde X^{\pm}$ with
simple boundary, area in $[1,S]$, and connected face-dual graph,
filling $P_v\cdot rq(e)\cdot P_w^{-1}\cdot e^{-1}$;
these strip templates come from a finite family;
\item symmetrically for $\widetilde Y$:
vertex tracks $Q_y$ of positive length at most $L$ and strip templates with
simple boundary, area in $[1,S]$, and connected face-dual; from a
finite family.
\end{enumerate}
Set $A_0:=\max\{A,L\}$.
\end{definition}

\begin{definition}[Path expansion of a disk map]
\label{def:path-expansion}
Assume nondegenerate bounded template data.
Let $\phi\colon D\to\widetilde X^{\pm}$ be a free-completed disk map.
For each oriented edge $a$ of $D$, write
$q(\phi(a))=f_{a,1}\cdots f_{a,\ell(a)}$, $1\le\ell(a)\le L$.
The \emph{$q$-path expansion} $D[q]$ is the planar $2$-complex obtained
from $D$ by subdividing each edge $a$ into $\ell(a)$ oriented subedges
labelled by $f_{a,1},\dots,f_{a,\ell(a)}$, with opposite orientations
identified by $q(\phi(\bar a))=\overline{q(\phi(a))}$.
Each face $\sigma$ acquires the subdivided boundary walk
$q(\phi(\partial\sigma))$, and the outer boundary walk of $D[q]$ is
$q(\phi(\partial D))$.
Because $D[q]$ is obtained from $D$ by subdivision, it is again a
finite connected simply connected planar $2$-complex.
\end{definition}

\begin{lemma}[Cell substitution preserves planar type]
\label{lem:cell-substitution}
Let $K$ be a finite connected planar $2$-complex embedded in $S^2$
with $h+1$ boundary components ($h\ge 0$), and let $\sigma$ be a
$2$-cell of $K$.
Let $T$ be a finite connected simply connected planar $2$-complex
with $\operatorname{Area}(T)\ge 1$, \emph{simple boundary}, and
the same boundary walk as $\sigma$.
Then there exists a finite connected planar $2$-complex $K'$
with the same $h+1$ boundary components and the same boundary
walks as $K$, obtained by replacing $\sigma$ with $T$.
\end{lemma}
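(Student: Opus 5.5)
The plan is to construct $K'$ as a pushout and then verify the topology with the same tools used in \Cref{lem:disk-replacement}. Let $\Sigma$ be $K$ with the open $2$-cell $\sigma$ deleted, and set
\[
K':=\Sigma\cup_{|\partial\sigma|}T,
\]
where the characteristic map of $\sigma$ identifies $\partial T$ with $|\partial\sigma|$ edge by edge. Since $\partial T$ is simple, $|T|$ is a closed disk by \Cref{lem:simple-boundary-top-disk}. Gluing $T$ along its boundary circle is therefore the same operation as gluing the original $2$-cell $\sigma$, which is also a disk attached along the same boundary walk.

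First I will record the identifications. The attaching map of $\sigma$ factors through a cellular homeomorphism $\partial\bar D^2\to\partial T$. Choose a cellular homeomorphism $h\colon\bar D^2\to|T|$ extending this boundary identification; it exists by Alexander's trick, since $|T|\cong\bar D^2$. Then the characteristic map of $\sigma$ and the composite through $h$ agree on $\partial\bar D^2$. Consequently there is a homeomorphism $|K|\cong|K'|$ that is the identity on $\Sigma$ and restricts to $h$ on the cell. This holds even when $\partial\sigma$ is non-simple in $K$ (for example, when an edge occurs twice on $\partial\sigma$), because the identifications are forced by the boundary word in both constructions. Under this homeomorphism the interior of $\sigma$ corresponds to the interior of $T$, and the cell structure on $K'$ refines that of $K$ only inside this region.

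Next I will transport the embedding. Fix the embedding $|K|\subset S^2$. The homeomorphism carries it to an embedding of $|K'|$ in $S^2$ with the same image, so the complementary regions are literally the same open sets. Their frontiers lie in $\Sigma$: no part of $\partial\sigma$ can bound a complementary region on the $\sigma$ side, because $\operatorname{int}\sigma$ is part of the complex. Hence $K'$ has the same $h+1$ boundary components. Its boundary walks are read off the same edges of $\Sigma$ in the same cyclic order, so they coincide with those of $K$. Connectivity and planarity are then immediate, and the number of $2$-cells changes by $\operatorname{Area}(T)-1$.

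The main obstacle is making the identification rigorous when $\partial\sigma$ is not embedded in $K$. The argument has to be phrased through characteristic maps from $\bar D^2$ rather than through subcomplexes, and it must check that $T$ is glued by a map that is injective on the interior of $T$. This is exactly where simplicity of $\partial T$ is used: it guarantees that the only identifications are the ones induced on $\partial T\cong S^1$. The cellularity of $h$ can be arranged by constructing it cell by cell, using Schoenflies-type extension in the plane. Alternatively, one can avoid needing $h$ to be cellular at all, since only the homeomorphism type and the boundary walks matter.
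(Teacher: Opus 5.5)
Your argument is correct and follows the same route as the paper: a homeomorphism $|K|\cong|K'|$ that is the identity on $K\setminus\operatorname{int}\sigma$ and a disk homeomorphism onto $|T|$ (via \Cref{lem:simple-boundary-top-disk}) on the cell, after which the $S^2$-embedding is transported. Your formulation through the characteristic map of $\sigma$ is more robust than the paper's. The paper treats the closure $\bar R_\sigma$ of the face region in $S^2$ as a closed disk bounded by the boundary walk of $\sigma$, which holds only when $\partial\sigma$ is simple in $K$; your adjunction-space argument also covers faces whose boundary walk revisits vertices or edges.
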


\begin{proof}
In the $S^2$-embedding of $K$, the open $2$-cell $\sigma$ occupies
a face region $R_\sigma$ (an open topological disk).
Its closure $\bar R_\sigma$ is a closed disk with
$\partial\bar R_\sigma\cong S^1$.

Since $T$ is a finite connected simply connected planar $2$-complex
with $\operatorname{Area}(T)\ge 1$ and simple boundary,
\Cref{lem:simple-boundary-top-disk} gives $|T|\cong\bar D^2$.
Choose a homeomorphism
$\theta\colon\bar D^2\xrightarrow{\cong}|T|$ that restricts to
a degree-$1$ homeomorphism of boundary circles.

Both the boundary walk of $\sigma$ and the boundary walk of $T$
parametrise $S^1$ with the same edge-by-edge data.
Define $K'$ by replacing $\bar R_\sigma$ in $S^2$ with $|T|$ via
$\theta$, gluing along the common boundary parametrisation.

\emph{Planarity and topology.}
The identity on $K\setminus\operatorname{int}\sigma$ together with
$\theta$ on $\bar R_\sigma$ defines a homeomorphism $|K|\cong|K'|$.
Hence $K'$ inherits an $S^2$-embedding and has the same number of
boundary components as $K$.

\emph{Boundary walks.}
$K'$ differs from $K$ only inside the open cell $\sigma$, so the
boundary walks are identical.
\end{proof}

\begin{lemma}[Capping a boundary component by a simple-boundary disk]
\label{lem:cap-boundary-component}
Let $K$ be a finite connected planar $2$-complex with $h+1$
boundary components ($h\ge 1$), embedded in $S^2$.
Let $\beta$ be one of the $h$ inner boundary components, and let
$D$ be a finite connected simply connected planar $2$-complex
with simple boundary and the same boundary walk as $\beta$.
Then gluing $D$ to $K$ along $\beta$ produces a finite connected
planar $2$-complex $K'$ with $h$ boundary components (one fewer),
the same outer boundary walk as $K$, and
$\operatorname{Area}(K')=\operatorname{Area}(K)+\operatorname{Area}(D)$.
If $h=1$, then $K'$ is a disk diagram.
\end{lemma}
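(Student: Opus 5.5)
The plan is to imitate the proof of \Cref{lem:cell-substitution}, gluing along a boundary component instead of substituting inside a face. First, since $D$ has simple boundary, I want $|D|$ to be a closed disk. If $\operatorname{Area}(D)\ge 1$ this is \Cref{lem:simple-boundary-top-disk}. If $\operatorname{Area}(D)=0$, then $D$ is a tree with simple boundary walk, so as in \Cref{lem:simple-boundary-free-general} it is a single edge and $\beta=e\bar e$. I would handle this degenerate case separately by collapsing the corresponding bigonal hole of $K$ (identifying the two parallel edges of $\beta$). Alternatively, I would note that inner boundary components of a planar complex embedded in $S^2$ are frontiers of bounded complementary regions, so a simple $\beta$ of length $2$ bounds an open disk region and the same argument applies with $|D|$ replaced by an arc. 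In the main case, fix a homeomorphism $\theta\colon\bar D^2\to|D|$ restricting to a degree-one homeomorphism of boundary circles that matches the edge-by-edge parametrisation of $\beta$.

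Next, I would realise the hole geometrically. In the $S^2$-embedding of $K$, the inner boundary component $\beta$ is the frontier of a bounded complementary component $R_\beta$ of $S^2\setminus|K|$. Since $\beta$ is a simple closed edge-walk, $|\beta|$ is an embedded circle. By the Jordan curve theorem, together with the fact that the remaining components of $K$ lie on one side, $\bar R_\beta$ is a closed disk with $\partial\bar R_\beta=|\beta|$. Now define $K':=K\cup_\beta D$ and embed it in $S^2$ by the identity on $|K|$ and by $\theta$ composed with a homeomorphism $\bar D^2\cong\bar R_\beta$ extending the boundary identification (Schoenflies). Simple boundary ensures that the gluing forces no vertex identifications inside $D$, exactly as in the pushout of \Cref{lem:disk-replacement}. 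So $K'$ is a finite connected planar $2$-complex whose cells are those of $K$ together with those of $D$, and $\operatorname{Area}(K')=\operatorname{Area}(K)+\operatorname{Area}(D)$ follows immediately.

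For the boundary count, the complementary regions of $K'$ in $S^2$ are those of $K$ minus $R_\beta$, so $K'$ has $h$ boundary components with the same walks, in particular the same outer walk. When $h=1$, only the exterior region remains. The Euler characteristic computation of \Cref{rem:no-interior-face-free} and \Cref{lem:disk-replacement} then gives $\chi(|K'|)=1$ and $H_1=H_2=0$, and planarity makes $\pi_1$ free of rank zero, so $K'$ is a disk diagram. Equivalently, Alexander duality with a connected complement gives $H_1=0$, as in \Cref{lem:hole-filling}(i).

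The main obstacle I expect is the claim that the closure of the hole $R_\beta$ is exactly a closed disk bounded by $|\beta|$, that is, that other parts of $K$ do not lie inside the Jordan domain of $\beta$. I would argue that $R_\beta$ is a complementary component whose frontier is precisely $|\beta|$. The Jordan domain on the side of $R_\beta$ is connected, disjoint from $|\beta|$, and contains $R_\beta$. Any point of $|K|$ inside it would have to meet the frontier of $R_\beta$, which lies in $|\beta|$, a contradiction; so that domain equals $R_\beta$.
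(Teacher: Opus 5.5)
Your proposal is correct and is essentially the paper's argument with one lemma inlined. The paper adjoins an auxiliary $2$-cell filling the hole and applies \Cref{lem:cell-substitution}, whose proof is exactly your gluing of $|D|\cong\bar D^2$ into $\bar R_\beta$ by a boundary-respecting homeomorphism; it settles $h=1$ by the same Alexander-duality and free-$\pi_1$ argument. Your connectedness argument that the Jordan domain on the side of $R_\beta$ is $R_\beta$ itself, and your separate treatment of $\operatorname{Area}(D)=0$ (which the appeal to \Cref{lem:cell-substitution}, requiring positive area, silently excludes), are extra care rather than a different route.
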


\begin{proof}
In the $S^2$-embedding of $K$, the inner boundary component $\beta$
bounds a complementary region $R$.
Adjoin one auxiliary $2$-cell $\sigma$ filling $R$; as an abstract
disk cell attached along $\beta$, it has simple boundary.
The result $K^{+}$ is a finite connected planar $2$-complex with
$h$ boundary components.
Apply \Cref{lem:cell-substitution}: replace $\sigma$ by $D$
(which has simple boundary and the same boundary walk).
The result is exactly $K'$, inheriting planarity and boundary data
from $K^{+}$.
The area formula is immediate from face counting.

If $h=1$, then $K'$ has exactly one boundary component, so
$S^2\setminus|K'|$ is connected.
By Alexander duality, $\widetilde H_1(|K'|;\mathbb Z)=0$.
Since a finite planar $2$-complex has free fundamental group,
$\pi_1(K')$ is free with trivial abelianisation, hence trivial.
Therefore $K'$ is a disk diagram.
\end{proof}

\begin{lemma}[Excising pairwise interior-disjoint disk subdiagrams]
\label{lem:excise-disk-subdiagrams}
Let $\Delta$ be a disk diagram, and let
$\Omega_1,\dots,\Omega_m\subseteq \Delta$ be disk subdiagrams with
simple boundary and positive area, pairwise interior-disjoint, whose
underlying spaces lie in the interior of $\Delta$ and meet, if at all,
only in boundary vertices.
Then deleting the open interiors of the $\Omega_i$ leaves a connected
planar $2$-complex whose outer boundary is $\partial\Delta$ and whose
inner boundary components are the boundary walks of the $\Omega_i$.
Its area is
\[
\operatorname{Area}(\Delta)-\sum_{i=1}^m\operatorname{Area}(\Omega_i).
\]
\end{lemma}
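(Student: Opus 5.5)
The plan is to work in an $S^2$-embedding of $|\Delta|$ and remove the open cells of the $\Omega_i$ one subdiagram at a time, tracking connectivity, planarity, the boundary walks and the face count. Set $K:=\Delta\setminus\bigcup_i\operatorname{int}|\Omega_i|$. Here $\operatorname{int}|\Omega_i|$ means the open $2$-cells, open edges and non-boundary vertices of $\Omega_i$. Since each $\Omega_i$ has simple boundary and positive area, \Cref{lem:simple-boundary-top-disk} gives $|\Omega_i|\cong\bar D^2$. Its boundary $\partial\Omega_i$ is therefore an embedded circle in $S^2$, and by the Jordan curve theorem it bounds a unique open disk $R_i\subset\operatorname{int}|\Delta|$ with $\overline{R_i}=|\Omega_i|$. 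Interior-disjointness together with the hypothesis that the $|\Omega_i|$ meet only in boundary vertices makes the open disks $R_i$ pairwise disjoint. It also ensures that no edge or vertex of $\Delta$ outside $\partial\Omega_i$ lies in $R_i$ except those of $\Omega_i$ itself, since $\Omega_i$ is a full subcomplex filling $R_i$.

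\textbf{Step 1 (complement structure).} The open disks $R_i$ are exactly the new complementary regions. So $S^2\setminus|K|=U_\infty\sqcup R_1\sqcup\cdots\sqcup R_m$, where $U_\infty$ is the exterior region. Because $|\Omega_i|\subset\operatorname{int}|\Delta|$, each $\partial\Omega_i$ is disjoint from $\partial\Delta$ as a set of edges, although vertices may be shared among the $\partial\Omega_i$. The frontier of $R_i$ is the simple walk $\partial\Omega_i$, and the frontier of $U_\infty$ is still $\partial\Delta$. This identifies the outer boundary as $\partial\Delta$ and the inner boundary components as the walks $\partial\Omega_i$. The area formula then follows by counting faces: the $2$-cells of $K$ are exactly the $2$-cells of $\Delta$ not lying in any $\Omega_i$, and these sets are disjoint.

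\textbf{Step 2 (connectivity).} This is the main obstacle. The concern is that removing several disks which touch at boundary vertices might disconnect $K$, for instance if a chain of the $\Omega_i$ formed a closed ring whose union separates $|\Delta|$. I would argue by induction on $m$, as in \Cref{lem:disk-replacement}. Suppose $K_{j-1}$, the complex after excising $\Omega_1,\dots,\Omega_{j-1}$, is connected. Then $\Omega_j$ sits in $K_{j-1}$ as a closed disk whose boundary circle remains in $K_{j-1}$. Deleting the interior of a closed disk while keeping its boundary circle does not disconnect: any path through $R_j$ can be rerouted along the circle $\partial\Omega_j$, which lies in $K_j$.

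This rerouting argument applies regardless of whether the circles touch one another. Hence the separating-ring worry does not arise, because the boundary circles are never removed. Planarity is inherited, since $K$ is a subcomplex of $\Delta\subset S^2$. Finally, the Euler characteristic satisfies $\chi(K)=1-m$ by additivity, with $\chi(\partial\Omega_i)=0$ and $\chi(|\Omega_i|)=1$. This is consistent with $K$ having $m+1$ boundary components, and it serves as a sanity check on the count of regions.
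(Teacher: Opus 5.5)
Your proposal is correct and follows essentially the same route as the paper: embed $|\Delta|$ in $S^2$, use \Cref{lem:simple-boundary-top-disk} to see each $|\Omega_i|$ as a closed disk, delete the open disks, and read off connectivity, the boundary walks and the area by face counting. The paper asserts in one line that every point can be joined to $\partial\Delta$ avoiding the open interiors. Your inductive rerouting along the retained circles $\partial\Omega_j$, together with the Jordan-curve description of $S^2\setminus|K|$, spells that step out explicitly.
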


\begin{proof}
Embed $|\Delta|$ in $S^2$.
By \Cref{lem:simple-boundary-top-disk}, each $|\Omega_i|$ is
homeomorphic to a closed disk lying in the interior of $|\Delta|$.
The closed disks have pairwise disjoint interiors and meet, if at all,
only in boundary points.
Removing the open interior of each $|\Omega_i|$ preserves
path-connectivity: every interior point of $|\Delta|$ can be connected
to $\partial\Delta$ by a path in $|\Delta|$ that avoids the open
interiors (at shared boundary vertices, the path passes through but
does not enter any open interior).
The result is a connected planar $2$-complex whose outer boundary is
$\partial\Delta$ and whose inner boundary components are the
boundaries of the $\Omega_i$.
The area formula is immediate from face counting.
\end{proof}

\begin{corollary}[Excising a single interior $2$-cell or simple-boundary
subdiagram]
\label{cor:excise-single-disk}
Let $K$ be a finite connected planar $2$-complex with boundary,
and let $\Omega\subset K$ be either a single interior $2$-cell or a
disk subdiagram with simple boundary and positive area, whose
$2$-cells all lie in the interior of $K$.
Then removing the open interior of $\Omega$ from $K$ produces a
finite connected planar $2$-complex with one additional inner
boundary component (the boundary walk of $\Omega$) and area
$\operatorname{Area}(K)-\operatorname{Area}(\Omega)$.
\end{corollary}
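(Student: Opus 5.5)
The two cases need different tools. For a single interior $2$-cell the excision can be done directly in the $S^2$-embedding. For a simple-boundary disk subdiagram the excision will be reduced to \Cref{lem:excise-disk-subdiagrams}, applied to the disk diagram obtained by capping the boundary components of $K$.

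\emph{Single interior $2$-cell.} Embed $|K|$ in $S^2$. The open cell $\sigma$ is an open face region $R_\sigma$, and the hypothesis that $\sigma$ is interior means $R_\sigma$ is not a complementary region of $|K|$. Set $K':=K\setminus R_\sigma$, which is a subcomplex because only an open top-dimensional cell is removed. Then $K'$ has the same vertices and edges as $K$, so $K'$ is connected. Its complementary regions in $S^2$ are those of $K$ together with $R_\sigma$. The boundary walk of the new region $R_\sigma$ is the attaching walk $\partial\sigma$ (read with reversed orientation), so exactly one inner boundary component has been added. Planarity is inherited from the restricted embedding, and the area drops by exactly $1$.

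\emph{Simple-boundary subdiagram.} By \Cref{lem:simple-boundary-top-disk}, $|\Omega|$ is a closed disk. Cap each inner boundary component of $K$ by a formal $2$-cell, as in the proof of \Cref{lem:boundary-synthesis-retained}. Call the resulting disk diagram $K^+$; it is simply connected by the Alexander-duality argument of \Cref{lem:cap-boundary-component}. The hypothesis that all $2$-cells of $\Omega$ lie in the interior of $K$ is needed here, and this is where I expect the main care to go. I would use it to verify that $|\Omega|$ lies in the interior of $|K^+|$, that $\Omega$ meets no formal cap, and that $\Omega$ remains a simple-boundary disk subdiagram of $K^+$. If $\partial\Omega$ touches $\partial K$ at isolated vertices, this is permitted, since \Cref{lem:excise-disk-subdiagrams} only requires meetings at boundary vertices. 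To be safe, I would treat the interiority hypothesis as meaning that no edge of $\partial\Omega$ lies on $\partial K$.

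Next, apply \Cref{lem:excise-disk-subdiagrams} to $K^+$, excising $\Omega$ together with the formal caps. The caps are single cells whose boundaries are simple, lobe by lobe; if a cap's boundary is not simple, handle the caps by the single-cell case instead. The result is connected, planar, has area $\operatorname{Area}(K)-\operatorname{Area}(\Omega)$, and its inner boundary walks are the original inner boundaries of $K$ together with $\partial\Omega$, so exactly one new component has been added.

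A simpler fallback is to remove the open $2$-cells and the open interior edges and vertices of $\Omega$ directly. Connectivity then follows as in the pushout step of \Cref{lem:disk-replacement}: the closure of $|K|\setminus\operatorname{int}|\Omega|$ is connected because $\partial\Omega$ is a simple closed curve bounding a disk inside $|K|$. The new complementary region is $\operatorname{int}|\Omega|$, whose frontier walk is $\partial\Omega$.
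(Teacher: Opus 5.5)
Your single-cell argument and your ``simpler fallback'' are the paper's proof. The paper embeds $K$ in $S^2$ and notes that in either case $\operatorname{int}|\Omega|$ is an open disk; in the subdiagram case this uses \Cref{lem:simple-boundary-top-disk}. It then observes that deleting this open disk adds exactly one complementary region, keeps the complex connected, and lowers the area by $\operatorname{Area}(\Omega)$.

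Your primary route for the subdiagram case is genuinely different: you cap the holes of $K$ to get a disk diagram $K^+$, invoke \Cref{lem:excise-disk-subdiagrams}, and then remove the caps. It gains nothing, because that lemma is itself proved by the same open-disk argument. It also adds bookkeeping that you partly mis-handle:
\begin{itemize}
\item \Cref{lem:excise-disk-subdiagrams} requires the excised pieces to lie in the interior of $\Delta$. Its clause about meeting only in boundary vertices concerns the pieces meeting each other, not meeting $\partial\Delta$. So if $\partial\Omega$, or a cap, touches the outer boundary of $K$ at a vertex, the lemma does not apply as stated. Your claim that such contacts are ``permitted'' misreads it.
\item The direct argument has no such issue. $\operatorname{int}|\Omega|$ and $S^2\setminus|K|$ are disjoint open sets, so $\operatorname{int}|\Omega|$ is a new component regardless of vertex contacts.
\end{itemize}

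If you want to keep the capping route, apply the lemma to $\Omega$ alone in $K^+$, under the reading $|\Omega|\subset\operatorname{int}|K|$. Afterwards, delete the caps one at a time using your single-cell case. The order matters, since once caps are removed the complex is no longer a disk diagram. Your single-cell case, as you proved it, holds for any connected planar complex. It never really uses interiority: your gloss that $R_\sigma$ ``is not a complementary region'' is vacuous, and the region count works simply because all edges are retained.

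Finally, for general $K$ the connectivity in your fallback does not need the disk-specific pushout step of \Cref{lem:disk-replacement}. Write $|K|=A\cup|\Omega|$ with $A$ and $|\Omega|$ closed and $A\cap|\Omega|=|\partial\Omega|$ connected. Since $|K|$ is connected, any separation of $A$ would separate $|K|$, so $A$ is connected.
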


\begin{proof}
Embed $K$ in $S^2$.
If $\Omega$ is a single interior $2$-cell, its interior is an open disk.
If $\Omega$ is a disk subdiagram with simple boundary and positive area,
then \Cref{lem:simple-boundary-top-disk} gives
$|\Omega|\cong\bar D^2$, so its interior is an open disk contained in
$\operatorname{int}|K|$.
Removing that open disk creates exactly one new complementary region,
hence one new inner boundary component.
Connectivity is preserved because $K\setminus\operatorname{int}\Omega$
is path-connected.
The area formula is immediate.
\end{proof}

\begin{proposition}[Selective replacement inside a disk diagram]
\label{prop:selective-replacement}
Let $\psi\colon U\to X$ be a disk diagram over $X$, and let
$\Omega_1,\dots,\Omega_s$ and $H_1,\dots,H_t$ be disk subdiagrams
with simple boundary and positive area,
pairwise interior-disjoint, whose
underlying spaces meet, if at all, only in boundary vertices.
Assume the $H_i$ are contained in the interior of $U$;
the $\Omega_j$ may share boundary edges with $\partial U$.
Write $C:=F(U)\setminus(\bigcup_j F(\Omega_j)\cup\bigcup_i F(H_i))$.
For each $j$, let $E_j\to X$ be a disk map with boundary walk
$\psi(\partial\Omega_j)^{-1}$.
Then there exists a connected planar $2$-complex $Y\to X$ with
outer boundary $\psi(\partial U)$, inner boundary components
$\psi(\partial H_1),\dots,\psi(\partial H_t)$, and
\[
\operatorname{Area}(Y)
\le
|C|+\sum_{j=1}^s\operatorname{Area}(E_j).
\]
The same holds in the free-completed category.
\end{proposition}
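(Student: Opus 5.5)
The plan is to reduce the statement to the two excision/capping tools already available: \Cref{lem:excise-disk-subdiagrams} (or repeated \Cref{cor:excise-single-disk}) to cut out the $H_i$, and cell substitution/boundary synthesis to replace the $\Omega_j$. I will do the replacements one subdiagram at a time, so the area bound comes out as a running count.

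\emph{Step 1 (collapse each $\Omega_j$ to a formal cell).} Since each $\Omega_j$ has simple boundary and positive area, \Cref{lem:simple-boundary-top-disk} makes $|\Omega_j|$ a closed disk. Replace its interior by a single formal $2$-cell $\sigma_j$ attached along $\psi(\partial\Omega_j)$. This is just the reverse of \Cref{lem:cell-substitution}: the homeomorphism type of $|U|$ and all boundary walks are unchanged, and because the $\Omega_j$ and $H_i$ are interior-disjoint and meet only at boundary vertices, the $H_i$ survive untouched. Call the resulting disk complex $U'$. Its faces are $C$, the $H_i$-faces and $\sigma_1,\dots,\sigma_s$. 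Note that an $\Omega_j$ sharing edges with $\partial U$ causes no problem here: only the open interior is altered.

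\emph{Step 2 (excise the holes).} The $H_i$ lie in the interior of $U'$, are pairwise interior-disjoint and have simple boundary and positive area. By \Cref{lem:excise-disk-subdiagrams}, deleting their open interiors gives a connected planar $2$-complex $K$ with outer boundary $\psi(\partial U)$ and inner boundaries $\psi(\partial H_i)$. Its area is $|C|+s$, and the map to $X$ is defined off the formal cells.

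\emph{Step 3 (substitute the fillers).} For each $j$, replace $\sigma_j$ by $E_j$ (with orientation reversed to match $\psi(\partial\Omega_j)$).

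\begin{itemize}
\item If $E_j$ has simple boundary, \Cref{lem:cell-substitution} applies directly. It preserves the number of boundary components and the boundary walks, and it changes the area by $\operatorname{Area}(E_j)-1$.
\item If $E_j$ is not simple-boundary (or has area $0$), I would first excise $\sigma_j$ as an extra inner hole via \Cref{cor:excise-single-disk}. I would then invoke \Cref{lem:boundary-synthesis-retained}, with $\beta$ the hole $\partial\sigma_j$ filled by $E_j$ and the $\delta$'s the retained $\partial H_i$. This produces a planar $Y$ with the same outer and retained inner boundaries, and area at most the previous area minus one plus $\operatorname{Area}(E_j)$.
\end{itemize}

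After all $s$ steps the area is at most $|C|+\sum_j\operatorname{Area}(E_j)$, as claimed. Nothing in the argument uses relators specifically, so the same proof works over $X^{\pm}$.

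\emph{Main obstacle.} The delicate point is Step 3 in the non-simple case. \Cref{lem:boundary-synthesis-retained} is stated via conjugate products, and I must check that the output still has exactly the inner boundaries $\psi(\partial H_i)$ as combinatorial walks, not merely up to homotopy. I would handle this by temporarily capping each $H_i$ with a formal cell $F_i$, performing the synthesis on the resulting disk, and then re-excising the $F_i$ with \Cref{cor:excise-single-disk}. This is exactly the mechanism already used in the proof of \Cref{lem:boundary-synthesis-retained}, and it also guarantees that the $F_i$ remain interior single cells.
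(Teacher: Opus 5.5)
Your Steps~1, 2 and the simple-boundary branch of Step~3 are fine. Cell substitution alters only the open cell, so it does not care whether $\sigma_j$ has edges on $\partial U$. The non-simple branch of Step~3, however, breaks down in exactly the situation the statement explicitly allows. \Cref{cor:excise-single-disk} needs $\sigma_j$ to be an \emph{interior} cell. If $\Omega_j$ shares an edge with $\partial U$, deleting $\operatorname{int}\sigma_j$ creates no new inner boundary component. Instead, the region of $\sigma_j$ merges with the exterior, and the outer walk changes: the shared arc is replaced by the complementary arc of $\partial\Omega_j$. Hence \Cref{lem:boundary-synthesis-retained}, which needs outer boundary $\psi(\partial U)$ with $\partial\sigma_j$ among the inner walks, cannot be invoked.

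Nor can this case be routed into your first branch. Take $\Omega_j$ to be a single free bigon with one edge on $\partial U$. Alternatively, over $X$ itself, take two faces mapping to the same $2$-cell with opposite orientations, glued along one edge; this is a simple cycle whose label freely reduces to the empty word. In either case an area-minimising $E_j$ is a face-free tree. Any positive-area simple-boundary substitute would then already exceed the bound $|C|+\sum_j\operatorname{Area}(E_j)$ (cf.\ \Cref{rem:scope-simple-boundary}). This is not a corner case: in \Cref{thm:hfmhqm-transfer} the $E_j$ are arbitrary minimal fillers of lobes $B_j$, which may run along $\partial U_k$.

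The paper handles boundary-touching pieces separately, by direct gluing. It removes $\operatorname{int}\Omega_j$, which replaces the shared arc of the outer boundary by the complementary arc of $\partial\Omega_j$. It then glues $E_j$ along $\partial\Omega_j$, which restores the outer boundary $\psi(\partial U)$. Only the pieces lying in the interior go through excision plus \Cref{lem:boundary-synthesis-retained}, exactly as in your second branch. To repair your argument you need such a pushout/gluing step for boundary-touching $\Omega_j$ with non-simple $E_j$. Alternatively, run the conjugate-product mechanism underlying \Cref{lem:boundary-synthesis} on the formal cells $\sigma_j$ irrespective of their position.

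A secondary point concerns your one-piece-at-a-time scheme. \Cref{lem:boundary-synthesis-retained} rebuilds the complex via the converse van Kampen lemma. Its output therefore gives no guarantee that the remaining formal cells $\sigma_{j'}$ are still interior cells attached along simple curves. Instead, first perform all simple-boundary cell substitutions, which are homeomorphisms. Then synthesise all remaining interior formal cells in a single application of the lemma.
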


\begin{proof}
The $\Omega_j$ that lie entirely in the interior of $U$ are handled by
\Cref{lem:excise-disk-subdiagrams}: removing their open interiors
yields a planar complex with corresponding inner boundary components,
which are then filled by the $E_j$ via
\Cref{lem:boundary-synthesis-retained}.

When a replaced piece $\Omega_j$ shares boundary edges with
$\partial U$, removing $\operatorname{int}\Omega_j$ from $U$
modifies the outer boundary by replacing the shared arc with the
complementary arc of $\partial\Omega_j$ running through the interior.
Gluing the filler $E_j$ along $\partial\Omega_j$ restores the
original outer boundary.
Since the pieces are pairwise interior-disjoint and meet only at
boundary vertices, these operations commute and can be performed
simultaneously.
The retained $H_i$, being interior to $U$ and disjoint from all
replaced and other retained pieces, survive as inner boundary
components.
The resulting planar complex $Y\to X$ has the stated boundary data and
area $|C|+\sum_j\operatorname{Area}(E_j)$.
\end{proof}

\begin{lemma}[Template substitution produces a disk map]
\label{lem:template-substitution}
Assume nondegenerate bounded template data with edge-length bound $L$.
Let $\phi\colon D\to\widetilde X^{\pm}$ be a free-completed disk map.
Replacing each ordinary $2$-cell of $D$ by the $q$-template for its
image, and each free bigon $\sigma=e\bar e$ by its bounded
path-completed replacement (the ordinary free bigon when
$|q(\phi(e))|=1$, and the single path-bigon
$B_{q(\phi(e))}\subset\widetilde Y^{\pm,L}$ when
$|q(\phi(e))|\ge 2$), produces a disk map
$q_{\bullet} D\to\widetilde Y^{\pm,L}$
with boundary $q(\phi(\partial D))$ and
$\operatorname{Area}(q_{\bullet} D)\le A_0 \operatorname{Area}(D)$,
$|\partial(q_{\bullet} D)|\le L|\partial D|$,
where $A_0:=\max\{A,L\}$.
When $|q(\phi(e))|=1$ for every source free bigon $e\bar e$,
every replacement is an ordinary free bigon, and
$q_{\bullet} D$ maps to $\widetilde Y^{\pm}$.
\end{lemma}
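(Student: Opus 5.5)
We build $q_\bullet D$ from the path expansion $D[q]$ of \Cref{def:path-expansion}. By construction, $D[q]$ is a finite connected simply connected planar $2$-complex. Its outer boundary walk is $q(\phi(\partial D))$, and each face $\sigma$ carries the boundary walk $q(\phi(\partial\sigma))$. At this stage each face of $D[q]$ is still an abstract polygon and does not yet map to a cell of the target. We therefore replace the faces one at a time by genuine disk maps with the same boundary walk, using \Cref{lem:cell-substitution} with $h=0$. Since $D[q]$ is a disk diagram, each substitution preserves the boundary walk and the homeomorphism type, hence simple connectivity.

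\textbf{Ordinary faces.} For an ordinary $2$-cell $\sigma$ of $D$, its image $\phi(\sigma)$ is a $2$-cell of $\widetilde X$. \Cref{def:bounded-template-data}(b) supplies a template $T^q_{\phi(\sigma)}\to\widetilde Y^{\pm}$ with the following properties:
\begin{itemize}
\item its boundary is $q(\partial\phi(\sigma))$ and is simple;
\item its area lies in $[1,A]$.
\end{itemize}
The boundary of this template agrees with the boundary walk of the expanded face, so \Cref{lem:cell-substitution} applies directly.

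\textbf{Free bigons.} For a free bigon $\sigma=e\bar e$, the expanded face has boundary walk $q(\phi(e))\overline{q(\phi(e))}$. There are two cases.
\begin{itemize}
\item If $|q(\phi(e))|=1$, this walk is $f\bar f$ for a single edge $f$, and the ordinary free bigon $B_f$ of $\widetilde Y^{\pm}$ has exactly this simple boundary.
\item If $|q(\phi(e))|=k\ge 2$, we use the single path-bigon $B_{q(\phi(e))}$ of $\widetilde Y^{\pm,L}$. It is one $2$-cell attached along this walk of length $2k\le 2L$.
\end{itemize}
In both cases the replacement is a single cell, so substitution is trivial: we simply relabel the face. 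No simple-boundary issue arises, because we are replacing a cell by a cell.

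We should check that the labels compose into a cellular map. Each substituted piece agrees with the labelling on the expanded edges along its boundary. Gluing these pieces therefore yields a cellular map $q_\bullet D\to\widetilde Y^{\pm,L}$ that respects attaching maps.

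\textbf{Counting.} Each ordinary face contributes at most $A$ faces, and each bigon contributes exactly one face. This gives
\[
\operatorname{Area}(q_\bullet D)\le\max\{A,1\}\operatorname{Area}(D)\le A_0\operatorname{Area}(D).
\]
Each boundary edge is subdivided into at most $L$ subedges, which gives $|\partial(q_\bullet D)|\le L|\partial D|$.

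\textbf{Final clause.} If every source bigon $e\bar e$ has $|q(\phi(e))|=1$, only templates and ordinary free bigons are used. All of these map to $\widetilde Y^{\pm}$, so $q_\bullet D$ maps to $\widetilde Y^{\pm}$.

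\textbf{Main obstacle.} The delicate point is that $D[q]$ may have non-simple face boundaries. A face of $D$ may have the same edge on both sides, or repeated vertices, which happens for non-reduced relators and for bigons. When that occurs, the closed face region is not an embedded disk, and the proof of \Cref{lem:cell-substitution} needs care.

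I would handle this by working with the open cell and its characteristic map. We glue $|T|\cong\bar D^2$ (by \Cref{lem:simple-boundary-top-disk}) via the boundary parametrisation $\partial\bar D^2\to K$, rather than identifying the closure with a disk. Under this gluing the homotopy type and the planar embedding are unchanged, because the interior of the template maps into the open disk $\operatorname{int}\sigma$. Simple connectivity of the result then follows as before.
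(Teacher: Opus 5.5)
Your proposal is correct and follows essentially the same route as the paper: you pass to the path expansion $D[q]$, substitute each expanded ordinary face by its relator template via \Cref{lem:cell-substitution}, replace each expanded free bigon by a single cell (an ordinary free bigon or a path-bigon), and use the same face and edge counts. Gluing each template along the face's characteristic map, instead of treating the closed face as an embedded disk, is a worthwhile refinement: the paper's proof of \Cref{lem:cell-substitution} tacitly assumes the face closure is a closed disk, which fails for faces with self-touching boundary walks.
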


\begin{proof}
Start with the path expansion $D[q]$ from \Cref{def:path-expansion}.
For each face $\sigma[q]$ of $D[q]$, choose the replacement:
if $\sigma$ is ordinary, replace $\sigma[q]$ by the relator template
$T^q_\sigma$ (which has boundary walk $q(\phi(\partial\sigma))
=\partial\sigma[q]$; $T^q_\sigma$ has simple boundary
by \Cref{def:bounded-template-data}(b));
if $\sigma$ is a free bigon mapping to $e\bar e$, then
$\partial\sigma[q]=q(\phi(e))\overline{q(\phi(e))}$.
When $|q(\phi(e))|=1$, replace $\sigma[q]$ by the ordinary free bigon
via \Cref{lem:cell-substitution}.
When $|q(\phi(e))|\ge 2$, replace $\sigma[q]$ by the single path-bigon
$B_{q(\phi(e))}\subset\widetilde Y^{\pm,L}$ via
\Cref{lem:single-cell-substitution}.
In every case the replacement has the same boundary walk as the face it
replaces, so each replacement preserves planarity and the outer boundary.
The result is a disk map
$q_{\bullet}D\to\widetilde Y^{\pm,L}$.
Each ordinary source face contributes at most $A$ faces, and each
source free bigon contributes exactly $1$ face; hence
$\operatorname{Area}(q_{\bullet} D)\le A_0\operatorname{Area}(D)$.
Each boundary edge of $D$ expands to a path of length at most $L$,
so $|\partial(q_{\bullet} D)|\le L|\partial D|$.
\end{proof}

\begin{definition}[Expanded strip annulus]
\label{def:expanded-strip-annulus}
Assume nondegenerate bounded template data.
Let $\gamma=e_1\cdots e_m$ ($m\ge 1$) be a closed edge-loop in
$\widetilde X^{(1)}$ with successive vertices $v_0,\dots,v_m=v_0$.
The \emph{expanded strip annulus} $A^0_\gamma$ is the finite annular
combinatorial $2$-complex obtained by cyclically gluing one polygonal
strip cell $R_i$ per edge $e_i$ (indices mod $m$), where
$\partial R_i=P_{v_{i-1}}\cdot rq(e_i)\cdot P_{v_i}^{-1}\cdot e_i^{-1}$,
and the copy of $P_{v_i}^{-1}$ in $R_i$ is identified with the copy of
$P_{v_i}$ in $R_{i+1}$.
Because each track path $P_{v_i}$ has positive length, these
identifications are along nondegenerate boundary arcs, so
$A^0_\gamma$ is homeomorphic to a closed annulus.
Its outer boundary is $\gamma^{-1}$ and its inner boundary is $rq\gamma$.
\end{definition}

\begin{lemma}[Strip annulus]
\label{lem:strip-annulus}
Assume nondegenerate bounded template data.
Let $\gamma=e_1\cdots e_m$ be a closed edge-loop in
$\widetilde X^{(1)}$ with successive vertices
$v_0,\dots,v_m=v_0$.
Then the strip templates $S_{e_i}$ glue along the track paths $P_{v_i}$
to a free-completed annular diagram $A_\gamma\to\widetilde X^{\pm}$
whose outer boundary is $\gamma^{-1}$, inner boundary is $rq\gamma$,
and $\operatorname{Area}(A_\gamma)\le S|\gamma|$.
\end{lemma}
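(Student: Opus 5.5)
The plan is to build $A_\gamma$ from the expanded strip annulus $A^0_\gamma$ of \Cref{def:expanded-strip-annulus}, which already has the correct combinatorial type and boundary data. I would substitute each polygonal strip cell $R_i$ by the corresponding strip template $S_{e_i}$ from \Cref{def:bounded-template-data}(c). By construction $\partial R_i=P_{v_{i-1}}\cdot rq(e_i)\cdot P_{v_i}^{-1}\cdot e_i^{-1}$, which is exactly the boundary walk filled by $S_{e_i}$. Each $S_{e_i}$ is a finite connected simply connected planar $2$-complex with simple boundary and area at least $1$. Hence \Cref{lem:cell-substitution} applies verbatim to the planar complex $A^0_\gamma$ (with $h=1$, two boundary components) and to the cell $\sigma=R_i$.

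First I would verify that $A^0_\gamma$ is a finite connected planar $2$-complex with two boundary components, outer walk $\gamma^{-1}$ and inner walk $rq\gamma$. This is asserted in \Cref{def:expanded-strip-annulus}: positive track lengths make the gluings along nondegenerate arcs, so the result is a closed annulus. Next I would carry out the $m$ substitutions one at a time. At each stage \Cref{lem:cell-substitution} preserves the planar type, the number of boundary components, and both boundary walks. The remaining cells $R_j$, $j\neq i$, are untouched, so the induction proceeds. The labelled map to $\widetilde X^{\pm}$ is defined cellwise: on $S_{e_i}$ it is the template map, and on the $1$-skeleton of $A^0_\gamma$ it is the given edge-path labels. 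These agree on the shared track paths $P_{v_i}$, because both $S_{e_i}$ and $S_{e_{i+1}}$ read $P_{v_i}$ (with opposite orientation) as the same path in $\widetilde X^{(1)}$. The resulting $A_\gamma\to\widetilde X^{\pm}$ is therefore a well-defined combinatorial map.

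The area bound is face counting: $\operatorname{Area}(A_\gamma)=\sum_{i=1}^m\operatorname{Area}(S_{e_i})\le Sm=S|\gamma|$.

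I expect the main obstacle to be a degenerate case: $\gamma$ non-simple, or distinct tracks $P_{v_i}$ with overlapping images. Then one might worry that the abstract gluing is not an annulus. I would handle this by insisting that $A^0_\gamma$ is an abstract complex: images in $\widetilde X$ may overlap, but no vertex identifications beyond the prescribed track gluings are made. Planarity and the annulus property are then intrinsic to the cyclic gluing pattern of $m$ polygons along single arcs, so the argument needs no embedding of $\gamma$. The case $m=1$ also needs care, because $R_1$ is glued to itself along $P_{v_0}$. I would check directly that identifying two disjoint boundary arcs of a polygon with opposite orientation yields an annulus; this uses the positivity of $|P_{v_0}|$.
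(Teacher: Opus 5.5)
Your proposal is correct and follows the paper's proof: start from the expanded strip annulus $A^0_\gamma$, replace the strip cells $R_i$ one at a time by the simple-boundary templates $S_{e_i}$ via \Cref{lem:cell-substitution}, and count faces to get $\operatorname{Area}(A_\gamma)\le S|\gamma|$. Your extra checks go beyond what the paper writes out. These are the agreement of labels along the shared tracks $P_{v_i}$, and the self-glued case $m=1$, where the substitution should be read at the level of attaching maps because the closure of $R_1$ is not an embedded disk.
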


\begin{proof}
Begin with the expanded strip annulus $A^0_\gamma$ from
\Cref{def:expanded-strip-annulus}.
Each strip template $S_{e_i}$ has the same boundary walk as the
strip cell $R_i$, is simply connected and planar, has
$\operatorname{Area}(S_{e_i})\ge 1$ and simple boundary
(by \Cref{def:bounded-template-data}(c)).
Replace the strip cells one at a time using
\Cref{lem:cell-substitution}; each replacement preserves the planar
type, the two boundary components, and their walks.
After all replacements, every $2$-cell maps to a cell of
$\widetilde X^{\pm}$, so the result is a free-completed annular
diagram $A_\gamma\to\widetilde X^{\pm}$ with outer boundary
$\gamma^{-1}$, inner boundary $rq\gamma$, and
$\operatorname{Area}(A_\gamma)\le S|\gamma|$.
\end{proof}

\begin{lemma}[Template substitution for planar complexes with boundary]
\label{lem:template-substitution-planar}
Assume nondegenerate bounded template data with edge-length bound $L$.
Let $K$ be a finite connected planar $2$-complex with $h+1$
boundary components ($h\ge 0$) carrying boundary walks
$\beta_0,\beta_1,\dots,\beta_h$,
and let $\psi\colon K\to\widetilde Y^{\pm}$ be a combinatorial map.
Replacing each ordinary face of $K$ by the $r$-template for its image,
and each free bigon $f\bar f$ by the single path-bigon
$B_{r(f)}\subset\widetilde X^{\pm,L}$,
produces a finite connected planar $2$-complex
$r_{\bullet} K\to\widetilde X^{\pm,L}$ with the same number of boundary
components, boundary walks $r(\psi(\beta_0)),\dots,r(\psi(\beta_h))$, and
$\operatorname{Area}(r_{\bullet} K)\le A_0\operatorname{Area}(K)$.
\end{lemma}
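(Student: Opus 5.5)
The plan is to mirror the proof of \Cref{lem:template-substitution}, now allowing several boundary components. I would build $r_\bullet K$ from a path expansion and then substitute cell by cell, using only the planar-type-preserving substitution lemmas.

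\emph{Path expansion.} First form the $r$-path expansion $K[r]$, as in \Cref{def:path-expansion} with the roles of $\widetilde X$ and $\widetilde Y$ exchanged. Subdivide each oriented edge $a$ of $K$ into $\ell(a)\in[1,L]$ subedges labelled by the letters of $r(\psi(a))$. Opposite orientations are matched via $r(\bar f)=\overline{r(f)}$. Nondegeneracy (\Cref{def:bounded-template-data}(a)) guarantees that no edge collapses, so $K[r]$ is a subdivision of $K$. It is therefore homeomorphic to $K$, has the same $h+1$ boundary components, and carries boundary walks $r(\psi(\beta_0)),\dots,r(\psi(\beta_h))$. Each face $\sigma$ of $K$ becomes a face $\sigma[r]$ with boundary walk $r(\psi(\partial\sigma))$.

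\emph{Cell-by-cell substitution.} Next, process the faces of $K[r]$ one at a time. There are two cases.
\begin{itemize}
\item If $\sigma$ is ordinary, with image the $2$-cell $\tau$ of $\widetilde Y$, replace $\sigma[r]$ by the template $T^r_\tau$. By \Cref{def:bounded-template-data}(b), applied symmetrically, $T^r_\tau$ has simple boundary, area in $[1,A]$, and boundary walk $r(\partial\tau)=\partial\sigma[r]$. \Cref{lem:cell-substitution} then applies directly: the number of boundary components and all boundary walks are preserved.
\item If $\sigma$ is a free bigon over $f\bar f$, then $\partial\sigma[r]=r(f)\overline{r(f)}$. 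When $|r(f)|=1$, replace it by the ordinary free bigon, which is again a simple-boundary single cell, so \Cref{lem:cell-substitution} applies. When $|r(f)|\ge 2$, replace it by the single path-bigon $B_{r(f)}\subset\widetilde X^{\pm,L}$ using \Cref{lem:single-cell-substitution}, exactly as in \Cref{lem:template-substitution}. I treat that lemma as a one-cell relabelling that preserves planar type.
\end{itemize}
Since each substitution happens inside a single open face region, the substitutions commute. Induction on the number of faces then gives a finite connected planar $2$-complex $r_\bullet K\to\widetilde X^{\pm,L}$ with the stated boundary data. Every cell of $r_\bullet K$ maps to a cell of $\widetilde X^{\pm,L}$, since template cells lie in $\widetilde X^{\pm}\subset\widetilde X^{\pm,L}$ and bigons map to (path-)bigons.

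\emph{Area count.} Each ordinary face contributes at most $A$ faces, and each bigon contributes exactly $1\le L$ face. Hence $\operatorname{Area}(r_\bullet K)\le \max\{A,1\}\operatorname{Area}(K)\le A_0\operatorname{Area}(K)$.

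\emph{Main obstacle.} The delicate point is verifying that \Cref{lem:cell-substitution} applies with the expanded walks, which requires the template's boundary to match $\partial\sigma[r]$ edge by edge. When $K$ has non-simple boundary, or when a face meets itself along an edge, the face region $R_\sigma$ in the $S^2$-embedding need not have embedded closure. My fix is to work with the open cell and its characteristic map $\bar D^2\to|K|$, rather than with $\bar R_\sigma$ as an embedded disk. I would glue $|T|\cong\bar D^2$ (by \Cref{lem:simple-boundary-top-disk}) along that characteristic map. The resulting space is homeomorphic to the original outside the cell interior, so the count of complementary regions, and hence the $h+1$ boundary components, is unchanged.
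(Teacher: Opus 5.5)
Your proposal is correct and follows the paper's proof. You form the $r$-path expansion, substitute ordinary faces via \Cref{lem:cell-substitution} and free bigons via \Cref{lem:single-cell-substitution}, and bound the area face by face by $A_0\operatorname{Area}(K)$. Two differences are harmless: gluing $|T|\cong\bar D^2$ along the characteristic map when a face's closure is not embedded is extra care the paper leaves implicit, and your separate case $|r(f)|=1$ changes nothing, since $B_{r(f)}$ is then just the ordinary free bigon.
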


\begin{proof}
Perform the $r$-path expansion of $K$, then replace the $2$-cells
one at a time: for ordinary faces use \Cref{lem:cell-substitution}
(simple-boundary relator templates); for free bigons use
\Cref{lem:single-cell-substitution} (single path-bigon cells in
$\widetilde X^{\pm,L}$).
Each replacement preserves connectedness, planarity, and the
number and labelling of boundary components.
The area and boundary-length bounds follow as in
\Cref{lem:template-substitution-path}.
\end{proof}

\begin{theorem}[Filling-area comparison]
\label{thm:fillarea-comparison-free}
Assume nondegenerate bounded template data.
Then there exists $C=C(L,A,S)\ge 1$ such that for every null-homotopic
closed edge-loop $\gamma$ in $\widetilde X^{(1)}$,
\begin{align}
\operatorname{FillArea}^{\pm}_{\widetilde Y}(q\gamma)
&\le C \operatorname{FillArea}^{\pm}_{\widetilde X}(\gamma),
\label{eq:fillarea-forward}\\
\operatorname{FillArea}^{\pm}_{\widetilde X}(\gamma)
&\le C \operatorname{FillArea}^{\pm}_{\widetilde Y}(q\gamma)+C|\gamma|.
\label{eq:fillarea-backward}
\end{align}
\end{theorem}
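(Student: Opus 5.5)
The plan is to argue at the level of products of conjugates, where bounded template data acts on words, rather than through cell-by-cell substitution inside diagrams. This sidesteps two problems: simple-boundary requirements, and the path-bigons of $\widetilde Y^{\pm,L}$ produced by \Cref{lem:template-substitution}. The free bigons are harmless here, since the image of a backtracking word $e\bar e$ under $q$ or $r$ is again freely trivial.

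\emph{Forward inequality \eqref{eq:fillarea-forward}.}
\begin{enumerate}
\item Let $N:=\operatorname{FillArea}^{\pm}_{\widetilde X}(\gamma)$ and choose a free-completed filler $D\to\widetilde X^{\pm}$ of area $N$. If $N=0$, then $\gamma$ is freely trivial. Since $q$ respects orientation reversal, $q\gamma$ is then freely trivial, so $\operatorname{FillArea}^{\pm}_{\widetilde Y}(q\gamma)=0$.
\item For $N\ge 1$, read off $\gamma$ from $D$ via a spanning tree, as in the proof of \Cref{lem:simple-boundary-exists-lemma}. This gives a free equality in the path groupoid of $\widetilde X^{(1)}$:
\[
\gamma=\prod_{k=1}^N \rho_k\,\partial\sigma_k^{\pm1}\,\rho_k^{-1},
\]
where each $\partial\sigma_k$ is either a relator cell boundary or a bigon word $e\bar e$.
\item Apply $q$ edgewise. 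Because $q$ is a groupoid morphism on edge-paths, $q\gamma$ is freely equal to $\prod_k q(\rho_k)\,q(\partial\sigma_k)^{\pm1}\,q(\rho_k)^{-1}$.
\item Handle the factors by type. Bigon factors $q(e)\overline{q(e)}$ are freely trivial and are deleted. Each relator factor $q(\partial\sigma_k)$ is filled by its template $T^q_{\sigma_k}$, of area at most $A$, by \Cref{def:bounded-template-data}(b).
\item Apply the converse van Kampen lemma, exactly as in \Cref{lem:boundary-synthesis}. This gives a free-completed disk map over $\widetilde Y^{\pm}$ bounding $q\gamma$ with area at most $AN$. Hence \eqref{eq:fillarea-forward} holds with $C\ge A$.
\end{enumerate}

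\emph{Backward inequality \eqref{eq:fillarea-backward}.}
\begin{enumerate}
\item Let $M:=\operatorname{FillArea}^{\pm}_{\widetilde Y}(q\gamma)$. Apply the same argument with $r$ in place of $q$ and the $r$-templates of \Cref{def:bounded-template-data}(b). This shows that the loop $rq\gamma$ bounds a free-completed disk map $E\to\widetilde X^{\pm}$ of area at most $AM$, including the case $M=0$, where $E$ has area $0$.
\item Take the strip annulus $A_\gamma\to\widetilde X^{\pm}$ from \Cref{lem:strip-annulus}. It has outer boundary $\gamma^{-1}$, inner boundary $rq\gamma$, and area at most $S|\gamma|$.
\item Cap the single inner boundary component of $A_\gamma$ with $E$, suitably reoriented, using \Cref{lem:boundary-synthesis} with $K=A_\gamma$ and $m=1$. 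This lemma imposes no simple-boundary condition on $E$, which matters because $E$ may have area $0$ or non-simple boundary.
\item The result is a filler of $\gamma^{-1}$, hence of $\gamma$ by reflection, with area at most $S|\gamma|+AM$. This gives \eqref{eq:fillarea-backward} with $C:=\max\{A,S\}$.
\end{enumerate}

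\emph{Main obstacle.} The delicate step is the bookkeeping in the word-level transfer. I need to check three things.
\begin{itemize}
\item Applying $q$ (or $r$) to a product-of-conjugates expression yields a valid expression in $\widetilde Y^{(1)}$: it is a free equality of based edge-paths, so the converse van Kampen lemma applies.
\item The orientation conventions on $\partial\sigma^{\pm1}$ and on the annulus boundaries $\gamma^{-1}$ versus $rq\gamma$ match up in the capping step.
\item The free-completed category is respected, since template fillers land in $\widetilde Y^{\pm}$ or $\widetilde X^{\pm}$ and never in the path-completed $\widetilde Y^{\pm,L}$.
\end{itemize}

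If the groupoid-level argument proves awkward, the fallback is to use \Cref{lem:template-substitution-planar} for ordinary faces only. Each bigon face would first be excised using \Cref{cor:excise-single-disk}, and its freely trivial image hole would then be refilled by an area-$0$ tree through \Cref{lem:boundary-synthesis}.
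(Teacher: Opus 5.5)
Your argument is correct, but it takes a different route from the paper for producing the fillers of $q\gamma$ and $rq\gamma$.

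The paper works diagrammatically. For \eqref{eq:fillarea-forward} it pushes a minimal filler forward cell by cell via \Cref{lem:template-substitution}, which sends each source free bigon to a single path-bigon in $\widetilde Y^{\pm,L}$. It then unfolds each path-bigon into a chain of at most $L$ free bigons (as in \Cref{prop:fillarea-path-comparison}). For \eqref{eq:fillarea-backward} it does the same with $r$ to fill $rq\gamma$, then caps the strip annulus $A_\gamma$ with the reversed filler exactly as you do. It ends with $C=\max\{LA_0,S\}$.

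You instead transport a product-of-conjugates expression through the groupoid morphism induced by $q$ (resp.\ $r$). You discard bigon factors because $q(e)\overline{q(e)}$ is freely trivial, expand each relator factor using only the area bound on its template, and re-synthesise by converse van Kampen.

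Your route buys several things. It is more elementary: no path-completion, no cell substitution, and no need to justify replacing one path-bigon cell by a pinched chain of bigons. It uses none of the simple-boundary or connected-dual properties of the relator templates. It also gives the sharper constant $C=\max\{A,S\}$.

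The paper's route buys structure: its forward filler is literally the pushforward $q_\bullet D$, with a face-by-face ancestor correspondence. That is irrelevant for this theorem, but it is exactly the machinery reused in \Cref{prop:diagramwise-spectral-pushforward} and the $1$-Cancellation of \Cref{thm:hfmhqm-transfer}.

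One small point: the strip annulus is only defined for $|\gamma|\ge 1$, so dispose of the empty loop separately, as the paper does; both sides are then $0$.
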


\begin{proof}
For \eqref{eq:fillarea-forward}, apply
\Cref{lem:template-substitution} to a minimal-area filler of $\gamma$,
obtaining a disk map to $\widetilde Y^{\pm,L}$ of area at most
$A_0\operatorname{FillArea}^{\pm}_{\widetilde X}(\gamma)$.
Replacing each path-bigon $B_p$ in the result by the chain of
$|p|\le L$ free bigons gives a filler in $\widetilde Y^{\pm}$
of area at most $L\cdot A_0\operatorname{FillArea}^{\pm}_{\widetilde X}
(\gamma)$.

For \eqref{eq:fillarea-backward}, if $|\gamma|=0$ then both sides are
$0$, so there is nothing to prove.
Assume $|\gamma|>0$.
By nondegeneracy, $|q\gamma|>0$.
Since $\widetilde Y$ is simply connected, $q\gamma$ is null-homotopic.
Choose any minimal-area filler $E\to\widetilde Y^{\pm}$ of $q\gamma$.
Push $E$ back by $r$ using \Cref{lem:template-substitution} to get
a disk map to $\widetilde X^{\pm,L}$ with boundary walk $rq\gamma$ and
area at most
$A_0\operatorname{FillArea}^{\pm}_{\widetilde Y}(q\gamma)$.
Unfolding path-bigons gives a filler
$E'\to\widetilde X^{\pm}$ of area at most
$L\cdot A_0\operatorname{FillArea}^{\pm}_{\widetilde Y}(q\gamma)$.
By \Cref{lem:strip-annulus}, $A_\gamma$ is an annulus with outer
boundary $\gamma^{-1}$ and inner boundary $rq\gamma$.
Apply \Cref{lem:boundary-synthesis} to $A_\gamma$, using the
orientation-reversed disk map $\overline{E'}$
(whose boundary walk is $(rq\gamma)^{-1}$).
This produces a disk map over $\widetilde X^{\pm}$ with
outer boundary $\gamma^{-1}$ and area at most
$S|\gamma|+
L\cdot A_0\operatorname{FillArea}^{\pm}_{\widetilde Y}(q\gamma)$.
Since $\operatorname{FillArea}^{\pm}(\gamma^{-1})
=\operatorname{FillArea}^{\pm}(\gamma)$
(reverse the disk orientation), the same bound holds for $\gamma$.
Taking $C:=\max\{L\cdot A_0,S\}$
proves both inequalities.
\end{proof}

\begin{definition}[Pushforward and ancestor face-set]
\label{def:pushforward-ancestor}
Under nondegenerate bounded template data, the
\emph{$q$-pushforward} of a free-completed disk map
$\phi\colon D\to\widetilde X^{\pm}$ is the path-completed disk map
\[
q_{\bullet}D\to\widetilde Y^{\pm,L}
\]
of \Cref{lem:template-substitution-path}.

For each source face $\sigma\in F(D)$, let $\widehat T_\sigma$ denote
its \emph{abstract replacement piece}:
for an ordinary source face, $\widehat T_\sigma:=T^q_\sigma$ is the
chosen relator template;
for a source free bigon $\sigma=e\bar e$, $\widehat T_\sigma$ is the
single path-bigon cell $B_{q(\phi(e))}$.
Let $T_\sigma\subseteq q_{\bullet}D$ be the \emph{image template},
i.e.\ the face-image of $\widehat T_\sigma$ inside the pushforward.
There is a canonical face-bijection
$F(\widehat T_\sigma)\cong F(T_\sigma)$, and:
\begin{enumerate}[label=\textup{(\alph*)}]
\item $\widehat T_\sigma$ is simply connected;
\item the face-dual graph of $T_\sigma$ (using adjacencies internal to
$T_\sigma$ inside $q_{\bullet}D$) is connected;
\item $|F(T_\sigma)|=|F(\widehat T_\sigma)|\le A_0$;
\item the boundary walk of $\widehat T_\sigma$ matches the expanded
boundary of $\sigma$;
\item the replacement pieces belong to one of finitely many
combinatorial types (from the finite relator-template families,
resp.\ the finitely many path-bigon lengths $1,\dots,L$).
\end{enumerate}

Every face $f\in F(q_{\bullet}D)$ lies in a unique $T_\sigma$; define
$\operatorname{anc}(f):=\sigma$.
For $\varnothing\neq B\subseteq F(q_{\bullet}D)$, define the
\emph{ancestor face-set}
$\operatorname{anc}(B):=\{\operatorname{anc}(f):f\in B\}$.
For $A\subseteq F(D)$, write
$T_A:=\bigcup_{\sigma\in A}T_\sigma\subseteq q_{\bullet}D$
for the \emph{template hull}.
The \emph{literal collar} is
$C_A(B):=F(T_A)\setminus B$,
and a source face $\sigma\in A$ is \emph{partial} if $T_\sigma$
meets both $B$ and $C_A(B)$.
\end{definition}

\begin{proposition}[Diagramwise spectral comparison under pushforward]
\label{prop:diagramwise-spectral-pushforward}
Assume nondegenerate bounded template data with edge-length bound $L$.
There exists $C_{\mathrm{spec}}\ge 1$ such that for every
free-completed disk map $\phi\colon D\to \widetilde X^{\pm}$,
\[
C_{\mathrm{spec}}^{-1}\widetilde\mu_1(D)
\le
\widetilde\mu_1(q_{\bullet} D)
\le
C_{\mathrm{spec}}\widetilde\mu_1(D).
\]
\end{proposition}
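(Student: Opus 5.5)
The plan is to realise the face-dual multigraph of $q_{\bullet}D$ as a bounded local replacement of the face-dual multigraph $D^\ast$ in the sense of \Cref{def:bounded-local-replacement}, and then invoke \Cref{prop:graph-local-replacement} directly. The constant $C_{\mathrm{spec}}$ will be the constant of that proposition for a specific finite gadget family manufactured from the template data.

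\textbf{Gadgets.} For each source face $\sigma$ I take as core gadget $C_\sigma$ the face-dual graph of the image template $T_\sigma$, using only adjacencies internal to $T_\sigma$. By \Cref{def:pushforward-ancestor}(b) it is connected, and by (c) and (e) it has at most $A_0$ vertices and one of finitely many types.

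The port tuples of $C_\sigma$ are indexed by the edge occurrences on $\partial\sigma$, that is, by the half-edges of $D^\ast$ at $\sigma$. This indexing is sound because $|\partial\sigma|\le L_{\max}^{\pm}$ bounds the interior degree of $D^\ast$ uniformly. For an occurrence $a$, the subdivided arc $q(\phi(a))$ of length $\ell(a)\in[1,L]$ lies on $\partial\widehat T_\sigma$ by (d). The port tuple is the ordered list of faces of $T_\sigma$ incident to the $\ell(a)$ subedges, in order, with repetitions allowed.

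For an interior primal edge $a$ shared by $\sigma,\sigma'$, possibly with $\sigma=\sigma'$, the interface gadget $G_a$ is the bipartite graph on the two port tuples whose edges are the $\ell(a)$ subedges; the $k$th left port is joined to the $k$th right port. Every port vertex is incident to an edge, and nondegeneracy $\ell(a)\ge1$ makes the tuples nonempty. For a boundary edge $a$ of $D$, the boundary gadget $H_a$ is a star: $\infty$ joined once to each position of the port tuple. Only finitely many such gadgets arise, since lengths are at most $L$ and template types are finite.

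\textbf{Identification.} The main step, and the one I expect to be the obstacle, is checking that the pushout $\Gamma'$ is exactly $(q_{\bullet}D)^\ast$, including multiplicities and self-loops. I would argue edge by edge in $q_{\bullet}D$; since $q_{\bullet}D$ is built from the path expansion $D[q]$ by cell substitution (\Cref{lem:template-substitution}), each of its edges falls into one of two classes.

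The first class consists of edges interior to some $\widehat T_\sigma$. They are recorded in $C_\sigma$, including self-loops inside a template.

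The second class consists of subedges of expanded edges of $D[q]$. Each such subedge either separates $T_\sigma$ from $T_{\sigma'}$, giving an edge of $G_a$, or lies on $\partial(q_{\bullet}D)$, giving an edge of $H_a$.

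Because the templates have simple boundary, each boundary subedge of $\widehat T_\sigma$ is a single edge occurrence. So the port lists match position by position, and no extra identifications occur. When the same primal edge has $\sigma$ on both sides, the interface gadget glues two port tuples of the same core and produces the self-loops or parallel edges that the definition permits. For path-bigon replacements, the core is a single vertex with all port tuples equal, a case the definition explicitly allows.

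With this verified, \Cref{prop:graph-local-replacement} gives $C^{-1}\widetilde\mu_1(D^\ast)\le\widetilde\mu_1(\Gamma')\le C\widetilde\mu_1(D^\ast)$. Since $\widetilde\mu_1(D)=\widetilde\mu_1(D^\ast)$ and $\widetilde\mu_1(q_{\bullet}D)=\widetilde\mu_1(\Gamma')$, this is the claim with $C_{\mathrm{spec}}:=C$.

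The degenerate case $F(D)=\varnothing$ is handled separately. Then $q_{\bullet}D$ is face-free, both eigenvalues are $+\infty$, and the inequality holds trivially.
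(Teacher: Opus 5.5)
Your proposal follows the paper's proof: you realise $(q_{\bullet}D)^\ast$ as a bounded local replacement of $D^\ast$ (cores are the dual graphs of the abstract pieces $\widehat T_\sigma$, port tuples are read along the arcs $q(\phi(a))$, interface gadgets are bipartite and boundary gadgets are stars) and then apply \Cref{prop:graph-local-replacement}. The only structural difference is that you keep the self-loops of $D^\ast$ and attach their interface gadgets to two port tuples of the same core, which \Cref{def:bounded-local-replacement}(ii) permits and which spares you the paper's enlargement of $C_\sigma$ by self-glued crossing edges; connectivity of your $C_\sigma$ should then be cited from \Cref{def:bounded-template-data}(b) rather than \Cref{def:pushforward-ancestor}(b), since the latter may count the self-glued adjacencies you have moved into interface gadgets.

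One bookkeeping fix is needed. The two faces along a shared source edge read its arc in opposite directions, since $q(\phi(\bar a))=\overline{q(\phi(a))}$. So $G_a$ must join the $k$th left port to the $(\ell(a)+1-k)$th right port, not the $k$th; this is exactly what the paper's reversed right tuple $(v_m,\dots,v_1)$ encodes.
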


\begin{proof}
If $F(D)=\varnothing$, then $F(q_{\bullet} D)=\varnothing$ and both
eigenvalues are $+\infty$; the bound holds trivially.
Assume $F(D)\neq\varnothing$.
We claim that $(q_{\bullet} D)^\ast$ is obtained from
$D^\ast$ by bounded local replacement in the sense of
\Cref{def:bounded-local-replacement}, after the following
preprocessing and gadget assignments.

\emph{Self-loop preprocessing.}
Remove all self-loops from $D^\ast$.
A self-loop at $\sigma$ contributes
$(g(\sigma)-g(\sigma))^2=0$ to the Rayleigh quotient, so
$\widetilde\mu_1$ is unchanged.
However, the dual edges in $(q_{\bullet} D)^\ast$ created by
self-gluing at $\sigma$ (identifying two boundary subpaths of
$T_\sigma$ with each other) must be accounted for.
We incorporate them into the core gadget for $\sigma$.

\emph{Core gadgets.}
For each source face $\sigma\in F(D)$, define the core gadget
$C_\sigma$ as follows.
Start with the face-dual multigraph of the abstract replacement piece
$\widehat T_\sigma$
(vertices = faces of $\widehat T_\sigma$, edges = dual edges between
face pairs sharing a primal edge internal to $\widehat T_\sigma$).
Then, for each self-loop at $\sigma$ in $D^\ast$
(corresponding to a source edge $e$ with $\sigma$ on both sides),
write $I_{\sigma,e}$ and $I_{\sigma,\bar e}$ for the two boundary
subpaths of $\partial\widehat T_\sigma$ labelled by $q(\phi(e))$ and
$q(\phi(\bar e))$ respectively (one for each incidence of $e$ on
$\partial\sigma$);
these two subpaths are identified in $q_{\bullet} D$.
Add to $C_\sigma$ all dual edges crossing that self-glued interface
(i.e.\ dual edges between the boundary face of $\widehat T_\sigma$
on the $I_{\sigma,e}$ side and the boundary face on the
$I_{\sigma,\bar e}$ side at each glued primal edge).
Under the canonical face identification
$F(\widehat T_\sigma)\cong F(T_\sigma)$, the resulting multigraph is
exactly the subgraph of $(q_{\bullet}D)^\ast$ induced by the faces of
$T_\sigma$ after deleting cross-template interface edges.
The result is a finite connected multigraph (connectivity is
preserved since $\widehat T_\sigma$ has connected face-dual and the
self-gluing edges only add new connections).
Because the source face boundary length is at most $L_{\max}^{\pm}$,
only finitely many self-gluing patterns can occur; combined with the
finite template families from
\Cref{def:bounded-template-data} and the finitely many path-bigon
types (lengths in $\{1,\dots,L\}$), this yields only finitely many
combinatorial types of core gadgets.

If $h$ is a half-edge of the self-loop-free $D^\ast$ incident to
$\sigma$, corresponding to a source edge $e\subset\partial\sigma$
with $\sigma\neq\tau$ on the other side, let
$I_{\sigma,e}=a_1\cdots a_m$ be the successive primal edges of the
boundary subpath of $\partial\widehat T_\sigma$ labelled by $q(\phi(e))$.
For each $t=1,\dots,m$, let $u_t\in V(C_\sigma)$ be the face of
$\widehat T_\sigma$ incident to $a_t$ on the interior side of
$I_{\sigma,e}$.
Define the port tuple
\[
P_{\sigma,h}:=(u_1,\dots,u_m).
\]
This is the ordered list of boundary faces seen along the interface
subpath; entries may repeat if the same boundary face meets the
interface in more than one edge.
If $h$ corresponds to a source boundary edge, define $P_{\sigma,h}$
by the same rule.
These are the port tuples of $C_\sigma$.

\emph{Interface gadgets.}
Now let $e$ be an interior source edge in the self-loop-free $D^\ast$,
separating distinct faces $\sigma\neq\tau$.
Inside $q_{\bullet} D$, the two boundary subpaths $I_{\sigma,e}$ and
$I_{\tau,\bar e}$ are identified edge-by-edge.
Since $I_{\tau,\bar e}$ reads $\bar a_m\cdots\bar a_1$
(the reverse of $q(\phi(e))$), the $t$-th primal edge $a_t$ on the
$\sigma$-side is identified with the $(m+1-t)$-th edge of
$I_{\tau,\bar e}$ on the $\tau$-side.
Write
\[
P_{\sigma,h}=(u_1,\dots,u_m),\qquad
P_{\tau,h'}=(v_1,\dots,v_m),
\]
where $v_t$ is the face of $T_\tau$ incident to the $t$-th edge of
$I_{\tau,\bar e}$ in boundary-walk order.
Let $G_e$ be the bipartite multigraph with one edge joining $u_t$ to
$v_{m+1-t}$ for each $t=1,\dots,m$.
Its left and right boundary port tuples are
$P_{\sigma,h}$ and $(v_m,\dots,v_1)$
(the reversal of $P_{\tau,h'}$, matching the shared geometric
interface order).
Parallel edges are allowed, and the port tuples may repeat vertices.
Then $G_e$ is an interface gadget:
its underlying left and right vertex-sets are disjoint because
$\underline P_{\sigma,h}\subseteq F(T_\sigma)$ and
$\underline P_{\tau,h'}\subseteq F(T_\tau)$, while
$T_\sigma$ and $T_\tau$ have disjoint face sets;
and every vertex in
$\underline P_{\sigma,h}\cup\underline P_{\tau,h'}$
is incident to at least one crossing edge.
Since $m=|q(\phi(e))|\le L$, these port tuples have uniformly bounded
length and only finitely many combinatorial types arise.

\emph{Boundary gadgets.}
If $e$ is a source boundary edge incident to $\sigma$, let
$H_e$ be the boundary gadget whose distinguished port tuple is
$P_{\sigma,h}$, whose interior vertices are the underlying set
$\underline P_{\sigma,h}$, and whose edges are the dual edges from
those faces to the boundary vertex $\infty$
(with multiplicities from shared boundary primal edges).
Again only finitely many such gadgets arise.

\emph{Edge partition.}
Every edge of $(q_{\bullet} D)^\ast$ is exactly one of:
an edge internal to a core gadget $C_\sigma$ (either an original
face-dual edge of $\widehat T_\sigma$ or a self-gluing crossing edge),
a crossing edge in an interface gadget $G_e$, or
a boundary-gadget edge.
No edge is double-counted.
The source degree bound
$d_{\max}(D^\ast)\le L_{\max}^{\pm}$ ensures
\Cref{prop:graph-local-replacement} applies, yielding the stated
bilateral comparison.
\end{proof}

\begin{lemma}[Ancestor connectivity and collar bounds]
\label{lem:ancestor-collar}
Assume nondegenerate bounded template data.
There exist constants $M_0,M_1,M_2\ge 1$ depending only on the
bounded template data and $L_{\max}^{\pm}$ such that for every free-completed disk map
$\phi\colon D\to\widetilde X^{\pm}$, every pushforward $q_{\bullet} D$,
and every nonempty dual-connected face-set
$B\subseteq F(q_{\bullet} D)$, with $A:=\operatorname{anc}(B)$:
\begin{enumerate}[label=\textup{(\roman*)}]
\item $A$ is dual-connected in $D^\ast$;
\item $|B|\le M_0 |A|$;
\item $|C_A(B)|\le M_1 \|\partial^{\mathrm{multi}} B\|$;
\item $\|\partial^{\mathrm{multi}} A\|
\le M_2 \|\partial^{\mathrm{multi}} B\|$.
\end{enumerate}
One may take
\[
M_0=A_0,\qquad M_1=2A_0,\qquad M_2=1+2L_{\max}^{\pm},
\]
where $A_0$ is the constant from \Cref{def:bounded-template-data}.
\end{lemma}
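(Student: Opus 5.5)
We prove the four items in order. Each uses only the structure of the template pieces $T_\sigma$ listed in \Cref{def:pushforward-ancestor}(a)--(e) and the way pieces are glued along the expanded source edges.

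\emph{(i) Ancestor connectivity.} Take $\sigma,\tau\in A$ and pick $f,g\in B$ with $\operatorname{anc}(f)=\sigma$ and $\operatorname{anc}(g)=\tau$. Since $B$ is dual-connected, there is a dual path $f=f_0,f_1,\dots,f_k=g$ inside $B$. Consecutive faces $f_i,f_{i+1}$ share a primal edge $a$ of $q_\bullet D$, and there are two cases.
\begin{itemize}
\item If $a$ is internal to one piece $T_\rho$, or is a self-glued interface edge, then $\operatorname{anc}(f_i)=\operatorname{anc}(f_{i+1})$.
\item Otherwise $a$ lies on an expanded source edge $e$ separating $\operatorname{anc}(f_i)\neq\operatorname{anc}(f_{i+1})$. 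These two source faces are then dual-adjacent in $D^\ast$.
\end{itemize}
Projecting the path by $\operatorname{anc}$ and deleting repetitions therefore gives a dual path in $A$ from $\sigma$ to $\tau$.

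\emph{(ii) Size comparison.} Every $f\in B$ lies in a unique $T_\sigma$ with $\sigma\in A$. Hence $|B|\le\sum_{\sigma\in A}|F(T_\sigma)|\le A_0|A|$ by \Cref{def:pushforward-ancestor}(c), giving $M_0=A_0$.

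\emph{(iii) Collar bound.} Only partial source faces contribute to $C_A(B)$: if $\sigma\in A$ is not partial, then $T_\sigma\cap B\neq\varnothing$ forces $F(T_\sigma)\subseteq B$. So $|C_A(B)|\le A_0\cdot\#\{\text{partial }\sigma\}$. Now let $\sigma$ be partial. The face-dual graph of $T_\sigma$ is connected by \Cref{def:pushforward-ancestor}(b), and both $B\cap F(T_\sigma)$ and $F(T_\sigma)\setminus B$ are nonempty. So some internal dual edge of $T_\sigma$ joins a face of $B$ to a face outside $B$, and this edge lies in $\partial_E B$. Charging $\sigma$ to such an edge, each dual edge is charged at most twice, once for each endpoint's ancestor; for an internal edge both ancestors are the same $\sigma$. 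Hence $\#\{\text{partial}\}\le 2|\partial_E B|$. By \Cref{lem:multiboundary-identity}, $|C_A(B)|\le 2A_0\|\partial^{\mathrm{multi}}B\|$, so $M_1=2A_0$. Actually the bound obtained is $\#\{\text{partial}\}\le|\partial_EB|$, which is even better.

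\emph{(iv) Multiboundary comparison.} This is the main obstacle. By \Cref{lem:multiboundary-identity} it suffices to bound $|\partial_E A|$, and we split the edges of $\partial_E A$ into two types.
\begin{itemize}
\item \emph{Interface type.} A dual edge of $\partial_E A$ separates $\sigma\in A$ from some $\tau\notin A$ or from $\infty$ across a source edge $e$. Its expansion $I_{\sigma,e}$ is a nonempty arc by nondegeneracy. The port faces of $T_\sigma$ along $I_{\sigma,e}$ face faces of $T_\tau$, which are not in $B$ because $\tau\notin A$, or face $\infty$.
\item \emph{Charging.} If some port face along $I_{\sigma,e}$ lies in $B$, the crossing dual edge lies in $\partial_E B$. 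Each dual edge of $B$ is attached to a unique source edge, so this charging is injective.
\item \emph{Partial case.} If no port face along $I_{\sigma,e}$ lies in $B$, then $T_\sigma$ contains port faces not in $B$ but also faces of $B$, since $\sigma\in A$. So $\sigma$ is partial.
\end{itemize}
Each partial $\sigma$ has at most $|\partial\sigma|\le L_{\max}^{\pm}$ incident source edges. Combining with the count of partial faces from (iii) gives
\[
|\partial_E A|\le|\partial_E B|+L_{\max}^{\pm}\cdot 2|\partial_E B|,
\]
that is, $M_2=1+2L_{\max}^{\pm}$. The delicate point is to handle self-glued source edges and source boundary edges uniformly in this charging; both fit the same dichotomy, with $\infty$ playing the role of $\tau$.
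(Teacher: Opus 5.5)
Your proposal is correct and follows essentially the same route as the paper: projecting a dual path of $B$ by $\operatorname{anc}$ for (i), counting template faces for (ii), and charging each partial template to a dual edge of $\partial_E B$ via connectivity of $T_\sigma$ for (iii). For (iv) you split $\partial_E A$ into edges charged injectively to disjoint interface arcs and edges incident to partial templates; your port-face dichotomy is an equivalent variant of the paper's ``template entirely in $B$'' versus ``partial template'' split. Your observation in (iii) is also correct: the witnessing dual edge is internal to $T_\sigma$, so the charging is injective and the paper's factor $2$ can be dropped.
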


\begin{proof}
\emph{(i)} Follow a dual path in $B$.
Consecutive faces either lie in the same template (ancestor unchanged)
or in adjacent templates.
In the latter case, the shared primal edge lies on the gluing
interface $q(\phi(e))$ for some source edge $e$ of $D$
(by construction of $q_{\bullet} D$ in
\Cref{def:pushforward-ancestor},
templates share primal edges only along these interfaces),
so the ancestors are adjacent in $D^\ast$.

\emph{(ii)} For each $\sigma\in A$, the substituted subcomplex
$T_\sigma$ has area at most $A_0$:
this is immediate from \Cref{lem:template-substitution-path}
($\le A$ for an ordinary face, $1$ for a path-bigon).
Since
\[
B\subseteq F(T_A)=\bigsqcup_{\sigma\in A}F(T_\sigma),
\]
we obtain
\[
|B|
\le
\sum_{\sigma\in A}|T_\sigma|
\le
A_0|A|.
\]

\emph{(iii)} Every face of $C_A(B)$ lies in a partial template
(otherwise its whole template would lie in $C_A(B)$, contradicting
$\sigma\in A$).
Let $\mathcal P$ be the set of partial templates.
If $T_\sigma\in\mathcal P$, then the connected face-dual graph of
$T_\sigma$ (connected by hypothesis for relator templates, and a
single vertex for a path-bigon) meets both $B$ and $C_A(B)$, so some dual edge
of $T_\sigma$ crosses from a face of $B$ to a face of $C_A(B)$.
The corresponding primal edge lies on the edge-boundary of $B$.
Conversely, any boundary edge of $B$ is incident to faces from at
most two templates, so it can witness at most two partial templates.
Hence
\[
|\mathcal P|
\le
2|\partial_E B|
=
2\|\partial^{\mathrm{multi}}B\|
\]
by \Cref{lem:multiboundary-identity}.
Since every face of $C_A(B)$ lies in a partial template and each partial
template has area at most $A_0$,
\[
|C_A(B)|
\le
\sum_{T_\sigma\in\mathcal P}|T_\sigma|
\le
A_0|\mathcal P|
\le
2A_0\|\partial^{\mathrm{multi}}B\|.
\]

\emph{(iv)} Let $\partial_E A$ be the edge-boundary of $A$ in the
source dual graph.
Split the corresponding source boundary edges into two classes.

\smallskip
\emph{Full-template edges.}
These are source boundary edges $e$ of $A$ (separating $\sigma\in A$
from $\sigma'\notin A$ or from the exterior $\infty$)
such that the incident template $T_\sigma$ lies entirely in $B$.
Then the entire interface path $q(e)$ lies on the boundary of $B$.
Different source edges give edge-disjoint interface paths in the
path-expanded complex, so
\[
|E_{\mathrm{full}}|
\le
\|\partial^{\mathrm{multi}}B\|.
\]

\smallskip
\emph{Partial-template edges.}
Any remaining source boundary edge of $A$ is incident to a partial
template.
Each source face of $D$ has boundary length at most $L_{\max}^{\pm}$,
so each partial template contributes at most $L_{\max}^{\pm}$ source
boundary edges.
Using the bound on $|\mathcal P|$ from \textup{(iii)},
\[
|E_{\mathrm{part}}|
\le
L_{\max}^{\pm}|\mathcal P|
\le
2L_{\max}^{\pm}\|\partial^{\mathrm{multi}}B\|.
\]

Combining the two classes and applying
\Cref{lem:multiboundary-identity},
\[
\|\partial^{\mathrm{multi}}A\|
=
|\partial_E A|
=
|E_{\mathrm{full}}|+|E_{\mathrm{part}}|
\le
(1+2L_{\max}^{\pm})\|\partial^{\mathrm{multi}}B\|.
\]
\end{proof}

\begin{remark}[The general $\kappa$-mHQM transfer problem]
\label{rem:mhqm-transfer-omitted}
The $1$-hfmHQM transfer of \Cref{thm:hfmhqm-transfer} and the
mixed interleaving of \Cref{thm:mixed-qi-interleaving} both work
in the path-completed category $q_{\bullet}D\to\widetilde Y^{\pm,L}$.
An ordinary-target transfer for general $\kappa$-mHQM via the ancestor
set $A=\operatorname{anc}(B)$ requires a localisation
theorem for multiboundary loops inside pushforward lobe regions and a
uniform estimate for enclosed ancestor holes.
Whether such a transfer holds remains open.
\end{remark}

\begin{lemma}[Lobe localisation under pushforward]
\label{lem:lobe-localisation}
Assume nondegenerate bounded template data.
Let $\phi\colon D\to\widetilde X^{\pm}$ be a free-completed disk map,
let $B\subseteq F(q_{\bullet}D)$ be nonempty, dual-connected, and
hole-free, and set $A:=\operatorname{anc}(B)$.
Let $\widehat A$ be the hole-filling of $A$ in $D$, and let
$L_1,\dots,L_r$ be the lobe disk subdiagrams of
$\partial\widehat\Sigma_A$ with outer boundary walks
$\gamma_{1,1},\dots,\gamma_{1,r}$.
For each $k$, set $U_k:=q_{\bullet}L_k$.
Let $\delta_1,\dots,\delta_p$ be the inner boundary loops of $A$
with lobe disk subdiagrams $H_i$ as in
\Cref{lem:multiboundary-outer-inner}(ii).
Then:
\begin{enumerate}[label=\textup{(\roman*)}]
\item every lobe of the carrier of $B$ inside $q_{\bullet}D$ is
contained in a unique $U_k$;
\item each $H_i$ is contained in a unique $L_k$, so its pushforward
$H_i^q:=q_{\bullet}H_i$ is contained in the corresponding $U_k$ and
lies in the interior of $U_k$;
\item within each $U_k$, the lobe subdiagrams of $B$ and the
subdiagrams $H_i^q$ are pairwise interior-disjoint and meet, if at
all, only in boundary vertices.
\end{enumerate}
\end{lemma}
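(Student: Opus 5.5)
The plan is to transport all three statements through the template-hull structure of the pushforward. Every face $f$ of $q_{\bullet}D$ lies in a unique image template $T_{\operatorname{anc}(f)}$. The topological decomposition of $D$ by $\widehat\Sigma_A$, its lobes $L_k$, and the holes $H_i$ is by disk subdiagrams meeting along source vertices and edges. Since $q_{\bullet}$ acts by path expansion followed by cellwise substitution (\Cref{lem:template-substitution,lem:cell-substitution}), it carries this decomposition to a decomposition of $q_{\bullet}D$ by the subcomplexes $q_{\bullet}L_k$ and $q_{\bullet}H_i$. These meet exactly along the path-expansions of the corresponding source interfaces. Concretely, I would first record the following: for any disk subdiagram $\Omega\subseteq D$, $q_{\bullet}\Omega$ is the union of the $T_\sigma$ with $\sigma\in F(\Omega)$, and it is a disk subdiagram of $q_{\bullet}D$ whose boundary walk is the path-expansion of $\partial\Omega$. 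Here I use that $q_{\bullet}$ commutes with restriction, and that substitution is a homeomorphism relative to the expanded $1$-skeleton (\Cref{lem:cell-substitution}).

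For (i), I take a lobe $\Lambda$ of the carrier $\Sigma_B$; since $B$ is hole-free, the lobes are its only pieces. Its faces $B_\Lambda$ are dual-connected through interior edges of $\Lambda$. Two consecutive faces in such a dual path either lie in one template or cross a source interface $q(\phi(e))$ between two faces of $A$. I claim this crossing edge $e$ is an interior edge of a single lobe $L_k$ of $\widehat\Sigma_A$. It is an edge separating two faces of $A\subseteq\widehat A$. The lobes of $\widehat\Sigma_A$ meet only at cut vertices, not along edges, so both faces lie in the same $L_k$. Hence $\operatorname{anc}(B_\Lambda)$ lies in a single $L_k$ and $\Lambda\subseteq q_{\bullet}L_k=U_k$. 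Uniqueness holds because distinct $U_k$ share no faces, since they have disjoint ancestor sets.

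For (ii), each $H_i$ is a disk subdiagram in the closure of a bounded complementary component $K$ of $\Sigma_A$ (\Cref{lem:multiboundary-outer-inner}(ii)), and $\overline K\subseteq\widehat\Sigma_A$. $K$ is connected and disjoint from the lobe cut vertices' complement structure, so $\overline K$, and hence $H_i$, lies in one $L_k$. Interiority comes from $\partial H_i\subseteq\partial K\subseteq|\Sigma_A|$: every boundary edge of $H_i$ has a face of $A\subseteq L_k$ on its other side. Therefore $\partial H_i$ avoids $\partial L_k$ except possibly at vertices. I expect the main obstacle to be excluding such boundary-vertex contact. To handle it, I would note that a vertex of $\partial H_i$ on $\partial L_k$ still leaves the open disk $|H_i|^\circ$ inside $|L_k|^\circ$; this suffices for ``lies in the interior'' in the sense of \Cref{prop:selective-replacement}, which permits meetings at boundary vertices. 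Pushing forward via the first step gives $H_i^q\subseteq U_k$ with the same interiority.

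For (iii), the faces of distinct pieces are disjoint. The lobes of $B$ have disjoint face sets, and a face of $B$ in $H_i^q$ would have ancestor in $F(H_i)\cap A$, which is impossible because $H_i$ lies in a hole of $\Sigma_A$. Shared edges are excluded as well. Distinct lobes of $\Sigma_B$ share no edges by definition. If a lobe of $B$ shared an edge with $H_i^q$, that edge would lie on the expanded interface of an edge of $\partial H_i$. That would make it a boundary edge of $B$ facing $H_i^q$, and hence a boundary edge of the carrier enclosing a hole. This contradicts hole-freeness of $B$ only if $H_i^q$ is enclosed. In the other case the edge is simply part of the frontier, and pieces may meet there only in vertices after the lobe decomposition. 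I would make this precise by applying the successor-cycle argument of \Cref{lem:multiboundary-outer-inner} to $B$ inside $U_k$. So the pieces meet only in boundary vertices.
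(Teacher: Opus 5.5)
Your arguments for (i), (ii) and for face-disjointness in (iii) follow the paper's ancestor/template-hull route. In (i), your crossing-edge step supplies what the paper only asserts: a dual step of $B$ between two templates crosses $q(\phi(e))$ for an edge $e$ separating two faces of $A$, and such an edge is interior to $\widehat\Sigma_A$, hence inside one lobe. In (ii), the reason $\overline K$ lies in a single lobe should simply be that the faces in $\overline K$ are dual-connected through edges interior to $\widehat\Sigma_A$.

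Your caveat in (ii) that $\partial H_i$ may touch $\partial L_k$ at vertices is warranted. Take an arc of $A$-faces whose two end faces touch at a vertex $v\in\partial\widehat\Sigma_A$; this puts $v$ on both $\partial H_i$ and $\partial L_k$. So the open-cell reading of ``interior'' is the one that holds in general. Note, however, that \Cref{prop:selective-replacement} tolerates vertex contact only among the pieces, so its hypothesis must then be read in the same weakened sense.

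The genuine gap is the exclusion of shared edges in (iii), and your own case split shows why it cannot be closed. Since $B$ is hole-free, $H_i^q$ is never enclosed by $\Sigma_B$, so you are always in the ``other case''. There the edge lies both on the boundary of a lobe of $\Sigma_B$ and on $\partial H_i^q$. The lobe decomposition cuts only at vertices and does not remove that edge, and no successor-cycle argument can, because the configuration actually occurs.

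Take template data in which some relator template has several faces, and let the source faces $A$ form a ring around a single face $H\notin A$, with one ring face $\tau$ of that kind. Let $B$ be the union of the full templates of the other ring faces together with a nonempty part of $F(T_\tau)$ adjacent to a neighbouring template. Choose that part so that the omitted faces of $T_\tau$ form a channel from $T_H$ to the exterior (the mechanism of \Cref{fig:collapse-counterexample}). Then:
\begin{itemize}
\item $B$ is dual-connected and hole-free, $\operatorname{anc}(B)=A$, and $H_1=H$;
\item the faces of $B$ along the expanded interface between a ring face and $H$ share those edges with $H_1^q=T_H$.
\end{itemize}
So what is provable is interior-disjointness (disjoint face-sets, as you showed), with contact along boundary edges as well as vertices.

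The paper's proof has the same defect. Its last sentence passes from ``distinct templates meet only along boundary edges/vertices'' to ``meet only in boundary vertices'', which does not exclude shared edges. The repair is to weaken (iii) to the edge-and-vertex form and recheck its use in the $1$-Cancellation, not to search for a proof of the vertex-only clause.
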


\begin{proof}
The pushforward $q_{\bullet}D$ is built face-by-face from $D$: each
source face $\sigma$ is replaced by its image template $T_\sigma$.
This replacement preserves the combinatorial inclusion structure of
face-sets.

\textup{(i)}
Each lobe of $\widehat\Sigma_A$ is a disk subdiagram $L_k$ of $D$.
The template hull $T_{F(L_k)}=U_k$ is the corresponding region of
$q_{\bullet}D$.
Since the lobes $L_k$ partition the faces of $\widehat A$ and the
template hulls partition the faces of $T_{\widehat A}$,
each face of $B\subseteq F(T_{\widehat A})$ lies in a unique $U_k$.
Because the carrier of $B$ is connected within each $U_k$ (the lobes
of $B$ in $U_k$ arise from the faces of $B$ in that region), each lobe
of the carrier of $B$ is contained in a unique $U_k$.

\textup{(ii)}
Each inner lobe disk $H_i$ is contained in a unique bounded
complementary component of $\Sigma_A$ in $D$, which in turn lies in
a unique lobe $L_k$ of $\widehat\Sigma_A$.
So $H_i\subset L_k$, and therefore
$H_i^q=q_{\bullet}H_i\subseteq q_{\bullet}L_k=U_k$.
Since $H_i$ lies in the interior of $L_k$ (the inner boundary loop
$\delta_i$ separates $H_i$ from $\partial L_k$), and pushforward
preserves the combinatorial interior/boundary distinction,
$H_i^q$ lies in the interior of $U_k$.

\textup{(iii)}
The source face-sets $F(H_i)$ are pairwise disjoint subsets of $F(D)$
(distinct inner lobe disks have disjoint face-sets), and each is
disjoint from the ancestor set $A$ (the $H_i$ are the hole
subdiagrams).
Since the template-hull construction is face-by-face, the pushforward
face-sets $F(H_i^q)$ are pairwise disjoint in $F(q_{\bullet}D)$.
The lobes of $B$ and the $H_i^q$ have disjoint face-sets because
$B\subseteq F(T_A)$ and each $F(H_i^q)\subseteq F(T_{F(H_i)})$, and
$F(T_A)\cap F(T_{F(H_i)})=\varnothing$.
At the topological level, distinct image templates $T_\sigma$ meet only
along boundary edges/vertices of the path expansion, so the underlying
spaces of distinct face-set regions meet, if at all, only in boundary
vertices.
\end{proof}

\begin{theorem}[Unconditional hfmHQM transfer for $1$-hfmHQM
diagrams {\normalfont(1-Cancellation)}]
\label{thm:hfmhqm-transfer}
Assume nondegenerate bounded template data with edge-length bound $L$.
There exists $\kappa_0\ge 1$ depending only on the bounded template
data and the source face-length bound $L_{\max}^{\pm}$ such that if
$\phi\colon D\to\widetilde X^{\pm}$ is $1$-hfmHQM, then
the path-completed pushforward
$q_{\bullet} D\to\widetilde Y^{\pm,L}$ is $\kappa_0$-hfmHQM
(where the hfmHQM inequality uses
$\operatorname{FillArea}^{\pm,L}_{\widetilde Y}$).
The symmetric statement with $\widetilde X$ and $\widetilde Y$
interchanged also holds.
\end{theorem}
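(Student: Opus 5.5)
Fix a nonempty, dual-connected, hole-free face-set $B\subseteq F(q_{\bullet}D)$ with multiboundary $(\beta_1,\dots,\beta_s)$, and set $A:=\operatorname{anc}(B)$. By \Cref{lem:ancestor-collar}(i), $A$ is dual-connected in $D^\ast$. The plan is to bound $|B|$ in three steps. First, use $|B|\le M_0|A|$ from \Cref{lem:ancestor-collar}(ii) to pass from $B$ to its ancestor. Second, bound $|A|$ by the source $1$-hfmHQM inequality applied to a suitable hole-free source face-set. Third, push the resulting source filling areas forward to $\widetilde Y^{\pm,L}$ and compare them with $\sum_j\operatorname{FillArea}^{\pm,L}_{\widetilde Y}(\beta_j)$, losing only a constant times $\|\partial^{\mathrm{multi}}B\|$. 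The symmetric statement follows by exchanging the roles of $q$ and $r$.

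\emph{Source step.} $A$ itself need not be hole-free, so I would apply the hypothesis to $\widehat A:=F(\widehat\Sigma_A)$. By \Cref{lem:hole-filling-multiboundary}, $\widehat A$ is dual-connected and hole-free. Since $D$ is $1$-hfmHQM, \Cref{lem:minimal-is-hfmhqm}-style lobe minimality gives
\[
|A|\le|\widehat A|\le\sum_{k=1}^r\operatorname{FillArea}^{\pm}_{\widetilde X}(\phi(\gamma_{1,k}))+\|\partial^{\mathrm{multi}}A\|,
\]
where $\gamma_{1,k}$ are the lobe walks of $\widehat\Sigma_A$. By \Cref{lem:ancestor-collar}(iv), the last term is at most $M_2\|\partial^{\mathrm{multi}}B\|$. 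It remains to bound each $\operatorname{FillArea}^{\pm}_{\widetilde X}(\phi(\gamma_{1,k}))$ by target data attached to $B$.

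\emph{Target comparison.} For each lobe $L_k$, I would build a filler of $q\phi(\gamma_{1,k})$ inside $\widetilde Y^{\pm,L}$, using the decomposition of \Cref{lem:lobe-localisation}. The region $U_k=q_{\bullet}L_k$ splits into four kinds of pieces: the lobes of $B$ lying in $U_k$, the retained holes $H_i^q$, the collar faces $C_A(B)\cap U_k$, and the remaining faces coming from $\widehat A\setminus A$. Since $B$ is hole-free, each lobe of the carrier of $B$ is a disk with simple boundary contained in some $U_k$. I would replace each such lobe by a minimal filler of its boundary $\beta_j$, which exists by \Cref{lem:simple-boundary-exists-lemma}, using \Cref{prop:selective-replacement}. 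Applying \Cref{lem:boundary-synthesis-retained} with the $H_i^q$ retained then yields a planar complex whose area is at most
\[
\sum_j\operatorname{FillArea}^{\pm,L}(\beta_j)+|C_A(B)|+\text{(faces of }T_{\widehat A\setminus A}\text{ outside the holes)}.
\]
The collar term is bounded by $M_1\|\partial^{\mathrm{multi}}B\|$ by \Cref{lem:ancestor-collar}(iii). Next I would pull this complex back by $r$ via \Cref{lem:template-substitution-planar} and unfold the path-bigons. Attaching strip annuli along the outer boundary (\Cref{lem:strip-annulus}) and along each hole boundary, each of cost at most $S$ times its length, then gives a planar complex over $\widetilde X^{\pm}$. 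This complex has outer boundary $\phi(\gamma_{1,k})$ and inner boundaries $\phi(\delta_i)$. Capping the holes by the original source subdiagrams $H_i$ produces a genuine filler of $\phi(\gamma_{1,k})$.

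\emph{Main obstacle.} The hard step is the cancellation of the hole contributions: the $H_i$ are capped back by their original source faces, and these faces are exactly the ones removed from $\widehat A$ when passing to $A$. This is where the coefficient $1$ in the hypothesis is essential. The inequality $|\widehat A|\le\sum\operatorname{FillArea}+\|\partial\|$ must hold with $|\widehat A\setminus A|$ appearing on both sides with coefficient $1$, so that after comparison one gets $|A|\le C\sum_j\operatorname{FillArea}^{\pm,L}(\beta_j)+C'\|\partial^{\mathrm{multi}}B\|$. A coefficient $\kappa>1$ would leave an uncontrolled surplus of $(\kappa-1)|\widehat A\setminus A|$.

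For the cancellation to go through, I need two facts. The hole boundaries $\delta_i$ must have total length at most $\|\partial^{\mathrm{multi}}A\|$, which holds since the inner loops are part of the multiboundary by \Cref{lem:multiboundary-outer-inner}(ii). And the strip-annulus costs around the holes must be linear in that length. I expect the delicate point to be the bookkeeping: verifying that the faces of $T_{\widehat A\setminus A}$ outside the $H_i^q$ are already counted in the collar or the holes. I would first try to show that $\widehat A\setminus A$ consists of exactly the faces of the $H_i$, using hole-filling. Combining the bounds with $|B|\le A_0|A|$ then yields $\kappa_0$ depending only on $L,A,S$ and $L_{\max}^{\pm}$.
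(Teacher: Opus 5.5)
Your proposal follows essentially the same route as the paper's proof. You apply the $1$-hfmHQM inequality to the hole-filled ancestor $\widehat A$. Then, for each lobe $L_k$, you replace the $B$-lobes inside $U_k=q_{\bullet}L_k$ by minimal fillers, retain and excise the pushed-forward holes $H_i^q$, unfold, pull back by $r$, attach strip annuli, and cap with the original $H_i$. This makes $|H|=\sum_i|H_i|$ cancel precisely because the source coefficient is $1$, and you finish with \Cref{lem:ancestor-collar}.

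One adjustment is needed in your source step. Keep the identity $|\widehat A|=|A|+|H|$ on the left rather than weakening it to $|A|\le|\widehat A|$; your main-obstacle paragraph already recognises this, and it is how the paper sets up the cancellation.
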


\begin{proof}
It suffices to show: for every nonempty dual-connected hole-free
$B\subseteq F(q_{\bullet} D)$ with multiboundary
$\partial^{\mathrm{multi}}B=(\eta_1,\dots,\eta_s)$
(all outer, since $B$ is hole-free),
\[
|B|\le\kappa_0\Bigl(\sum_{j=1}^s
\operatorname{FillArea}^{\pm,L}_{\widetilde Y}(\eta_j)
+\|\partial^{\mathrm{multi}} B\|\Bigr).
\]

Let $A:=\operatorname{anc}(B)$, $\widehat A$ the hole-filling of $A$
in $D$, and $\widehat\Sigma_A$ the carrier of $\widehat A$.
Let $\gamma_{1,1},\dots,\gamma_{1,r}$ be the lobe boundary
walks of $\partial\widehat\Sigma_A$ (these are exactly the outer
multiboundary loops of the hole-free set $\widehat A$), and let
$\delta_1,\dots,\delta_p$ be the inner boundary loops of $A$.
For each $i$, choose the corresponding lobe disk subdiagram
$H_i\subseteq D$ supplied by
\Cref{lem:multiboundary-outer-inner}(ii), so that $\partial H_i$
is the simple loop $\delta_i$ up to orientation.
Applying the construction of
\Cref{lem:template-substitution-path} to the simple-boundary disk map
$H_i\to\widetilde X^{\pm}$ shows that its pushforward
$H_i^q:=q_{\bullet}H_i\subseteq q_{\bullet}D$ is again a
simple-boundary disk subdiagram with boundary walk $q(\delta_i)$.
Set $|H|:=\sum_{i=1}^p|H_i|$.

Since $\widehat A$ is hole-free and dual-connected, and $D$ is
$1$-hfmHQM,
\begin{equation}\label{eq:1cancel-source}
|\widehat A|=|A|+|H|
\le
\sum_k\operatorname{FillArea}^{\pm}_{\widetilde X}(\phi(\gamma_{1,k}))
+|\partial\widehat\Sigma_A|,
\end{equation}
where $\gamma_{1,1},\dots,\gamma_{1,r}$ are the simple lobe boundary
walks of $\partial\widehat\Sigma_A$.

\emph{The $1$-Cancellation filler (lobe by lobe).}
For each outer lobe walk $\gamma_{1,k}$ of $\partial\widehat\Sigma_A$,
let $J_k\subseteq\{1,\dots,s\}$ be the set of indices $j$ such that
the lobe $B_j$ of the carrier of $B$ lies in the template hull of the
lobe of $\widehat A$ bounded by $\gamma_{1,k}$, and let
$C_k\subseteq C_A(B)$ be the collar faces in that same region.
By \Cref{lem:lobe-localisation}(i), the $J_k$ partition
$\{1,\dots,s\}$ and the $C_k$ partition $C_A(B)$.
By \Cref{lem:lobe-localisation}(ii), let
$I_k\subseteq\{1,\dots,p\}$ be the indices of inner
loops enclosed by $\gamma_{1,k}$.

Construct a filler $Z_k$ of $\gamma_{1,k}$ in $\widetilde X^{\pm}$
as follows.

\begin{enumerate}[label=\textup{(\alph*)}]
\item Let $A_k^{\mathrm{out}}$ be the strip annulus for $\gamma_{1,k}$.
It has outer boundary $\gamma_{1,k}^{-1}$, inner boundary
$rq(\gamma_{1,k})$, and area $\le S|\gamma_{1,k}|$
(\Cref{lem:strip-annulus}).

\item Let $L_k\subseteq\widehat\Sigma_A$ be the lobe disk subdiagram
bounded by $\gamma_{1,k}$, and let
$U_k:=q_{\bullet} L_k\to\widetilde Y^{\pm,L}$
be its pushforward (a disk map with outer boundary $q(\gamma_{1,k})$).
Write $B_k:=\bigcup_{j\in J_k}B_j$ for the union of the lobes of $B$
lying in $U_k$.
For each $j\in J_k$, the lobe $B_j$ is a disk subdiagram of $U_k$ with
simple boundary and boundary walk $\eta_j$.
For each $i\in I_k$, the subdiagram $H_i^q\subseteq U_k$ lies in the
interior of $U_k$ and has simple boundary walk $q(\delta_i)$.
Set
\[
H_k:=\bigcup_{i\in I_k}F(H_i^q),\qquad
C_k:=F(U_k)\setminus\Bigl(
\bigcup_{j\in J_k}F(B_j)\cup H_k\Bigr).
\]

Distinct lobes $B_j$ meet, if at all, only in boundary cut vertices,
and the lobes may share boundary edges with $\partial U_k$.
By \Cref{lem:lobe-localisation}(iii), the families $\{B_j:j\in J_k\}$
and $\{H_i^q:i\in I_k\}$ are pairwise interior-disjoint inside $U_k$
and meet, if at all, only in boundary vertices.
For each $j\in J_k$, choose a minimal-area filler
$E_j\to\widetilde Y^{\pm}$ of $\eta_j^{-1}$, and regard it also as a
disk map over $\widetilde Y^{\pm,L}$.
Apply \Cref{prop:selective-replacement} to $U_k$, replacing the lobes
$B_j$ ($j\in J_k$) by the fillers $E_j$ and retaining no holes.
This yields a disk map
$\overline Y_k\to\widetilde Y^{\pm,L}$ with outer boundary
$q(\gamma_{1,k})$ and area at most
\[
|C_k|+|H_k|+\sum_{j\in J_k}
\operatorname{FillArea}^{\pm}_{\widetilde Y}(\eta_j).
\]
The subdiagrams $H_i^q$ ($i\in I_k$) remain pairwise interior-disjoint
simple-boundary disk subdiagrams in the interior of $\overline Y_k$.
Excising them by \Cref{lem:excise-disk-subdiagrams} gives a connected
planar $2$-complex
$Y_k\to\widetilde Y^{\pm,L}$ with outer boundary
$q(\gamma_{1,k})$, inner boundary walks $q(\delta_i)$
($i\in I_k$), and area at most
\[
|C_k|+\sum_{j\in J_k}
\operatorname{FillArea}^{\pm}_{\widetilde Y}(\eta_j).
\]
Unfold each path-bigon in $Y_k$ into at most $L$ free bigons to
obtain $Y_k'\to\widetilde Y^{\pm}$ with the same boundary data and
area at most $L$ times larger.
Pull $Y_k'$ back by $r$ using \Cref{lem:template-substitution-planar}.
This gives a connected planar $2$-complex
$r_{\bullet} Y_k'\to\widetilde X^{\pm,L}$
with outer boundary $rq(\gamma_{1,k})$, inner boundary walks
$rq(\delta_i)$ for $i\in I_k$, and area at most
\[
LA_0\Bigl(
|C_k|+\sum_{j\in J_k}
\operatorname{FillArea}^{\pm}_{\widetilde Y}(\eta_j)
\Bigr).
\]
Unfolding each path-bigon in $r_{\bullet} Y_k'$ into at most $L$ free
bigons gives a complex over $\widetilde X^{\pm}$ with area at most
$L$ times larger.

\item For each $i\in I_k$, embed the strip annulus $A_{\delta_i}$ in
$S^2$ with outer boundary $rq(\delta_i)$ and inner boundary
$\delta_i^{-1}$.
Apply \Cref{lem:boundary-synthesis} using the filler $H_i$
of $\delta_i$.
The result is a filler $L_i$ of $rq(\delta_i)$ with area
$\le S|\delta_i|+|H_i|$.
Reversing disk orientation gives a filler of $rq(\delta_i)^{-1}$
with the same area bound.
\end{enumerate}
Apply \Cref{lem:boundary-synthesis} to the unfolded $r_{\bullet} Y_k'$
(outer boundary $rq(\gamma_{1,k})$, inner boundaries $rq(\delta_i)$
for $i\in I_k$) using the reversed $L_i$.
The result is a disk $W_k$ over $\widetilde X^{\pm}$ for
$rq(\gamma_{1,k})$ with area at most
\[
L^2A_0\Bigl(
|C_k|+\sum_{j\in J_k}
\operatorname{FillArea}^{\pm}_{\widetilde Y}(\eta_j)
\Bigr)
+\sum_{i\in I_k}\bigl(S|\delta_i|+|H_i|\bigr).
\]
Apply \Cref{lem:boundary-synthesis} to the outer strip annulus
$A_k^{\mathrm{out}}$ (outer boundary $\gamma_{1,k}^{-1}$, inner
boundary $rq(\gamma_{1,k})$) using the reversed $W_k$.
This produces a filler $Z_k$ of $\gamma_{1,k}^{-1}$.
Reversing orientation gives a filler of $\phi(\gamma_{1,k})$.

Summing over all lobes $k$ and using the partitions
\(\bigsqcup_k J_k=\{1,\dots,s\}\),
\(\bigsqcup_k C_k=C_A(B)\), and
\(\bigsqcup_k I_k=\{1,\dots,p\}\), we obtain
\begin{align*}
\sum_k\operatorname{FillArea}^{\pm}_{\widetilde X}(\phi(\gamma_{1,k}))
&\le
\sum_k \operatorname{Area}(Z_k) \\
&\le
2S\|\partial^{\mathrm{multi}}A\|
+L^2A_0\sum_{j=1}^s
\operatorname{FillArea}^{\pm}_{\widetilde Y}(\eta_j)
+L^2A_0|C_A(B)|+|H|.
\end{align*}

Substituting this into \eqref{eq:1cancel-source} yields
\[
|A|+|H|
\le
2S\|\partial^{\mathrm{multi}}A\|
+L^2A_0\sum_{j=1}^s
\operatorname{FillArea}^{\pm}_{\widetilde Y}(\eta_j)
+L^2A_0|C_A(B)|+|H|+|\partial\widehat\Sigma_A|.
\]
The $|H|$ terms cancel, so
\[
|A|
\le
L^2A_0\sum_{j=1}^s
\operatorname{FillArea}^{\pm}_{\widetilde Y}(\eta_j)
+L^2A_0|C_A(B)|
+2S\|\partial^{\mathrm{multi}}A\|
+|\partial\widehat\Sigma_A|.
\]

By \Cref{lem:ancestor-collar}(ii-iv),
\[
|B|\le M_0|A|,\qquad
|C_A(B)|\le M_1\|\partial^{\mathrm{multi}}B\|,\qquad
\|\partial^{\mathrm{multi}}A\|\le M_2\|\partial^{\mathrm{multi}}B\|.
\]
Also $|\partial\widehat\Sigma_A|\le\|\partial^{\mathrm{multi}}A\|
\le M_2\|\partial^{\mathrm{multi}}B\|$.
Therefore
\[
|B|
\le
\kappa_0'\Bigl(
\sum_{j=1}^s
\operatorname{FillArea}^{\pm}_{\widetilde Y}(\eta_j)
+\|\partial^{\mathrm{multi}}B\|
\Bigr),
\]
where
$\kappa_0'
:=
M_0\max\Bigl\{
L^2A_0,\ L^2A_0 M_1+(2S+1)M_2
\Bigr\}$.
Since each path-bigon in $\widetilde Y^{\pm,L}$ unfolds to at most $L$
free bigons, $\operatorname{FillArea}^{\pm}_{\widetilde Y}(\eta_j)
\le L\operatorname{FillArea}^{\pm,L}_{\widetilde Y}(\eta_j)$.
Setting $\kappa_0:=L\kappa_0'$ gives the stated hfmHQM inequality
with $\operatorname{FillArea}^{\pm,L}_{\widetilde Y}$.
\end{proof}

\begin{proposition}[Template data from quasi-isometries]
\label{prop:template-data-from-qi}
Let $\mathcal P$ and $\mathcal Q$ be finite presentations with
nonempty generating sets of
quasi-isometric groups with edge-path maps $q,r$ and vertex tracks
$P_v,Q_y$ as constructed below.
If every relator-image loop $q(\partial\sigma)$,
$r(\partial\tau)$ and every strip loop
$P_v\cdot rq(e)\cdot P_w^{-1}\cdot e^{-1}$
(and their $Q_y$-counterparts) has
$\operatorname{FillArea}^{\pm}\ge 1$ in the appropriate free
completion, then $\widetilde X_{\mathcal P}$ and
$\widetilde X_{\mathcal Q}$ admit nondegenerate bounded template
data.
\end{proposition}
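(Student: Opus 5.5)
We construct the data explicitly from a quasi-isometry and then check the four conditions of \Cref{def:bounded-template-data} one at a time. Fix a quasi-isometry $f\colon G(\mathcal P)\to G(\mathcal Q)$ with quasi-inverse $g$, both $(\lambda,\epsilon)$-coarse Lipschitz, with $d(gf(x),x)\le\epsilon$ and $d(fg(y),y)\le\epsilon$. Identify the vertex sets $\widetilde X_{\mathcal P}^{(0)}=G(\mathcal P)$ and $\widetilde X_{\mathcal Q}^{(0)}=G(\mathcal Q)$. After precomposing with a left translation we may take $f,g$ to be equivariant on the level of finitely many orbit types (the usual trick is to define $f$ on generators and extend). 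Concretely, set $q(v):=f(v)$. For each oriented edge $e\colon v\to w$ labelled by $s\in S^{\pm}$, the points $f(v),f(w)$ are at distance at most $\lambda+\epsilon$. Choose a geodesic edge-path between them, depending only on the label $s$ and on the local translate, so that $q(\bar e)=\overline{q(e)}$. If $f(v)=f(w)$, replace the empty path by a backtrack $tt^{-1}$ for a fixed generator $t$; this is possible because the generating set is nonempty, and it gives nondegeneracy with lengths in $[1,L]$, $L:=\max\{\lambda+\epsilon,2\}$. Define $r$ symmetrically. The vertex tracks $P_v$ are geodesics from $v$ to $rq(v)$ of length at most $L'$ (enlarge $L$), replaced by a backtrack when $rq(v)=v$, so that each has positive length.

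Next come the templates. For a $2$-cell $\sigma$ the loop $q(\partial\sigma)$ has length at most $L L_{\max}$. Likewise each strip loop $P_v\cdot rq(e)\cdot P_w^{-1}\cdot e^{-1}$ has length at most $3L^2+1$. These loops are null-homotopic because the targets are simply connected. By the choice of $q,r,P$ using only finitely many label-dependent local choices, they fall into finitely many types up to deck transformation. The hypothesis gives $\operatorname{FillArea}^{\pm}\ge 1$ for each. Then \Cref{lem:simple-boundary-exists-lemma} yields an area-minimising filler with simple boundary. By \Cref{cor:simple-boundary-connected-dual} this filler has connected face-dual graph. Choosing one model per type (as in \Cref{lem:bounded-length-connected-dual}) and transporting by deck transformations gives finite template families. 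We set $A$ and $S$ to be the maximal areas; positivity of area is exactly the hypothesis. The $\widetilde Y$ side, with tracks $Q_y$ and the corresponding templates, is identical.

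The main obstacle is making the construction genuinely finite-type. A bare quasi-isometry between groups need not be equivariant, so the loops $q(\partial\sigma)$ could a priori come in infinitely many labelled types. Length alone bounds the number of types up to deck transformation, since the Cayley graphs are locally finite and the action is cofinite. The point to verify is that choosing fillers per deck-orbit still gives templates whose images are consistent, i.e.\ that two faces sharing an edge use the same interface path $q(e)$. I would handle this by fixing $q$ on edges first and only then choosing templates per loop-orbit, so that consistency is automatic. Orientation compatibility $q(\bar e)=\overline{q(e)}$ is enforced by choosing on one orientation per unoriented edge.
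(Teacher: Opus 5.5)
Your proposal is correct and follows essentially the same route as the paper: geodesic-or-backtrack edge paths and vertex tracks, a uniform length bound on the relator-image and strip loops, then \Cref{lem:simple-boundary-exists-lemma} and \Cref{cor:simple-boundary-connected-dual}, with finiteness coming from the finitely many loop types of bounded length up to deck transformation. Your aside about making $f,g$ equivariant on finitely many orbit types is unnecessary, and it is not achievable for a general quasi-isometry, but you rightly drop it in favour of the length-plus-cocompactness count, which is exactly what the paper uses.
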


\begin{proof}
Let $q_0\colon V(\widetilde X_{\mathcal P})\to
V(\widetilde X_{\mathcal Q})$ and
$r_0\colon V(\widetilde X_{\mathcal Q})\to
V(\widetilde X_{\mathcal P})$ be quasi-isometries between the
Cayley graphs with quasi-isometry constant $\lambda$ and
coarse inverse constant $C_0$
(i.e.\ $d(q_0(v),q_0(w))\le\lambda d(v,w)+\lambda$ for all
vertices $v,w$ and $d(r_0q_0(v),v)\le C_0$ for all $v$,
and symmetrically for $r_0$).

\emph{Edge-path maps.}
For each oriented edge $e\colon v\to w$ of $\widetilde X_{\mathcal P}$:
if $q_0(v)\neq q_0(w)$, let $q(e)$ be a fixed geodesic from
$q_0(v)$ to $q_0(w)$; if $q_0(v)=q_0(w)$, let $q(e)$ be a fixed
length-$2$ backtrack at $q_0(v)$.
Define $q(\bar e):=\overline{q(e)}$.
This gives $1\le|q(e)|\le 2\lambda$.
Construct $r$ symmetrically.

\emph{Vertex tracks.}
For each vertex $v\in\widetilde X_{\mathcal P}^{(0)}$:
if $rq(v)\neq v$, let $P_v$ be a geodesic from $v$ to $rq(v)$
(length in $[1,C_0]$);
if $rq(v)=v$, choose a fixed oriented edge $a_v$ at $v$ and set
$P_v:=a_v\bar a_v$ (length $2$).
Define $Q_y$ symmetrically.
Set $L:=\lceil\max\{2\lambda,C_0,2\}\rceil$.

\emph{Templates.}
Set $K_{\mathrm{rel}}:=L_{\max}(\mathcal P)\cdot L$,
$K_{\mathrm{strip}}:=L^2+2L+1$.
Every relator-image loop $q(\partial\sigma)$ has positive length
$\le K_{\mathrm{rel}}$ and is null-homotopic (closed loop in the
simply connected $\widetilde X_{\mathcal Q}$).
Every strip loop has positive length $\le K_{\mathrm{strip}}$
and is null-homotopic.
By the hypothesis $\operatorname{FillArea}^{\pm}\ge 1$,
\Cref{lem:simple-boundary-exists-lemma} provides a simple-boundary
filler with area $=\operatorname{FillArea}^{\pm}\ge 1$;
\Cref{cor:simple-boundary-connected-dual} gives connected face-dual.
Since only finitely many labelled loop types of bounded length occur,
the maxima $A,S$ are finite and all templates come from finite
families.
Model fillers are transported by deck transformations.

\end{proof}

\begin{theorem}[Quasi-isometry invariance of the hfmHQM positivity
criterion]
\label{thm:hfmhqm-qi-invariance}
Let $\mathcal P$ and $\mathcal Q$ be finite presentations
presenting quasi-isometric groups.
Then the positivity criterion
$\inf_{n\ge 1}
\widetilde\Lambda^{\ast,\langle 1\rangle,
\mathrm{hfmhqm},\pm}_{\mathcal P}(n)>0$
is a quasi-isometry invariant of finitely presented groups.
Combined with \Cref{thm:hfmhqm-free-hyp}, this criterion detects
word-hyperbolicity.
\end{theorem}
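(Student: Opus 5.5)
The plan is to deduce the statement from \Cref{thm:hfmhqm-free-hyp} together with the quasi-isometry invariance of word-hyperbolicity, as the introduction indicates. I will not go through the template pushforward machinery. Write $P(\mathcal P)$ for the assertion $\inf_{n\ge 1}\widetilde\Lambda^{\ast,\langle 1\rangle,\mathrm{hfmhqm},\pm}_{\mathcal P}(n)>0$.

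First I will show that $P(\mathcal P)$ holds if and only if $G(\mathcal P)$ is word-hyperbolic. \Cref{thm:hfmhqm-free-hyp} states that hyperbolicity is equivalent to positivity for some $\kappa\ge 1$, and equivalently for every $\kappa\ge 1$. Taking $\kappa=1$ in the ``every'' form gives hyperbolic $\Rightarrow P(\mathcal P)$. Conversely, $P(\mathcal P)$ is positivity for the particular value $\kappa=1$, so the ``some $\kappa$'' direction gives $P(\mathcal P)\Rightarrow$ hyperbolic.

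I will also note why the $\kappa=1$ case carries no extra difficulty. The $(\Leftarrow)$ proof of \Cref{thm:hfmhqm-free-hyp} only needs that lifted area-minimising diagrams over $\mathcal P^{\pm}$ lie in the admissible class. This is exactly \Cref{lem:minimal-is-hfmhqm}, which gives $1$-hfmHQM with no loss in $\kappa$. Hence the class at $\kappa=1$ is nonempty in the required sense, and positivity forces $\Lambda^\ast_{\mathcal P^\pm}$ to be bounded below; \Cref{thm:dual-spectral-hyp-iff} and \Cref{rem:pm-dual-hyp} then give hyperbolicity.

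Next, let $G(\mathcal P)$ and $G(\mathcal Q)$ be quasi-isometric. Word-hyperbolicity is a quasi-isometry invariant among finitely generated groups: the Gromov $\delta$-thin triangle condition transfers along quasi-isometries of geodesic Cayley graphs by stability of quasi-geodesics (see \cite{BridsonHaefliger1999,GhysdelaHarpe1990}). The chain
\[
P(\mathcal P)\iff G(\mathcal P)\text{ hyperbolic}\iff G(\mathcal Q)\text{ hyperbolic}\iff P(\mathcal Q)
\]
then shows that $P$ depends only on the quasi-isometry class. The same chain shows that $P$ is independent of the choice of finite presentation, since any two presentations of the same group give quasi-isometric Cayley graphs. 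So $P$ is a well-defined invariant of finitely presented groups. The final sentence of the statement is then the first step read backwards.

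The main point to check is that \Cref{thm:hfmhqm-free-hyp} applies to an arbitrary finite presentation with no extra hypotheses. In particular, the standing conventions (nonempty $R$, cyclically reduced relators of length at least $2$) are either harmless or can be arranged by Tietze moves, which change neither the group nor, by the argument above, the truth value of $P$. If one instead wanted a quantitative comparison of the two profiles, that would need \Cref{thm:hfmhqm-transfer} and \Cref{prop:diagramwise-spectral-pushforward}, and the target would lie in the path-completed category. The qualitative invariance asked for here avoids that issue entirely.
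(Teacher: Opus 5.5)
Your proposal is correct and matches the paper's proof: \Cref{thm:hfmhqm-free-hyp} at $\kappa=1$ makes the positivity criterion equivalent to hyperbolicity for each presentation, and quasi-isometry invariance of hyperbolicity carries it from $\mathcal P$ to $\mathcal Q$ and back. Your closing aside on Tietze moves is not needed, since the statement already falls under the paper's standing conventions, and as written it is slightly circular: equating the truth value of $P$ across a Tietze move already assumes the equivalence for the presentation that violates the conventions.
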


\begin{proof}
If $\inf_n\widetilde\Lambda^{\ast,\langle 1\rangle,
\mathrm{hfmhqm},\pm}_{\mathcal P}(n)>0$, then
\Cref{thm:hfmhqm-free-hyp} gives hyperbolicity of $G(\mathcal P)$.
Since hyperbolicity is a quasi-isometry invariant
\cite{BridsonHaefliger1999,GhysdelaHarpe1990},
$G(\mathcal Q)$ is also
hyperbolic, and \Cref{thm:hfmhqm-free-hyp} applied to $\mathcal Q$
gives $\inf_n\widetilde\Lambda^{\ast,\langle 1\rangle,
\mathrm{hfmhqm},\pm}_{\mathcal Q}(n)>0$.
The reverse implication is symmetric.
\end{proof}

\begin{remark}[Quantitative interleaving via bounded path-completion]
\label{rem:template-data-scope}
The free-bigon branch of the template substitution
(\Cref{lem:template-substitution}) has an obstruction: when
$|q(e)|\ge 2$, the chain of free bigons filling
$q(e)\overline{q(e)}$ does not have simple boundary
(\Cref{rem:scope-simple-boundary}), so \Cref{lem:cell-substitution}
does not apply.
This is resolved by the bounded path-completion
(\Cref{subsec:bounded-path-completion} below): adjoin single $2$-cells
$B_p$ for loops $p\bar p$ of bounded length, and push forward into
the enlarged target.
The resulting \emph{mixed quantitative interleaving}
(\Cref{thm:mixed-qi-interleaving}) has source in the ordinary
free completion and target in the bounded path-completion.
Descending from the path-completed target to the ordinary free
completion is obstructed by a hole-freeness failure under face-set
collapse (\Cref{prop:collapse-obstruction}); the mixed interleaving
is the strongest quantitative comparison obtained.
See \Cref{rem:descent-obstruction} for the precise obstruction.
\end{remark}

\subsection{Bounded path-completion and mixed quantitative interleaving}
\label{subsec:bounded-path-completion}

\begin{definition}[Bounded path-completion]
\label{def:bounded-path-completion}
Let $X$ be a simply connected locally finite combinatorial $2$-complex,
and let $M\ge 1$.
The \emph{$M$-path completion} $X^{\pm,M}$ is obtained from the
free completion $X^{\pm}$ by adjoining, for every oriented
edge-path $p=e_1\cdots e_m$ in $X^{(1)}$ with $2\le m\le M$,
a single $2$-cell $B_p$ attached along $p\cdot p^{-1}$.
For $|p|=1$ the ordinary free bigon already plays this role.
Write $\operatorname{FillArea}^{\pm,M}_X(\gamma)
:=\operatorname{FillArea}_{X^{\pm,M}}(\gamma)$.
If $X$ is the universal cover of a finite presentation complex,
$X^{\pm,M}$ is cocompact with finitely many $2$-cell orbits.
\end{definition}

\begin{lemma}[Single-cell substitution]
\label{lem:single-cell-substitution}
Let $K$ be a finite connected planar $2$-complex embedded in $S^2$
with $h+1$ boundary components, and let $\sigma$ be a $2$-cell of $K$.
Let $B$ be a single $2$-cell with the same attaching walk as $\sigma$.
Then replacing $\sigma$ by $B$ produces a $2$-complex $K'$ with
$|K'|\cong|K|$; in particular, $K'$ inherits planarity,
connectedness, the same boundary walks, and simple connectivity
whenever $K$ has it.
\end{lemma}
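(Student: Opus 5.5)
The natural approach is to realise the replacement as a homeomorphism that is the identity away from $\sigma$ and a reparametrisation on $\sigma$. First I record the setup. In the $S^2$-embedding of $K$, the open cell $\sigma$ is an open topological disk $R_\sigma$ whose closure is the image of a closed disk $\bar D^2$ under the characteristic map $\chi_\sigma$. This map is a homeomorphism on the interior, and its restriction to $\partial\bar D^2$ is the attaching walk. The cell $B$ is, by definition, a single $2$-cell attached along the same walk. So $B$ has a characteristic map $\chi_B\colon\bar D^2\to K'$ whose boundary restriction agrees, edge by edge, with that of $\chi_\sigma$.

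Next I define $K'$ as the quotient of $(K\setminus\operatorname{int}\sigma)\sqcup\bar D^2$ by the attaching map of $B$. Since the attaching map is literally the same, $K'$ is canonically $(K\setminus\operatorname{int}\sigma)\cup_{\partial}\bar D^2$. The same is true of $K$ itself, with the gluing given by $\chi_\sigma|_{\partial\bar D^2}$. The map $K\to K'$ is then defined as the identity on $K\setminus\operatorname{int}\sigma$ and as $\chi_B\circ\chi_\sigma^{-1}$ on $\operatorname{int}\sigma$. This is continuous and bijective by the universal property of the two pushouts. Both spaces are compact Hausdorff, so it is a homeomorphism $|K|\cong|K'|$.

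The inherited properties then follow directly from this homeomorphism.
\begin{enumerate}
\item \emph{Planarity.} Composing the embedding $|K|\hookrightarrow S^2$ with the inverse homeomorphism gives an embedding of $|K'|$. It is cellular, because $B$ maps onto $\bar R_\sigma$.
\item \emph{Connectedness and simple connectivity.} These are topological invariants, so they transfer immediately.
\item \emph{Number of boundary components.} The complement $S^2\setminus|K'|$ equals $S^2\setminus|K|$, so $K'$ has the same $h+1$ boundary components.
\item \emph{Boundary walks.} $K'$ agrees with $K$ outside the open cell, so the $1$-skeleta coincide. The boundary walks are read from the rotation system at vertices and the complementary regions, and neither of these is touched.
\end{enumerate}

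The only delicate point is when the attaching walk is non-simple. This is the case for a path-bigon, whose walk $p\bar p$ revisits vertices. In that case $\bar R_\sigma$ is not an embedded closed disk. The argument above never uses simplicity of the boundary, only that $\sigma$ and $B$ have identical attaching maps. I expect the main care to be in stating precisely that the two pushouts agree, rather than in any geometric argument.
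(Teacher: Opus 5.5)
Your proposal is correct and follows the same argument as the paper, which simply observes that $K$ and $K'$ both arise by attaching a closed disk to $K\setminus\operatorname{int}\sigma$ along the same attaching map, so $|K'|\cong|K|$. Your version makes the pushout, the compact-Hausdorff step, and the transfer of the embedding and boundary data explicit, and you correctly note that the argument never uses simplicity of the attaching walk.
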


\begin{proof}
A single $2$-cell has underlying space $\bar D^2$.
Both $\sigma$ and $B$ are attached to the same boundary walk, so
$K'$ is obtained from $K$ by replacing the filling disk of $\sigma$
by the filling disk of $B$ along the same attaching map.
Hence $|K'|\cong|K|$.
\end{proof}

\begin{lemma}[Path-completed template substitution]
\label{lem:template-substitution-path}
Assume nondegenerate bounded template data with edge-length bound $L$,
and set $M:=L$.
Let $\phi\colon D\to\widetilde X^{\pm}$ be a free-completed disk map.
Replacing each ordinary source face by its relator template and each
source free bigon $e\bar e$ by the single path-bigon $B_{q(\phi(e))}
\subset\widetilde Y^{\pm,M}$
produces a disk map
$q_{\bullet}D\to\widetilde Y^{\pm,M}$ with boundary
$q(\phi(\partial D))$ and
\[
\operatorname{Area}(q_{\bullet}D)\le A_0\operatorname{Area}(D),
\qquad
|\partial(q_{\bullet}D)|\le L|\partial D|.
\]
\end{lemma}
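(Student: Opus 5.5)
The plan is to run the construction of \Cref{lem:template-substitution} again, with the target $\widetilde Y^{\pm,M}$ and $M:=L$. First form the $q$-path expansion $D[q]$ of \Cref{def:path-expansion}. It is a subdivision of $D$, so it is again a finite connected simply connected planar $2$-complex. Its outer boundary walk is $q(\phi(\partial D))$, and each face $\sigma[q]$ has boundary walk $q(\phi(\partial\sigma))$. We then replace the faces of $D[q]$ one at a time.

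The replacement depends on the type of the source face.
\begin{itemize}
\item If $\sigma$ is an ordinary face, substitute the relator template $T^q_\sigma$. By \Cref{def:bounded-template-data}(b) it has area in $[1,A]$, simple boundary, and boundary walk equal to $\partial\sigma[q]$. Hence \Cref{lem:cell-substitution} applies, and the substitution preserves planar type, simple connectivity (because $|K'|\cong|K|$) and the outer boundary walk.
\item If $\sigma$ is a free bigon with image $e\bar e$, then $\partial\sigma[q]=p\bar p$ with $p:=q(\phi(e))$ and $1\le|p|\le L=M$.
\begin{itemize}
\item If $|p|=1$, the face is already an ordinary free bigon of $\widetilde Y^{\pm}$, and we keep it.
\item If $2\le|p|\le M$, the cell $B_p$ exists in $\widetilde Y^{\pm,M}$ by \Cref{def:bounded-path-completion}. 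Swap $\sigma[q]$ for $B_p$ using \Cref{lem:single-cell-substitution}, which again gives a homeomorphic underlying space with the same boundary walks.
\end{itemize}
\end{itemize}

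Every replacement leaves the underlying space unchanged up to homeomorphism. So after all replacements the result is a finite connected simply connected planar $2$-complex whose outer boundary walk is $q(\phi(\partial D))$. Each of its cells maps homeomorphically onto a cell of $\widetilde Y^{\pm,M}$, compatibly with the attaching maps, so it is a disk map $q_{\bullet}D\to\widetilde Y^{\pm,M}$.

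The point needing care is compatibility of the cellular map across faces. Adjacent replacement pieces are glued along the subdivided edges of $D[q]$, and those edges carry the labels $f_{a,i}$ on both sides. This follows from the orientation convention $q(\phi(\bar a))=\overline{q(\phi(a))}$. Consequently the maps on the pieces agree on shared edges and vertices, and they assemble to a single cellular map.

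The numerical bounds are counting arguments.
\begin{itemize}
\item Each ordinary source face contributes at most $A\le A_0$ faces, and each free bigon contributes exactly one face. This gives $\operatorname{Area}(q_{\bullet}D)\le A_0\operatorname{Area}(D)$.
\item Each edge occurrence of the boundary walk $\partial D$ becomes a path of length at most $L$. This gives $|\partial(q_{\bullet}D)|\le L|\partial D|$.
\end{itemize}

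The main obstacle is the bigon case with $|p|\ge2$. A chain of free bigons filling $p\bar p$ would not have simple boundary, so \Cref{lem:cell-substitution} could not be used there (\Cref{rem:scope-simple-boundary}). The single cell $B_p$ removes this difficulty, because \Cref{lem:single-cell-substitution} needs no simple-boundary hypothesis.
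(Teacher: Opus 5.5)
Your proposal is correct and follows the paper's proof essentially step for step. Both arguments form the path expansion $D[q]$, substitute the simple-boundary relator templates via the cell-substitution lemma, and swap each free bigon for the single cell $B_{q(\phi(e))}$ via the single-cell substitution lemma, which is the ordinary free bigon when $|q(\phi(e))|=1$. Both then obtain the two bounds by the same per-face and per-edge counting. Your remark on label compatibility across shared subdivided edges only makes explicit what the paper leaves implicit.
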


\begin{proof}
Start with the $q$-path expansion $D[q]$
(\Cref{def:path-expansion}).
For each ordinary source face $\sigma$, the expanded face
$\sigma[q]$ has boundary walk $q(\phi(\partial\sigma))$;
replace it by the relator template $T^q_\sigma$ via
\Cref{lem:cell-substitution}.
For each source free bigon $e\bar e$, the expanded face has boundary
$q(\phi(e))\overline{q(\phi(e))}$, which is the attaching walk of
$B_{q(\phi(e))}\subset\widetilde Y^{\pm,M}$
(when $|q(\phi(e))|=1$ this is the ordinary free bigon);
replace it by $B_{q(\phi(e))}$ via
\Cref{lem:single-cell-substitution}.
Each replacement preserves the planar type, so the result is a
disk map $q_{\bullet}D\to\widetilde Y^{\pm,M}$.
Each ordinary source face contributes at most $A$ faces to
$q_{\bullet}D$, and each source free bigon contributes exactly $1$
path-bigon face; hence
$\operatorname{Area}(q_{\bullet}D)\le A_0\operatorname{Area}(D)$
where $A_0:=\max\{A,L\}$.
Each boundary edge of $D$ expands to at most $L$ edges, so
$|\partial(q_{\bullet}D)|\le L|\partial D|$.
\end{proof}

\begin{proposition}[Filling-area comparison]
\label{prop:fillarea-path-comparison}
For every null-homotopic $\gamma$ in $X^{(1)}$,
\[
\operatorname{FillArea}^{\pm,M}_X(\gamma)
\le
\operatorname{FillArea}^{\pm}_X(\gamma)
\le
M\operatorname{FillArea}^{\pm,M}_X(\gamma).
\]
\end{proposition}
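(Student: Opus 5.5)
The two inequalities are proved separately, each by a filler-conversion argument; since $X^{\pm}$ is a subcomplex of $X^{\pm,M}$, the first is immediate.

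\emph{First inequality.} Every free-completed disk map $D\to X^{\pm}$ is also a disk map over $X^{\pm,M}$, with the same area. An area-minimising filler of $\gamma$ over $X^{\pm}$ is therefore an admissible filler over $X^{\pm,M}$. Taking the minimum gives $\operatorname{FillArea}^{\pm,M}_X(\gamma)\le\operatorname{FillArea}^{\pm}_X(\gamma)$. If the right-hand side is $+\infty$ there is nothing to prove.

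\emph{Second inequality.} Let $E\to X^{\pm,M}$ be an area-minimising filler of $\gamma$, and write $N:=\operatorname{Area}(E)$. Every face of $E$ is an ordinary relator cell, a free bigon, or a path-bigon $B_p$ with $2\le|p|\le M$; only the path-bigons need replacing. For $p=e_1\cdots e_m$, the loop $p\,p^{-1}$ has a free-completed filler of area exactly $m\le M$. Concretely, take the nested chain of bigons $e_m\bar e_m$, then $e_{m-1}\bar e_{m-1}$ around it, and so on; equivalently, read off $p\,p^{-1}$ as a product of $m$ conjugates of the words $e_i\bar e_i$.

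We cannot substitute these fillers face by face with \Cref{lem:cell-substitution}, because the chain does not have simple boundary (\Cref{rem:scope-simple-boundary}). Instead we argue algebraically, as in \Cref{lem:boundary-synthesis}:
\begin{enumerate}[label=\textup{(\arabic*)}]
\item Read $\gamma$, via van Kampen's lemma and a spanning tree of $E$, as a product of $N$ conjugates of the boundary words of the faces of $E$.
\item Replace each factor $p\,p^{-1}$ by its product of $|p|\le M$ conjugates of free-bigon words.
\item Apply the converse van Kampen lemma (\cite[Ch.~V]{LyndonSchupp}) to the resulting expression, which now involves only cells of $X^{\pm}$.
\end{enumerate}
The product in step (1) has $N$ factors and step (2) multiplies the count by at most $M$, so the converse lemma yields a disk map over $X^{\pm}$ with boundary $\gamma$ and at most $MN$ faces.

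Alternatively, one can apply \Cref{lem:boundary-synthesis} directly. Delete the open path-bigon cells from $E$, giving a planar complex $K$ with inner boundaries $p\,p^{-1}$, and cap each inner boundary with the chain filler. Either route gives $\operatorname{FillArea}^{\pm}_X(\gamma)\le MN$, hence $\operatorname{FillArea}^{\pm}_X(\gamma)\le M\operatorname{FillArea}^{\pm,M}_X(\gamma)$.

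The step needing care is this non-simple-boundary replacement. One must check that \Cref{lem:boundary-synthesis} really imposes no simple-boundary condition on the cap fillers, and that it counts each ordinary face once and each path-bigon at most $|p|\le M$ times. Both points are clear from the conjugate-product formulation.
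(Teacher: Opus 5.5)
Your proof is correct, and you handle the first inequality exactly as the paper does. For the second, your accounting matches the paper's: each path-bigon $B_p$ is traded for $|p|\le M$ free bigons. What differs is how you justify that the result is a filler.

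The paper does the replacement geometrically. It substitutes the chain of $|p|$ free bigons for each path-bigon face of the filler in place, and asserts that the result is a disk map. This is a one-line local surgery that leaves every other face untouched. As you note, however, it is not covered by \Cref{lem:cell-substitution}, because the chain has non-simple boundary; the paper records this same obstruction in \Cref{rem:template-data-scope}. Gluing the chain in pinches pairs of vertices on the face boundary. Worse, suppose two edges of the face boundary that the chain places on the two sides of a single bigon already coincide in the filler (a face folded around a hair). Then the naive pushout closes that bigon into a $2$-sphere.

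Your route goes through the conjugate-product expression and the converse van Kampen lemma (equivalently \Cref{lem:boundary-synthesis}). It builds a fresh disk diagram and needs no such topological check. What it gives up is that the new filler is no longer a local modification of $E$, which is irrelevant for an area bound.

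Your formulation also shows the factor $M$ is slack. Each factor $u\,pp^{-1}u^{-1}$ is already freely trivial, so you may delete it instead of expanding it. This gives $\operatorname{FillArea}^{\pm}_X(\gamma)\le\operatorname{FillArea}^{\pm,M}_X(\gamma)$, and hence the two filling areas are equal.
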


\begin{proof}
The first inequality holds because $X^{\pm}\subset X^{\pm,M}$.
For the second, let $D\to X^{\pm,M}$ be a filler of $\gamma$.
Replace each path-bigon face $B_p$ (with $|p|=m\le M$) by the
canonical chain of $m$ ordinary free bigons filling $p\bar p$
(each bigon fills one backtracking pair $e_t\bar e_t$), leaving
all other faces unchanged.
The result is a disk map over $X^{\pm}$ with the same boundary walk
$\gamma$ and area at most $M\operatorname{Area}(D)$.
Taking infima gives
$\operatorname{FillArea}^{\pm}_X(\gamma)\le
M\operatorname{FillArea}^{\pm,M}_X(\gamma)$.
\end{proof}

\begin{theorem}[Mixed quantitative interleaving]
\label{thm:mixed-qi-interleaving}
Let $\mathcal P$ and $\mathcal Q$ be finite presentations of
quasi-isometric groups admitting nondegenerate bounded template data
with edge-length bound $L$.
Set $M:=L$.
Then there exist constants $C,\kappa_0\ge 1$ such that
\[
\widetilde\Lambda_{\mathcal Q}^{\ast,\langle\kappa_0\rangle,
\mathrm{hfmhqm},\pm,M}(\lceil Ln\rceil)
\le
C
\widetilde\Lambda_{\mathcal P}^{\ast,\langle 1\rangle,
\mathrm{hfmhqm},\pm}(n)
\]
for all $n\ge 1$, and symmetrically with $\mathcal P,\mathcal Q$
interchanged.
Here $\widetilde\Lambda^{\ast,\langle\kappa\rangle,
\mathrm{hfmhqm},\pm,M}_{\mathcal Q}$ is the hfmHQM profile
defined using disk maps over
$\widetilde X_{\mathcal Q}^{\pm,M}$ and filling area
$\operatorname{FillArea}^{\pm,M}$.
\end{theorem}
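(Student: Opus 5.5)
The plan is to push forward a near-optimal source diagram and control both the spectrum and the admissibility class. Fix $n\ge 1$. If the $\mathcal P$-profile at $n$ is $+\infty$ there is nothing to prove. Otherwise choose a $1$-hfmHQM disk map $\phi\colon D\to\widetilde X_{\mathcal P}^{\pm}$ with $|\partial D|\le n$ and $\widetilde\mu_1(D)$ within a factor $2$ of $\widetilde\Lambda_{\mathcal P}^{\ast,\langle1\rangle,\mathrm{hfmhqm},\pm}(n)$. If the infimum is $0$, we instead take a sequence of such maps realising it. We then form the path-completed pushforward $q_\bullet D\to\widetilde X_{\mathcal Q}^{\pm,M}$ with $M=L$ (\Cref{lem:template-substitution-path}). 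This is a disk map with boundary $q(\phi(\partial D))$, and $|\partial(q_\bullet D)|\le L|\partial D|\le\lceil Ln\rceil$. So $q_\bullet D$ lies in the length range of the target profile at $\lceil Ln\rceil$.

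\textbf{Admissibility.} By \Cref{thm:hfmhqm-transfer}, the pushforward of a $1$-hfmHQM map is $\kappa_0$-hfmHQM over $\widetilde X_{\mathcal Q}^{\pm,L}$. Here the inequality is measured with $\operatorname{FillArea}^{\pm,L}$, which is exactly the filling area used in the target profile. The constant $\kappa_0$ depends only on the template data and $L^{\pm}_{\max}$. Hence $q_\bullet D$ is admissible, and
\[
\widetilde\Lambda_{\mathcal Q}^{\ast,\langle\kappa_0\rangle,\mathrm{hfmhqm},\pm,M}(\lceil Ln\rceil)\le\widetilde\mu_1(q_\bullet D).
\]

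\textbf{Spectral comparison.} \Cref{prop:diagramwise-spectral-pushforward} gives $\widetilde\mu_1(q_\bullet D)\le C_{\mathrm{spec}}\widetilde\mu_1(D)$. Combining the displays yields the inequality with $C=2C_{\mathrm{spec}}$, or with $C=C_{\mathrm{spec}}$ after passing to the limit along the sequence. The symmetric statement follows by exchanging the roles of $q,r$ and $P_v,Q_y$, since the template data of \Cref{def:bounded-template-data} is symmetric. We would also take $\kappa_0$ and $C$ to be the maximum of the constants from the two directions.

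\textbf{Main obstacle.} The delicate point is the spectral comparison. \Cref{prop:diagramwise-spectral-pushforward} is stated for pushforwards built from the template data, and its gadget decomposition treats path-bigon replacements as single-vertex core gadgets. We must check that this is literally the same complex $q_\bullet D$ used in the transfer theorem, so that one diagram is simultaneously admissible and spectrally comparable. We must also check that the finitely many gadget types include the path-bigon lengths $1,\dots,L$, so that $C_{\mathrm{spec}}$ is independent of $D$. Once that identification is confirmed, the remaining steps are bookkeeping of constants and the monotonicity of the profiles in $n$.
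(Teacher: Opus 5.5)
Your proposal is correct and follows the paper's proof essentially verbatim: push forward an arbitrary $1$-hfmHQM source map via \Cref{lem:template-substitution-path}, obtain $\kappa_0$-hfmHQM admissibility from \Cref{thm:hfmhqm-transfer} and the eigenvalue bound from \Cref{prop:diagramwise-spectral-pushforward}, and conclude with $C=C_{\mathrm{spec}}$ (symmetry handles the reverse direction). The only difference is cosmetic: the paper does not choose a near-optimal $D$ or a limiting sequence, because the bound $\widetilde\Lambda_{\mathcal Q}^{\ast,\langle\kappa_0\rangle,\mathrm{hfmhqm},\pm,M}(\lceil Ln\rceil)\le C_{\mathrm{spec}}\widetilde\mu_1(D)$ holds for every admissible $D$, so it takes the infimum over $D$ directly.
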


\begin{proof}
Fix $n\ge 1$.
If there is no $1$-hfmHQM disk map
$\phi\colon D\to\widetilde X_{\mathcal P}^{\pm}$ with
$|\partial D|\le n$, then
$\widetilde\Lambda_{\mathcal P}^{\ast,\langle 1\rangle,
\mathrm{hfmhqm},\pm}(n)=+\infty$ and the claimed inequality is
trivial.
So choose a $1$-hfmHQM disk map
$\phi\colon D\to\widetilde X_{\mathcal P}^{\pm}$ with
$|\partial D|\le n$.
By \Cref{thm:hfmhqm-transfer}, the path-completed pushforward
$q_{\bullet}D\to\widetilde X_{\mathcal Q}^{\pm,L}$ is
$\kappa_0$-hfmHQM (with
$\operatorname{FillArea}^{\pm,L}_{\widetilde X_{\mathcal Q}}$).
By \Cref{lem:template-substitution-path},
$|\partial(q_{\bullet}D)|\le L|\partial D|\le Ln$.
By \Cref{prop:diagramwise-spectral-pushforward},
$\widetilde\mu_1(q_{\bullet}D)\le C_{\mathrm{spec}}\widetilde\mu_1(D)$.
Hence
\[
\widetilde\Lambda_{\mathcal Q}^{\ast,\langle\kappa_0\rangle,
\mathrm{hfmhqm},\pm,M}(\lceil Ln\rceil)
\le
\widetilde\mu_1(q_{\bullet}D)
\le
C_{\mathrm{spec}}\widetilde\mu_1(D).
\]
Taking the infimum over all such $D$ gives
$\widetilde\Lambda_{\mathcal Q}^{\ast,\langle\kappa_0\rangle,
\mathrm{hfmhqm},\pm,M}(\lceil Ln\rceil)
\le
C_{\mathrm{spec}}
\widetilde\Lambda_{\mathcal P}^{\ast,\langle 1\rangle,
\mathrm{hfmhqm},\pm}(n)$.
Set $C:=C_{\mathrm{spec}}$.
The reverse inequality is symmetric.
\end{proof}

\begin{definition}[Collapse of face-sets under unfolding]
\label{def:collapse-under-unfolding}
Let $D\to X^{\pm,M}$ be a disk map.
The \emph{unfolding} $D':=\operatorname{unf}_M(D)\to X^{\pm}$ is
obtained by replacing each path-bigon face $\sigma\in F(D)$ of
path-length $\ell(\sigma)\in\{2,\dots,M\}$ by the canonical chain
$C_\sigma$ of $\ell(\sigma)$ ordinary free bigons with the same
attaching walk, leaving all other faces unchanged.
When $\ell(\sigma)=1$, set $C_\sigma:=\{\sigma\}$ (already a free
bigon).
Define the face-collapse map $\pi_M\colon F(D')\to F(D)$ by
$\pi_M(f)=\sigma$ if $f\in F(C_\sigma)$, and $\pi_M(f)=f$ otherwise.
For $A'\subseteq F(D')$, set
$\operatorname{coll}_M(A'):=\pi_M(A')$.
Say $A'$ is \emph{chain-saturated} if for every path-bigon
$\sigma\in F(D)$ one has either $F(C_\sigma)\subseteq A'$ or
$F(C_\sigma)\cap A'=\varnothing$.
\end{definition}

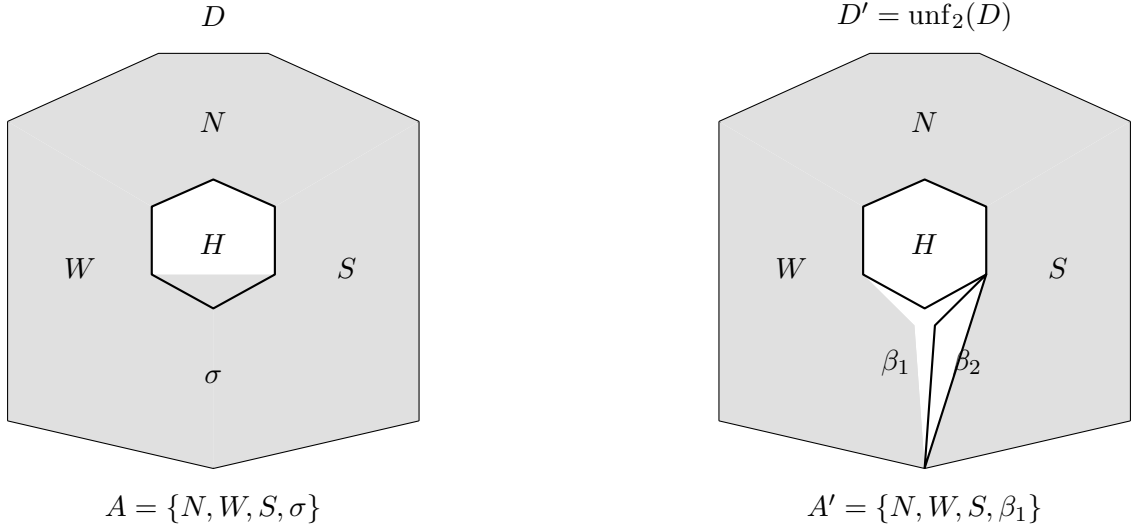
\begin{figure}[t]
\centering
\begin{tikzpicture}[scale=0.9,line join=round,line cap=round]
  \begin{scope}[xshift=-5.2cm]
    \draw[thick] (-3,-2.4) -- (-3,2.0) -- (-0.8,3.0) -- (0.8,3.0) -- (3,2.0) -- (3,-2.4) -- (0,-3.1) -- cycle;
    \fill[white] (-0.9,-0.25) -- (-0.9,0.75) -- (0,1.15) -- (0.9,0.75) -- (0.9,-0.25) -- (0,-0.75) -- cycle;
    \fill[black!12] (-3,2.0) -- (-0.8,3.0) -- (0.8,3.0) -- (3,2.0) -- (0.9,0.75) -- (0,1.15) -- (-0.9,0.75) -- cycle;
    \fill[black!12] (-3,-2.4) -- (-3,2.0) -- (-0.9,0.75) -- (-0.9,-0.25) -- (0,-0.75) -- (0,-3.1) -- cycle;
    \fill[black!12] (3,-2.4) -- (3,2.0) -- (0.9,0.75) -- (0.9,-0.25) -- (0,-0.75) -- (0,-3.1) -- cycle;
    \fill[black!12] (-0.9,-0.25) -- (0.9,-0.25) -- (0,-0.75) -- (0,-3.1) -- cycle;
    \draw[thick] (-0.9,-0.25) -- (-0.9,0.75) -- (0,1.15) -- (0.9,0.75) -- (0.9,-0.25) -- (0,-0.75) -- cycle;
    \node at (0,3.55) {$D$};
    \node at (0,2.0) {$N$};
    \node at (-1.95,-0.15) {$W$};
    \node at (1.95,-0.15) {$S$};
    \node at (0,-1.75) {$\sigma$};
    \node at (0,0.2) {$H$};
    \node at (0,-3.7) {$A=\{N,W,S,\sigma\}$};
  \end{scope}
  \begin{scope}[xshift=5.2cm]
    \draw[thick] (-3,-2.4) -- (-3,2.0) -- (-0.8,3.0) -- (0.8,3.0) -- (3,2.0) -- (3,-2.4) -- (0,-3.1) -- cycle;
    \fill[white] (-0.9,-0.25) -- (-0.9,0.75) -- (0,1.15) -- (0.9,0.75) -- (0.9,-0.25) -- (0,-0.75) -- cycle;
    \fill[black!12] (-3,2.0) -- (-0.8,3.0) -- (0.8,3.0) -- (3,2.0) -- (0.9,0.75) -- (0,1.15) -- (-0.9,0.75) -- cycle;
    \fill[black!12] (-3,-2.4) -- (-3,2.0) -- (-0.9,0.75) -- (-0.9,-0.25) -- (-0.15,-1.0) -- (0,-3.1) -- cycle;
    \fill[black!12] (3,-2.4) -- (3,2.0) -- (0.9,0.75) -- (0.9,-0.25) -- (0.15,-1.0) -- (0,-3.1) -- cycle;
    \fill[black!12] (-0.9,-0.25) -- (-0.15,-1.0) -- (0,-3.1) -- cycle;
    \fill[white] (0.9,-0.25) -- (0.15,-1.0) -- (0,-3.1) -- cycle;
    \draw[thick] (-0.9,-0.25) -- (-0.9,0.75) -- (0,1.15) -- (0.9,0.75) -- (0.9,-0.25) -- (0,-0.75) -- cycle;
    \draw[thick] (0.9,-0.25) -- (0.15,-1.0) -- (0,-3.1) -- cycle;
    \node at (0,3.55) {$D'=\operatorname{unf}_2(D)$};
    \node at (0,2.0) {$N$};
    \node at (-1.95,-0.15) {$W$};
    \node at (1.95,-0.15) {$S$};
    \node at (-0.42,-1.55) {$\beta_1$};
    \node at (0.62,-1.55) {$\beta_2$};
    \node at (0,0.2) {$H$};
    \node at (0,-3.7) {$A'=\{N,W,S,\beta_1\}$};
  \end{scope}
\end{tikzpicture}
\caption{The descent obstruction.
In $D'$ the face-set $A'=\{N,W,S,\beta_1\}$ is dual-connected and
hole-free: the complementary component containing the omitted face
$H$ also contains the omitted bigon $\beta_2$, and hence reaches
the exterior.
After collapsing the unfolded pair $\beta_1\cup\beta_2$ to the
original path-bigon face $\sigma$, the face-set
$A=\{N,W,S,\sigma\}$ surrounds $H$, so $A$ is not hole-free.}
\label{fig:collapse-counterexample}
\end{figure}

\begin{proposition}[Collapse does not preserve hole-freeness]
\label{prop:collapse-obstruction}
There exist $D$, $D'=\operatorname{unf}_2(D)$, and a nonempty
dual-connected hole-free $A'\subseteq F(D')$ such that
$\operatorname{coll}_2(A')$ is dual-connected but not hole-free
in $D$.
In particular, the implication
\[
A'\text{ dual-connected and hole-free in }D'
\Longrightarrow
\operatorname{coll}_M(A')\text{ hole-free in }D
\]
is false in general.
\end{proposition}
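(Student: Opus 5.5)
The plan is to build an explicit counterexample modelled on \Cref{fig:collapse-counterexample}, working over a target in which every cell used exists. I will take $X$ to be the universal cover of the presentation complex of a free group (or any $X$ with enough free bigons). The source $D\to X^{\pm,2}$ will be a disk diagram containing one path-bigon face $\sigma=B_p$ with $|p|=2$, say $p=e_1e_2$. The cell $\sigma$ has boundary $e_1e_2\bar e_2\bar e_1$.

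The first step is to arrange an annulus of faces $N,W,S,\sigma$ in $D$ that surrounds a central face $H$. It will be simplest to let all of $N,W,S,H$ be free bigons or path-bigons as well, so that labels are automatically consistent. The face $\sigma$ will sit in the annulus with one boundary arc on $\partial D$ and the opposite arc on $\partial H$. I also need the meeting point of the two $e_2$ edges, i.e.\ the turnaround vertex of the backtrack $p\bar p$, to lie on $\partial H$.

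In the unfolding $D'=\operatorname{unf}_2(D)$, $\sigma$ becomes a chain $\beta_1\cup\beta_2$ of two free bigons. One bigon, $\beta_2$, fills the backtrack $e_2\bar e_2$ and touches $H$; the other, $\beta_1$, fills $e_1\bar e_1$ and reaches $\partial D$. I then choose the geometry as in the figure, so that $\beta_2$ is adjacent both to $H$ and to the exterior.

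The second step is to verify the three claims. First, $A':=\{N,W,S,\beta_1\}$ is dual-connected: $N,W,S$ are chained around the annulus and $\beta_1$ shares an edge with $W$. Second, $A'$ is hole-free, because the complement of $\Sigma_{A'}$ consists of the faces $H$ and $\beta_2$ together with the exterior; since $H$ meets $\beta_2$ and $\beta_2$ meets $\partial D$, this is a single component containing $U_\infty$. Third, $\pi_2(A')=\{N,W,S,\sigma\}$: it is dual-connected since $\sigma$ inherits the adjacency of $\beta_1$ to $W$ and to $S$. It is not hole-free, since $\Sigma_{\pi_2(A')}$ now contains the full annulus, so $H$ lies in a bounded complementary component.

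The main obstacle is making the unfolding honest. The canonical chain $C_\sigma$ of \Cref{prop:fillarea-path-comparison} must realise the picture, with $\beta_2$ meeting both $H$ and the exterior. For the face-free edge case, a chain of free bigons filling $e_1e_2\bar e_2\bar e_1$ glues $\beta_2$ inside $\beta_1$ along the middle vertex. I will therefore place $D$ so that the inner bigon $\beta_2$ is the one exposed to $H$, and connect $H$'s region to the exterior through $\beta_2$'s boundary vertex path. I will then check, using \Cref{lem:hole-filling}'s component description, that the complement of $\Sigma_{A'}$ is connected.

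If that embedding proves awkward, the fallback is to choose the labels so that $p\bar p$ is attached along a diagonal arc from $\partial D$ to $\partial H$. Then $C_\sigma$ splits along that arc into two bigons, one on each side, exactly as drawn in the figure.
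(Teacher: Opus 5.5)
Your strategy matches the paper's: a ring $N,W,S,\sigma$ around $H$, unfold $\sigma$ into $\beta_1\cup\beta_2$, and take $A'=\{N,W,S,\beta_1\}$. You also identify what actually makes $A'$ hole-free. $\beta_2$ has to carry the component of $\operatorname{int}H$ to the exterior, and it must do so through open edges rather than vertices, because the cut vertex of the chain lies on $\beta_1\in A'$. You arrange this by putting one edge of $\beta_2$ on $H$ and the other on $\partial D$.

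The gap is that the statement is purely existential, so the construction \emph{is} the proof, and you never produce a labelled $D$. The paper also only posits the five-face configuration, but you commit to a specific realisation, and that realisation cannot work.

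First, the unfolding is not as you describe. $C_\sigma$ is two bigons wedged at a cut vertex, not nested. Gluing it in therefore identifies the two corners $v_1,v_3$ of $\sigma$ lying over the middle vertex of $p$. In $D'$, $\beta_1$ joins $v_0$ to $Y:=v_1=v_3$, and $\beta_2$ joins $Y$ to $v_2$.

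Now take $X$ a tree, $p=e_1e_2$ reduced, and $N,W,S,H$ free bigons.
\begin{itemize}
\item Every face of $D'$ is a $2$-gon whose two edges join the same two vertices. Dual-adjacent faces therefore share their vertex pair, so all faces of a dual-connected $A'$ join the same two vertices.
\item The face across the second edge of $H$ must lie in $A$; otherwise $H$ has an edge on $\partial D$ and is not enclosed in $D$. Hence it lies in $A'$, and it joins $Y$ to $v_2$.
\item Since $\phi(v_0)\neq\phi(v_2)$ in the tree, $v_0\neq v_2$, so $A'$ cannot be dual-connected.
\end{itemize}
Letting some ring faces be path-bigons does not fix this automatically: they are unfolded too, and their vertex identifications can disconnect $A'$. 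So ``labels are automatically consistent'' is false, and the diagonal-arc fallback does not specify a diagram either.

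Your geometric descriptions are also inconsistent. $\beta_2$ consists of two \emph{consecutive} edges of $\sigma$, so it cannot meet arcs of $\partial H$ and $\partial D$ that are opposite in $\sigma$. And the bigon $\beta_1$ cannot both reach $\partial D$ and share edges with $W$ and $S$.

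To close the gap, exhibit an explicit diagram. One option works over $\widetilde X^{\pm,2}$ for $\mathcal P_{\mathbb Z^2}$:
\begin{itemize}
\item Take the six unit squares of $[0,3]\times[0,2]$, and let $p$ be the path $(0,0)\to(1,0)\to(2,0)$.
\item Glue $\sigma=B_p$ along the bottom edges $a_1,a_2$ of the two left bottom squares, using new edges $a_3\colon(2,0)\to v_3$ and $a_4\colon v_3\to(0,0)$ with $v_3\mapsto(1,0)$.
\item With $H=[1,2]\times[0,1]$ and $A$ the other five squares plus $\sigma$, every edge of $H$ lies on a face of $A$. So $A$ is dual-connected but not hole-free.
\item In $D'$, $\beta_2=\{a_2,a_3\}$ hangs below $H$ with $a_3\subset\partial D'$, and $\beta_1=\{a_1,a_4\}$ is adjacent to $[0,1]\times[0,1]$. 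Then $A'$ (the five squares plus $\beta_1$) is dual-connected and hole-free, with $\operatorname{coll}_2(A')=A$.
\end{itemize}
If you want a tree target with five faces, the obstruction above disappears for a backtracking $p=e\bar e$. Let $H,N,S,W$ be lenses stacked in that order between $v_0$ and $v_1$. Let $\sigma$, with boundary $a_1a_2a_3a_4$ visiting $v_0$ twice, wrap around $v_1$ from the outer edge $a_1$ of $W$ to the outer edge $a_2$ of $H$, with $\partial D=a_3a_4$. Then $D'$ is the linear stack $\beta_2,H,N,S,W,\beta_1$, and every required property can be checked by inspection.
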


\begin{proof}
Choose a finite planar disk diagram $D$ with exactly five faces
$H,N,W,S,\sigma$, where $\sigma$ is a path-bigon face of
path-length $2$ and the carrier of $\{N,W,S,\sigma\}$ is the
closure of $D\setminus\operatorname{int}H$.
Equivalently, $N,W,S,\sigma$ occur cyclically around the face $H$
and form a ring separating $H$ from the exterior of $D$
(see \Cref{fig:collapse-counterexample}, left panel).

Let $D'=\operatorname{unf}_2(D)$.
Then $\sigma$ is replaced by a chain of two ordinary free bigons
$\beta_1,\beta_2$, meeting in a cut vertex, with $\beta_1$
adjacent to $W$ and $\beta_2$ adjacent to $S$.
Set
\[
A':=\{N,W,S,\beta_1\}\subseteq F(D').
\]
The induced face-dual subgraph on $A'$ is connected:
$\beta_1$ is dual-adjacent to $W$, $W$ is dual-adjacent to $N$,
and $N$ is dual-adjacent to $S$.
The carrier $\Sigma_{A'}$ is hole-free: in the right panel of
\Cref{fig:collapse-counterexample}, the complementary component
of $|D'|\setminus|\Sigma_{A'}|$ containing the omitted face $H$
also contains the omitted bigon $\beta_2$, and therefore reaches
the exterior of $|D'|$.

Now collapse the unfolded pair to the original path-bigon face.
Then
\[
\operatorname{coll}_2(A')=\{N,W,S,\sigma\}=:A\subseteq F(D).
\]
This set is still dual-connected.
However, its carrier is the closed ring of four faces surrounding
$H$ in the left panel of \Cref{fig:collapse-counterexample}.
The interior of $H$ lies in a bounded complementary component of
$|D|\setminus|\Sigma_A|$, so $A$ is not hole-free.
\end{proof}

\begin{remark}[Obstruction to full ordinary-target descent]
\label{rem:descent-obstruction}
\Cref{prop:collapse-obstruction} shows that the direct collapse
approach to descending from the path-completed target
$X^{\pm,M}$ to the ordinary free completion $X^{\pm}$ fails:
a hole-free face-set in the unfolded diagram can collapse to a
non-hole-free face-set.
A full ordinary-target descent would require a controlled
chain-saturation procedure that enlarges an arbitrary dual-connected
hole-free face-set to a chain-saturated one with bounded boundary
length and filling cost; whether such a procedure exists is open.
The mixed interleaving of \Cref{thm:mixed-qi-interleaving}, with
source in the ordinary free completion and target in the bounded
path-completion, gives the strongest quantitative comparison
between the two profile families.
\end{remark}

\begin{remark}[Relation to the original HQM profile]
\label{rem:hqm-vs-mhqm}
The HQM class is contained in the mHQM class by
\Cref{lem:hqm-implies-mhqm}, so the mHQM profile is at most the
HQM profile.
Hyperbolicity implies a positive HQM spectral gap
(\Cref{thm:hqm-spectral-hyp}), and the mHQM and hfmHQM profiles
detect hyperbolicity without a degree hypothesis
(\Cref{thm:mhqm-free-hyp,thm:hfmhqm-free-hyp}).
Whether the HQM profile also detects hyperbolicity (i.e.\ whether
positive HQM gap implies hyperbolicity) remains open; it would follow
if area-minimisers lie in some fixed $\kappa$-HQM class. The sharpest
version of this question ($\kappa=1$) is discussed in
\Cref{rem:lobe-vs-hqm}.
Whether the two profile families are coarsely equivalent
remains open; equivalently, whether the free-completed
HQM profile (defined by the same formula as the mHQM profile but
with the infimum restricted to $\kappa$-HQM disk maps) is coarsely
equivalent to the mHQM profile.
It also remains open whether either family is coarsely equivalent to
the minimal free-completed profile.
\end{remark}

\section{Dual profile asymptotics and open problems}
\label{sec:asymptotics}

The spectral-isoperimetric inequality (\Cref{thm:dual-spectral-dehn})
and the encapsulation-based Cheeger argument together bracket the
face-dual spectral Dehn function for polynomial Dehn functions.

\begin{proposition}[Dual profile bounds for polynomial Dehn functions]
\label{prop:dual-profile-bracket}
Let $\mathcal P$ be a finite presentation.
\begin{enumerate}[label=\textup{(\roman*)}]
\item \emph{Lower bound from Dehn upper growth.}
If $\delta_{\mathcal P}(n)\le Kn^\alpha$ for some $\alpha>1$, then
$\Lambda^\ast_{\mathcal P}(n)\ge cn^{2-2\alpha}$
with $c=c(K,L_{\max},\alpha)>0$.
\item \emph{Upper bound from Dehn lower growth.}
If $\delta_{\mathcal P}(n)\ge cn^\alpha$ for some $\alpha>1$
and all sufficiently large $n$, then
$\Lambda^\ast_{\mathcal P}(n)\le Cn^{1-\alpha}$
for all sufficiently large $n$,
with $C=(\ell_{\min} c)^{-1}$.
If the lower bound holds only for infinitely many $n$, the upper bound
holds on that same subsequence.
\end{enumerate}
In particular, if $\delta_{\mathcal P}(n)\asymp n^\alpha$
(two-sided bounds for all large $n$), then
$c' n^{2-2\alpha}\le\Lambda^\ast_{\mathcal P}(n)\le C' n^{1-\alpha}$
for all large $n$.
\end{proposition}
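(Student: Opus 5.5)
Part~(ii) is the easier half, and I would dispatch it first by copying the argument of \Cref{cor:spectral-gap}. For each large $n$ with $\delta_{\mathcal P}(n)\ge cn^\alpha$, choose a cyclically reduced null-homotopic word $w_n$ with $|w_n|\le n$ and $\operatorname{Area}^{\min}_{\mathcal P}(w_n)=\delta_{\mathcal P}(n)$, and let $\Delta_n$ be a minimal diagram for it. Since the area is positive, $F(\Delta_n)\neq\varnothing$, and \Cref{lem:dual-area-bound} gives
\[
\mu_1(\Delta_n)\le \frac{n}{\ell_{\min}\delta_{\mathcal P}(n)}\le (\ell_{\min}c)^{-1}n^{1-\alpha}.
\]
Because $\Lambda^\ast_{\mathcal P}(n)\le\mu_1(\Delta_n)$, this is the bound, with exactly the stated constant. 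The subsequence version follows verbatim.

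For part~(i), I would transplant the Faber--Krahn argument of \Cref{thm:faber-krahn} to the weighted face-dual network, using the hole-filling machinery exactly as in \Cref{thm:dual-spectral-hyp-iff}.

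1. Fix a minimal $\Delta$ with $|\partial\Delta|\le n$ and $F(\Delta)\ne\varnothing$. Take a dual-connected $A\subseteq F(\Delta)$ and form $\widehat\Sigma_A$.
2. By \Cref{lem:hole-filling}(ii),(iii), we have $|\partial\widehat\Sigma_A|\le|\partial_E A|$ and $\operatorname{Area}(\widehat\Sigma_A)=\sum_k\operatorname{FillArea}(w_k)$. Together with $\operatorname{FillArea}(w_k)\le\delta_{\mathcal P}(|w_k|)\le K|w_k|^\alpha$ and superadditivity of $x^\alpha$, this gives
\[
\mathrm{vol}^\ast(A)\le L_{\max}|A|\le L_{\max}K|\partial_E A|^\alpha .
\]
3. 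For an arbitrary $A$, split it into dual-connected components. Since $\partial_E$ is additive over components and $x\mapsto x^{1/\alpha}$ is subadditive, we obtain $|\partial_E A|\ge (L_{\max}K)^{-1/\alpha}\mathrm{vol}^\ast(A)^{1/\alpha}$.
4. It follows that $h_D(\Delta^\ast)\ge c_0\,\mathrm{vol}^\ast(F(\Delta))^{1/\alpha-1}$.
5. \Cref{lem:cheeger-discrete}(i) then yields $\mu_1(\Delta)\ge \tfrac12 c_0^2\,\mathrm{vol}^\ast(F(\Delta))^{2/\alpha-2}$.
6. Finally, $\mathrm{vol}^\ast(F(\Delta))\le L_{\max}\operatorname{Area}(\Delta)\le L_{\max}Kn^\alpha$, since $\Delta$ is minimal and $|\partial\Delta|\le n$. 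Substituting gives $\mu_1(\Delta)\ge c\,n^{\alpha(2/\alpha-2)}=c\,n^{2-2\alpha}$, uniformly in $\Delta$, hence the same bound for $\Lambda^\ast_{\mathcal P}(n)$.

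The step needing the most care is step~2, specifically the lobe-wise bound for the hole-filled subcomplex. Here I would lean on \Cref{lem:hole-filling}(iii) and the cyclic-reduction remark after \Cref{def:spectral-dehn}, so that non-cyclically-reduced lobe words are still bounded by $\delta_{\mathcal P}$. One also needs $|\partial_E A|\ge1$, which holds by \Cref{rem:dual-killed-connected}, so that no degenerate case arises when raising to the power $\alpha$.

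The two-sided bracket for $\delta_{\mathcal P}\asymp n^\alpha$ is then simply the conjunction of (i) and (ii).
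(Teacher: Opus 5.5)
Your proposal is correct and follows the paper's proof. Part~(ii) is the same indicator-function bound; the paper simply cites \Cref{thm:dual-spectral-dehn}. Part~(i) uses the same hole-filling and lobe-wise Dehn bound, followed by subadditivity of $x^{1/\alpha}$ and a Dirichlet Cheeger inequality.

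The only difference is cosmetic. You apply the weighted inequality \Cref{lem:cheeger-discrete}(i) to $\mu_1$ directly via $\mathrm{vol}^\ast$, whereas the paper bounds the unweighted constant $\widetilde h_D$, applies \Cref{lem:cheeger-discrete}(ii) to $\widetilde\mu_1$, and converts via \eqref{eq:comb-vs-weighted}, losing an extra factor of $L_{\max}$. Your caveat about $|\partial_E A|\ge 1$ is harmless but unnecessary here, since in the dual setting there is no linear term to absorb.
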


\begin{proof}
\emph{Part~(ii).}
The spectral-isoperimetric inequality (\Cref{thm:dual-spectral-dehn})
gives $\Lambda^\ast_{\mathcal P}(n)\le n/(\ell_{\min}\delta_{\mathcal P}(n))$.
If $\delta_{\mathcal P}(n)\ge cn^\alpha$ holds at a particular $n$, then
$\Lambda^\ast_{\mathcal P}(n)\le n/(\ell_{\min} cn^\alpha)
=(\ell_{\min} c)^{-1} n^{1-\alpha}$.
If the lower bound holds for all sufficiently large $n$, the upper
bound holds for all sufficiently large $n$;
if it holds only along an infinite sequence, the upper bound holds
along that same sequence.

\emph{Part~(i).}
Let $\Delta$ be a minimal diagram with $|\partial\Delta|\le n$ and
$F(\Delta)\neq\varnothing$.
For any nonempty $A\subset F(\Delta)$, decompose $A$ into its
dual-connected components $A_1,\dots,A_m$.
For each $A_j$, let $\Sigma_j$ be the union of its faces
and let $\widehat\Sigma_j$ be the hole-filled subcomplex of
\Cref{lem:hole-filling}.
By \Cref{lem:hole-filling}(i)-(iii),
$\widehat\Sigma_j$ is a disk subdiagram of $\Delta$,
$|\partial\widehat\Sigma_j|\le|\partial_E A_j|$,
and each lobe is area-minimising.
Hence, by $\delta_{\mathcal P}(n)\le Kn^\alpha$,
$|A_j|\le\operatorname{Area}(\widehat\Sigma_j)
=\sum_k\operatorname{FillArea}(w_k)
\le K|\partial\widehat\Sigma_j|^\alpha
\le K|\partial_E A_j|^\alpha$.
By concavity of $x^{1/\alpha}$ for $\alpha>1$:
$|\partial_E A|=\sum_j|\partial_E A_j|
\ge c_0\sum_j|A_j|^{1/\alpha}
\ge c_0|A|^{1/\alpha}$.
The unweighted dual Cheeger constant therefore satisfies
$\widetilde h_D(\Delta^\ast)
\ge c_0\operatorname{Area}(\Delta)^{1/\alpha-1}$.
The Cheeger inequality (\Cref{lem:cheeger-discrete}(ii)) gives
$\widetilde\mu_1(\Delta)
\ge\widetilde h_D(\Delta^\ast)^2/(2L_{\max}^2)
\ge c_1\operatorname{Area}(\Delta)^{2/\alpha-2}$.
Since $\operatorname{Area}(\Delta)\le Kn^\alpha$ and
$2/\alpha-2<0$:
$\widetilde\mu_1(\Delta)\ge c_1(Kn^\alpha)^{2/\alpha-2}=c_2 n^{2-2\alpha}$.
By the comparison \eqref{eq:comb-vs-weighted},
$\mu_1(\Delta)\ge\widetilde\mu_1(\Delta)/L_{\max}
\ge cn^{2-2\alpha}$.
Since this bound is uniform over all minimal $\Delta$ with
$|\partial\Delta|\le n$, taking the infimum gives
$\Lambda^\ast_{\mathcal P}(n)\ge cn^{2-2\alpha}$.
\end{proof}

\begin{example}[Dual profile of the Heisenberg group]
The discrete Heisenberg group $H_3(\mathbb Z)$ has Dehn function
$\delta(n)\asymp n^3$
(see e.g.~\cite[Ch.~5]{Gromov1996GeometricGroupTheory}).
Since $\delta(n)\asymp n^3$ provides both upper and lower polynomial
bounds, \Cref{prop:dual-profile-bracket} gives the bracket
\[
c n^{-4}
\le\Lambda^\ast_{\mathcal P_{H_3}}(n)
\le C n^{-2}.
\]
For comparison, the standard presentation of $\mathbb Z^2$ has
$\delta(n)\asymp n^2$, giving the bracket
$c n^{-2}\le\Lambda^\ast_{\mathcal P_{\mathbb Z^2}}(n)\le C n^{-1}$.
The $\mathbb Z^2$ computation (\Cref{prop:z2-quasisquare}) gives the
primal eigenvalue $\lambda_1(Q_{p,q})\asymp p^{-2}+q^{-2}$; since the
face-dual of $Q_{p,q}$ is itself a grid, the same sine-product
computation gives $\mu_1(Q_{p,q})\asymp p^{-2}+q^{-2}$.
(The killed walk on the $p\times q$ face-dual grid is equivalent
to the Dirichlet random walk on
$\{1,\dots,p\}\times\{1,\dots,q\}$ absorbed at off-grid positions;
the sine-product eigenfunctions vanish at the killing positions
$i=0,p+1$ and $j=0,q+1$, so they are exact eigenfunctions of the
killed operator.)
For quasi-square grids ($p\asymp q$), the boundary length is
$n=2(p+q)\asymp p$ and
$\mu_1(Q_{p,q})\asymp p^{-2}\asymp n^{-2}$,
so the bracket is tight at the lower end for $\alpha=2$.
The following proposition establishes the upper bound
$\widetilde\mu_1(\Delta_n)\le Cn^{-2}$
for an explicit cubic-area family.
The profile upper bound
$\Lambda^\ast_{\mathcal P_{H_3}}(n)\le Cn^{-2}$ still
follows independently from \Cref{prop:dual-profile-bracket}(ii) and
$\delta(n)\asymp n^3$, while the exact order of the minimal profile
remains open.
\end{example}

\begin{proposition}[Explicit Heisenberg family]
\label{prop:heisenberg-upper}
Let $\mathcal P_{H_3}=\langle x,y,z\mid [x,y]=z, [x,z]=1, [y,z]=1\rangle$.
For each $n\ge 3$, the word $W_n=[x^n,[x^n,y^n]]$ has $|W_n|=10n$ and
admits an explicit van Kampen diagram $\Delta_n$ with
$\operatorname{Area}(\Delta_n)=n^3+2n^2$ and
\[
\widetilde\mu_1(\Delta_n)\le \frac{C}{n^2}.
\]
The profile upper bound
$\Lambda^\ast_{\mathcal P_{H_3}}(10n)\le Cn^{-2}$
follows independently from
\Cref{prop:dual-profile-bracket}(ii) and $\delta_{H_3}(n)\asymp n^3$
(not from the explicit family, since area-minimality of $\Delta_n$
has not been established).
The exact order of the minimal profile
$\Lambda^\ast_{\mathcal P_{H_3}}$ remains open
(\Cref{conj:heisenberg-sharp}).
\end{proposition}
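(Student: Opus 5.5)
Three things need to be established for the word $W_n=[x^n,[x^n,y^n]]$: its length, an explicit diagram $\Delta_n$ of area $n^3+2n^2$, and a test function showing $\widetilde\mu_1(\Delta_n)\le C/n^2$.

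\textbf{Length.} Expanding the commutators gives
\[
W_n=x^n\,[x^n,y^n]\,x^{-n}\,[x^n,y^n]^{-1}.
\]
Each copy of $[x^n,y^n]$ has length $4n$, so $|W_n|=n+4n+n+4n=10n$. One could first try to rewrite using $z$ so as to shorten the boundary, but that is not needed for the count.

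\textbf{Construction of $\Delta_n$.} I would assemble the diagram from three standard pieces.
\begin{enumerate}
\item A \emph{commutator disc} $E_n$ for $[x^n,y^n]z^{-n^2}$. It is an $n\times n$ array of $[x,y]=z$ relator cells. The $z$-edges produced inside are pushed to one side using $[y,z]=1$ and $[x,z]=1$ cells, which costs $O(n^3)$ cells in total. A cleaner accounting is to take $n^2$ cells $[x,y]z^{-1}$ together with $z$-corridors.
\item The word $W_n$ is $x^n\,c\,x^{-n}\,c^{-1}$ with $c=[x^n,y^n]$. I would take two copies of $E_n$, glued with opposite orientations, which reduces the problem to filling $x^n z^{n^2} x^{-n} z^{-n^2}$.
\item This remaining annulus-type region is filled by an $n\times n^2$ grid of $[x,z]$ cells, giving area $n^3$.
\end{enumerate}
To hit exactly $n^3+2n^2$, each $E_n$ must contribute $n^2$ cells. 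I would therefore fix the conventions for $E_n$ so that its $z$-edges exit along a single side without any additional $[y,z]$ cells. One way is to read $c$ as a product of conjugates of $z$ and use a stacked corridor layout. I expect that getting the exact area $n^3+2n^2$ requires choosing this layout carefully; if it fails, I would settle for $\asymp n^3$, which suffices for the spectral bound.

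\textbf{Spectral bound.} The central $n\times n^2$ grid $G$ of $[x,z]$ cells has face-dual equal to an $n\times n^2$ rectangular grid. I would use the test function
\[
g(i,j)=\sin\Bigl(\frac{\pi i}{n+1}\Bigr)\sin\Bigl(\frac{\pi j}{n^2+1}\Bigr)
\]
on $G$, extended by zero elsewhere. By \Cref{lem:dual-subdiagram-monotonicity} together with the sine computation in the Example following \Cref{prop:dual-profile-bracket}, the Rayleigh quotient of $g$ is $O(n^{-2}+n^{-4})=O(n^{-2})$, since the terms across $\partial G$ are included as boundary terms. This requires $G$ to be a disk subdiagram in which each face has at most 4 dual neighbours; that holds for commutator cells.

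The main obstacle is the exact area count and verifying that $G$ embeds as a subgrid. The spectral estimate itself is straightforward once $G$ is in place.
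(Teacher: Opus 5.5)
Your spectral step is the same as the paper's. You pass to the central $n\times n^2$ grid $G$ of $[x,z]$-cells via \Cref{lem:dual-subdiagram-monotonicity} and test with a product of sines. Your choice $\sin(\pi i/(n+1))\sin(\pi j/(n^2+1))$ is the exact first Dirichlet eigenfunction of the standalone grid, which is slightly cleaner than the paper's function vanishing on the two $z$-columns. Either function gives $\widetilde\mu_1(\Delta_n)\le Cn^{-2}$ for \emph{any} disk diagram containing $G$ as a disk subdiagram, whatever the area of the rest. Your three-piece decomposition (two oppositely oriented fillers of $[x^n,y^n]z^{-n^2}$ glued to $G$) is also the paper's.

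\textbf{The gap.} The gap is the exact count $\operatorname{Area}(\Delta_n)=n^3+2n^2$, which you defer to a careful choice of layout. No choice of layout can produce it.
\begin{enumerate}
\item By the $z$-exponent sum, an $n^2$-cell filler of $[x^n,y^n]z^{-n^2}$ would consist of $n^2$ equally oriented $[x,y]z^{-1}$-cells and nothing else.
\item The two $y$-edges of such a cell cut its boundary into the arcs $x^{-1}$ and $z^{-1}x$. So every cell lies in a $y$-corridor with sides $x^{k}$ and $(z^{-1}x)^{k}$.
\item There are no $y$-annuli, since neither $x^k$ nor $z^{-k}x^k$ is trivial in $H_3$.
\item The corridors leaving the block $y^n$ are parallel, with their $x$-sides facing the arc $x^{-n}$. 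The innermost one faces that arc across a cell-free region, so it has length $n$.
\item For $n\ge 2$ there is a second corridor. The cell-free region between the two would force $(z^{-1}x)^n$ to be freely equal to a power of $x$, which is impossible.
\end{enumerate}

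\textbf{The paper does not avoid this.} It asserts that a strip of $n$ pentagons fills $x^nyx^{-n}y^{-1}z^{-n}$. But such a strip is a single $y$-corridor, and it fills $(z^{-1}x)^nyx^{-n}y^{-1}$ instead. Sorting its $z$-letters costs $\binom n2$ $[x,z]$-cells. After stacking the $n$ strips, the $z^n$-blocks sit interleaved with the $y$'s on the side of the stack, and gathering them costs a further $n\binom n2$ $[y,z]$-cells. Done this way, each funnel has area exactly $n^2+2n\binom n2=n^3$, and $\operatorname{Area}(\Delta_n)=3n^3$.

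\textbf{What can be proved.} Your fallback $\operatorname{Area}(\Delta_n)\asymp n^3$ (for instance $=3n^3$ with the layout above) is the statement that can actually be established here. It is all the spectral bound needs. The profile bound $\Lambda^\ast_{\mathcal P_{H_3}}(10n)\le Cn^{-2}$ follows from \Cref{prop:dual-profile-bracket}(ii) without using $\Delta_n$ at all.
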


\begin{proof}
We first verify that $W_n=1$ in $H_3(\mathbb Z)$.
Since $[x,y]=z$ and $z$ is central ($[x,z]=[y,z]=1$),
$[x^n,y^n]=z^{n^2}$.
Because $z$ is central, $[x^n,z^{n^2}]=1$, so
$W_n=[x^n,[x^n,y^n]]=[x^n,z^{n^2}]=1$.
(The length count:
$[x^n,y^n]=x^ny^nx^{-n}y^{-n}$ has length $4n$, and
$[x^n,\cdot]$ wraps it in $x^n(\cdot)x^{-n}(\cdot)^{-1}$,
giving $n+4n+n+4n=10n$.)

For each $r=1,\dots,n$, let $S_r$ be a strip of $n$ pentagons
(labelled by the relator $[x,y]z^{-1}$) filling
$x^n y x^{-n} y^{-1} z^{-n}$.
Glue the lower $x^{-n}$-side of $S_r$ to the upper $x^n$-side of
$S_{r+1}$ for $r=1,\dots,n-1$. The resulting \emph{source funnel}
$A_n:=\bigcup_{r=1}^n S_r$
has area $n^2$ and outer boundary word
$[x^n,y^n]z^{-n^2}$.
Let $B_n$ be the reverse-oriented copy, so
$\operatorname{Area}(B_n)=n^2$ and its outer boundary word is
$z^{n^2}[y^n,x^n]$.

Let $C_n$ be an $n\times n^2$ rectangular grid of $[x,z]$-squares,
indexed by $(i,k)$ with $1\le i\le n$ and $1\le k\le n^2$, filling
$x^n z^{n^2}x^{-n}z^{-n^2}$.
Partition the $z^{-n^2}$-side of $A_n$ into consecutive blocks of
length $n$, one block for each strip $S_r$, and partition the left
boundary of $C_n$ into the same $n$ blocks according to
$k=(r-1)n+1,\dots,rn$.
Glue the $r$th interface block of $A_n$ to the $r$th block on the
left side of $C_n$, with orientation reversal; similarly glue the
$r$th block of the right side of $C_n$ to the corresponding interface
block of $B_n$.
Reading the outer boundary starting at the top of the corridor gives
$x^n[x^n,y^n]x^{-n}[y^n,x^n]=[x^n,[x^n,y^n]]=W_n$.
Thus $\Delta_n:=A_n\cup C_n\cup B_n$
is a van Kampen diagram for $W_n$, and
$\operatorname{Area}(\Delta_n)=n^2+n^3+n^2=n^3+2n^2$.

\emph{Upper bound.}
The corridor $C_n$ is a disk subdiagram of $\Delta_n$
(it is a simply connected planar subcomplex), so by
\Cref{lem:dual-subdiagram-monotonicity},
$\widetilde\mu_1(\Delta_n)\le\widetilde\mu_1(C_n)$.
Define a test function $g$ on the face-dual of $C_n$ by
$g(i,k)=\sin(\pi(i-1)/(n-1))\sin(\pi k/(n^2+1))$.
In the standalone subdiagram $C_n$, every perimeter face has
$b_{C_n}(f)>0$.
The test function $g$ vanishes on the left and right boundary
columns ($g(1,k)=g(n,k)=0$); at the top and bottom rows,
$g(i,1)^2,g(i,n^2)^2=O(n^{-4})$, contributing $O(n^{-3})$ total.
Mass: $\|g\|_2^2\asymp n^3$.
Horizontal energy: $\mathcal E_x(g)\asymp n$ (the $i$-difference is
$O(n^{-1})$).
Vertical energy: $\mathcal E_z(g)\asymp n^{-1}$ (the $k$-difference
is $O(n^{-2})$).
Therefore $\mathcal E(g)\asymp n$ and
$\widetilde\mu_1(\Delta_n)\le\widetilde\mu_1(C_n)
\le Cn/n^3=Cn^{-2}$.
\end{proof}

A path-Poincar\'e argument should give
$\widetilde\mu_1(\Delta_n)\ge cn^{-3}$
(routing each corridor face to the boundary via the funnels),
but we do not pursue the details.

\begin{conjecture}[Sharp Heisenberg eigenvalue and minimal profile]
\label{conj:heisenberg-sharp}
The explicit family satisfies
$\widetilde\mu_1(\Delta_n)\asymp n^{-2}$
(unweighted face-dual Dirichlet eigenvalue),
and the minimal weighted profile satisfies
$\Lambda^\ast_{\mathcal P_{H_3}}(n)\asymp n^{-2}$.
\end{conjecture}

The full minimal-profile conjecture requires controlling
all area-minimising diagrams, not just the explicit family.

\begin{remark}[Escape-resistance diagnostic]\label{rem:escape-diagnostic}
Define the \emph{spectral escape resistance} of a disk diagram $\Delta$
with $V^\circ\neq\varnothing$ by
\[
E_{\mathrm{esc}}(\Delta)
:=\inf\bigl\{\mathrm{EL}(\Gamma_{\mathrm{esc}}):
f\colon V^\circ\to[0,\infty),
\mathcal Lf=\lambda_1 f,
v_0\in V^\circ,
f(v_0)=\max f\bigr\},
\]
where the infimum is over all nonnegative first Dirichlet eigenfunctions
$f$ and all maximising vertices $v_0$.
For a minimal van Kampen diagram over $\mathcal P$
with $V^\circ\neq\varnothing$ and every basepoint
$b\in V(\partial\Delta)$, combining
\Cref{thm:extremal-inversion} with \Cref{lem:dehn-capacity} gives
\begin{equation}\label{eq:escape-refinement}
E_{\mathrm{esc}}(\Delta)\cdot
\delta_{\mathcal P}(\mathrm{FL}_b(\Delta))
\ge\frac{c}{\lambda_1(\Delta)},
\end{equation}
where $c=c(L_{\max})$.
If $\delta_{\mathcal P}(n)\le C n^\alpha$ and
$E_{\mathrm{esc}}(\Delta)\le C' \mathrm{FL}_b(\Delta)^\beta$,
then $\mathrm{FL}_b(\Delta)\ge c\lambda_1^{-1/(\alpha+\beta)}$;
in particular, bounded escape resistance ($\beta=0$) yields the
exponent $1/\alpha$, and in the quadratic case $\alpha=2$ this is the
sharp exponent $1/2$.
The estimate \eqref{eq:escape-refinement} partitions spectral collapse
into a \emph{volume-driven} regime
($E_{\mathrm{esc}}=O(1)$)
and a \emph{resistance-driven} regime
($E_{\mathrm{esc}}\to\infty$; for instance, one expects $\mathbb Z^2$
grids of side length $R$ to have
$E_{\mathrm{esc}}\asymp\log R$).
Whether multiscale diffusive gauge conditions
can sharpen the exponent
in the resistance-driven regime is an open problem.
\end{remark}

\subsection{Spectral separation within the linear Dehn class}
\label{subsec:spectral-separation}

The spectral Dehn function carries finer information than the
ordinary Dehn function class.
The following results give a first demonstration.

\begin{lemma}[Single-face dual eigenvalue]
\label{lem:single-face-dual-eigenvalue}
If $\Delta$ is a van Kampen diagram with exactly one face, then
$\widetilde\mu_1(\Delta)=1$.
\end{lemma}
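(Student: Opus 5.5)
With exactly one face $f$, the face-dual network $\Delta^\ast$ has a single interior vertex $f$ together with the absorbing vertex $\infty$. The plan is to evaluate the Rayleigh quotient defining $\widetilde\mu_1(\Delta)$ in \Cref{def:dual-spectral-dehn} directly. No spectral theory is needed, because the space of test functions $g\colon F(\Delta)\to\mathbb R$ is one-dimensional.

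\emph{Step 1: collapse the quotient.} For $g\not\equiv 0$ write $g(f)=t\neq 0$. The first sum in the numerator runs over unordered pairs of \emph{distinct} faces, so it is empty. Any self-conductance $m(f,f)$ contributes nothing, as noted in the definition. The numerator is therefore $b(f)\,t^2$ and the denominator is $t^2$. Every nonzero $g$ gives the same value, so the infimum is attained and $\widetilde\mu_1(\Delta)=b(f)$. The same reading is obtained from the Dirichlet-multigraph formulation preceding \Cref{prop:graph-local-replacement}, with $V^\circ=\{f\}$.

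\emph{Step 2: identify $b(f)$.} This is the only substantive step and the one I expect to be the main obstacle. I would use the decomposition identity $|\partial f|=\sum_{f'}m(f,f')+b(f)$ from \Cref{def:dual-spectral-dehn}, with $m(f,f')=0$ for $f'\neq f$ since there are no other faces. It remains to control $m(f,f)$, i.e.\ the interior edges having $f$ on both sides.

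\begin{itemize}
\item By \Cref{rem:no-interior-face-free} and the Euler-characteristic count there ($V-E+1=1$), together with simple connectivity, the $1$-skeleton of a one-face disk diagram is $\partial f$ with possible face-free bridges attached.
\item An interior edge with $f$ on both sides would create a nontrivial loop in $|\Delta|$ avoiding the open cell. This contradicts $H_1(|\Delta|)=0$, so $m(f,f)=0$.
\item Hence every edge occurrence of $\partial f$ lies on $\partial\Delta$, and $b(f)$ is governed entirely by the boundary edges of $f$.
\end{itemize}

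The delicate point is the bookkeeping that converts this into the value $1$ asserted in the lemma. Under the counting-measure normalisation as literally written, Step 1 gives $b(f)=|\partial f|$, so the verification must check exactly how boundary multiplicities to $\infty$ are recorded. I would first try to show that, for a single face, the boundary gadget to $\infty$ contributes with total weight matching the norm of $f$. This would make the quotient identically $1$, consistent with the killed-walk picture: from $f$ every step exits to $\infty$, so $\rho(P^\ast_{\mathrm{kill}})=0$ and $1-\rho=1$. If the normalisation check shows that $b(f)$ must instead be divided by $|\partial f|$, I would record the lemma in that normalised form.

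\emph{Step 3: conclude.} Substituting the identification from Step 2 into Step 1 gives $\widetilde\mu_1(\Delta)=1$, independently of the face length and of the presentation. Consequently, any family of single-relator-face diagrams has dual eigenvalue pinned at $1$, which is what the subsequent separation result needs.
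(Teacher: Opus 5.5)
Your Steps 1 and 2 are right, but together they refute the displayed value rather than prove it. With the counting-measure denominator in \Cref{def:dual-spectral-dehn}, a diagram with one face gives $\widetilde\mu_1(\Delta)=b(f)=|\partial f|\ge\ell_{\min}\ge 2$. Step 3 then asserts $\widetilde\mu_1(\Delta)=1$ with nothing in between, and no ``normalisation check'' can supply the missing step. By definition $b(f)$ counts edge occurrences and the denominator is $g(f)^2$. The paper relies on exactly this reading when it tests a single vertex to get $\widetilde\mu_1\le\deg(v)$ in the proof of \Cref{prop:graph-local-replacement}.

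The quantity that equals $1$ is the weighted eigenvalue $\mu_1(\Delta)=b(f)/|\partial f|=1-\rho(P^\ast_{\mathrm{kill}})$, which is what your killed-walk heuristic actually computes. The paper's own proof makes the same conflation: it writes the quotient as $b(f)g(f)^2/(|\partial f|g(f)^2)$, i.e.\ with the $\mu_1$ denominator. So what it really proves is $\mu_1(\Delta)=1$, equivalently $\widetilde\mu_1(\Delta)=|\partial f|$, consistent with \eqref{eq:comb-vs-weighted}. That corrected form is all that \Cref{prop:hyperbolic-relator-constant-dual-profile} needs, since $\Lambda^\ast_{\mathcal P}$ is defined via $\mu_1$. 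The correct outcome of your Step 3 was therefore to restate the lemma for $\mu_1$, not to assert the value $1$ for $\widetilde\mu_1$.

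Your justification of $m(f,f)=0$ is also wrong. With one $2$-cell the $1$-skeleton is unicyclic, so an edge with $f$ on both sides is a tree edge hanging into the face region. Such an edge creates no loop: $|\Delta|$ is still a closed disk, and $H_1(|\Delta|)=0$ gives no contradiction. The correct reason is combinatorial. The label of $\partial f$ is a cyclic conjugate of a cyclically reduced relator. But the boundary walk around a pendant tree inside $f$ turns back at a leaf, which produces a subword $ss^{-1}$ in that label, a contradiction.
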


\begin{proof}
Let $F(\Delta)=\{f\}$.
Every edge occurrence on $\partial f$ lies on $\partial\Delta$, so
$b(f)=|\partial f|$, and the sum over pairs of distinct faces is
empty.
Hence for every nonzero $g$ on $F(\Delta)$,
\[
\frac{b(f)g(f)^2}{|\partial f|g(f)^2}=1.
\]
Taking the infimum gives $\widetilde\mu_1(\Delta)=1$.
\end{proof}

\needspace{4\baselineskip}
\begin{proposition}[Hyperbolic presentations with a relator have
constant dual profile]
\label{prop:hyperbolic-relator-constant-dual-profile}
Let $\mathcal P=\langle S\mid R\rangle$ be a finite presentation with
$R\neq\varnothing$, all relators cyclically reduced, and
$G(\mathcal P)$ word-hyperbolic.
Then there exists $c_{\mathcal P}>0$ such that
\[
c_{\mathcal P}\le \Lambda^\ast_{\mathcal P}(n)\le 1
\qquad\text{for all }n\ge \ell_{\min}.
\]
In particular,
$\Lambda^\ast_{\mathcal P}(n)\asymp 1$.
\end{proposition}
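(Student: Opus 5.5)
The lower bound comes directly from \Cref{thm:dual-spectral-hyp-iff}. For the upper bound I will exhibit, for every $n\ge\ell_{\min}$, one admissible minimal diagram whose weighted face-dual eigenvalue is exactly $1$. Then $\Lambda^\ast_{\mathcal P}(n)\le 1$ follows from the definition of the profile as an infimum, together with its monotonicity in $n$.

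\emph{Lower bound.} Since $G(\mathcal P)$ is word-hyperbolic, \Cref{thm:dual-spectral-hyp-iff} gives $\inf_{n\ge1}\Lambda^\ast_{\mathcal P}(n)>0$. Its proof even supplies the explicit constant $c_{\mathcal P}=1/(2(L_{\max}C_{\mathrm{iso}})^2)$, where $C_{\mathrm{iso}}$ is a linear isoperimetric constant. This is uniform in $n$, which settles the left inequality.

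\emph{Upper bound.} Pick a relator $r\in R$ with $|r|=\ell_{\min}$, and let $\Delta_r$ be the one-face van Kampen diagram whose single $2$-cell is attached along $r$. Its boundary word is $r$, which is cyclically reduced and null-homotopic, and $|\partial\Delta_r|=\ell_{\min}\le n$.

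The key check is that $\Delta_r$ is area-minimising. By standard van Kampen theory, a diagram of area $0$ is a tree, so its boundary word freely reduces to the empty word. The word $r$ is cyclically reduced and nonempty (since $|r|\ge2$), hence not freely trivial. Therefore $\operatorname{Area}^{\min}_{\mathcal P}(r)\ge1=\operatorname{Area}(\Delta_r)$, and $\Delta_r$ is minimal in the standing sense.

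It remains to compute $\mu_1(\Delta_r)$. The only face $f$ has every edge occurrence on $\partial\Delta_r$. So $b(f)=|\partial f|$, no pairs of distinct faces occur, and the Rayleigh quotient equals $b(f)g(f)^2/(|\partial f|g(f)^2)=1$ for every nonzero $g$. This is the weighted analogue of \Cref{lem:single-face-dual-eigenvalue} and gives $\mu_1(\Delta_r)=1$. Hence $\Lambda^\ast_{\mathcal P}(n)\le\mu_1(\Delta_r)=1$ for all $n\ge\ell_{\min}$. Combining the two bounds yields $\Lambda^\ast_{\mathcal P}(n)\asymp1$.

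The only delicate point is confirming $b(f)=|\partial f|$, i.e.\ that no edge of $\Delta_r$ has $f$ on both sides. In a one-face disk diagram the boundary walk is the full face boundary $r$, so every edge occurrence of $\partial f$ is a boundary occurrence. Even if some edge were traversed twice, it would lie on $\partial\Delta_r$ and would not be a self-loop edge. I expect this step to need only a brief topological remark, using \Cref{rem:no-interior-face-free}-style Euler-characteristic reasoning if necessary.
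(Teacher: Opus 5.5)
Your proof is correct and follows the paper's argument: the lower bound comes from \Cref{thm:dual-spectral-hyp-iff}, the upper bound comes from the one-cell diagram $\Delta_r$ on a shortest relator, and your free-reduction argument supplies the area-minimality that the paper simply asserts. Working with the weighted quotient is also the right choice: the paper cites \Cref{lem:single-face-dual-eigenvalue}, whose statement is phrased with $\widetilde\mu_1$ (which would actually equal $|\partial f|=\ell_{\min}$), but its computation, like yours, gives $\mu_1(\Delta_r)=1$, which is the quantity $\Lambda^\ast_{\mathcal P}$ uses.
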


\begin{proof}
The lower bound $\Lambda^\ast_{\mathcal P}(n)\ge c_{\mathcal P}$
follows from \Cref{thm:dual-spectral-hyp-iff}.
Choose a relator $r\in R$ with $|r|=\ell_{\min}$.
Since $r$ is nonempty and cyclically reduced, the single-$2$-cell
diagram $\Delta_r$ with boundary label $r$ is area-minimising, and
$|\partial\Delta_r|=\ell_{\min}$.
By \Cref{lem:single-face-dual-eigenvalue},
$\widetilde\mu_1(\Delta_r)=1$.
Therefore $\Lambda^\ast_{\mathcal P}(n)\le 1$ for all
$n\ge\ell_{\min}$.
\end{proof}

\begin{proposition}[Relator-free presentations of free groups]
\label{prop:relator-free-free-presentation}
Let $\mathcal P_{F_r}:=\langle x_1,\dots,x_r\mid\ \rangle$
$(r\ge 2)$.
Then
$\delta_{\mathcal P_{F_r}}(n)=0$ and
$\Lambda^\ast_{\mathcal P_{F_r}}(n)=+\infty$ for every $n\ge 1$.
\end{proposition}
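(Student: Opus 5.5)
Both claims reduce to the observation that, with $R=\varnothing$, there are no $2$-cells at all.

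\emph{Dehn function.} The presentation complex $X_{\mathcal P_{F_r}}$ is a wedge of $r$ circles and has no $2$-cells. Hence every van Kampen diagram over $\mathcal P_{F_r}$ is face-free, i.e.\ a finite planar tree. A word $w$ is null-homotopic in $F_r$ exactly when it freely reduces to the empty word. The only cyclically reduced null-homotopic word is the empty word, since a nonempty freely reduced word represents a nontrivial element of the free group. Its unique minimal diagram is the single vertex, of area $0$. So the set in the definition of $\delta_{\mathcal P_{F_r}}(n)$ (\Cref{def:spectral-dehn}) is $\{0\}$ for every $n$, and $\delta_{\mathcal P_{F_r}}(n)=0$. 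One can also say directly that any null-homotopic word bounds a tree diagram, so $\operatorname{Area}^{\min}=0$; this covers even words that are not cyclically reduced.

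\emph{Face-dual profile.} Every admissible minimal diagram $\Delta$ in the definition of $\Lambda^\ast_{\mathcal P_{F_r}}(n)$ (\Cref{def:dual-spectral-dehn}) has $F(\Delta)=\varnothing$, because there are no relator cells to label faces. By convention $\mu_1(\Delta)=+\infty$ for every such $\Delta$. The admissible set is nonempty, since it contains the trivial one-vertex diagram of the empty word, which has length $0\le n$. Therefore the infimum equals $+\infty$, and if the set were empty the convention $\inf\varnothing=+\infty$ would give the same value. This agrees with \Cref{thm:dual-spectral-dehn}: under $1/\infty=0$ it reads $0\le 0$.

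The one point needing care is that the paper's standing convention assumes $R\neq\varnothing$ with $\ell_{\min}$ defined. I will note that neither quantity in the statement uses $\ell_{\min}$, so the definitions apply verbatim. There is no genuine obstacle; the proof consists of checking that the conventions for empty face sets give the claimed values.
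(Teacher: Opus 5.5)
Your proposal is correct and follows the same route as the paper. With $R=\varnothing$, every van Kampen diagram is face-free, so all filling areas vanish and every admissible $\Delta$ has $F(\Delta)=\varnothing$, which gives $\mu_1(\Delta)=+\infty$ by convention; your additional checks (that only the empty word is cyclically reduced and null-homotopic, and that the admissible set is nonempty or else $\inf\varnothing=+\infty$ applies) just spell out details the paper's one-line proof leaves implicit.
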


\begin{proof}
There are no $2$-cells in any van Kampen diagram over
$\mathcal P_{F_r}$, so every null-homotopic word has filling area $0$
and every diagram has $F(\Delta)=\varnothing$, giving
$\widetilde\mu_1(\Delta)=+\infty$ by convention.
\end{proof}

\begin{theorem}[Same Dehn class, different spectral profiles]
\label{thm:linear-class-spectral-separation}
Let $\mathcal P_{F_r}:=\langle x_1,\dots,x_r\mid\ \rangle$
$(r\ge 2)$ and
\[
\mathcal P_{\Sigma_g}:=
\left\langle a_1,b_1,\dots,a_g,b_g\ \middle|\
\prod_{i=1}^g [a_i,b_i]\right\rangle
\qquad (g\ge 2).
\]
Both presentations lie in the linear Dehn class, but
\[
\Lambda^\ast_{\mathcal P_{F_r}}(n)=+\infty
\qquad\text{while}\qquad
\Lambda^\ast_{\mathcal P_{\Sigma_g}}(n)\asymp 1.
\]
\end{theorem}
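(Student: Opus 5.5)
The plan is to assemble the theorem from three ingredients: \Cref{prop:relator-free-free-presentation}, \Cref{prop:hyperbolic-relator-constant-dual-profile}, and the hyperbolicity of the surface group. No new analysis is needed.

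First, I will confirm that both presentations lie in the linear Dehn class. For $\mathcal P_{F_r}$, \Cref{prop:relator-free-free-presentation} gives $\delta_{\mathcal P_{F_r}}\equiv 0$, which is trivially bounded by a linear function. For $\mathcal P_{\Sigma_g}$ with $g\ge 2$, the group is the fundamental group of a closed hyperbolic surface. It therefore acts geometrically on $\mathbb H^2$ and is word-hyperbolic by \v{S}varc--Milnor, so its Dehn function is linear \cite{BridsonHaefliger1999,GhysdelaHarpe1990}. Alternatively, the relator satisfies $C'(1/6)$ small cancellation for $g\ge 2$, and Dehn's algorithm gives the linear bound directly \cite{LyndonSchupp}. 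Because the Dehn function here is the cyclically reduced version of \Cref{def:spectral-dehn}, linearity transfers to it by the cyclic-reduction remark.

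Next, the equality $\Lambda^\ast_{\mathcal P_{F_r}}(n)=+\infty$ for all $n$ is exactly \Cref{prop:relator-free-free-presentation}.

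For $\mathcal P_{\Sigma_g}$, I will check the hypotheses of \Cref{prop:hyperbolic-relator-constant-dual-profile}. The relator set is nonempty. The single relator $\prod_i[a_i,b_i]$ has length $4g$ and is cyclically reduced: consecutive letters involve distinct generators or opposite-type letters, and the last letter $b_g^{-1}$ and the first letter $a_1$ are not inverse. The group is hyperbolic, as established above. That proposition then yields $c\le\Lambda^\ast_{\mathcal P_{\Sigma_g}}(n)\le 1$ for all $n\ge 4g$.

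To obtain $\asymp 1$ for every $n\ge 1$, I will treat $n<4g$ separately. There, $\Lambda^\ast$ is either $+\infty$, because no diagram has a face, or a finite nonincreasing quantity. The cleanest route is to state $\asymp 1$ in the coarse sense of \Cref{def:coarse-equivalence}, which only concerns large $n$, or to note the result for $n\ge\ell_{\min}$.

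The only mildly delicate point is this small-$n$ convention. I will handle it by restricting to $n\ge\ell_{\min}=4g$, as in \Cref{prop:hyperbolic-relator-constant-dual-profile}. The contrast between the two presentations then comes down to the relator-free profile being identically infinite while the surface profile is bounded between two positive constants.
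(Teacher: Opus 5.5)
Your proposal is correct and follows essentially the same route as the paper, which likewise combines \Cref{prop:relator-free-free-presentation}, \Cref{prop:hyperbolic-relator-constant-dual-profile}, and the hyperbolicity of $\pi_1(\Sigma_g)$. The differences are minor: you get linearity of $\delta_{\mathcal P_{\Sigma_g}}$ directly from hyperbolicity rather than via \Cref{thm:dual-spectral-dehn}, and you handle the range $n<4g$ more carefully than the paper does (in the sense of \Cref{def:coarse-equivalence}, taking $C\ge 4g$ makes $\lceil Cn\rceil\ge\ell_{\min}$, which settles it).
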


\begin{proof}
By \Cref{prop:relator-free-free-presentation},
$\delta_{\mathcal P_{F_r}}(n)=0$ and
$\Lambda^\ast_{\mathcal P_{F_r}}(n)=+\infty$.
Under the standard coarse equivalence
$f\preceq_{\mathrm D} g\iff \exists C\ge 1\
\forall n: f(n)\le Cg(Cn+C)+Cn+C$,
one has $0\preceq_{\mathrm D} n$ (trivially) and
$n\preceq_{\mathrm D} 0$ (since $n\le Cn+C$), so
$\delta_{\mathcal P_{F_r}}$ lies in the linear class.

The group $\pi_1(\Sigma_g)$ is word-hyperbolic for $g\ge 2$
\cite[Ch.~III.H]{BridsonHaefliger1999}, so
\Cref{prop:hyperbolic-relator-constant-dual-profile} gives
$\Lambda^\ast_{\mathcal P_{\Sigma_g}}(n)\asymp 1$.
By \Cref{thm:dual-spectral-dehn},
$\delta_{\mathcal P_{\Sigma_g}}(n)\le Cn$, so
$\mathcal P_{\Sigma_g}$ also lies in the linear class.
\end{proof}

\begin{remark}[Separation in non-hyperbolic classes]
\label{rem:nonlinear-separation-open}
\Cref{thm:linear-class-spectral-separation} separates presentations
within the linear (hyperbolic) class.
Whether the spectral profile can separate groups within a genuinely
non-hyperbolic Dehn class (e.g.\ among groups with quadratic Dehn
function) is a natural open problem; a candidate pair is
$\mathbb Z^2$ (isotropic fillings) versus $\mathbb Z\times\Sigma_g$
for $g\ge 2$ (anisotropic corridor fillings), both of which have
$\delta(n)\asymp n^2$ but whose filling geometries are qualitatively
different.
\end{remark}

\begin{proposition}[Exact face-dual spectrum of rectangular commutator grids]
\label{prop:z2-rect-grid-dual}
For integers $p,q\ge 2$, let $Q_{p,q}$ be the rectangular commutator
grid from \Cref{def:z2-rect-grid}.
Then
\[
\widetilde\mu_1(Q_{p,q})
=
1-\frac12\Bigl(\cos\frac{\pi}{p+1}+\cos\frac{\pi}{q+1}\Bigr)
\asymp p^{-2}+q^{-2}.
\]
\end{proposition}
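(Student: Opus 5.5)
The plan is to identify the killed face-dual network of $Q_{p,q}$ explicitly, diagonalise its Dirichlet form with discrete sine products, and then estimate $1-\cos$ to get the order of magnitude.

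\emph{Step 1: the dual network.} Index the faces of $Q_{p,q}$ by $(i,j)\in\{1,\dots,p\}\times\{1,\dots,q\}$. Two faces share a primal edge exactly when they are horizontal or vertical neighbours, and then they share exactly one edge, so $m(f,f')\in\{0,1\}$. There are no self-loops, since every interior edge separates two distinct squares. For the boundary term, $b(i,j)$ counts how many of the four neighbours $(i\pm1,j)$, $(i,j\pm1)$ fall outside the grid. The numerator of $\widetilde\mu_1$ therefore equals
\[
\sum_{x\sim y}(\widetilde g(x)-\widetilde g(y))^2,
\]
summed over nearest-neighbour edges of $\mathbb Z^2$ with at least one endpoint in the grid, where $\widetilde g$ extends $g$ by zero. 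So $\widetilde\mu_1(Q_{p,q})$ is the bottom of the spectrum of the combinatorial Dirichlet Laplacian $\mathcal L g(x)=4g(x)-\sum_{y\sim x}\widetilde g(y)$ on the $p\times q$ box with zero values on the ring $i\in\{0,p+1\}$ or $j\in\{0,q+1\}$.

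\emph{Step 2: the eigenbasis.} For $1\le r\le p$ and $1\le s\le q$, set
\[
u_{r,s}(i,j)=\sin\frac{\pi r i}{p+1}\,\sin\frac{\pi s j}{q+1}.
\]
These vanish on the absorbing ring. By the identity $\sin(\theta(i+1))+\sin(\theta(i-1))=2\cos\theta\,\sin(\theta i)$, they satisfy
\[
\mathcal L u_{r,s}=\Bigl(4-2\cos\frac{\pi r}{p+1}-2\cos\frac{\pi s}{q+1}\Bigr)u_{r,s}.
\]
The standard discrete sine orthogonality shows these $pq$ functions are orthogonal in the counting inner product, so they form a complete eigenbasis. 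The smallest eigenvalue, at $(r,s)=(1,1)$, is
\[
4-2\Bigl(\cos\frac{\pi}{p+1}+\cos\frac{\pi}{q+1}\Bigr)=2\Bigl(1-\cos\frac{\pi}{p+1}\Bigr)+2\Bigl(1-\cos\frac{\pi}{q+1}\Bigr).
\]

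\emph{Step 3: reconciling with the stated formula.} The displayed formula $1-\frac12(\cos+\cos)$ is exactly $\tfrac14$ of the eigenvalue above. This is the value for the $|\partial f|$-weighted quotient $\mu_1$, because every face has $|\partial f|=4$ and so $\mu_1=\widetilde\mu_1/4$ (consistent with \eqref{eq:comb-vs-weighted}). I would therefore first settle the normalisation, by reading the statement as $\mu_1$ or as the normalised operator $I-P^\ast_{\mathrm{kill}}$. Whichever reading is intended, the eigenfunctions are the same $u_{r,s}$, and the difference is the constant factor $4$. This bookkeeping is the step I expect to need the most care; the rest is routine.

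\emph{Step 4: the order of magnitude.} Apply $\frac{2x^2}{\pi^2}\le 1-\cos x\le \frac{x^2}{2}$ on $[0,\pi]$, together with $(p+1)^{-2}\asymp p^{-2}$ for $p\ge2$. This gives $\asymp p^{-2}+q^{-2}$ with absolute constants under either normalisation.
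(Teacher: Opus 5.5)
Your argument is correct and is essentially the paper's proof: identify the face-dual as the $p\times q$ grid with killing boundary, diagonalise with the sine products $\sin\frac{\pi ri}{p+1}\sin\frac{\pi sj}{q+1}$, and finish with $1-\cos t\asymp t^2$; you also supply the completeness-by-orthogonality step that the paper leaves implicit. Your normalisation concern is justified, and it resolves in favour of your computation: the paper's proof finds the eigenvalues of $I-P^\ast_{\mathrm{kill}}$, so its displayed closed form is really $\mu_1(Q_{p,q})$, whereas the counting-measure definition gives $\widetilde\mu_1(Q_{p,q})=4\mu_1(Q_{p,q})=4-2\bigl(\cos\frac{\pi}{p+1}+\cos\frac{\pi}{q+1}\bigr)$ (for $p=q=2$ this is $2$, not $\tfrac12$), so only the $\asymp p^{-2}+q^{-2}$ part of the statement holds verbatim for $\widetilde\mu_1$.
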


\begin{proof}
The face-dual graph of $Q_{p,q}$ is the $p\times q$ grid with
Dirichlet (killing) boundary.
Every face has boundary length $4$, so the killed transition operator is
$(P^{\ast}_{\mathrm{kill}}f)(i,j)
=\tfrac14(f(i{-}1,j)+f(i{+}1,j)+f(i,j{-}1)+f(i,j{+}1))$,
with $f$ extended by $0$ off the grid.
The functions
$\psi_{r,s}(i,j)
:=\sin(\pi ri/(p{+}1))\sin(\pi sj/(q{+}1))$
are Dirichlet eigenfunctions with eigenvalues
$\mu_{r,s}=1-\frac12(\cos\frac{\pi r}{p+1}+\cos\frac{\pi s}{q+1})$.
The smallest is $\mu_{1,1}$.
The two-sided estimate follows from $1-\cos t\asymp t^2$.
\end{proof}

\begin{lemma}[Area-minimality of rectangular commutator grids]
\label{lem:z2-rect-area}
For all integers $p,q\ge 2$,
$\operatorname{FillArea}_{\mathcal P_{\mathbb Z^2}}
(a^p b^q a^{-p} b^{-q})=pq$.
In particular, the rectangular grid $Q_{p,q}$ is area-minimising.
\end{lemma}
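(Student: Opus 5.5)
The plan is to prove the two inequalities separately. The upper bound $\operatorname{FillArea}(a^pb^qa^{-p}b^{-q})\le pq$ is witnessed by $Q_{p,q}$ itself. That diagram has exactly $pq$ square faces, and its boundary reads $a^pb^qa^{-p}b^{-q}$ by construction (\Cref{def:z2-rect-grid}).

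For the lower bound I will use the Heisenberg invariant already used in the proof of \Cref{prop:z2-quasisquare}. Define $\phi\colon F(a,b)\to H_3(\mathbb Z)$ by $a\mapsto x$, $b\mapsto y$. Let $D$ be any van Kampen diagram for $w:=a^pb^qa^{-p}b^{-q}$ with $N$ faces. By van Kampen's lemma \cite[Ch.~V]{LyndonSchupp}, $w$ equals in $F(a,b)$ a product $\prod_{i=1}^N u_i r^{\epsilon_i}u_i^{-1}$, where $r=aba^{-1}b^{-1}$ and $\epsilon_i=\pm1$. Since $\phi(r)=[x,y]=z$ is central, each factor maps to $z^{\epsilon_i}$. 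Hence $\phi(w)=z^m$ with $|m|\le N$.

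Next I compute $\phi(w)=[x^p,y^q]$. Because $H_3(\mathbb Z)$ is nilpotent of class $2$ with $[x,y]=z$ central, bilinearity of the commutator gives $[x^p,y^q]=z^{pq}$. The element $z$ has infinite order: the unitriangular matrix representation gives a homomorphism to $\mathrm{UT}_3(\mathbb Z)$ under which $z^n$ has top-right entry $n$. Therefore $z^m=z^{pq}$ forces $m=pq$, and so $N\ge pq$. Taking the infimum over all diagrams gives $\operatorname{FillArea}(w)\ge pq$.

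Combining the two bounds gives equality, so $Q_{p,q}$ attains the minimum and is area-minimising. The only point needing care is the normal-form expression of the boundary word as a product of exactly $N$ conjugates of $r^{\pm1}$, since the signed count must be bounded by the number of faces. This is standard from van Kampen's lemma and is used verbatim in \Cref{prop:z2-quasisquare}. There is no real obstacle; the lemma essentially restates the minimality paragraph of that proposition.
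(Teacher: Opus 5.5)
Your proof is correct and follows the paper's argument: the homomorphism $a\mapsto x$, $b\mapsto y$ into $H_3(\mathbb Z)$ sends each conjugate of a relator to $z^{\pm1}$ and the boundary word to $z^{pq}$. Since $z$ has infinite order (via $\mathrm{UT}_3(\mathbb Z)$), any diagram needs at least $pq$ faces, and $Q_{p,q}$ attains this. Your explicit bookkeeping of the signed count $m=\sum_i\epsilon_i$ with $|m|\le N$ is exactly the step the paper leaves implicit.
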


\begin{proof}
Define $\phi\colon F(a,b)\to H_3(\mathbb Z)$ by $\phi(a)=x$,
$\phi(b)=y$, where $H_3(\mathbb Z)$ is the integer Heisenberg group.
Then $\phi(a^p b^q a^{-p} b^{-q})=[x^p,y^q]=z^{pq}$,
and each conjugate of $aba^{-1}b^{-1}$ maps to $z^{\pm 1}$.
Since $z$ has infinite order (its image under
$H_3(\mathbb Z)\to\mathrm{UT}_3(\mathbb Z)$ has top-right entry $1$),
any van Kampen diagram for $a^p b^q a^{-p} b^{-q}$ needs at least $pq$
faces.
Since $Q_{p,q}$ has exactly $pq$ faces, it is area-minimising.
\end{proof}

\begin{corollary}[Equal filling area, different spectral geometry]
\label{cor:equal-area-different-spectrum}
For each $m\ge 2$, let
$u_m:=a^m b^{2m} a^{-m} b^{-2m}$ and
$v_m:=a^2 b^{m^2} a^{-2} b^{-m^2}$.
Then $\operatorname{FillArea}(u_m)=\operatorname{FillArea}(v_m)=2m^2$,
but the area-minimising diagrams $\Delta_m:=Q_{m,2m}$ and
$\Gamma_m:=Q_{2,m^2}$ satisfy
\[
\widetilde\mu_1(\Delta_m)\asymp m^{-2},
\qquad
\widetilde\mu_1(\Gamma_m)\to\tfrac14.
\]
Thus $\widetilde\mu_1(\Gamma_m)/\widetilde\mu_1(\Delta_m)\asymp m^2$:
filling area does not determine face-dual spectral geometry, even
within the single group $\mathbb Z^2$.
\end{corollary}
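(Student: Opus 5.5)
Both area statements follow from \Cref{lem:z2-rect-area}, and both spectral statements from the exact formula of \Cref{prop:z2-rect-grid-dual}; no new construction is needed. First, $u_m=a^mb^{2m}a^{-m}b^{-2m}$ is the boundary word of $Q_{m,2m}$, and $v_m=a^2b^{m^2}a^{-2}b^{-m^2}$ is the boundary word of $Q_{2,m^2}$. Both have parameters $\ge 2$ for $m\ge 2$. So \Cref{lem:z2-rect-area} gives $\operatorname{FillArea}(u_m)=m\cdot 2m=2m^2$ and $\operatorname{FillArea}(v_m)=2\cdot m^2=2m^2$. It also shows that $\Delta_m:=Q_{m,2m}$ and $\Gamma_m:=Q_{2,m^2}$ are area-minimising.

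Next I substitute into \Cref{prop:z2-rect-grid-dual}. For $\Delta_m$,
\[
\widetilde\mu_1(\Delta_m)=1-\tfrac12\Bigl(\cos\tfrac{\pi}{m+1}+\cos\tfrac{\pi}{2m+1}\Bigr)\asymp m^{-2}+(2m)^{-2}\asymp m^{-2}.
\]
For $\Gamma_m$, with $p=2$,
\[
\widetilde\mu_1(\Gamma_m)=1-\tfrac12\Bigl(\cos\tfrac{\pi}{3}+\cos\tfrac{\pi}{m^2+1}\Bigr)=\tfrac34-\tfrac12\cos\tfrac{\pi}{m^2+1}.
\]
As $m\to\infty$ this tends to $\tfrac34-\tfrac12=\tfrac14$. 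The two limits together give the ratio $\widetilde\mu_1(\Gamma_m)/\widetilde\mu_1(\Delta_m)\asymp m^2$: the numerator is bounded above and below by positive constants (it lies in $[\tfrac14,\tfrac12]$ for all $m\ge 2$), and the denominator is $\asymp m^{-2}$.

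The one point to check is whether the formula in \Cref{prop:z2-rect-grid-dual} is for $\widetilde\mu_1$ or for $\mu_1$. In $Q_{p,q}$ every face has length $4$, so the unweighted Rayleigh quotient is exactly $4$ times the weighted one. The displayed expression $1-\tfrac12(\cos+\cos)$ is the spectral gap of the killed transition operator, that is $\mu_1$, so $\widetilde\mu_1=4\mu_1$. Under that reading the exact limit becomes $1$ instead of $\tfrac14$. The $\asymp$ statements and the $m^2$ ratio are unaffected by this constant factor, so they are robust. I would resolve the issue by following the normalisation stated in \Cref{prop:z2-rect-grid-dual} as written, so the limit $\tfrac14$ holds under that convention, and add a remark that the exact constant depends on the $\mu_1$ versus $\widetilde\mu_1$ normalisation. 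Beyond this bookkeeping the argument is a direct substitution.
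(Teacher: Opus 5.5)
Your argument is the paper's proof. You get $\operatorname{FillArea}(u_m)=\operatorname{FillArea}(v_m)=2m^2$ and area-minimality from \Cref{lem:z2-rect-area}, then substitute $(p,q)=(m,2m)$ and $(2,m^2)$ into the formula of \Cref{prop:z2-rect-grid-dual}.

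Your normalisation check is correct. It exposes a slip in the statement and in the paper's proof, not a harmless choice of convention:
\begin{itemize}
\item Every face of $Q_{p,q}$ has length $4$, so the definitions give $\widetilde\mu_1(Q_{p,q})=4\mu_1(Q_{p,q})=4-2\cos\tfrac{\pi}{p+1}-2\cos\tfrac{\pi}{q+1}$.
\item The expression displayed in \Cref{prop:z2-rect-grid-dual} is $1-\rho(P^\ast_{\mathrm{kill}})$, which is $\mu_1$, not $\widetilde\mu_1$.
\end{itemize}

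So do not resolve the issue by ``following the normalisation as written''. Doing so asserts a limit that is false for $\widetilde\mu_1$ as defined. State instead that $\mu_1(\Gamma_m)\to\tfrac14$, equivalently $\widetilde\mu_1(\Gamma_m)\to 1$. As you noted, $\widetilde\mu_1(\Delta_m)\asymp m^{-2}$ and the $m^2$ ratio hold in either normalisation.
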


\begin{proof}
By \Cref{lem:z2-rect-area}, both diagrams are area-minimising with
area $2m^2$.
By \Cref{prop:z2-rect-grid-dual},
$\widetilde\mu_1(\Delta_m)
=1-\frac12(\cos\frac{\pi}{m+1}+\cos\frac{\pi}{2m+1})
\asymp m^{-2}$
and
$\widetilde\mu_1(\Gamma_m)
=1-\frac12(\cos\frac{\pi}{3}+\cos\frac{\pi}{m^2+1})
\to\frac14$.
\end{proof}

\begin{remark}[Template data and remaining open questions]
\label{rem:open-template-data}
The free-bigon pushforward obstruction
(chains of free bigons lack simple boundary when $|q(e)|\ge 2$)
is resolved by the bounded path-completion of
\Cref{subsec:bounded-path-completion}: the mixed quantitative
interleaving (\Cref{thm:mixed-qi-interleaving}) is fully proved.
Two obstructions remain for the full ordinary-to-ordinary interleaving.
First, the descent from the path-completed target to the ordinary free
completion is blocked by a hole-freeness failure under face-set
collapse (\Cref{prop:collapse-obstruction,rem:descent-obstruction}).
Second, the template-data existence:
\Cref{prop:template-data-from-qi} requires that no relator-image or
strip loop be freely trivial.
For a free group quasi-isometric to itself via the identity, the
strip loop $a\bar a\cdot e\cdot a\bar a\cdot e^{-1}$ is freely
trivial and admits no positive-area simple-boundary filler.
Whether bounded template data exists unconditionally for all
quasi-isometric pairs remains open.
The positivity criterion is unconditionally QI-invariant by
\Cref{thm:hfmhqm-qi-invariance}.
\end{remark}

\begin{remark}[Open problems]\label{rem:open-summary}
\Cref{prop:degree-counterexample} shows that uniform degree bounds are
not automatic for area-minimising diagrams, so the implication
$\inf_n\Lambda_{\mathcal P}(n)>0\Rightarrow\text{hyperbolic}$
in \Cref{thm:spectral-hyp-iff} requires the extra vertex degree
hypothesis.
Two questions remain central.
\begin{enumerate}
\item[\textup{(1)}]
\emph{Sharp filling-length exponents for $\alpha\ge 3$.}
\Cref{thm:faber-krahn} achieves the sharp exponent $1/2$
for all quadratic Dehn functions, and
\Cref{prop:z2-quasisquare} confirms it on a two-parameter family.
For $\alpha\ge 3$, neither the filling-length bound
$\lambda_1^{-1/(\alpha+1)}$ (\Cref{cor:polynomial-dehn})
nor the Faber-Krahn bound
$\lambda_1^{-1/(2\alpha-2)}$ (\Cref{thm:faber-krahn})
is expected to be sharp.
The escape-resistance diagnostic \eqref{eq:escape-refinement}
localises the loss.
\item[\textup{(2)}]
\emph{Finer structure of the spectral filling profile.}
\Cref{thm:hfmhqm-qi-invariance} establishes that the positivity
criterion of the free-completed hfmHQM spectral filling profile is
an unconditional quasi-isometry invariant of finitely presented
groups and detects word-hyperbolicity.
A quantitative interleaving of the profile itself holds under
bounded template data (\Cref{rem:template-data-scope});
whether such data exists unconditionally is open
(\Cref{rem:open-template-data}).
Whether area-minimising free-completed diagrams satisfy the stronger
single-loop $1$-HQM inequality
$\operatorname{Area}(\Omega)
\le\operatorname{FillArea}^{\pm}(\partial\Omega)+|\partial\Omega|$
for every disk subdiagram $\Omega$ (not just for dual-connected
face-sets) remains open (\Cref{rem:lobe-vs-hqm}).
Two further structural comparison problems remain
(see \Cref{rem:hqm-vs-mhqm}):
\begin{itemize}
\item whether the hfmHQM and mHQM
profile families are coarsely equivalent;
\item whether either is coarsely equivalent to the minimal
free-completed profile.
\end{itemize}
A positive answer to both would make the minimal free-completed
face-dual profile $\Lambda^{\ast,\pm}_{\mathcal P}(n)
:=\inf\{\widetilde\mu_1(D): \phi\colon D\to\widetilde X^{\pm},
\text{area-min.},|\partial D|\le n\}$
a quasi-isometry invariant.

\Cref{thm:linear-class-spectral-separation} shows that the spectral
profile already separates presentations within the linear Dehn class.
Whether it can separate groups within a genuinely \emph{non-hyperbolic}
class remains open (\Cref{rem:nonlinear-separation-open}).
The current results place the minimal face-dual profile within general
power-law brackets (\Cref{prop:dual-profile-bracket}) and exhibit
explicit families realising both ends of this landscape:
Euclidean grids realise the lower-end behaviour in the quadratic case
(the face-dual of $Q_{p,q}$ is itself a grid, so the same
sine-product computation applies),
while explicit Heisenberg diagrams realise the upper-end behaviour
available from the present comparison method in the cubic case.
The explicit family $\Delta_n$ gives
$\widetilde\mu_1(\Delta_n)\le Cn^{-2}$
(\Cref{prop:heisenberg-upper}); whether the minimal Heisenberg
profile has order $n^{-2}$ remains open
(\Cref{conj:heisenberg-sharp}), since area-minimality of $\Delta_n$
has not been established.
It also remains open whether the minimal-diagram profile
$\Lambda^\ast_{\mathcal P}$ coincides with the HQM profile up to coarse
equivalence, and whether the primal vertex profile admits a similarly
invariant version without a uniform degree bound.
\end{enumerate}
\end{remark}

\subsection*{Acknowledgements} The author wishes to thank IIT Bombay for providing ideal working conditions.

\bibliographystyle{alpha}
\bibliography{references}

\end{document}